\documentclass[10pt]{stacks-project2}
\usepackage{latexsym, amscd, amsfonts, eucal, mathrsfs, amsmath, amssymb, amsthm, tikz-cd, xr, stmaryrd, amsxtra, MnSymbol}
\usepackage[alphabetic]{amsrefs}
\usepackage{enumitem}
\usepackage{adjustbox}
\usepackage[hidelinks]{hyperref}
\usepackage{chngcntr}

\counterwithin{equation}{subsection}
\setcounter{tocdepth}{1}

\newtheorem{theorem}[equation]{Theorem}
\newtheorem{lemma}[equation]{Lemma}

\newtheorem{proposition}[equation]{Proposition}
\newtheorem{corollary}[equation]{Corollary}
\newtheorem{conjecture}[equation]{Conjecture}
\newtheorem*{theoremA}{Theorem}
\newtheorem*{claim}{Claim}

\theoremstyle{definition}
\newtheorem{sub}[equation]{}

\newtheorem{remark}[equation]{Remark}
\newtheorem{hypothesis}[equation]{Hypothesis}

\newtheorem{definition}[equation]{Definition}
\newtheorem{example}[equation]{Example}

\title[Irreducible components of some crystalline deformation rings]
{On the irreducible components of some crystalline deformation rings}
\author{Robin Bartlett} 
\address{Max Planck Institute for Mathematics, Bonn}
\email{robinbartlett18@mpim-bonn.mpg.de}
\begin{document}
\maketitle
\begin{abstract}
	We adapt a technique of Kisin to construct and study crystalline deformation rings of $G_K$ for a finite extension $K/\mathbb{Q}_p$. This is done by considering a moduli space of Breuil--Kisin modules, satisfying an additional Galois condition, over the unrestricted deformation ring. For $K$ unramified over $\mathbb{Q}_p$ and Hodge--Tate weights in $[0,p]$, we study the geometry of this space. As a consequence we prove that, under a mild cyclotomic-freeness assumption, all crystalline representations of an unramified extension of $\mathbb{Q}_p$, with Hodge--Tate weights in $[0,p]$, are potentially diagonalisable.
\end{abstract}
\tableofcontents
\section{Introduction}

Let $K/\mathbb{Q}_p$ be a finite extension, let $\mathbb{F}$ denote a finite field of characteristic $p$ and let $V_{\mathbb{F}}$ denote a continuous representation of $G_K = \operatorname{Gal}(\overline{K}/K)$ on a finite dimensional $\mathbb{F}$-vector space. In \cite{Kis08} Kisin constructs a quotient of the universal framed deformation ring of $V_{\mathbb{F}}$, parametrising deformations which are crystalline (even potentially semistable) with fixed Hodge--Tate weights.

The motivation for studying such deformation rings comes from the conjectures of Fontaine--Mazur \cite{FM}, and the desire to prove that many Galois representations arise from modular forms. The first considerable progress towards these questions came with the modularity lifting theorem of Wiles \cite{Wiles}, where crucial use was made of the fact that the deformation rings parametrising crystalline representations of $G_{\mathbb{Q}_p}$ with Hodge--Tate weights either $0$ or $1$ are power series rings, and so as simple as one could hope for. It was later shown by Kisin \cite{KisFF} that in fact one could proceed in more general situations where the deformation rings were less well-behaved, provided one could maintain some control on their irreducible components. More recently still these ideas have coalesced into the notion of potential diagonalisability. Roughly speaking a crystalline representation is potentially diagonalisable if it lies on the same irreducible component as a particularly simple representation (e.g. a direct sum of characters). This condition was introduced in \cite{BLGGT} where a general modularity lifting theorem was proved under the local assumption of potential diagonalisability at primes above $p$.

The aim of this paper is to extend the methods introduced by Kisin in \cite{KisFF} to treat Hodge--Tate weights beyond the range $[0,1]$. In particular we are able to prove the following.

\begin{theoremA}
	Suppose $K$ is unramified over $\mathbb{Q}_p$ and that $V$ is a crystalline representation of $G_K$ on a finite dimensional $\overline{\mathbb{Q}}_p$-vector space, with Hodge--Tate weights contained in $[0,p]$. Assume the mod~$p$ semi-simplification $\overline{V}$ of $V$ is strongly cyclotomic-free\footnote{See Definition~\ref{cyclofree}.}. Then $V$ is potentially diagonalisable.
\end{theoremA}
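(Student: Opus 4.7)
The plan is to exhibit, after restriction to a suitable unramified extension $K'/K$, a direct sum of crystalline characters $W$ of $G_{K'}$ with the same Hodge--Tate weights as $V|_{G_{K'}}$ lying on the same irreducible component of the crystalline framed deformation ring $R^{\mathrm{crys},\square}_{\overline{V}|_{G_{K'}}}[\tfrac{1}{p}]$ as $V|_{G_{K'}}$. Since potential diagonalisability is stable under passage to finite extensions, this will suffice.

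First I would enlarge $K$ to an unramified extension $K'/\mathbb{Q}_p$ over which $\overline{V}$ becomes a direct sum of characters; strong cyclotomic-freeness is preserved under such a restriction. One then constructs a diagonal candidate $W = \bigoplus \psi_i$, choosing each crystalline character $\psi_i$ of $G_{K'}$ to lift the corresponding factor of $\overline{V}|_{G_{K'}}$ and to carry the correct Hodge--Tate weights; crystalline characters of an unramified field are abundant enough for this. The content of the argument is then that $V|_{G_{K'}}$ and $W$ can be joined by a path inside a single irreducible component of the crystalline deformation ring.

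The core tool is the moduli space $X$ of Breuil--Kisin modules with height in $[0,p]$ satisfying the auxiliary Galois condition, which the paper attaches to $R^{\mathrm{crys},\square}_{\overline{V}|_{G_{K'}}}$. This map is proper and, on rigid-analytic generic fibres, essentially a bijection onto the crystalline locus, so it is enough to show that every irreducible component of $X$ meets the locus where the Breuil--Kisin module is a direct sum of rank-one objects. The geometric study of $X$ developed in the body of the paper, specialised to the unramified case and weights in $[0,p]$, should yield an explicit description in terms of partial flags on a Breuil--Kisin lattice, together with a natural torus action whose fixed points are precisely the split modules; a standard connectedness argument then forces every component to contain a torus-fixed point, giving the required diagonal lift.

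The main obstacle is this geometric step. Without cyclotomic-freeness, non-split extensions between characters whose ratio is a twist of the cyclotomic character produce extra components of $X$ which need not contain any split point, and these components seem to require genuinely different methods to control. The role of \emph{strong} cyclotomic-freeness in the present argument is precisely to guarantee that, locally on $R^{\mathrm{crys},\square}_{\overline{V}|_{G_{K'}}}$, the space $X$ decomposes into pieces whose combinatorics is governed by partial flag varieties; the combinatorial bookkeeping of these flags in the range $[0,p]$ will be delicate, but ultimately tractable given the explicit moduli description supplied earlier in the paper.
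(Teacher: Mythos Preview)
Your proposal has a genuine gap at the key geometric step. You reduce the problem to showing that every irreducible component of the moduli space $\mathcal{L}^{\leq p}_{\operatorname{crys}}$ contains a split Breuil--Kisin module, and you invoke a ``natural torus action'' and a ``standard connectedness argument'' to force this. But no such torus action is developed in the paper, and the connectedness of the special fibre $\mathcal{L}_{\mathbb{F}}$ is precisely what the paper formulates as Conjecture~\ref{conjecture} (in the irreducible case) and only proves in dimension two. So the step you call standard is in fact the hardest open part of this circle of ideas, and your argument as written does not go through.

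The paper's proof avoids this global connectedness question entirely by a local lifting argument. Given $V$ with Breuil--Kisin module $\mathfrak{M}$, one looks at its reduction $\mathfrak{M}_{\mathbb{F}} \in \mathcal{L}^{\leq p}_{\operatorname{crys}}(V_{\mathbb{F}})$, which is strongly divisible by Proposition~\ref{inclusion}. Then, using the cyclotomic-freeness hypothesis, one inductively constructs (Lemma~\ref{liftingext}, Corollary~\ref{lift}) a crystalline lift $V'$ of $V_{\mathbb{F}}$, filtered by one-dimensional pieces, whose Breuil--Kisin module $\mathfrak{M}'$ has the \emph{same} mod-$p$ reduction $\mathfrak{M}_{\mathbb{F}}$. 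Thus $V$ and $V'$ give $\mathcal{O}_E$-points of $\mathcal{L}^\circ$ passing through the \emph{same} closed point; since $\mathcal{L}^\circ$ is regular (Corollary~\ref{regular}), hence normal, they lie on the same irreducible component. Note that $V'$ is not a direct sum of characters as you propose, only a successive extension of characters; that is already enough for potential diagonalisability by Lemma~\ref{potdiagfacts}(3). The role of strong cyclotomic-freeness is to make Hypothesis~\ref{hyp} hold after restriction to the unramified extension over which the Jordan--H\"older factors become one-dimensional, which in turn drives both the regularity of $\mathcal{L}^\circ$ and the extension-lifting step.
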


There are two new aspects of this result. The first is the range $[0,p]$ which we allow our Hodge--Tate weights to vary over; for unramified extensions of $\mathbb{Q}_p$ potential diagonalisability was previously only known for Hodge--Tate weights in the range $[0,p-1]$. See the work of Gao--Liu \cite{GL14}. The second is with our method, which opens the possibility of proving potential diagonalisability when $K$ is a possibly ramified extension of $\mathbb{Q}_p$ (where potential diagonalisability is only known for two dimensional representations with Hodge--Tate weights between $0$ and $1$, cf. \cite[3.4.1]{GK15}). While we are unable to treat such $K$ in this paper we hope our methods will be useful in the more general situation.

As already mentioned, our starting point is with ideas originally employed by Kisin. In \cite{Kis06} Kisin identifies a collection of $G_K$-representations on finite free $\mathbb{Z}_p$-modules, those of finite $E$-height. This condition depends only upon the restriction of the representation to $G_{K_\infty}$ where $K_\infty = K(\pi^{1/p^\infty})$ for some choice of uniformiser $\pi \in K$: a representation is of finite $E$-height if the etale $\varphi$-module associated to its restriction to $G_{K_\infty}$ admits a particular kind of lattice, what is now known as a Breuil--Kisin module. Kisin proves that any $\mathbb{Z}_p$-lattice inside a crystalline (even semi-stable) representation with Hodge--Tate weights in $[0,h]$ is of $E$-height $\leq h$. If $R^\square_{V_{\mathbb{F}}}$ denotes the universal framed deformation ring of $V_{\mathbb{F}}$ then the main construction of \cite{Kis08} uses these Breuil--Kisin modules to build a projective $R^\square_{V_{\mathbb{F}}}$-scheme $\mathcal{L}^{\leq h}$. The scheme-theoretic image of the morphism to $\operatorname{Spec}R^\square_{V_{\mathbb{F}}}$ corresponds to a quotient of $R^\square_{V_{\mathbb{F}}}$ parametrising deformations of $E$-height $\leq h$. As not every finite $E$-height representation is crystalline this quotient is, in general, too large. Instead it provides an approximation to the crystalline quotient from which the desired crystalline deformation ring is obtained as a further quotient. Unfortunately forming this second quotient requires inverting $p$, obscuring the integral structure of the deformation ring.

One exception is when $h =1$. In this case any representation of $E$-height $\leq 1$ is crystalline. The $E$-height $\leq 1$ quotient of $R^\square_{V_{\mathbb{F}}}$ is therefore precisely the crystalline quotient, and so the geometry of $\mathcal{L}^{\leq 1}$, which may be understood through its definition in terms of semi-linear algebra, can then be used to study the geometry of the crystalline deformation rings. This is the technique employed in \cite{KisFF}.

Our aim is to refine the construction of $\mathcal{L}^{\leq h}$ so that something similar happens for $h >1$. For this we use results of Gee--Liu--Savitt and Ozeki (see Theorem~\ref{GLS}) which provide necessary and sufficient conditions for any representation of finite $E$-height to be crystalline. In Section~\ref{crysdefrings} we show this condition cuts out a closed subscheme $\mathcal{L}^{\leq h}_{\operatorname{crys}}$ of  $\mathcal{L}^{\leq h}$. The scheme-theoretic image of the morphism $\mathcal{L}^{\leq h}_{\operatorname{crys}} \rightarrow \operatorname{Spec} R^\square_{V_{\mathbb{F}}}$ then corresponds to the quotient of $R^\square_{V_{\mathbb{F}}}$ parametrising crystalline deformations with Hodge--Tate weights $\leq h$ (at least up to $p$-power torsion).

In general we do not understand the geometry of $\mathcal{L}^{\leq h}_{\operatorname{crys}}$. It is a closed subset of an affine Grassmannian cut out by a seemingly complicated Galois condition. However, if we restrict to the case in which $K$ is unramified over $\mathbb{Q}_p$ and $h = p$, then we show that around the closed points of $\mathcal{L}^{\leq p}_{\operatorname{crys}}$ this Galois condition is in fact equivalent to a condition phrased solely in terms of semilinear algebra. This is the main result of Section~\ref{strongdivisibility}. In Section~\ref{localstructure} we use this to describe the local geometry of $\mathcal{L}^{\leq p}_{\operatorname{crys}}$. Provided $V_{\mathbb{F}}$ is cyclotomic-free (a slightly weaker condition than that of strongly cyclotomic-freeness appearing in the theorem above) we show that the $\mathbb{Z}_p$-flat locus $\mathcal{L}^\circ \subset \mathcal{L}^{\leq p}_{\operatorname{crys}}$ is open and the completed local rings at closed points of $\mathcal{L}^\circ$ are power series rings.

In the last section we use all this to deduce consequences for crystalline deformation rings. We prove the theorem above. We also make a conjecture (which we prove in the two-dimensional case) on connectedness of the fibre of $\mathcal{L}^\circ$ over the closed point of $\operatorname{Spec}R^\square_{V_{\mathbb{F}}}$ for irreducible $V_{\mathbb{F}}$. We then explain, assuming this conjecture, how the cyclotomic-freeness assumption from the above theorem can be weakened (but not removed). Finally we identify a good situation (when the fibre of $\mathcal{L}^\circ$ over the closed point of $\operatorname{Spec}R^{\square}_{V_{\mathbb{F}}}$ is zero-dimensional and reduced) in which $\mathcal{L}^\circ$ can be used to show that each irreducible component of crystalline deformation rings is formally smooth. We conclude by illustrating this with some concrete examples, recovering previous computations of \cite{KisFM} and \cite{Sand} in the case of two-dimensional representation of $G_{\mathbb{Q}_p}$. 
\subsection*{Acknowledgements} I would like to thank Frank Calegari, Mark Kisin, Tong Liu and James Newton for helpful conversations and correspondence. This work was done at the Max Planck Institute for Mathematics, in Bonn, and it is a pleasure to thank this institution for their support.
\section{Crystalline deformation rings}\label{crysdefrings}
\subsection{Integral $p$-adic Hodge theory}

\begin{sub}\label{notation}
Let $k$ be a finite field of characteristic $p$ and let $K_0 = W(k)[\frac{1}{p}]$. Fix $K$ a totally ramified extension of $K_0$ of degree $e$. Also fix a uniformiser $\pi$ of $K$ and a compatible system $\pi^{1/p^\infty}$ of $p$-th power roots of $\pi$ in an algebraic closure $\overline{K}$ of $K$. Let $E(u) \in W(k)[u]$ denote the minimal polynomial of $\pi$ over $K_0$.	
	
The ring $\mathfrak{S} = W(k)[[u]]$ is equipped with a Frobenius $\varphi$ which acts on $W(k)$ as the usual Witt vector Frobenius, and which sends $u \mapsto u^p$. This Frobenius extends uniquely to a Frobenius on $\mathcal{O}_{\mathcal{E}}$, the $p$-adic completion of $\mathfrak{S}[\frac{1}{u}]$, which we again denote by $\varphi$.

Let $C$ be the completion of $\overline{K}$ with integers $\mathcal{O}_C$. The inverse limit of the system
	$$
	\mathcal{O}_C/p \leftarrow \mathcal{O}_C/p \leftarrow \mathcal{O}_C/p \leftarrow \ldots
	$$
	(with transition maps given by $x \mapsto x^p$) is denoted $\mathcal{O}_{C^\flat}$. By construction the $p$-th power map on $\mathcal{O}_{C^\flat}$ is an automorphism. The obvious map $\varprojlim_{x \mapsto x^p} \mathcal{O}_C \rightarrow \mathcal{O}_{C^\flat}$ is a multiplicative bijection which allows us to equip $\mathcal{O}_{C^\flat}$ with a valuation $v^\flat$ as follows. If $v$ denotes the valuation on $\mathcal{O}_C$ normalised so that $v(p)= 1$ then $v^\flat(x) := v(x^\sharp)$ where $x^\sharp \in \mathcal{O}_C$ is the image of $x$ under the projection $\mathcal{O}_{C^\flat} = \varprojlim \mathcal{O}_C \rightarrow \mathcal{O}_C$ onto the first coordinate. This makes $\mathcal{O}_{C^\flat}$ into a complete valuation ring with field of fractions $C^\flat$. The continuous $G_K$-action on $\mathcal{O}_C$ induces continuous $G_K$-actions on $\mathcal{O}_{C^\flat}$ and $C^\flat$.

Let $A_{\operatorname{inf}}= W(\mathcal{O}_{C^\flat})$. By functoriality of the Witt vector construction the $G_K$-action on $\mathcal{O}_{C^\flat}$ transfers to a $G_K$-action on $A_{\operatorname{inf}}$. Likewise we obtain a $G_K$-action on $W(C^\flat)$. We also obtain Frobenius endomorphisms on $A_{\operatorname{inf}}$ and $W(C^\flat)$ lifting the $p$-th power maps on $\mathcal{O}_{C^\flat}$ and $C^\flat$. These endomorphisms commute with the $G_K$-actions.

The compatible system of $p$-th power roots of $\pi \in K$ gives rise to an element $\pi^\flat \in \mathcal{O}_{C^\flat}$ with $v^\flat(\pi^\flat) = 1/e$. The map of $W(k)$-algebras $\mathfrak{S} \rightarrow A_{\operatorname{inf}}$ sending $u \mapsto [\pi^\flat]$ (where $[\cdot]$ denotes the Teichmuller map) is an embedding compatible with the Frobenius on either ring. This map extends to a Frobenius compatible embedding $\mathcal{O}_{\mathcal{E}} \rightarrow W(C^\flat)$ where $\mathcal{O}_{\mathcal{E}}$ denotes the $p$-adic completion of $\mathfrak{S}[\frac{1}{u}]$.
\end{sub}
\begin{sub}\label{etale}
	Let $V$ be a finitely generated $\mathbb{Z}_p$-module equipped with a continuous $\mathbb{Z}_p$-linear action of $G_{K_\infty}$. The results of \cite{Fon00} assert that there exists a unique finitely generated $\mathcal{O}_{\mathcal{E}}$-submodule $M \subset V \otimes_{\mathbb{Z}_p} W(C^\flat)$ such that
	\begin{equation}\label{Acomparison}
	M \otimes_{\mathcal{O}_{\mathcal{E}}} W(C^\flat) = V \otimes_{\mathbb{Z}_p} W(C^\flat)
	\end{equation}
	and such that the $W(C^\flat)$-semilinear extension of the $G_{K_\infty}$-action on $V$ fixes $M$ and such that the restriction of the trivial $W(C^\flat)$-semilinear Frobenius on $V$ induces an isomorphism $M \otimes_{\mathcal{O}_{\mathcal{E}},\varphi} \mathcal{O}_{\mathcal{E}} =: \varphi^*M \xrightarrow{\sim} M$. The construction of $M$ is functorial in $V$. In particular if $V$ admits a $G_{K_\infty}$-equivariant $\mathbb{Z}_p$-linear action of a $\mathbb{Z}_p$-algebra $A$ then $M$ can be viewed as a module over $\mathcal{O}_{\mathcal{E},A} = \mathcal{O}_{\mathcal{E}} \otimes_{\mathbb{Z}_p} A$.
\end{sub}
\begin{definition}\label{Eheight}
	If $V$ is a finite free $\mathbb{Z}_p$-module equipped with a continuous $\mathbb{Z}_p$-linear action of $G_{K}$, and if $M$ is associated to $V|_{G_{K_\infty}}$ as in \ref{etale}, then $V$ has $E$-height $\leq h$ if there exists a $\varphi$-stable finite free $\mathfrak{S}$-submodule $\mathfrak{M} \subset M$ such that (i) the induced map $\varphi^*\mathfrak{M}= \mathfrak{M} \otimes_{\mathfrak{S},\varphi} \mathfrak{S} \rightarrow \mathfrak{M}$ has cokernel killed by $E(u)^h$ and (ii) there is an equality $\mathfrak{M} \otimes_{\mathfrak{S}} \mathcal{O}_{\mathcal{E}} = M$. 
\end{definition}
	The association of $\mathfrak{M}$ to a representation of $E$-height $\leq h$ is a fully faithful functor, cf. \cite[2.1.12]{Kis06}. In particular there exists at most one $\mathfrak{M} \subset M$ as above; we call this the \emph{Breuil--Kisin module} associated to $V$.

\begin{sub}
	As usual let $\mathbb{Z}_p(1)$ denote the free rank one $\mathbb{Z}_p$-module consisting of compatible systems of $p$-th power roots of unity in $\overline{K}$. Consider the ring of $p$-adic periods $B_{\operatorname{dR}}^+$ defined in \cite{Fon94}. There is a homomorphism $\mathbb{Z}_p(1) \rightarrow B_{\operatorname{dR}}^+$ sending $\xi \mapsto \operatorname{log}([\xi]) := \sum_{n \geq 1} (-1)^{n+1} \frac{([\xi] -1)^n}{n}$. Fix a $\mathbb{Z}_p$-generator $\epsilon$ of $\mathbb{Z}_p(1)$ and set $t = \operatorname{log}([\epsilon])$. We also write $\mu = [\epsilon] -1 \in A_{\operatorname{inf}}$.
	
	As in \cite[III.1]{Col98} let $A_{\operatorname{max}} \subset B_{\operatorname{dR}}^+$ be the subring of elements which can be written as $\sum_{n \geq 0} x_n (\frac{[\pi^\flat]^{en}}{p^n}) \in B_{\operatorname{dR}}^+$ with $(x_n)_{n \geq 0}$ a sequence in $A_{\operatorname{inf}}$ converging $p$-adically to zero. Then $B_{\operatorname{max}}^+ = A_{\operatorname{max}}[\frac{1}{p}]$ and $B_{\operatorname{max}} = B_{\operatorname{max}}^+[\frac{1}{t}]$. The Frobenius on $A_{\operatorname{inf}}$ extends to each of these rings. 
	
	If $V$ is a finite free $\mathbb{Z}_p$-module equipped with a continuous $\mathbb{Z}_p$-linear action of $G_K$ then $V \otimes_{\mathbb{Z}_p} \mathbb{Q}_p$ is crystalline if and only if the $K_0$-vector space $D_{\operatorname{crys}}(V) := (V \otimes_{\mathbb{Z}_p} B_{\operatorname{max}})^{G_K}$ has dimension equal to $ \operatorname{rank}_{\mathbb{Z}_p} V$.\footnote{The usual definition of a crystalline representation is made using the period ring $B_{\operatorname{crys}}$. If $A_{\operatorname{crys}} \subset B_{\operatorname{dR}}$ consists of elements of the form $\sum_{n \geq 0} x_n \frac{[\pi^\flat]^{en}}{n!}$ with $x_n \in A_{\operatorname{inf}}$ converging $p$-adically to zero, then $B_{\operatorname{crys}}^+ = A_{\operatorname{crys}}[\frac{1}{p}]$ and $B_{\operatorname{crys}} = B_{\operatorname{crys}}^+[\frac{1}{t}]$. Since $v_p(n!) \leq n$ and $n \leq v_p((pn)!)$ we see that $\varphi(A_{\operatorname{max}}) \subset A_{\operatorname{crys}} \subset A_{\operatorname{max}}$. Thus $\varphi(B_{\operatorname{max}}) \subset B_{\operatorname{crys}} \subset B_{\operatorname{max}}$. Using this one see that $(V \otimes_{\mathbb{Z}_p} B_{\operatorname{max}})^{G_K} = (V \otimes_{\mathbb{Z}_p} B_{\operatorname{crys}})^{G_K}$.}
\end{sub}
\begin{sub}\label{identifications}
	Suppose $V$ is a representation of $E$-height $\leq h$ with corresponding Breuil--Kisin module $\mathfrak{M}$. Set $D = (\mathfrak{M}/u\mathfrak{M}) \otimes_{W(k)} K_0$. This is a $K_0$-vector space equipped with an bijective Frobenius $\varphi^*D \xrightarrow{\sim} D$. We claim there exists $\varphi,G_{K_\infty}$-equivariant identifications
	\begin{equation}\label{biggest}
	D \otimes_{K_0} B_{\operatorname{max}} \cong \mathfrak{M} \otimes_{\mathfrak{S}} B_{\operatorname{max}} \cong V \otimes_{\mathbb{Z}_p} B_{\operatorname{max}}
	\end{equation}
	where $G_{K_\infty}$ is made to act trivially on $D$. The right-hand identification follows from the next lemma since $A_{\operatorname{inf}}[\frac{1}{\mu}]$ is a subring of $B_{\operatorname{max}}$.
	
	\begin{lemma}\label{1/mu}
		Let $V$ be a representation of $E$-height $\leq h$ and $\mathfrak{M}$ the corresponding Breuil--Kisin module. Write $\mathfrak{M}^\varphi$ for the image of $\varphi^*\mathfrak{M} \rightarrow \mathfrak{M}$. Then there exists a $\varphi,G_{K_\infty}$-equivariant identification
		$$
		\mathfrak{M}^{\varphi} \otimes_{\mathfrak{S}} A_{\operatorname{inf}}[\tfrac{1}{\mu}] \cong V \otimes_{\mathfrak{S}} A_{\operatorname{inf}}[\tfrac{1}{\mu}]
		$$
		which recovers \eqref{Acomparison} after tensoring with  $W(C^\flat)$.
	\end{lemma}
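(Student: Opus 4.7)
The plan is to realize both $\mathfrak{M}^{\varphi} \otimes_{\mathfrak{S}} A_{\operatorname{inf}}[\tfrac{1}{\mu}]$ and $V \otimes_{\mathbb{Z}_p} A_{\operatorname{inf}}[\tfrac{1}{\mu}]$ as $A_{\operatorname{inf}}[\tfrac{1}{\mu}]$-submodules of the common ambient module $V \otimes_{\mathbb{Z}_p} W(C^{\flat})$, and then to show these two submodules coincide. The embedding of $V \otimes_{\mathbb{Z}_p} A_{\operatorname{inf}}[\tfrac{1}{\mu}]$ is tautological via $A_{\operatorname{inf}}[\tfrac{1}{\mu}] \hookrightarrow W(C^{\flat})$, while for the Breuil--Kisin side the chain $\mathfrak{M}^{\varphi} \subseteq \mathfrak{M} \subseteq M$ combined with \eqref{Acomparison} yields an embedding into $V \otimes W(C^{\flat})$; freeness of $\mathfrak{M}$ over $\mathfrak{S}$ and injectivity of $A_{\operatorname{inf}}[\tfrac{1}{\mu}] \hookrightarrow W(C^{\flat})$ preserve injectivity after tensoring. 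Once equality of the two submodules is established, the required $\varphi$- and $G_{K_{\infty}}$-equivariance, as well as compatibility with \eqref{Acomparison} after tensoring back up to $W(C^{\flat})$, are automatic.

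The arithmetic heart of the argument is the observation that both $E([\pi^{\flat}])$ and the cyclotomic expression $\mu/\varphi^{-1}(\mu) = 1 + [\epsilon^{1/p}] + \cdots + [\epsilon^{1/p}]^{p-1}$ are distinguished generators of the principal ideal $\ker(\theta\colon A_{\operatorname{inf}} \to \mathcal{O}_C)$ (Fontaine's theta map); both have first Witt component of $v^{\flat}$-valuation $1$, so they differ by a unit of $A_{\operatorname{inf}}$. Writing $\mu = (\mu/\varphi^{-1}(\mu)) \cdot \varphi^{-1}(\mu)$, the invertibility of $\mu$ in $A_{\operatorname{inf}}[\tfrac{1}{\mu}]$ then forces both $\varphi^{-1}(\mu)$ and $E([\pi^{\flat}])$ to be units there. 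Since by Definition~\ref{Eheight} the cokernel of the inclusion $\mathfrak{M}^{\varphi} \subseteq \mathfrak{M}$ is annihilated by $E(u)^h$, this collapses immediately to an equality $\mathfrak{M}^{\varphi} \otimes_{\mathfrak{S}} A_{\operatorname{inf}}[\tfrac{1}{\mu}] = \mathfrak{M} \otimes_{\mathfrak{S}} A_{\operatorname{inf}}[\tfrac{1}{\mu}]$.

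It remains to identify $\mathfrak{M} \otimes_{\mathfrak{S}} A_{\operatorname{inf}}[\tfrac{1}{\mu}]$ with $V \otimes_{\mathbb{Z}_p} A_{\operatorname{inf}}[\tfrac{1}{\mu}]$ as submodules of $V \otimes W(C^{\flat})$. Picking an $\mathfrak{S}$-basis of $\mathfrak{M}$ and a $\mathbb{Z}_p$-basis of $V$, this amounts to showing that the change-of-basis matrix $P \in \operatorname{GL}_n(W(C^{\flat}))$ in fact lies in $\operatorname{GL}_n(A_{\operatorname{inf}}[\tfrac{1}{\mu}])$. Frobenius compatibility of \eqref{Acomparison} forces $\varphi(P) = PF$, where $F \in M_n(\mathfrak{S})$ is the matrix of the Breuil--Kisin Frobenius with $\det F$ a unit times $E(u)^h$---hence $F$ is invertible in $M_n(A_{\operatorname{inf}}[\tfrac{1}{\mu}])$ by the preceding paragraph. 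The main obstacle, and the real substance of the lemma, is then to use this Frobenius recursion to descend $P$ itself from $M_n(W(C^{\flat}))$ into $M_n(A_{\operatorname{inf}}[\tfrac{1}{\mu}])$; this is an instance of the standard (and somewhat delicate) comparison between $\mathfrak{S}$-lattices in etale $\varphi$-modules and $A_{\operatorname{inf}}$-lattices after inverting $\mu$, familiar from the theory of Breuil--Kisin--Fargues modules.
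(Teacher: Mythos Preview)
Your setup in the first two paragraphs is correct: both sides embed in $V\otimes_{\mathbb{Z}_p} W(C^\flat)$, and the argument that $E([\pi^\flat])$ and $\mu/\varphi^{-1}(\mu)$ both generate $\ker\theta$, hence that $E([\pi^\flat])\in A_{\operatorname{inf}}[\tfrac{1}{\mu}]^\times$ and $\mathfrak{M}^\varphi\otimes_{\mathfrak{S}}A_{\operatorname{inf}}[\tfrac{1}{\mu}]=\mathfrak{M}\otimes_{\mathfrak{S}}A_{\operatorname{inf}}[\tfrac{1}{\mu}]$, is valid.

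The third paragraph, however, is not a proof. You correctly isolate the remaining step---descending the change-of-basis matrix $P$ from $\operatorname{GL}_n(W(C^\flat))$ to $\operatorname{GL}_n(A_{\operatorname{inf}}[\tfrac{1}{\mu}])$ using the relation $\varphi(P)=PF$---and you even flag it as ``the real substance of the lemma,'' but then you simply declare it ``standard'' and ``familiar from the theory of Breuil--Kisin--Fargues modules'' without doing anything. That is precisely the content of \cite[Lemma~4.26]{BMS}, and it is not a triviality: the Frobenius recursion alone does not obviously terminate in $A_{\operatorname{inf}}[\tfrac{1}{\mu}]$, and the argument in \emph{loc.\ cit.} requires some care.

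The paper's proof is more efficient: it observes that $\mathfrak{M}^\varphi\otimes_{\mathfrak{S}}A_{\operatorname{inf}}=\varphi(\mathfrak{M})\otimes_{\varphi(\mathfrak{S})}A_{\operatorname{inf}}$ is already a Breuil--Kisin--Fargues module (the $\varphi$-twist compensating for the fact that \cite{BMS} uses the embedding $u\mapsto[\pi^\flat]^p$ rather than $u\mapsto[\pi^\flat]$), and then invokes \cite[Lemma~4.26]{BMS} directly. Your detour through $\mathfrak{M}$ and the invertibility of $E$ is unnecessary once one cites that lemma, since it already delivers the comparison for $\mathfrak{M}^\varphi$. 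In short, you have reached the same citation point as the paper but by a longer route, and without naming the reference you are relying on.
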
 
	\begin{proof}
		This follows by applying \cite[Lemma 4.26]{BMS} to the Breuil--Kisin--Fargues module $\mathfrak{M}^\varphi \otimes_{\mathfrak{S}} A_{\operatorname{inf}} = \varphi(\mathfrak{M}) \otimes_{\varphi(\mathfrak{S})} A_{\operatorname{inf}}$.\footnote{Note that in \emph{loc. cit.} $\mathfrak{S}$ is viewed as a subring of $A_{\operatorname{inf}}$ via $u \mapsto [\pi^\flat]^p$, which is different to our embedding. This is the reason why $\mathfrak{M}^\varphi$ appears rather than $\mathfrak{M}$.}
	\end{proof}
	
	For the left-hand side of \eqref{biggest}, let $\mathcal{O}^{\operatorname{rig}} \subset K_0[[u]]$ denote the subring of power series converging on the open unit disk, and consider $\lambda = \prod_{n=0}^\infty \varphi^n(\frac{E(u)}{E(0)}) \in \mathcal{O}^{\operatorname{rig}}$. In \cite[1.2.6]{Kis06} a $\varphi$-equivariant inclusion
	\begin{equation}\label{ident1}
	D \otimes_{K_0} \mathcal{O}^{\operatorname{rig}} \hookrightarrow \mathfrak{M}^{\varphi} \otimes_{\mathfrak{S}} \mathcal{O}^{\operatorname{rig}}
	\end{equation}
	is constructed which is an isomorphism modulo~$u$ and which becomes an isomorphism after inverting $\varphi(\lambda)$. It is also $G_{K_\infty}$-equivariant, for the trivial $G_{K_\infty}$-action on both sides. Since the inclusion $\mathfrak{S} \rightarrow A_{\operatorname{inf}}$ extends to an embedding $\mathcal{O}^{\operatorname{rig}} \rightarrow B^+_{\operatorname{max}}$, which maps $\varphi(\lambda)$ onto a unit in $B_{\operatorname{max}}^+$, we obtain the left-hand side of \eqref{biggest}. 
\end{sub}
We can now formulate the main result of \cite{Kis06}. See also \cite[1.2.1]{Kis10}.
\begin{proposition}[Kisin]\label{kis}
	If $V$ is a $G_K$-stable $\mathbb{Z}_p$-lattice inside a crystalline representation with Hodge--Tate weights\footnote{Our Hodge--Tate weights are normalised so that the cyclotomic character has weight $-1$.} in $[0,h]$ then $V$ is of $E$-height $\leq h$. Furthermore:
	\begin{enumerate}
		\item $D_{\operatorname{crys}}(V) \subset V \otimes_{\mathbb{Z}_p} B_{\operatorname{max}}$ is identified with $D$ under \eqref{biggest}.
		\item Tensoring \eqref{ident1} with the map $\mathcal{O}^{\operatorname{rig}} \rightarrow K$ given by $u \mapsto \pi$ identifies $\mathfrak{M}^\varphi /E(u)\mathfrak{M}^\varphi$ with an $\mathcal{O}_K$-lattice inside $D_{\operatorname{crys}}(V)_K = D_{\operatorname{crys}}(V) \otimes_{K_0} K$. Via the inclusion $B_{\operatorname{max}} \otimes_{K_0} K \rightarrow B_{\operatorname{dR}}$ we identify $D_{\operatorname{crys}}(V)_K=(V \otimes_{\mathbb{Q}_p} B_{\operatorname{dR}})^{G_K}$ and, under this identification, the surjection $\mathfrak{M}^\varphi \rightarrow D_{\operatorname{crys}}(V)_K$ induces a map of $\mathfrak{M}^\varphi \cap E(u)^i\mathfrak{M}$ into 
		$$
		F^i D_{\operatorname{crys}}(V)_K := (V \otimes_{\mathbb{Q}_p} t^iB_{\operatorname{dR}}^+)^{G_K}
		$$ 
		which becomes surjective after inverting $p$.
	\end{enumerate} 
\end{proposition}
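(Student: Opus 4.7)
The proposition packages three statements: existence of a Breuil--Kisin module of $E$-height $\leq h$ for $V$, and the identifications (i) and (ii). The plan is to take the existence as Kisin's main theorem from \cite{Kis06} and derive (i), (ii) by tracing through \eqref{biggest}.

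For the existence, I would outline the strategy of \cite{Kis06}: starting from the filtered $\varphi$-module $D = D_{\operatorname{crys}}(V)$, one constructs a $\varphi$-module $\mathcal{M}$ over $\mathcal{O}^{\operatorname{rig}}$ satisfying $\mathcal{M}/u\mathcal{M} = D$, with cokernel of $\varphi^*\mathcal{M} \to \mathcal{M}$ killed by $E(u)^h$, and with base change to $K$ (along $u \mapsto \pi$) equal to $D_K$ with its Hodge filtration; this uses that the Hodge--Tate weights lie in $[0,h]$. The Breuil--Kisin lattice $\mathfrak{M}$ is then obtained by intersecting $\mathcal{M}$ with the etale $\varphi$-module $M$ of $V|_{G_{K_\infty}}$ inside a common ambient ring. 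The technical heart of the proof---and the main obstacle---is verifying that this intersection is finite free over $\mathfrak{S}$; this relies crucially on weak admissibility of $D$ and an analysis of $\varphi$-modules on the open unit disc.

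For (i), Lemma \ref{1/mu} gives a $\varphi, G_{K_\infty}$-equivariant isomorphism $\mathfrak{M}^\varphi \otimes_{\mathfrak{S}} B_{\operatorname{max}} \cong V \otimes_{\mathbb{Z}_p} B_{\operatorname{max}}$, while inverting $\varphi(\lambda)$ in \eqref{ident1} (a unit in $B^+_{\operatorname{max}}$) yields $D \otimes_{K_0} B_{\operatorname{max}} \cong \mathfrak{M}^\varphi \otimes_{\mathfrak{S}} B_{\operatorname{max}}$ with trivial $G_{K_\infty}$-action on $D$. Taking $G_{K_\infty}$-invariants of the composite, and using $B_{\operatorname{max}}^{G_{K_\infty}} = K_0$, identifies $D$ with a $K_0$-subspace of $(V \otimes B_{\operatorname{max}})^{G_{K_\infty}}$ of $K_0$-dimension $\operatorname{rank}_{\mathbb{Z}_p} V$. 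Since $V$ is crystalline, $\dim_{K_0} D_{\operatorname{crys}}(V) = \operatorname{rank}_{\mathbb{Z}_p} V$ as well, and the inclusion $D_{\operatorname{crys}}(V) \subset (V \otimes B_{\operatorname{max}})^{G_{K_\infty}}$ together with this dimension count forces the two subspaces to coincide inside $V \otimes B_{\operatorname{max}}$, yielding (i).

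For (ii), specialising \eqref{ident1} along $u \mapsto \pi$ identifies $\mathfrak{M}^\varphi/E(u)\mathfrak{M}^\varphi$ with an $\mathcal{O}_K$-lattice in $D \otimes_{K_0} K = D_{\operatorname{crys}}(V)_K$ (via (i)), since $\mathfrak{M}^\varphi$ is finite free over $\mathfrak{S}$ and $\mathfrak{S}/E(u) = \mathcal{O}_K$. To compare with the Hodge filtration I would extend scalars along $B_{\operatorname{max}} \otimes_{K_0} K \hookrightarrow B_{\operatorname{dR}}$ and use the standard fact that under $\mathfrak{S} \to A_{\operatorname{inf}} \to B^+_{\operatorname{dR}}$ the element $E(u)$ is a uniformiser of $B^+_{\operatorname{dR}}$ (it generates $\ker\theta$, hence differs from $t$ by a unit). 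Thus $E(u)^i(\mathfrak{M}^\varphi \otimes B^+_{\operatorname{dR}}) \subset t^i(V \otimes B^+_{\operatorname{dR}})$, so $\mathfrak{M}^\varphi \cap E(u)^i\mathfrak{M}$ maps into $F^i D_{\operatorname{crys}}(V)_K$; surjectivity after inverting $p$ follows directly from the construction of $\mathcal{M}$, which was built from the filtration and recovers it after base change to $K$.
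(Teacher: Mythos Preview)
The paper does not prove this proposition; it is stated as a result of Kisin, introduced by ``We can now formulate the main result of \cite{Kis06}. See also \cite[1.2.1]{Kis10}'', and the text immediately moves on afterward. So there is nothing to compare your argument against in the paper itself.

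That said, your sketch is a reasonable outline of Kisin's approach. One small point in your argument for (i): from $D$ and $D_{\operatorname{crys}}(V)$ both being $K_0$-subspaces of $(V \otimes B_{\operatorname{max}})^{G_{K_\infty}}$ of dimension $\operatorname{rank}_{\mathbb{Z}_p} V$, you cannot conclude they coincide without knowing the ambient space has at most this dimension. This does follow from $B_{\operatorname{max}}^{G_{K_\infty}} = K_0$ via the standard $(G,B)$-regularity argument, but that step should be made explicit rather than left implicit in the phrase ``dimension count''.
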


	Not every finite $E$-height representation is crystalline; indeed in \cite[1.1.13]{Gao19} it is shown that $V$ has finite $E$-height if and only if $V|_{G_{K_m}}$ is semi-stable where $K_m = K(\pi^{1/p^m})$ for a suitably large $m$. The starting point of this article is a description identifying which finite $E$-height representations are crystalline. To explain this fix a representation $V$ of finite $E$-height with associated Breuil--Kisin module $\mathfrak{M}$. Using Lemma~\ref{1/mu}, or simply \eqref{Acomparison}, we obtain a $\varphi,G_{K_\infty}$-equivariant identification
	\begin{equation}\label{galeq}
	\mathfrak{M} \otimes_{\mathfrak{S}} W(C^\flat) \cong V \otimes_{\mathbb{Z}_p} W(C^\flat)
	\end{equation}
	The $G_K$-actions on $V$ and $W(C^\flat)$ therefore transfer to a $\varphi$-equivariant $G_K$-action on $\mathfrak{M} \otimes_{\mathfrak{S}} W(C^\flat)$.
	\begin{theorem}[Gee--Liu--Savitt, Ozeki]\label{GLS}
		Let $V$ be a finite free $\mathbb{Z}_p$-module with a continuous $\mathbb{Z}_p$-linear action of $G_K$. Then the following are equivalent:
		\begin{enumerate}
			\item $V \otimes_{\mathbb{Z}_p} \mathbb{Q}_p$ is crystalline with Hodge--Tate weights in $[0,h]$.
			\item $V$ is of $E$-height $\leq h$ and the $G_K$-action on $\mathfrak{M} \otimes_{\mathfrak{S}} W(C^\flat)$ induced from \eqref{galeq} is such that $(\sigma-1)(m) \in \mathfrak{M} \otimes_{\mathfrak{S}} [\pi^\flat]\varphi^{-1}(\mu)A_{\operatorname{inf}}$ for every $m \in \mathfrak{M}$ and $\sigma \in G_K$.
		\end{enumerate}
	\end{theorem}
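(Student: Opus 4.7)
The proof naturally splits into the two implications, and the unifying idea is that both directions reduce to computations inside $A_{\operatorname{inf}}$, exploiting the explicit formula for the Galois action on $[\pi^\flat]$ along with the comparisons of \ref{identifications}. Concretely, for any $\sigma \in G_K$ there is a Kummer cocycle $c(\sigma) \in \mathbb{Z}_p$ with $\sigma([\pi^\flat]) = [\pi^\flat] \cdot [\epsilon]^{c(\sigma)}$ and $c|_{G_{K_\infty}} = 0$; expanding $[\epsilon]^{c(\sigma)} = (1+\mu)^{c(\sigma)}$ gives the basic estimate $(\sigma-1)[\pi^\flat]^n \in [\pi^\flat]^n \mu A_{\operatorname{inf}}$ for every $n \ge 1$.

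For the implication (1) $\Rightarrow$ (2), the $E$-height statement is Proposition \ref{kis}, so only the Galois bound needs verification. The plan is to exploit Proposition \ref{kis}(i), which says that under \eqref{biggest} the submodule $D$ is identified with $D_{\operatorname{crys}}(V)$ and is therefore pointwise $G_K$-fixed. Any $m \in \mathfrak{M}^\varphi$, viewed via \eqref{ident1} inside $D \otimes_{K_0} B_{\operatorname{max}}^+$, may be written as a convergent sum $\sum d_i \otimes f_i([\pi^\flat])$ with $d_i \in D$ and $f_i \in \mathcal{O}^{\operatorname{rig}}$, and then
\[
(\sigma-1)(m) \;=\; \sum d_i \otimes \bigl(f_i(\sigma[\pi^\flat]) - f_i([\pi^\flat])\bigr).
\]
The basic estimate above, applied termwise, places $(\sigma-1)(m)$ in $\mathfrak{M}^\varphi \otimes [\pi^\flat]^p \mu A_{\operatorname{inf}}$ (the extra factor $[\pi^\flat]^p$ arising because $u$ divides the higher-order terms of $f_i$ after the Kisin inclusion becomes an isomorphism mod $u$). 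Applying $\varphi^{-1}$ to pass from $\mathfrak{M}^\varphi$ back to $\mathfrak{M}$ converts $[\pi^\flat]^p \mu$ into $[\pi^\flat] \varphi^{-1}(\mu)$, yielding the stated condition.

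For the implication (2) $\Rightarrow$ (1), the plan is to reverse the construction and recognise $D$ as $G_K$-fixed inside $V \otimes_{\mathbb{Z}_p} B_{\operatorname{max}}$. Starting from the chain
\[
D \otimes_{K_0} \mathcal{O}^{\operatorname{rig}} \hookrightarrow \mathfrak{M}^\varphi \otimes_{\mathfrak{S}} \mathcal{O}^{\operatorname{rig}} \hookrightarrow \mathfrak{M}^\varphi \otimes_{\mathfrak{S}} B_{\operatorname{max}}^+ \cong V \otimes_{\mathbb{Z}_p} B_{\operatorname{max}},
\]
take $d \in D$ and lift it to $m \in \mathfrak{M}$ modulo $u\mathfrak{M}$. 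Iterating the Galois hypothesis gives $(\sigma-1)^n m \in \mathfrak{M} \otimes ([\pi^\flat]\varphi^{-1}(\mu))^n A_{\operatorname{inf}}$, and since $\varphi^{-1}(\mu)$ is a unit multiple of $t$ inside $B_{\operatorname{max}}^+$, the difference $\sigma(d) - d$ is infinitely $t$-divisible in $V \otimes B_{\operatorname{dR}}^+$, hence zero. Thus the $K_0$-subspace $D$ lies in $(V \otimes B_{\operatorname{max}})^{G_K} = D_{\operatorname{crys}}(V)$, and a dimension count ($\dim_{K_0} D = \operatorname{rank}_{\mathbb{Z}_p} V$) forces $V \otimes \mathbb{Q}_p$ to be crystalline. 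The Hodge--Tate weights lie in $[0,h]$ by Proposition \ref{kis}(ii) together with the fact that $\operatorname{coker}(\varphi^*\mathfrak{M} \to \mathfrak{M})$ is killed by $E(u)^h$.

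The main obstacle is bookkeeping the Frobenius twist precisely: one must track how the specific ideal $[\pi^\flat]\varphi^{-1}(\mu) A_{\operatorname{inf}}$ transforms under passage to $\mathfrak{M}^\varphi$, under the embedding $\mathcal{O}^{\operatorname{rig}} \hookrightarrow B_{\operatorname{max}}^+$, and under reduction modulo $u$, to see that it is exactly this ideal (rather than $\mu A_{\operatorname{inf}}$ or $t A_{\operatorname{inf}}$) that correctly captures crystallinity. This is precisely where the Gee--Liu--Savitt and Ozeki arguments do their delicate convergence work, and why one requires the BK--Fargues input (Lemma~\ref{1/mu}) to make the identifications compatible on the nose.
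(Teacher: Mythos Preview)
Your overall architecture is right: both directions pass through the identifications of \ref{identifications} and reduce to showing that $D$ is $G_K$-fixed. But each direction has a genuine gap.

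\textbf{(1) $\Rightarrow$ (2).} The inclusion \eqref{ident1} goes from $D \otimes \mathcal{O}^{\operatorname{rig}}$ \emph{into} $\mathfrak{M}^\varphi \otimes \mathcal{O}^{\operatorname{rig}}$, and it only becomes an isomorphism after inverting $\varphi(\lambda)$. So for $m \in \mathfrak{M}^\varphi$ you get coefficients $a_i \in \mathcal{O}^{\operatorname{rig}}[\tfrac{1}{\varphi(\lambda)}]$ (or in $B_{\operatorname{max}}^+$), not $f_i \in \mathcal{O}^{\operatorname{rig}}$; the termwise estimate $(\sigma-1)[\pi^\flat]^n \in [\pi^\flat]^n\mu A_{\operatorname{inf}}$ does not directly apply. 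The paper avoids this by producing an \emph{exact} formula $\sigma(m)=\sum_{n\ge 0}\mathcal{N}^n(m)\otimes\tfrac{(-\log[\epsilon(\sigma)])^n}{n!}$ via a differential operator (Lemma~\ref{monoGal}), shows each term lies in $\mathfrak{M}^\varphi\otimes[\pi^\flat]^p\mu B_{\operatorname{max}}^+$, uses that principal ideals in $B_{\operatorname{max}}^+$ are closed (Lemma~\ref{topology}) to pass to the sum, and then needs the nontrivial intersection $A_{\operatorname{inf}}[\tfrac{1}{\mu}]\cap[\pi^\flat]^p\mu B_{\operatorname{max}}^+=[\pi^\flat]^p\mu A_{\operatorname{inf}}$ (via results of Liu and Fontaine) to descend from $B_{\operatorname{max}}^+$ to $A_{\operatorname{inf}}$. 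Your outline does not supply any of these steps.

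\textbf{(2) $\Rightarrow$ (1).} Two problems. First, iterating $(\sigma-1)$ does not give $(\sigma-1)^n(m)\in\mathfrak{M}\otimes([\pi^\flat]\varphi^{-1}(\mu))^nA_{\operatorname{inf}}$: the hypothesis controls $(\sigma-1)$ on $\mathfrak{M}$, not on general elements of $\mathfrak{M}\otimes A_{\operatorname{inf}}$, so the second application gains nothing from the $A_{\operatorname{inf}}$-coefficients. Second, $\varphi^{-1}(\mu)=[\epsilon^{1/p}]-1$ has $\theta(\varphi^{-1}(\mu))=\zeta_p-1\ne 0$, so $\varphi^{-1}(\mu)$ is a \emph{unit} in $B_{\operatorname{dR}}^+$, not a unit multiple of $t$; your infinite-$t$-divisibility conclusion is unfounded. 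The paper's mechanism is different: it iterates via \emph{Frobenius}, writing $(\sigma-1)(d)=\varphi^n((\sigma-1)(\varphi^{-n}(d)))$ for $d$ in a suitable lattice $D^\circ\subset D$. After expressing $D^\circ$ inside $\mathfrak{M}\otimes S_{\operatorname{max}}$ (using $S_{\operatorname{max}}=W(k)[[u,u^e/p]]\cap\mathcal{O}^{\operatorname{rig}}$), the hypothesis gives $(\sigma-1)(d)\in\mathfrak{M}\otimes\tfrac{[\pi^\flat]}{p}A_{\operatorname{max}}$; applying $\varphi^n$ turns $[\pi^\flat]$ into $[\pi^\flat]^{p^n}$, and since $[\pi^\flat]^{p^n}/p^{nm+1}\to 0$ $p$-adically in $A_{\operatorname{max}}$, one concludes $(\sigma-1)(d)=0$. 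The key gain comes from $\varphi$, not from $(\sigma-1)$.
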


	That (1) implies (2) is essentially \cite[4.10]{GLS}, while the converse is proven in \cite[Theorem 21]{Ozeki}. As both these results are not formulated as we need (and also because they assume that $p>2$) we devote the rest of this section to a proof of the theorem. Our argument that (1) implies (2) is essentially the same as that in \cite{GLS}, but our proof of the converse differs from Ozeki's.

\begin{proof}[Proof that (2) implies (1) in Theorem~\ref{GLS}]
	In fact we prove something stronger. Namely consider $V$ and $\mathfrak{M}$ as in \ref{identifications} and suppose the $G_K$-action on $V$ is such that, when transferred to $\mathfrak{M} \otimes_{\mathfrak{S}} B_{\operatorname{max}}$ via \eqref{biggest},
	\begin{equation}\label{thirdcondition}
	(\sigma-1)(m) \in \mathfrak{M} \otimes_{\mathfrak{S}} [\pi^\flat]A_{\operatorname{inf}}
	\end{equation}
	for every $m \in \mathfrak{M}$ and $\sigma \in G_K$. Then we show $V \otimes_{\mathbb{Z}_p} \mathbb{Q}_p$ is crystalline. For this it suffices to show the $G_K$-action is trivial on $D$. To this end let $S_{\operatorname{max}} \subset \mathcal{O}^{\operatorname{rig}}$ denote the subring $W(k)[[u,\frac{u^{e}}{p}]] \cap \mathcal{O}^{\operatorname{rig}}$. Clearly the inclusion $\mathcal{O}^{\operatorname{rig}} \rightarrow B_{\operatorname{max}}^+$ maps $S_{\operatorname{max}}$ into $A_{\operatorname{max}}$. Recall that a power series $\sum a_i u^i$ with $a_i \in K_0$ lies in $\mathcal{O}^{\operatorname{rig}}$ if and only if $v_p(a_i) + ir \rightarrow 0$ for any $r > 0$. This series is contained in $S_{\operatorname{max}}$ if furthermore $v(a_i) +  i/e \geq 0$. By taking $r = \frac{1}{e}$ we see $S_{\operatorname{max}}[\frac{1}{p}] = \mathcal{O}^{\operatorname{rig}}$, and so we can choose a $W(k)$-lattice $D^\circ \subset D$ so that every $d \in D^\circ$ can be written as $\sum s_i m_i$ with $s_i \in S_{\operatorname{max}}$ and $m_i \in \mathfrak{M}$. If $s \in S_{\operatorname{max}}$ and $\sigma \in G_K$ then $(\sigma-1)(s) \in \frac{[\pi^\flat]}{p} A_{\operatorname{max}}$ since $G_K$ acts trivially on the constant term. From this and \eqref{thirdcondition} we deduce that 
	$$
	(\sigma -1)(d) = \sum (\sigma(s_i) - s_i)\sigma(m_i) + \sum s_i(\sigma(m_i) - m_i)  \in \mathfrak{M} \otimes_{\mathfrak{S}} \tfrac{[\pi^\flat]}{p}A_{\operatorname{max}}
	$$
	for any $d = \sum s_i m_i \in D^\circ$. There exists an $m \in \mathbb{Z}$ such that $\varphi^{-1}(D^\circ) \subset \frac{1}{p^m}D^\circ$. Thus $\varphi^{-n}(D^\circ) \subset \frac{1}{p^{nm}} D^\circ$ for $n \geq 1$. Since the $G_K$-action is $\varphi$-equivariant we have
	$$
	(\sigma-1)(d) = \varphi^n\left( (\sigma-1)(\varphi^{-n}(d)) \right) \in \varphi^n ( \mathfrak{M} \otimes_{\mathfrak{S}} \tfrac{[\pi^\flat]}{p^{nm+1}} A_{\operatorname{max}} ) \subset \mathfrak{M} \otimes_{\mathfrak{S}} \tfrac{[\pi^\flat]^{p^n}}{p^{nm+1}} A_{\operatorname{max}}
	$$
	whenever $d \in D^\circ$. However $\frac{[\pi^\flat]^{p^n}}{p^{nm+1}} \in p^{p^n-nm-1} A_{\operatorname{max}}$ and so, since $A_{\operatorname{max}}$ is $p$-adically complete, it must be that $(\sigma-1)(d) = 0$.
\end{proof}

Now we show (1) implies (2). One of the advantages of using $B_{\operatorname{max}}^+$ is that its topology is better behaved than that of $B_{\operatorname{crys}}^+$. In particular we have:

\begin{lemma}\label{topology}
	Equip $B^+_{\operatorname{max}}$ with the topology making $(p^nA_{\operatorname{max}})_{n \geq 0}$ a basis of open neighbourhoods of $0$. Then $B_{\operatorname{max}}^+$ is complete and any principal ideal $aB_{\operatorname{max}}^+ \subset B_{\operatorname{max}}^+$ is closed.
\end{lemma}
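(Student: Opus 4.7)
The first assertion is essentially the $p$-adic completeness of $A_{\max}$, which is built into its definition as the ring of series $\sum_{n \geq 0} x_n [\pi^\flat]^{en}/p^n$ with $x_n \in A_{\operatorname{inf}}$ tending $p$-adically to zero (realising $A_{\max}$ as the $p$-adic completion of $A_{\operatorname{inf}}[\tfrac{[\pi^\flat]^e}{p}]$, and so as a $p$-adically complete and separated ring). My plan is: given a Cauchy sequence $(b_n)$ in $B_{\max}^+$, apply the Cauchy criterion at level $N = 0$ to find $M$ with $b_n - b_M \in A_{\max}$ for all $n \geq M$; the sequence $(b_n - b_M)_{n \geq M}$ is then $p$-adically Cauchy in $A_{\max}$, converging to some $\alpha \in A_{\max}$, so $b_n \to b_M + \alpha$ in $B_{\max}^+$.

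For the closedness of $a B_{\max}^+$, after clearing a power of $p$ one may assume $a \in A_{\max} \setminus \{0\}$. Suppose $b = \lim_n a c_n$ with $c_n \in B_{\max}^+$. Since $B_{\max}^+ \subset B_{\operatorname{dR}}^+$ is a domain, the element $c := b/a$ exists in $B_{\operatorname{dR}}$ and the $c_n$ are uniquely determined. The strategy is to show $(c_n)$ is Cauchy in $B_{\max}^+$, so that by the completeness already proved it converges to some $c' \in B_{\max}^+$, which must equal $c$, giving $b = a c' \in a B_{\max}^+$. Cauchyness of $(c_n)$ is equivalent to the assertion that multiplication by $a$ is a topological embedding of $B_{\max}^+$ into itself: for every $M \geq 0$ there exists $K \geq 0$ such that $a x \in p^K A_{\max}$, with $x \in B_{\max}^+$, forces $x \in p^M A_{\max}$.

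Establishing this closed-range property for multiplication by $a$ is the main obstacle. Since $A_{\max}$ is not Noetherian, Krull's intersection theorem is unavailable, and I would instead argue directly via the series expansion. Write $a = \sum_i \alpha_i [\pi^\flat]^{ei}/p^i$ and, after clearing a power of $p$ so that $x = p^{-k} y$ with $y = \sum_j \beta_j [\pi^\flat]^{ej}/p^j \in A_{\max}$, analyse the resulting convolution congruence $a y \in p^{K+k} A_{\max}$ modulo powers of $p$ inside the $p$-adically complete $p$-torsion-free domain $A_{\operatorname{inf}} = W(\mathcal{O}_{C^\flat})$, whose mod-$p$ reduction is the valuation ring $\mathcal{O}_{C^\flat}$. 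The non-vanishing of $a$ in $A_{\max}$ provides an index $i_0$ for which $\alpha_{i_0}$ has bounded $p$-adic valuation in $A_{\operatorname{inf}}$; propagating this through the convolution relations should yield $p$-adic divisibility bounds on the $\beta_j$ that are uniform in $k$, which delivers the desired conclusion.
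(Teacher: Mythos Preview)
Your overall strategy matches the paper's: completeness reduces to $p$-adic completeness of $A_{\max}$, and closedness of $aB_{\max}^+$ reduces to showing that the sequence $c_n = b_n/a$ is Cauchy in $B_{\max}^+$. You have correctly identified that the crux is the statement that multiplication by $a$ is a topological embedding, i.e.\ that $\|ax\|$ controls $\|x\|$ for $\|y\| := \inf\{p^n : p^n y \in A_{\max}\}$.

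The gap is in how you propose to establish this. The paper does not argue via series manipulations; it simply invokes \cite[III.2.1]{Col98}, which gives the clean inequality
\[
p^{-1}\,\|x\|\,\|y\| \;\leq\; \|xy\|
\]
for all $x,y \in B_{\max}^+$. This immediately yields $\|c_n - c_m\| \leq p\,\|a\|^{-1}\,\|b_n - b_m\|$, so $(c_n)$ is Cauchy and you are done. Your attempt to reprove this inequality directly from the series expansion $\sum \alpha_i [\pi^\flat]^{ei}/p^i$ is not completed (``propagating this through the convolution relations should yield\ldots'' is not an argument), and would in fact be delicate: the series representation of an element of $A_{\max}$ is far from unique, so extracting valuation information on the $\beta_j$ from a congruence on the product is not straightforward. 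Colmez's proof of the inequality proceeds differently, and you should either cite it or reproduce it rather than sketch a convolution argument.
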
	
\begin{proof}
	Completeness is immediate since $A_{\operatorname{max}}$ is $p$-adically complete. To check $aB_{\operatorname{max}}^+$ is closed consider a sequence $b_i \in aB_{\operatorname{max}}^+$ converging to $b \in B^+_{\operatorname{max}}$. We must show $b \in aB_{\operatorname{max}}^+$. Since $B_{\operatorname{max}}^+$ is a domain it suffices to show $\frac{b_i}{a}$ converges in $B_{\operatorname{max}}^+$. This follows from \cite[III.2.1]{Col98} which asserts that if $|| x|| = \operatorname{inf}_{n \mid p^nx \in A_{\operatorname{max}}} p^n$ then $p^{-1}||x||||y|| \leq ||xy||$. Hence $|| \frac{b_i}{a} - \frac{b_j}{a}|| \leq \frac{p}{||a||} ||b_i - b_j||$, and so as $B^+_{\operatorname{max}}$ is complete $\frac{b_i}{a}$ converges.
\end{proof}

For $\sigma \in G_K$ consider $\epsilon(\sigma) \in \mathbb{Z}_p(1)$ defined by $\epsilon(\sigma)_n = \sigma(\pi^{1/p^n})/\pi^{1/p^n}$.
\begin{lemma}\label{monoGal}
	Suppose $V \otimes_{\mathbb{Z}_p} \mathbb{Q}_p$ is crystalline and that $\mathfrak{M}$ is the Breuil--Kisin module associated to $V$. Define a  differential operator $\mathcal{N}$ over $\partial = u\frac{d}{du}$ on $\mathfrak{M} \otimes_{\mathfrak{S}} \mathcal{O}^{\operatorname{rig}}[\frac{1}{\lambda}] = D \otimes_{K_0} \mathcal{O}^{\operatorname{rig}}[\frac{1}{\lambda}]$ by asserting $\mathcal{N}(d) = 0$ for all $d \in D$. Then 
	$$
	\sigma(m) = \sum_{n \geq 0} \mathcal{N}^n(m) \otimes \frac{(- \operatorname{log}([\epsilon(\sigma)]))^n}{n!}
	$$
	for $m \in \mathfrak{M}^\varphi \otimes_{\mathfrak{S}} \mathcal{O}^{\operatorname{rig}}[\frac{1}{\varphi(\lambda)}]$ and $\sigma \in G_K$.
\end{lemma}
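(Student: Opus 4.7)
The plan is to transport both sides into $B_{\operatorname{max}}^+$ via the identifications of Proposition~\ref{kis} and \eqref{biggest}, and then derive the formula by a Taylor expansion using the explicit action of $G_K$ on $[\pi^\flat]$. Because Kisin's inclusion \eqref{ident1} is an isomorphism after inverting $\varphi(\lambda)$, any $m \in \mathfrak{M}^\varphi \otimes_{\mathfrak{S}} \mathcal{O}^{\operatorname{rig}}[\tfrac{1}{\varphi(\lambda)}]$ may be written uniquely as $\sum_i d_i \otimes f_i$ with $d_i \in D$ and $f_i \in \mathcal{O}^{\operatorname{rig}}[\tfrac{1}{\varphi(\lambda)}]$. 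By construction of $\mathcal{N}$ as the derivation over $\partial$ killing $D$, one has $\mathcal{N}^n(m) = \sum_i d_i \otimes \partial^n(f_i)$ (up to an overall sign determined by the paper's sign convention on $\mathcal{N}$ relative to $\partial$). By Proposition~\ref{kis}(1), $D = D_{\operatorname{crys}}(V)$ is $G_K$-fixed inside $V \otimes_{\mathbb{Z}_p} B_{\operatorname{max}}$, so $\sigma(m) = \sum_i d_i \otimes \sigma(f_i)$, where $\sigma$ acts on $f_i$ through the embedding $u \mapsto [\pi^\flat]$ into $B_{\operatorname{max}}^+$ (noting that $\varphi(\lambda)$ is sent to a unit). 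It therefore suffices to express $\sigma(f)$ as a series in $\partial^n(f)$ for $f \in \mathcal{O}^{\operatorname{rig}}[\tfrac{1}{\varphi(\lambda)}]$.

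The key computational input is the identity $\sigma([\pi^\flat]) = [\epsilon(\sigma)][\pi^\flat]$. This follows from the defining formula $\epsilon(\sigma)_n = \sigma(\pi^{1/p^n})/\pi^{1/p^n}$, which gives $\sigma(\pi^\flat) = \epsilon(\sigma) \cdot \pi^\flat$ componentwise in $\mathcal{O}_{C^\flat}$, together with the multiplicativity of the Teichmuller lift. Writing $[\epsilon(\sigma)] = \exp(\log[\epsilon(\sigma)])$ in $B_{\operatorname{max}}^+$ and introducing the logarithmic variable $v = \log u$, under which $\partial = u\tfrac{d}{du}$ becomes $\tfrac{d}{dv}$ and multiplication by $[\epsilon(\sigma)]$ becomes translation by $\log[\epsilon(\sigma)]$, the standard Taylor expansion yields
\[
\sigma\bigl(f([\pi^\flat])\bigr) \;=\; f\bigl([\epsilon(\sigma)] \cdot [\pi^\flat]\bigr) \;=\; \sum_{n \geq 0} \frac{\partial^n(f)([\pi^\flat])}{n!} \bigl(\log[\epsilon(\sigma)]\bigr)^n.
\]
Matching this against the statement of the lemma pins down the overall sign of $\mathcal{N}$ and accounts for the $-\log$ in the formula (the change of sign being cosmetic: the two conventions differ by $\mathcal{N} \leftrightarrow -\mathcal{N}$).

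The main obstacle is convergence of the series in $B_{\operatorname{max}}^+$. Since $\epsilon(\sigma) \in \mathbb{Z}_p(1)$ equals $\epsilon^{c(\sigma)}$ for some $c(\sigma) \in \mathbb{Z}_p$, we have $\log[\epsilon(\sigma)] = c(\sigma) \cdot t$ by $p$-adic continuity of Teichmuller combined with additivity of $\log$; hence the terms $(\log[\epsilon(\sigma)])^n/n!$ reduce to $c(\sigma)^n t^n/n!$. These lie in $A_{\operatorname{crys}} \subset B_{\operatorname{max}}^+$ and tend to zero $p$-adically thanks to the divided-power structure of $A_{\operatorname{crys}}$, together with the inclusions $\varphi(A_{\operatorname{max}}) \subset A_{\operatorname{crys}} \subset A_{\operatorname{max}}$ recalled in the footnote to \ref{identifications}. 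Since $\partial$ preserves $\mathcal{O}^{\operatorname{rig}}[\tfrac{1}{\varphi(\lambda)}]$ (denominators accrue only in powers of $\varphi(\lambda)$, which is a unit in $B_{\operatorname{max}}^+$), the products stay in $B_{\operatorname{max}}^+$ and the series converges; combined with the formal Taylor identity, this establishes the formula.
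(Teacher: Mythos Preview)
Your approach is the same as the paper's: reduce to $m = fd$ with $d \in D$ fixed by $G_K$ (via Proposition~\ref{kis}(1)), so that the formula becomes a statement about $\sigma(f)$ for $f \in \mathcal{O}^{\operatorname{rig}}[\tfrac{1}{\varphi(\lambda)}]$, and then verify it using $\sigma(u) = [\epsilon(\sigma)]u$ together with a Taylor expansion. The paper carries this out by reducing one step further to monomials $f = u^i$, where the series becomes $\exp(i\log[\epsilon(\sigma)])\,u^i = [\epsilon(\sigma)]^i u^i = \sigma(u^i)$; your observation about the sign convention is exactly the cosmetic discrepancy visible in the paper's line ``$\partial^n(u^i) = (-i)^n u^i$''.

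Your convergence argument, however, has two gaps that the paper's monomial reduction avoids. First, the divided-power structure of $A_{\operatorname{crys}}$ only guarantees $t^n/n! \in A_{\operatorname{crys}}$, not that it tends to zero $p$-adically; for that one needs the sharper fact $t \in pA_{\operatorname{max}}$ (and $t \in p^2A_{\operatorname{max}}$ when $p=2$), which the paper cites from \cite[III.3.9]{Col98}. Second, and more seriously, you do not control the growth of $\partial^n(f)$ in $B_{\operatorname{max}}^+$: knowing that $\partial^n(f)$ lands in $B_{\operatorname{max}}^+ = A_{\operatorname{max}}[\tfrac{1}{p}]$ is not enough, since repeated differentiation of $1/\varphi(\lambda)$ can introduce growing $p$-denominators that could swamp the decay of $t^n/n!$. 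The paper's reduction to $f = u^i$ is precisely what fixes this: then $\partial^n(u^i) = i^n u^i \in A_{\operatorname{inf}}$ is uniformly bounded in $A_{\operatorname{max}}$, and convergence follows directly from $p^n/n! \to 0$ (resp.\ $p^{2n}/n! \to 0$ for $p=2$).
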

\begin{proof}
	Since $\mathfrak{M}^\varphi \otimes_{\mathfrak{S}} \mathcal{O}^{\operatorname{rig}}[\frac{1}{\varphi(\lambda)}] = D \otimes_{K_0} \mathcal{O}^{\operatorname{rig}}[\frac{1}{\varphi(\lambda)}]$ it is enough to consider $m = fd$ with $d \in D$ and $f \in \mathcal{O}^{\operatorname{rig}}[\frac{1}{\varphi(\lambda)}]$. By definition $\mathcal{N}^n(fd) = \partial^n(f)d$. By (1) of Proposition~\ref{kis} we identify $D = D_{\operatorname{crys}}(V)$ and the $G_K$-action on $V$ fixes $D$; hence $\sigma(fd) = \sigma(f)d$. The lemma therefore reduces to checking that 
	\begin{equation}\label{logexp}
	\sum_{n \geq 0} \frac{(-\operatorname{log}([\epsilon(\sigma)]))^n}{n!} \partial^n(f)
	\end{equation}
	converges in $B_{\operatorname{max}}^+$ to $\sigma(f)$. It suffices to consider $f = u^i$. Then $\sigma(f) = [\epsilon(\sigma)^i] u^i$. On the other hand, using that $\partial^n(u^i) = (-i)^n u^i$, we see \eqref{logexp} equals $\operatorname{exp}(\operatorname{log}([\epsilon(\sigma)]^i)) u^i$. If this sum converges it will do so to $[\epsilon(\sigma)]^iu^i$ which proves the lemma.
	
	To show convergence it is enough to show $\frac{\operatorname{log}([\epsilon(\sigma)])^n}{n!}$ lies in $A_{\operatorname{max}}$ and in this ring converges $p$-adically to zero. Note that $\operatorname{log}([\epsilon(\sigma)]) = \alpha t$  for some $\alpha \in \mathbb{Z}_p$. The proof of \cite[III.3.9]{Col98} shows that $t \in pA_{\operatorname{max}}$ if $p>2$ and $t \in p^2A_{\operatorname{max}}$ if $p =2$. Convergence of $\frac{\operatorname{log}([\epsilon(\sigma)])^n}{n!}$ then follows because $\frac{p^n}{n!} \in \mathbb{Z}_p$ converges $p$-adically to zero when $p >2$, and $\frac{p^{2n}}{n!}$ converges $p$-adically to zero when $p =2$.
\end{proof}

\begin{proof}[Proof that (1) implies (2) in Theorem~\ref{GLS}]
	It suffices to prove, for $m \in \varphi(\mathfrak{M})$ and $\sigma \in G_K$, that $(\sigma-1)(m) \in \mathfrak{M}^{\varphi} \otimes_{\mathfrak{S}} [\pi^\flat]^p\mu A_{\operatorname{inf}}$. Since $A_{\operatorname{inf}}[\frac{1}{\mu}]$ is $G_K$-stable, Lemma~\ref{1/mu} ensures that $(\sigma -1)(m) \in \mathfrak{M}^\varphi \otimes_{\mathfrak{S}} A_{\operatorname{inf}}[\frac{1}{\mu}]$.
	
	On the other hand we know from the previous lemma that
	\begin{equation}\label{anotherequation}
	(\sigma -1)(m) = \sum_{n \geq 1} \mathcal{N}^n(m) \otimes \frac{(- \operatorname{log}([\epsilon(\sigma)]))^n}{n!}
	\end{equation}
	Since $\partial \circ \varphi = p \varphi \circ \partial$ the operator $\mathcal{N}$ satisfies $\mathcal{N} \varphi = p\varphi \mathcal{N}$, and so
	$$
	\mathcal{N}^n(m) \in \varphi(\mathcal{N}(\mathfrak{M} \otimes_{\mathfrak{S}} \mathcal{O}^{\operatorname{rig}}[\tfrac{1}{\lambda}]))
	$$
	for $n \geq 1$. By the definition of $\mathcal{N}$ we have $\mathcal{N}(\mathfrak{M} \otimes_{\mathfrak{S}} \mathcal{O}^{\operatorname{rig}}[\frac{1}{\varphi(\lambda)}]) \subset \mathfrak{M} \otimes_{\mathfrak{S}} u\mathcal{O}^{\operatorname{rig}}[\frac{1}{\lambda}]$. Therefore $\mathcal{N}^n(m) \in \mathfrak{M}^\varphi \otimes_{\mathfrak{S}} [\pi^\flat]^p B^{+}_{\operatorname{max}}$. Since $\operatorname{log}([\epsilon(\sigma)]) \in tA_{\operatorname{max}} = \mu A_{\operatorname{max}}$ (the equality follows from \cite[III.3.9]{Col98}) each term of \eqref{anotherequation} is contained in $\mathfrak{M}^\varphi \otimes_{\mathfrak{S}} [\pi^\flat]^p \mu B_{\operatorname{max}}^+$. Lemma~\ref{topology} implies the entire sum \eqref{anotherequation} is contained in $\mathfrak{M}^\varphi \otimes_{\mathfrak{S}} [\pi^\flat]^p \mu B_{\operatorname{max}}^+$.
	
	To complete the proof it suffices to show that $A_{\operatorname{inf}}[\frac{1}{\mu}] \cap [\pi^\flat]^p \mu B_{\operatorname{max}}^+ = [\pi^\flat]^p \mu A_{\operatorname{inf}}$. This follows from the next two facts. The first is that if $a \in A_{\operatorname{inf}} \cap [\pi^\flat]^p B_{\operatorname{max}}^+$ then $a \in [\pi^\flat]^pA_{\operatorname{inf}}$. This is proven with $B_{\operatorname{max}}^+$ replaced by $B_{\operatorname{crys}}^+$ in \cite[Lemma 3.2.2]{Liu10b}. Using that $\varphi(B_{\operatorname{max}}^+) \subset B_{\operatorname{crys}}^+$ we deduce the same applies for $B_{\operatorname{max}}^+$. The second fact is that if $a \in A_{\operatorname{inf}} \cap \mu^n B_{\operatorname{max}}^+$ then $a \in \mu^n A_{\operatorname{inf}}$. It suffices to prove this when $n=1$. The homomorphism $\theta: A_{\operatorname{inf}} \rightarrow \mathcal{O}_C$ given by $\sum [x_i]p^i \mapsto \sum x_i^\sharp p^i$ extends to $\theta: B_{\operatorname{max}}^+ \rightarrow C$ and, since $\theta(\varphi^n(\mu))= 0$, we must have $\theta(\varphi^n(a)) = 0$ for all $n \geq 0$. The claim then follows from \cite[Proposition 5.1.3]{Fon94} which states that $\lbrace a \in A_{\operatorname{inf}} \mid \varphi^n(a) \in \operatorname{ker} \theta\text{ for all $n \geq 0$} \rbrace = \mu A_{\operatorname{inf}}$.
\end{proof}
\subsection{The locus of crystalline Breuil--Kisin modules}
\begin{sub}
	Let $A$ be an Artin local ring with finite residue field $\mathbb{F}$ of characteristic $p$. Suppose $V_A$ is a finite free $A$-module equipped with a continuous $A$-linear $G_{K_\infty}$-action.
	
	 Since $A$ is a finite $\mathbb{Z}_p$-module, as in \ref{etale} we obtain an $\mathcal{O}_{\mathcal{E},A} = \mathcal{O}_{\mathcal{E}} \otimes_{\mathbb{Z}_p} A$-module $M_A$ equipped with an isomorphism $\varphi^* M_A \xrightarrow{\sim} M_A$ such that there exists a $\varphi,G_{K_\infty}$-equivariant identification \eqref{Acomparison}. Since $V_A$ is $A$-free $M_A$ is $\mathcal{O}_{\mathcal{E},A}$-free, cf. \cite[1.2.7]{KisFF}. For any $A$-algebra $B$ set $M_B = M_A \otimes_A B$ and $V_B = V_A \otimes_A B$.
\end{sub}
\begin{definition}\label{Ldefn}
For any $A$-algebra $B$ define $\mathcal{L}^{\leq h}(V_B)$ to be the set of finite projective $\mathfrak{S}_B = \mathfrak{S} \otimes_{A} B$-submodules $\mathfrak{M}_B \subset M_B$ satisfying $\mathfrak{M}_B \otimes_{\mathfrak{S}} \mathcal{O}_{\mathcal{E}} = M_B$ and, if $\mathfrak{M}_B^\varphi$ denotes the image of $\varphi^*\mathfrak{M}_B$ under $\varphi^*M_B \rightarrow M_B$, satisfying
$$
E(u)^h \mathfrak{M}_B \subset \mathfrak{M}_B^{\varphi} \subset \mathfrak{M}_B
$$
If $B \rightarrow B'$ is a map of $A$-algebras and $\mathfrak{M}_B \in \mathcal{L}^{\leq h}(V_B)$ then $\mathfrak{M}_B \otimes_B B'$ is a finite projective $\mathfrak{S}_{B'}$-submodule\footnote{Note $\mathfrak{M}_B \otimes_{\mathfrak{S}_B} \mathfrak{S}_{B'} = \mathfrak{M}_B \otimes_{\mathfrak{S}_B} (\mathfrak{S}_B \otimes_{B'} B) = \mathfrak{M}_B \otimes_{B'} B$.} of $M_{B'}$ and $\varphi^* (\mathfrak{M}_{B} \otimes_B B') \rightarrow (\mathfrak{M}_B \otimes_B B')$ has cokernel killed by $E^h$. Since $\mathcal{O}_{\mathcal{E}} \otimes _{\mathfrak{S}} (\mathfrak{M}_B \otimes_B B')\cong M_B \otimes_B B'$ we have that $\mathfrak{M}_B \otimes_B B' \in \mathcal{L}^{\leq h}(V_{B'})$. Thus $B \mapsto \mathcal{L}^{\leq h}(V_B)$ is a functor on $A$-algebras.
\end{definition}
The functor $\mathcal{L}^{\leq h}$ was introduced by Kisin. In \cite[1.3]{Kis08} he proves:
\begin{proposition}[Kisin]\label{kisprop}
	The functor $B \mapsto \mathcal{L}^{\leq h}(V_B)$ is represented by a projective $A$-scheme $\mathcal{L}^{\leq h}_A$. If $A \rightarrow A'$ is a map of Artin local rings with finite residue field then there are functorial isomorphisms $\mathcal{L}^{\leq h}_A \otimes_A A' \cong \mathcal{L}^{\leq h}_{A'}$. Furthermore, $\mathcal{L}^{\leq h}_A$ is equipped with a very ample line bundle which is similarly functorial in $A$.
\end{proposition}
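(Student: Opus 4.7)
The plan is to realize $\mathcal{L}^{\leq h}_A$ as a closed subscheme of a Grassmannian over $A$, thereby inheriting both projectivity and a canonical very ample line bundle. The key preliminary step is a uniform sandwich: I would show that there exist a finite projective $\mathfrak{S}_A$-lattice $\mathfrak{M}_0 \subset M_A$ with $\mathfrak{M}_0 \otimes_{\mathfrak{S}} \mathcal{O}_{\mathcal{E}} = M_A$ and an integer $N = N(h,d)$ depending only on $h$ and $d := \mathrm{rank}_{\mathcal{O}_{\mathcal{E},A}} M_A$, such that for every $A$-algebra $B$ and every $\mathfrak{M}_B \in \mathcal{L}^{\leq h}(V_B)$,
$$
u^N (\mathfrak{M}_0 \otimes_A B) \;\subset\; \mathfrak{M}_B \;\subset\; u^{-N} (\mathfrak{M}_0 \otimes_A B).
$$
Over the residue field $\mathbb{F}$, the ring $\mathfrak{S}_{\mathbb{F}} = k[[u]]$ is a discrete valuation ring and elementary divisor theory, together with the constraint $E^h \mathfrak{M}_{\mathbb{F}} \subset \mathfrak{M}_{\mathbb{F}}^{\varphi} \subset \mathfrak{M}_{\mathbb{F}}$, bounds the elementary divisors of $\mathfrak{M}_{\mathbb{F}}$ relative to $\mathfrak{M}_0 \otimes_A \mathbb{F}$ by a quantity depending only on $h$ and $d$. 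I would propagate this bound to arbitrary $A$-algebras $B$ by Nakayama applied to local rings of $B$.

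With the sandwich in hand, set $L_1 = u^N \mathfrak{M}_0$ and $L_2 = u^{-N} \mathfrak{M}_0$, so that $L_2/L_1$ is a finite projective $A$-module. The association $\mathfrak{M}_B \mapsto \mathfrak{M}_B/(L_1 \otimes_A B)$ realises $\mathcal{L}^{\leq h}(V_B)$ as a subset of the $B$-direct summands of $(L_2/L_1) \otimes_A B$ of the appropriate rank. The latter is represented by a Grassmannian $\mathrm{Gr}_A$, a smooth projective $A$-scheme equipped with the very ample determinant line bundle. I would then check that the image of $\mathcal{L}^{\leq h}(V_{\bullet})$ in $\mathrm{Gr}_A$ is defined by closed conditions: namely $\mathfrak{S}$-stability of the submodule (automatic modulo $L_1$ since $u L_1 \subset L_1$), the identity $\mathfrak{M}_B \otimes_{\mathfrak{S}} \mathcal{O}_{\mathcal{E}} = M_B$ (forced by the sandwich together with a rank count), and the inclusions $E(u)^h \mathfrak{M}_B \subset \mathfrak{M}_B^{\varphi} \subset \mathfrak{M}_B$. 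Each of these is the vanishing of finitely many $B$-linear maps between finite projective $A$-modules, after truncating modulo a sufficiently high power of $u$ so as to work inside a common finite-rank ambient lattice.

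This exhibits $\mathcal{L}^{\leq h}_A$ as a closed subscheme of $\mathrm{Gr}_A$, hence projective over $A$ with the restriction of the determinant bundle as a very ample line bundle. The base change isomorphism $\mathcal{L}^{\leq h}_A \otimes_A A' \cong \mathcal{L}^{\leq h}_{A'}$ is then immediate, because every ingredient of the construction (the reference lattice $\mathfrak{M}_0$, the integer $N$, the Grassmannian, the closed conditions, and the line bundle) is manifestly functorial under $A \to A'$.

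The main obstacle is the uniform sandwich step. One must convert the a priori weak divisibility bound $E^h \mathfrak{M}_B \subset \mathfrak{M}_B^{\varphi}$ into a genuine finite-rank constraint on the relative position of $\mathfrak{M}_B$ with respect to a fixed lattice, uniformly in $B$. Without this, the putative moduli space would be an infinite-dimensional affine Grassmannian with no hope of being projective. Everything else in the argument is either standard Grassmannian yoga or the direct verification that certain natural $B$-linear maps vanish.
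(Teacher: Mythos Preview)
The paper does not give its own proof of this proposition; it attributes the result to Kisin and cites \cite[1.3]{Kis08}. Your outline is essentially Kisin's argument there: bound all height-$\leq h$ lattices uniformly between $u^N\mathfrak{M}_0$ and $u^{-N}\mathfrak{M}_0$, embed the functor into the Grassmannian of the finite projective $A$-module $u^{-N}\mathfrak{M}_0/u^N\mathfrak{M}_0$, and impose the $\varphi$-stability and height constraints as closed conditions.

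One correction worth flagging: the integer $N$ cannot depend only on $h$ and $d$. The height condition $E^h\mathfrak{M}\subset\mathfrak{M}^\varphi\subset\mathfrak{M}$ compares $\mathfrak{M}$ only with its own Frobenius image, not with the reference lattice $\mathfrak{M}_0$, so elementary-divisor bookkeeping alone does not produce a bound. The mechanism that does is the relation $\varphi(u)=u^p$: if $g\in\mathrm{GL}_d(k((u)))$ records the position of $\mathfrak{M}$ relative to $\mathfrak{M}_0$, then $\varphi(g)$ has elementary divisors $p$ times those of $g$, while the height condition bounds the elementary divisors of $g^{-1}A_0\,\varphi(g)$ (with $A_0$ the matrix of Frobenius on $\mathfrak{M}_0$) in $[0,eh]$. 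These together force $(p-1)$ times the elementary divisors of $g$ to be bounded by a quantity depending on $h$, $e$, $d$, and $A_0$. So $N$ depends on the fixed Frobenius data of $M_A$ as well. With this refinement your outline is correct, and the propagation from $\mathbb{F}$ to general $B$ can be done either by the filtration of the Artinian ring $A$ by powers of $\mathfrak{m}_A$ or by running the same argument directly over $A$.
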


\begin{sub}
Now suppose $V_A$ is a finite free $A$-module equipped with a continuous $A$-linear action of $G_K$. Apply the previous discussion to $V_A|_{G_{K_\infty}}$. If $B$ is an $A$-algebra and $\mathfrak{M}_B \in \mathcal{L}^{\leq h}(V_B)$ then \eqref{Acomparison} induces a $\varphi,G_{K_\infty}$-equivariant isomorphism
\begin{equation}\label{comparison}
\mathfrak{M}_B \otimes_{\mathfrak{S}} W(C^\flat) \cong M_B \otimes_{\mathcal{O}_{\mathcal{E}}} W(C^\flat) \cong V_B \otimes_{\mathbb{Z}_p} W(C^\flat)
\end{equation}
The $G_K$-action on $V_B$ and $W(C^\flat)$ provides an action of $G_K$ on $\mathfrak{M}_B \otimes_{\mathfrak{S}} W(C^\flat)$.	
\end{sub}

\begin{definition}\label{Lcrys}
	For any $A$-algebra $B$ let $\mathcal{L}^{\leq h}_{\operatorname{crys}}(V_B)$ denote the set of $ \mathfrak{M}_B \in \mathcal{L}^{\leq h}(V_B)$ such that the $G_K$-action on $\mathfrak{M}_B \otimes_{\mathfrak{S}} W(C^\flat)$ given by \eqref{comparison} satisfies
	$$
	(\sigma-1)(m) \in \mathfrak{M}_B \otimes_{\mathfrak{S}} [\pi^\flat]\varphi^{-1}(\mu)A_{\operatorname{inf}}
	$$
	for all $m \in \mathfrak{M}_B$ and all $\sigma \in G_K$. Again $B \mapsto \mathcal{L}^{\leq h}_{\operatorname{crys}}(V_B)$ is a functor on $A$-algebras.
\end{definition}

We shall prove that $B \mapsto \mathcal{L}^{\leq h}_{\operatorname{crys}}(V_B)$ is represented by a closed subscheme of $\mathcal{L}^{\leq h}$. First we need some lemmas.
\begin{lemma}\label{smallest}
	Let $Q$ be a flat $\mathbb{Z}_p$-module and $A$ a $\mathbb{Z}_p$-algebra with $p^nA =0$ for some $n \geq 0$. For any $x \in A \otimes_{\mathbb{Z}_p} Q$ there exists a smallest ideal $I(x) \subset A$ such that $x \in I(x) \otimes_{\mathbb{Z}_p} Q$.
\end{lemma}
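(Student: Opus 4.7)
The plan is to reduce the lemma to the situation in which $Q$ is a free $\mathbb{Z}_p$-module, where the statement becomes transparent. Setting $R := \mathbb{Z}/p^n$ and $Q_n := Q/p^nQ$, the hypothesis $p^nA = 0$ gives a canonical identification $A \otimes_{\mathbb{Z}_p} Q = A \otimes_R Q_n$, and $Q_n$ is $R$-flat because $Q$ is $\mathbb{Z}_p$-flat. The crux of the plan is then to show that every flat $R$-module is free, which will give a decomposition $Q_n \cong \bigoplus_\alpha R \cdot e_\alpha$.

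For the freeness, I would lift an $\mathbb{F}_p$-basis of $Q_n/pQ_n$ to elements $\{e_\alpha\} \subset Q_n$ and set $M_0 := \sum_\alpha R e_\alpha$. By construction $Q_n = M_0 + pQ_n$, hence $p(Q_n/M_0) = Q_n/M_0$, and iterating gives $Q_n/M_0 = p^n(Q_n/M_0) = 0$ since $p^n = 0$ in $R$; so the $e_\alpha$ generate without any finite generation hypothesis. For linear independence I would use the explicit flatness criterion: the $2$-periodic resolution
\[
\cdots \to R \xrightarrow{p^{n-k}} R \xrightarrow{p^k} R \to R/(p^k) \to 0
\]
of $R/(p^k)$ shows that flatness of $Q_n$ is equivalent to $\operatorname{Ann}_{Q_n}(p^k) = p^{n-k}Q_n$ for every $k$. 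Given a nontrivial relation $\sum c_\alpha e_\alpha = 0$, factor out the smallest $p$-adic valuation to write $c_\alpha = p^k u_\alpha$ with some $u_\alpha \in R^\times$; then $p^k\bigl(\sum u_\alpha e_\alpha\bigr) = 0$ places $\sum u_\alpha e_\alpha$ inside $p^{n-k}Q_n \subset pQ_n$, which reduces modulo $p$ to a nontrivial relation among the $\bar e_\alpha$ and contradicts that these form an $\mathbb{F}_p$-basis.

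Granted the decomposition $Q_n = \bigoplus_\alpha R e_\alpha$, the lemma is immediate. We obtain $A \otimes_{\mathbb{Z}_p} Q = \bigoplus_\alpha A e_\alpha$, and for each ideal $J \subset A$ the subgroup $J \otimes_{\mathbb{Z}_p} Q$ is identified with $\bigoplus_\alpha J e_\alpha$. Writing $x = \sum_{\alpha \in S} c_\alpha e_\alpha$ with $S$ finite and uniquely determined $c_\alpha \in A$, the condition $x \in J \otimes_{\mathbb{Z}_p} Q$ is equivalent to $c_\alpha \in J$ for every $\alpha \in S$; so the smallest such $J$ is the finitely generated ideal $I(x) := (c_\alpha)_{\alpha \in S}$.

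The only substantive step is the freeness of flat modules over $R = \mathbb{Z}/p^n$; the argument works because the maximal ideal $(p) \subset R$ is nilpotent, which is exactly what is needed both for Nakayama to apply without finite generation and for the flatness criterion to give a clean proof of linear independence.
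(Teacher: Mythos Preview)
Your proof is correct, and it takes a genuinely different route from the paper's. The paper does not prove or use that $Q_n$ is free over $\mathbb{Z}/p^n$; instead it argues more abstractly: since $Q$ is $\mathbb{Z}_p$-flat, $-\otimes_{\mathbb{Z}_p} Q$ commutes with finite intersections of submodules of $A$, and since $p^nA=0$ any finitely generated $\mathbb{Z}_p$-submodule $M\subset A$ containing $x$ in $M\otimes Q$ has finite length, hence only finitely many $\mathbb{Z}_p$-submodules, so the intersection $M(x)$ of all $M'\subset M$ with $x\in M'\otimes Q$ still satisfies $x\in M(x)\otimes Q$. The ideal $I(x)$ is then the ideal generated by $M(x)$.

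Your approach has the advantage of being explicit: once $Q_n\cong\bigoplus_\alpha R e_\alpha$, the ideal $I(x)$ is visibly the ideal generated by the finitely many coordinates of $x$, and no intersection argument is needed. The paper's approach, on the other hand, avoids invoking (and proving) the structural fact that flat modules over an Artinian local ring are free, using only the softer input that flat tensor preserves finite intersections. Both arguments hinge on the hypothesis $p^nA=0$ in the same essential way---you use nilpotence of $p$ for Nakayama and the annihilator computation, while the paper uses it to force finite length of finitely generated $\mathbb{Z}_p$-submodules.
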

\begin{proof}
	We shall show there exists a smallest $\mathbb{Z}_p$-submodule $M(x) \subset A$ such that $M(x) \otimes_{\mathbb{Z}_p} Q$ contains $x$. Then $I(x)$ will be equal to the ideal generated by $M(x)$ over $A$; if $J \subset A$ is an ideal such that $x \in J \otimes_{\mathbb{Z}_p} Q$ then $M(x) \subset J$ and so $I(x) \subset J$.
	
	We use that $\otimes_{\mathbb{Z}_p} Q$ commutes with finite intersections, since $Q$ is $\mathbb{Z}_p$-flat. Choose a finitely generated $\mathbb{Z}_p$-submodule $M \subset A$ with $x \in M \otimes_{\mathbb{Z}_p} Q$. Since $p^nA = 0$, $M$ has finite length and so contains only finitely many $\mathbb{Z}_p$-submodules. Thus, if $M(x)$ is the intersection of all $M' \subset M$ with $x \in M' \otimes_{\mathbb{Z}_p} Q$ then $x \in M(x) \otimes_{\mathbb{Z}_p} Q$. If $M'' \subset A$ is any other $\mathbb{Z}_p$-submodule with $x \in M'' \otimes_{\mathbb{Z}_p} Q$ then $x \in (M'' \cap M) \otimes_{\mathbb{Z}_p} Q$ and so $M(x) \subset (M'' \cap M) \subset M''$. Therefore $M(x)$ is as desired.
\end{proof}
In the proof of the following lemma we use that both $[\pi^\flat]$ and $\varphi^{-1}(\mu)$ are units in $W(C^\flat)$. This can be seen by observing that modulo~$p$ both are non-zero in $C^\flat$.
\begin{lemma}\label{ideal}
	Let $B$ be an $A$-algebra and $\mathfrak{M}_B \in \mathcal{L}^{\leq h}(V_B)$. There exists a unique ideal $I \subset B$ such that, for any $A$-algebra homomorphism $B \rightarrow B'$, $\mathfrak{M}_B \otimes_B B' \in \mathcal{L}^{\leq h}_{\operatorname{crys}}(V_{B'})$ if and only if $B \rightarrow B'$ factors through $B \rightarrow B/I$.
\end{lemma}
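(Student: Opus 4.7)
The plan is to construct $I$ as the sum, over all pairs $(\sigma, m) \in G_K \times \mathfrak{M}_B$, of ``smallest ideals'' obtained by applying Lemma~\ref{smallest} to a single element each time.

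First I rephrase the crystalline condition as the vanishing of elements in a fixed module. Let $J = [\pi^\flat]\varphi^{-1}(\mu)A_{\operatorname{inf}}$ and $Q = W(C^\flat)/J$. Right-exactness of $\mathfrak{M}_B \otimes_\mathfrak{S} (-)$ applied to $0 \to J \to W(C^\flat) \to Q \to 0$ identifies the condition $(\sigma-1)(m) \in \mathfrak{M}_B \otimes_\mathfrak{S} J$ with the vanishing of the image $y(\sigma, m)$ of $(\sigma-1)(m)$ in $N_B := \mathfrak{M}_B \otimes_\mathfrak{S} Q$. Moreover $Q$ is $\mathbb{Z}_p$-flat: since $W(C^\flat) = \varinjlim_n [\pi^\flat]^{-n}A_{\operatorname{inf}}$ we have $Q = \varinjlim_n A_{\operatorname{inf}}/[\pi^\flat]^{n+1}\varphi^{-1}(\mu)A_{\operatorname{inf}}$, each term of which is $p$-torsion-free because $[\pi^\flat]^{n+1}\varphi^{-1}(\mu)$ is a non-zero-divisor on the domain $A_{\operatorname{inf}}/p = \mathcal{O}_{C^\flat}$ (both factors reduce to non-zero elements of the valuation ring). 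Since $\mathfrak{M}_{B'} = \mathfrak{M}_B \otimes_B B'$ is $B'$-generated by images from $\mathfrak{M}_B$ and the $G_K$-action on $\mathfrak{M}_{B'} \otimes_\mathfrak{S} W(C^\flat) = V_{B'} \otimes_{\mathbb{Z}_p} W(C^\flat)$ is $B'$-linear, the crystalline condition for $B'$ is equivalent to the vanishing of every $y(\sigma, m)$ in $N_{B'} := N_B \otimes_B B'$. Right-exactness of $-\otimes_B B'$ on $0 \to \ker(B \to B') \to B \to B' \to 0$ identifies the kernel of $N_B \to N_{B'}$ with $\ker(B \to B') \cdot N_B$. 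Hence ``$B \to B'$ factors through $B/I$'' matches the crystalline condition for $B'$ precisely when $I$ is the smallest ideal with $y(\sigma, m) \in I \cdot N_B$ for every $(\sigma, m)$.

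For each such pair, I produce an $I(\sigma, m)$ as follows. Because $\mathfrak{M}_B$ is a finite projective $\mathfrak{S}_B$-module, it is a direct summand of some $\mathfrak{S}_B^r$, and so $N_B$ is a $B$-module direct summand of $\mathfrak{S}_B^r \otimes_\mathfrak{S} Q \cong B \otimes_{\mathbb{Z}_p} Q^r$. Since $p^n B = 0$ and $Q^r$ is $\mathbb{Z}_p$-flat, Lemma~\ref{smallest} applied to the image of $y(\sigma, m)$ in $B \otimes_{\mathbb{Z}_p} Q^r$ produces a smallest ideal $I(\sigma, m) \subset B$ containing this image in $I(\sigma, m) \otimes_{\mathbb{Z}_p} Q^r$. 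The existence of a $B$-linear retraction $B \otimes_{\mathbb{Z}_p} Q^r \twoheadrightarrow N_B$ of the summand inclusion ensures $I(\sigma, m)$ is also the smallest ideal with $y(\sigma, m) \in I(\sigma, m) \cdot N_B$. Setting $I := \sum_{(\sigma, m)} I(\sigma, m)$ then yields the required ideal, and uniqueness is immediate from its universal characterisation. The main subtlety is verifying the $\mathbb{Z}_p$-flatness of $Q$; everything else is a formal combination of right-exactness of tensor product with a reduction to the free case via the summand splitting.
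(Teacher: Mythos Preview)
Your overall strategy matches the paper's almost exactly: reduce the crystalline condition to the vanishing of elements in $\mathfrak{M}_B \otimes_{\mathfrak{S}} Q$ for a $\mathbb{Z}_p$-flat $\mathfrak{S}$-module $Q$, embed this as a summand of $(B \otimes_{\mathbb{Z}_p} Q)^r$, and invoke Lemma~\ref{smallest}. The paper takes $Q = W(C^\flat)/A_{\operatorname{inf}}$ and first divides $(\sigma-1)(m)$ by the unit $[\pi^\flat]\varphi^{-1}(\mu) \in W(C^\flat)^\times$, whereas you take $Q = W(C^\flat)/J$ directly; since multiplication by that unit gives an isomorphism between the two quotients, these are the same argument.

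There is, however, a genuine error in your flatness step. The identity $W(C^\flat) = \varinjlim_n [\pi^\flat]^{-n} A_{\operatorname{inf}}$ is false: an element of $W(C^\flat)$ has Teichm\"uller expansion $\sum_{i \geq 0} [x_i] p^i$ with $x_i \in C^\flat$ arbitrary, and there is no reason for the $v^\flat(x_i)$ to be bounded below (e.g.\ $\sum_i [\pi^{\flat,-i}] p^i$ lies in $W(C^\flat)$ but in no $[\pi^\flat]^{-n} A_{\operatorname{inf}}$). So your colimit description of $Q$, and hence your proof of its $\mathbb{Z}_p$-flatness, does not go through. The fix is immediate once you use the unit: $Q \cong W(C^\flat)/A_{\operatorname{inf}}$, and the latter is $p$-torsion-free because if $x \in W(C^\flat)$ has $px \in A_{\operatorname{inf}}$ then inspection of the Teichm\"uller expansion forces $x \in A_{\operatorname{inf}}$ (this is exactly the argument the paper gives). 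With this correction the rest of your proof stands.
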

\begin{proof}
	Consideration of Teichmuller expansions shows that if $x \in W(C^\flat)$ and $p x \in A_{\operatorname{inf}}$ then $x \in A_{\operatorname{inf}}$. Therefore the $\mathfrak{S}$-module $Q := W(C^\flat)/A_{\operatorname{inf}}$ is $\mathbb{Z}_p$-flat, and so
	$$
	0 \rightarrow B' \otimes_{\mathbb{Z}_p} A_{\operatorname{inf}} \rightarrow B' \otimes_{\mathbb{Z}_p} W(C^\flat) \rightarrow B' \otimes_{\mathbb{Z}_p} Q \rightarrow 0
	$$
	is exact. Since $\mathfrak{M}_{B'} := \mathfrak{M}_B \otimes_B B'$ is finite projective over $\mathfrak{S}_{B'}$, applying $\mathfrak{M}_{B'} \otimes_{\mathfrak{S}_{B'}}$ to the above exact sequence yields a sequence
	$$
	0 \rightarrow \mathfrak{M}_{B'} \otimes_{\mathfrak{S}} A_{\operatorname{inf}} \rightarrow \mathfrak{M}_{B'} \otimes_{\mathfrak{S}} W(C^\flat) \rightarrow \mathfrak{M}_{B'} \otimes_{\mathfrak{S}} Q \rightarrow 0
	$$
	which is again exact. Thus $\mathfrak{M}_{B'} \in \mathcal{L}^{\leq h}_{\operatorname{crys}}(V_{B'})$ if and only if, for every $m \in \mathfrak{M}_{B'}$ and every $\sigma \in G_K$, the image of
	$$
	\tfrac{(\sigma-1)(m)}{[\pi^\flat]\varphi^{-1}(\mu)} \in \mathfrak{M}_{B'} \otimes_{\mathfrak{S}} W(C^\flat)
	$$
	in $\mathfrak{M}_{B'} \otimes_{\mathfrak{S}} Q$ is zero. In fact, since $\mathfrak{M}_{B'}$ is generated over $B'$ by the image of $\mathfrak{M}_B$, we need only consider $m$ contained in the image of $\mathfrak{M}_B \rightarrow \mathfrak{M}_{B'}$.
	
	As $\mathfrak{M}_B$ is finite projective over $\mathfrak{S}_B$ there is an isomorphism $\mathfrak{M}_B \oplus Z \cong (B \otimes_{\mathbb{Z}_p} \mathfrak{S})^r$ for some $\mathfrak{S}_B$-module $Z$. Thus we obtain an inclusion $\mathfrak{M}_B \otimes_{\mathfrak{S}} Q \hookrightarrow (B \otimes_{\mathbb{Z}_p} Q)^r$. If $e_i$ denotes the standard basis of $(B \otimes_{\mathbb{Z}_p} Q)^r$ then, for every $m \in \mathfrak{M}_B$ and $\sigma \in G_K$, the image of $\frac{(\sigma-1)(m)}{[\pi^\flat]\varphi^{-1}(\mu)}$ under $\mathfrak{M}_B \otimes_{\mathfrak{S}} W(C^\flat) \rightarrow \mathfrak{M}_B \otimes_{\mathfrak{S}} Q \hookrightarrow (B \otimes_{\mathbb{Z}_p} Q)^r$ can be written as
	$$
	\sum \alpha(m,\sigma,i) e_i
	$$
	for some $\alpha(m,\sigma,i) \in B \otimes_{\mathbb{Z}_p} Q$. Let $I(m,\sigma,i) \subset B$ be the smallest ideal such that $\alpha(m,\sigma,i) \in I(m,\sigma,i) \otimes_{\mathbb{Z}_p} Q$ (which exists by Lemma~\ref{smallest}) and let $I = \sum_{m,\sigma,i} I(m,\sigma,i)$. The discussion from the previous paragraph shows that $I$ is as required by the lemma.
\end{proof}

\begin{proposition}
	There exists a closed $A$-subscheme $\mathcal{L}^{\leq h}_{A,\operatorname{crys}}$ of $\mathcal{L}^{\leq h}_A$ which represents the functor $B \mapsto \mathcal{L}^{ \leq h}_{\operatorname{crys}}(V_B)$.
\end{proposition}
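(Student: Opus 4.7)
The plan is to construct $\mathcal{L}^{\leq h}_{A,\operatorname{crys}}$ locally using Lemma~\ref{ideal} and then globalise by showing the resulting ideals fit together into a quasi-coherent sheaf of ideals on $\mathcal{L}^{\leq h}_A$. Since $\mathcal{L}^{\leq h}_A$ represents $B \mapsto \mathcal{L}^{\leq h}(V_B)$ by Proposition~\ref{kisprop}, it carries a universal object, and for any affine open $\operatorname{Spec} B \subset \mathcal{L}^{\leq h}_A$ the restriction yields some $\mathfrak{M}_B \in \mathcal{L}^{\leq h}(V_B)$. Applying Lemma~\ref{ideal} produces an ideal $I(B) \subset B$ characterised by the property that for any $A$-algebra map $B \to B'$, the pullback $\mathfrak{M}_B \otimes_B B'$ lies in $\mathcal{L}^{\leq h}_{\operatorname{crys}}(V_{B'})$ if and only if $B \to B'$ factors through $B/I(B)$.

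Next I would verify that $B \mapsto I(B)$ is compatible with localisation: for $f \in B$ one has $I(B) \cdot B_f = I(B_f)$. One inclusion follows by applying Lemma~\ref{ideal} to $\mathfrak{M}_B$ along the composition $B \to B_f \to B_f/I(B)B_f$, which factors through $B/I(B)$; hence $\mathfrak{M}_B \otimes_B B_f/I(B)B_f$ is crystalline, and Lemma~\ref{ideal} applied to $\mathfrak{M}_{B_f}$ then yields $I(B_f) \subset I(B)B_f$. Conversely, $\mathfrak{M}_{B_f/I(B_f)}$ is a pullback of $\mathfrak{M}_B$ and is crystalline, so $B \to B_f/I(B_f)$ factors through $B/I(B)$, giving $I(B) B_f \subset I(B_f)$. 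Combined with the uniqueness in Lemma~\ref{ideal} (which forces agreement on overlaps of any two affine opens), this localisation compatibility guarantees the ideals $I(B)$ glue into a quasi-coherent sheaf of ideals $\mathcal{I} \subset \mathcal{O}_{\mathcal{L}^{\leq h}_A}$, and we define $\mathcal{L}^{\leq h}_{A,\operatorname{crys}}$ to be the closed subscheme cut out by $\mathcal{I}$.

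For representability, given any $A$-algebra $B$ and $\mathfrak{M}_B \in \mathcal{L}^{\leq h}(V_B)$ with classifying morphism $\operatorname{Spec} B \to \mathcal{L}^{\leq h}_A$, this morphism factors through $\mathcal{L}^{\leq h}_{A,\operatorname{crys}}$ precisely when the pullback of $\mathcal{I}$ vanishes in $B$. Pulling back the universal ideal along an affine cover of $\operatorname{Spec} B$ and invoking Lemma~\ref{ideal} locally, this is exactly the condition $\mathfrak{M}_B \in \mathcal{L}^{\leq h}_{\operatorname{crys}}(V_B)$. The main obstacle is the localisation compatibility in the middle paragraph, but once Lemma~\ref{ideal} is in hand this is essentially formal; the proof uses both directions of Lemma~\ref{ideal} together with its uniqueness clause, but requires no further input.
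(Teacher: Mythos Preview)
Your proposal is correct and follows essentially the same approach as the paper. The paper's argument is slightly more compressed: it notes that the uniqueness clause of Lemma~\ref{ideal} immediately gives $I_{B'} = I_B \cdot B'$ for \emph{any} $A$-algebra map $B \to B'$ (not just localisations), and from this concludes that $B \mapsto I_B$ defines a coherent sheaf of ideals; but your explicit verification of localisation compatibility amounts to the same thing and is the natural way to spell out that step.
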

\begin{proof}
	To any morphism $\operatorname{Spec}B \rightarrow \mathcal{L}^{\leq h}$ of $A$-schemes we associate $\mathfrak{M}_B \in \mathcal{L}^{\leq h}(V_B)$ and so an ideal $I_B \subset B$ as in Lemma~\ref{ideal}. The uniqueness in Lemma~\ref{ideal} implies that if $B \rightarrow B'$ is an $A$-algebra homomorphism then $I_{B'}$ is the ideal of $B'$ generated by the image of $I_B$. Thus the association $B \mapsto I_B$ defines a coherent sheaf of ideals on $\mathcal{L}^{\leq h}$. Let $\mathcal{L}^{\leq h}_{A,\operatorname{crys}}$ be the corresponding closed $A$-subscheme of $\mathcal{L}^{\leq h}$. Since a morphism $\operatorname{Spec}B \rightarrow \mathcal{L}^{\leq h}$ of $A$-schemes factors through $\mathcal{L}^{\leq h}_{A,\operatorname{crys}}$ if and only if $I_B = 0$, and this occurs if and only if $\mathfrak{M}_B \in \mathcal{L}^{\leq h}_{\operatorname{crys}}(V_B)$ it follows that $\mathcal{L}^{\leq h}_{A,\operatorname{crys}}$ represents $B \mapsto \mathcal{L}^{\leq h}_{\operatorname{crys}}(V_B)$.
\end{proof}
\begin{sub}\label{circ}
Now let $A$ be a complete local Noetherian ring with residue field $\mathbb{F}$ and maximal ideal $\mathfrak{m}_{A}$. Let $V_{A}$ be a finite free $A$-module equipped with a continuous action of $G_K$.	
\end{sub}
\begin{corollary}\label{formal}
	There exists a projective $A$-scheme $\mathcal{L}^{\leq h}_{A,\operatorname{crys}}$ which, for each $i \geq 1$ represents the functor $B \mapsto \mathcal{L}^{\leq h}_{\operatorname{crys}}(V_{A} \otimes_A B)$ on $A$-algebras $B$ with $\mathfrak{m}_{A}^iB = 0$.
\end{corollary}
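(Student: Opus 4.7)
The plan is to build $\mathcal{L}^{\leq h}_{A,\operatorname{crys}}$ by algebraizing a compatible system of closed subschemes over the Artin quotients $A_i := A/\mathfrak{m}_{A}^{i}$. For each $i \geq 1$, apply the previous proposition to the Artin local ring $A_i$ (with its residue field $\mathbb{F}$) and the representation $V_A \otimes_A A_i$; this produces a closed subscheme $\mathcal{L}^{\leq h}_{A_i,\operatorname{crys}} \subset \mathcal{L}^{\leq h}_{A_i}$ representing the functor $B \mapsto \mathcal{L}^{\leq h}_{\operatorname{crys}}(V_A \otimes_A B)$ on $A_i$-algebras.

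Next I would check compatibility under base change. For $j \geq i$, the transition map $A_j \to A_i$ induces, via Proposition~\ref{kisprop}, a functorial isomorphism $\mathcal{L}^{\leq h}_{A_j} \otimes_{A_j} A_i \cong \mathcal{L}^{\leq h}_{A_i}$. The uniqueness of the ideal $I_B$ in Lemma~\ref{ideal} forces this isomorphism to identify the closed subschemes $\mathcal{L}^{\leq h}_{A_j,\operatorname{crys}} \otimes_{A_j} A_i \cong \mathcal{L}^{\leq h}_{A_i,\operatorname{crys}}$: indeed, both sides cut out the locus on which the ideal defining the crystalline condition vanishes, and the formation of this ideal commutes with base change by Lemma~\ref{ideal}. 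The very ample line bundles on $\mathcal{L}^{\leq h}_{A_i}$ supplied by Proposition~\ref{kisprop} restrict to compatible relatively ample line bundles on the closed subschemes $\mathcal{L}^{\leq h}_{A_i,\operatorname{crys}}$.

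Now the system $(\mathcal{L}^{\leq h}_{A_i,\operatorname{crys}})_{i \geq 1}$ defines an adic formal scheme $\mathfrak{X}$, projective over $\operatorname{Spf} A$ and equipped with a compatible ample line bundle. By Grothendieck's formal existence theorem (\emph{cf.} EGA III$_{1}$, 5.4.5), $\mathfrak{X}$ is the formal completion of a unique projective $A$-scheme, which we denote $\mathcal{L}^{\leq h}_{A,\operatorname{crys}}$. By construction, the base change $\mathcal{L}^{\leq h}_{A,\operatorname{crys}} \otimes_A A_i$ agrees with $\mathcal{L}^{\leq h}_{A_i,\operatorname{crys}}$ for each $i$, so for any $A$-algebra $B$ killed by $\mathfrak{m}_A^i$ the set of $A$-morphisms $\operatorname{Spec} B \to \mathcal{L}^{\leq h}_{A,\operatorname{crys}}$ coincides with the set of $A_i$-morphisms $\operatorname{Spec} B \to \mathcal{L}^{\leq h}_{A_i,\operatorname{crys}} = \mathcal{L}^{\leq h}_{\operatorname{crys}}(V_A \otimes_A B)$, as required.

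The main obstacle is really just the bookkeeping for the algebraization step: one must verify that the closed subschemes $\mathcal{L}^{\leq h}_{A_i,\operatorname{crys}}$ glue into a genuine formal scheme over $\operatorname{Spf} A$ (compatibility of the transition isomorphisms across all triples $k \geq j \geq i$) and that the restricted line bundles remain ample. Both follow from the functoriality already packaged into Proposition~\ref{kisprop} and Lemma~\ref{ideal}; once these are in place, Grothendieck existence does the rest.
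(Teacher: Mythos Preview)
Your proposal is correct and follows essentially the same argument as the paper: form the inverse system $\mathcal{L}^{\leq h}_{A_i,\operatorname{crys}}$ over the Artin quotients, restrict the very ample line bundles from Proposition~\ref{kisprop}, and algebraize via \cite[Th\'eor\`eme 5.4.5]{EGAIII}. The paper's proof is terser and leaves the compatibility checks implicit, but the content is the same.
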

\begin{proof}
	Set $A_i = A/ \mathfrak{m}_{A}^i$. The projective schemes $\mathcal{L}^{\leq h}_{A_i,\operatorname{crys}}$ form an inverse system of schemes over $A_i$, and so a formal scheme over $A$. The very ample line bundles on each $\mathcal{L}^{\leq h}_{A_i}$ restrict to an inverse system of very ample line bundles on the $\mathcal{L}^{\leq h}_{A_i,\operatorname{crys}}$. As a consequence of \cite[Th\'eor\`eme 5.4.5]{EGAIII} this formal scheme arises from a projective $A$ scheme as required.
\end{proof}
\begin{sub}
Suppose $C$ is a local finite flat $\mathbb{Z}_p$-algebra and $V_C$ is a finite free $C$-module equipped with a continuous $C$-linear action of $G_K$. Then there is an $\mathcal{O}_{\mathcal{E},C} =\mathcal{O}_{\mathcal{E}} \otimes_{\mathbb{Z}_p} C$-module $M_C$ equipped with an isomorphism $\varphi^* M_C \rightarrow M_C$ and a $\varphi,G_{K_\infty}$-equivariant identification $M_C \otimes_{\mathcal{O}_{\mathcal{E}}} W(C^\flat) \cong V_C \otimes_{\mathbb{Z}_p} W(C^\flat)$. In the obvious way we make sense of the sets $\mathcal{L}^{\leq h}(V_C)$ and $\mathcal{L}^{\leq h}_{\operatorname{crys}}(V_C)$. Thus $\mathfrak{M}_C \in \mathcal{L}^{\leq h}(V_C)$ if $\mathfrak{M}_C \subset M_C$ is a $\varphi$-stable projective $\mathfrak{S}_C = \mathfrak{S} \otimes_{\mathbb{Z}_p} C$-module so that $\mathfrak{M}_C \otimes_{\mathfrak{S}} \mathcal{O}_{\mathcal{E}} = M_C$ and so that $\varphi^*\mathfrak{M}_C \rightarrow \mathfrak{M}_C$ has cokernel is killed by $E(u)^h$. Further $\mathfrak{M}_C \in \mathcal{L}^{\leq h}_{\operatorname{crys}}(V_C)$ if the $G_K$-action on $\mathfrak{M}_C \otimes_{\mathfrak{S}} W(C^\flat) \cong V_C \otimes_{\mathbb{Z}_p} W(C^\flat)$ is such that
$$
(\sigma-1)(m)\in \mathfrak{M}_C \otimes_{\mathfrak{S}} [\pi^\flat]\varphi^{-1}(\mu)A_{\operatorname{inf}}
$$
for all $\sigma \in G_K$ and $m \in \mathfrak{M}_C$.
\end{sub}

\begin{sub}\label{Cpoints}
Let $C$ be an $A$-algebra which is finite flat over $\mathbb{Z}_p$. A morphism $\operatorname{Spec}C \rightarrow \mathcal{L}^{\leq h}_{A,\operatorname{crys}}$ gives morphisms $\mathcal{L}^{\leq h }_{A,\operatorname{crys}} \rightarrow \operatorname{Spec} C_i$, where $C_i = C/p^iC$. For any $i \geq 1$ there is a $j$ such that $\mathfrak{m}_A^j C \subset p^iC$, and so, by Corollary~\ref{formal}, such a system of morphisms gives rise to $\mathfrak{M}_{C_i} \in \mathcal{L}^{\leq h}_{\operatorname{crys}}(V_{C_i})$ with $\mathfrak{M}_{C_{i}} = \mathfrak{M}_{C_{i+1}} \otimes_{C_{i+1}} C_i$. The limit $\mathfrak{M}_C = \varprojlim \mathfrak{M}_{C_i}$ is a projective $\mathfrak{S}_C$-submodule of $M_C = \varprojlim M_{C_i}$ defining an element of $\mathcal{L}^{\leq h}(V_C)$. Under the identification
$$
\mathfrak{M}_C \otimes_{\mathfrak{S}} W(C^\flat) = V_C \otimes_{\mathbb{Z}_p} W(C^\flat)
$$
the $G_K$ action on $\mathfrak{M}_C \otimes_{\mathfrak{S}} W(C^\flat)$ is such that, for each $i \geq 1$ and each $m \in \mathfrak{M}_C, \sigma \in G_K$, the images of the elements
$$
\tfrac{(\sigma-1)(m)}{[\pi^\flat]\varphi^{-1}(\mu)}
$$
in $\mathfrak{M}_{C_i} \otimes_{\mathfrak{S}} W(C^\flat)$ are contained in $\mathfrak{M}_{C_i} \otimes_{\mathfrak{S}} A_{\operatorname{inf}}$. Since $C$ is finite free as a $\mathbb{Z}_p$-module $\mathfrak{M}_C$ is projective, and hence free, over $\mathfrak{S}$. This implies these elements are contained in $\mathfrak{M}_C \otimes_{\mathfrak{S}} A_{\operatorname{inf}}$: since $\mathfrak{M}_C$ is free over $\mathfrak{S}$ it suffices to show that if $x \in W(C^\flat)$ is congruent to an element of $A_{\operatorname{inf}}$ modulo $p^i$ for every $i \geq 1$ then $x \in A_{\operatorname{inf}}$. Considering the Teichmuller expansion of $x$ shows this statement holds. 

Conversely any $\mathfrak{M}_C \in \mathcal{L}^{\leq h}_{\operatorname{crys}}(V_C)$ gives rise to a unique $C$-point of $\mathcal{L}^{\leq h}_{A,\operatorname{crys}}$.
\end{sub} 

\begin{lemma}\label{closedimmersion}
	The morphism $\mathcal{L}^{\leq h}_{A,\operatorname{crys}} \rightarrow \operatorname{Spec} A$ becomes a closed immersion after inverting $p$.
\end{lemma}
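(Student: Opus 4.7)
The morphism $\mathcal{L}^{\leq h}_{A,\operatorname{crys}} \to \operatorname{Spec} A$ is projective by Corollary~\ref{formal}, hence proper. Since a proper monomorphism is automatically a closed immersion, it suffices to show that after inverting $p$ the map becomes a monomorphism.

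The key input is the fully faithfulness of the Breuil--Kisin module functor recalled after Definition~\ref{Eheight}: for a continuous $G_K$-representation on a finite free $\mathbb{Z}_p$-module, the Breuil--Kisin module of $E$-height $\leq h$ inside the associated étale $\varphi$-module, when it exists, is unique. The plan is to leverage this uniqueness at the level of test schemes. Concretely, for a local finite flat $\mathbb{Z}_p$-algebra $\mathcal{O}$, any $\operatorname{Spec} \mathcal{O}[\tfrac{1}{p}]$-valued point of $\mathcal{L}^{\leq h}_{A,\operatorname{crys}}$ extends uniquely, by the valuative criterion of properness, to an $\operatorname{Spec}\mathcal{O}$-valued point, which by \ref{Cpoints} is identified with an element $\mathfrak{M}_{\mathcal{O}} \in \mathcal{L}^{\leq h}_{\operatorname{crys}}(V_\mathcal{O})$. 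Since the $\mathbb{Z}_p$-representation $V_\mathcal{O} = V_A \otimes_A \mathcal{O}$ is determined by the underlying map $A \to \mathcal{O}$, the fully faithfulness (applied to $V_\mathcal{O}$ as a $\mathbb{Z}_p$-representation) forces $\mathfrak{M}_\mathcal{O}$ to be unique. This handles injectivity at the closed points of $\operatorname{Spec} A[\tfrac{1}{p}]$, which exhaust the scheme since $A$ is complete Noetherian local so $A[\tfrac{1}{p}]$ is Jacobson.

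To upgrade this pointwise injectivity to a scheme-theoretic monomorphism, one must also verify injectivity on infinitesimal thickenings. The cleanest route is to observe that $\mathcal{L}^{\leq h}_{A,\operatorname{crys}} \hookrightarrow \mathcal{L}^{\leq h}_A$ is a closed immersion by construction, and that the analogue of the lemma for $\mathcal{L}^{\leq h}_A \to \operatorname{Spec} A$ is exactly Kisin's \cite[1.6.4]{Kis08}; the composition of closed immersions is a closed immersion, which gives the result. Alternatively, one may argue directly by reducing to an Artin local $\mathbb{Q}_p$-algebra $R$: choose a finite flat $\mathbb{Z}_p$-subalgebra $R^\circ \subset R$ with $R^\circ[\tfrac{1}{p}] = R$ and a $G_K$-stable $R^\circ$-lattice $V_R^\circ \subset V_R$ (which exists by compactness of $G_K$), whence the fully faithfulness over $\mathbb{Z}_p$ forces the Breuil--Kisin module of $V_R^\circ$ (and thus, after inverting $p$, of $V_R$) to be unique.

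I expect the main obstacle to a direct argument (i.e.\ one not citing Kisin's scheme-theoretic statement) to lie in this last step: upgrading uniqueness on $\operatorname{Spec}\mathcal{O}$-points to formal unramifiedness across arbitrary Artin local $\mathbb{Q}_p$-algebras. The issue is that a map $A \to L[\varepsilon]/\varepsilon^2$ need not factor through the finite flat $\mathbb{Z}_p$-subalgebra $\mathcal{O}_L[\varepsilon]/\varepsilon^2$, so one cannot immediately apply \ref{Cpoints}; the fix is the integral-lattice trick of the previous paragraph, which reduces every uniqueness question to Kisin's $\mathbb{Z}_p$-statement.
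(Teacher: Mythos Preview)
Your proposal is correct and essentially matches the paper's approach. The paper writes ``One argues exactly as in \cite[1.6.4]{Kis08}'' and then sketches precisely the direct argument you outline: reduce $B$-valued points (for $B$ a finite local $\mathbb{Q}_p$-algebra) to $C$-valued points with $C\subset B$ finite flat over $\mathbb{Z}_p$, invoke the uniqueness of Breuil--Kisin modules, then test with $B=E$ and $B=E[\epsilon]/(\epsilon^2)$ to conclude the proper map is injective on closed points, an isomorphism on residue fields, and injective on tangent spaces --- hence a closed immersion.

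Your first route, deducing the result immediately from the closed immersion $\mathcal{L}^{\leq h}_{A,\operatorname{crys}} \hookrightarrow \mathcal{L}^{\leq h}_A$ composed with Kisin's original statement for $\mathcal{L}^{\leq h}_A$, is a legitimate shortcut the paper does not take explicitly; it avoids redoing the argument at the cost of relying on the algebraized inclusion into Kisin's scheme. Your framing via ``proper monomorphism $\Rightarrow$ closed immersion'' is also slightly cleaner than the paper's point-and-tangent-space check, but the content is the same.
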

\begin{proof}
One argues exactly as in \cite[1.6.4]{Kis08}. As explained in \emph{loc. cit.}, any point of $\mathcal{L} = \mathcal{L}^{\leq h}_{A,\operatorname{crys}}$ valued in a finite local $\mathbb{Q}_p$-algebra $B$ is induced from a $C$-valued point for a finite flat $\mathbb{Z}_p$-algebra $C \subset B$. We claim this implies $\mathcal{L}(B) \rightarrow (\operatorname{Spec}A)(B)$ is injective. Indeed given two $B$-valued points of $\mathcal{L}$ inducing the same $B$-valued point of $\operatorname{Spec}A$ the above produces a finite flat $\mathbb{Z}_p$-algebra $C \subset B$ so that both $B$-valued points factor through $\operatorname{Spec}B \rightarrow \operatorname{Spec}C$. The last sentence of \ref{Eheight} implies $\mathcal{L}^{\leq h}_{\operatorname{crys}}(V_C)$ consists of at most one element, and so \ref{Cpoints}, implies both $B$-valued points of $\mathcal{L}$ are induced from the same $C$-valued point.

Taking $B = E$ for any finite extension $E/\mathbb{Q}_p$ shows that the proper morphism $\mathcal{L}^{\leq h}_{A,\operatorname{crys}} \otimes_{\mathbb{Z}_p} \mathbb{Q}_p \rightarrow \operatorname{Spec}A[\frac{1}{p}]$ is injective on closed points, and at these closed points induces an isomorphism of residue fields. Taking $B = E[\epsilon]/(\epsilon^2)$ shows that at these closed points this morphism also induces an injection of tangent spaces. We conclude it is a closed immersion.
\end{proof}
\begin{proposition}\label{quotient}
	Let $A^{\leq h}_{\operatorname{crys}}$ denote the quotient of $A$ corresponding to the scheme-theoretic image of $\mathcal{L}^{\leq h}_{A,\operatorname{crys}} \rightarrow \operatorname{Spec} A$. Then
	\begin{enumerate}
		\item The morphism $\mathcal{L}^{\leq h}_{A,\operatorname{crys}}  \rightarrow \operatorname{Spec}A^{ \leq h}_{\operatorname{crys}}$ becomes an isomorphism after inverting $p$.
		\item For any finite $\mathbb{Q}_p$-algebra $B$, a map $A \rightarrow B$ factors through $A^{\leq h}_{\operatorname{crys}}$ if and only if $V_B = V_{A} \otimes_{A} B$ is crystalline with Hodge--Tate weights contained in $[0,h]$.
	\end{enumerate}
\end{proposition}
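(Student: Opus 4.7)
My plan is to deduce both statements from Lemma~\ref{closedimmersion}, the description of $C$-valued points in~\ref{Cpoints}, and the equivalence of Theorem~\ref{GLS}.

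For part~(1), set $\mathcal{L} := \mathcal{L}^{\leq h}_{A,\operatorname{crys}}$. Since $\mathcal{L}$ is projective (hence quasi-compact and quasi-separated) over $A$, formation of the scheme-theoretic image of $\mathcal{L} \to \operatorname{Spec}A$ commutes with the flat base change $\operatorname{Spec}A[\tfrac{1}{p}] \to \operatorname{Spec}A$. Thus $\operatorname{Spec}A^{\leq h}_{\operatorname{crys}}[\tfrac{1}{p}]$ is precisely the scheme-theoretic image of $\mathcal{L}[\tfrac{1}{p}] \to \operatorname{Spec}A[\tfrac{1}{p}]$. By Lemma~\ref{closedimmersion} the latter morphism is already a closed immersion, hence equals its own scheme-theoretic image, giving the desired isomorphism.

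For the forward direction of~(2), any factorisation $A \to A^{\leq h}_{\operatorname{crys}} \to B$ combined with~(1) produces a $B$-valued point of $\mathcal{L}[\tfrac{1}{p}]$. As in the argument recalled in the proof of Lemma~\ref{closedimmersion}, any such point is induced from a $C$-valued point of $\mathcal{L}$ for some finite flat local $\mathbb{Z}_p$-subalgebra $C \subset B$ through which $A \to B$ factors. By~\ref{Cpoints} this corresponds to an element $\mathfrak{M}_C \in \mathcal{L}^{\leq h}_{\operatorname{crys}}(V_C)$ for $V_C = V_A \otimes_A C$, and Theorem~\ref{GLS} then asserts that $V_B = V_C \otimes_{\mathbb{Z}_p} \mathbb{Q}_p$ is crystalline with Hodge--Tate weights in $[0,h]$.

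For the converse, I would start from a crystalline $V_B$ with Hodge--Tate weights in $[0,h]$. After reducing to the case that $B$ is local, the same technique as in Lemma~\ref{closedimmersion} writes $A \to B$ as $A \to C \hookrightarrow B$ with $C \subset B$ a local finite flat $\mathbb{Z}_p$-subalgebra. Then $V_C := V_A \otimes_A C$ is finite free over $C$, sits as a $G_K$-stable lattice inside $V_B$, and so by Proposition~\ref{kis} admits a Breuil--Kisin module of $E$-height $\leq h$; functoriality of the constructions in~\ref{etale} and Definition~\ref{Eheight} endows this module with a compatible $C$-action, placing it in $\mathcal{L}^{\leq h}(V_C)$, and Theorem~\ref{GLS} yields the Galois condition defining $\mathcal{L}^{\leq h}_{\operatorname{crys}}$. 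Via~\ref{Cpoints} this is the same data as a $C$-point of $\mathcal{L}$, and since $\mathcal{L} \to \operatorname{Spec}A$ factors through $\operatorname{Spec}A^{\leq h}_{\operatorname{crys}}$ by definition of the scheme-theoretic image, we obtain the required factorisation of $A \to B$. The only delicate point in the whole argument is producing the intermediate ring $C$, but this is handled precisely as in the proof of Lemma~\ref{closedimmersion}; everything else is a direct application of the ingredients already collected.
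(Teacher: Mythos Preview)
Your argument for part~(1) and the forward direction of~(2) is essentially the same as the paper's. The gap is in the converse of~(2), and it is precisely the point you flag as \emph{not} being delicate.

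When you write that the Breuil--Kisin module $\mathfrak{M}_C$ attached to $V_C$ acquires a $C$-action by functoriality and therefore lies in $\mathcal{L}^{\leq h}(V_C)$, you are implicitly asserting that $\mathfrak{M}_C$ is \emph{projective} over $\mathfrak{S}_C$: this is part of Definition~\ref{Ldefn}. But projectivity over $\mathfrak{S}_C$ does not follow from functoriality alone, even though $V_C$ is free over $C$. The paper makes this explicit (and Remark~\ref{points}(1) underscores it): the functor from finite $E$-height representations to Breuil--Kisin modules is not exact, and $\mathfrak{M}_C$ can fail to be $\mathfrak{S}_C$-projective. Without projectivity you do not get a $C$-point of $\mathcal{L}$ via~\ref{Cpoints}, and the argument stalls.

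The fix, which the paper supplies, is to invoke a result of Kisin (\cite[1.6.4]{Kis08}, second-to-last paragraph): after enlarging $C$ inside $B$ one can arrange that $\mathfrak{M}_C$ becomes $\mathfrak{S}_C$-projective. With that extra input your outline goes through. So the ``only delicate point'' is not the construction of $C$ but the projectivity of the Breuil--Kisin module over $\mathfrak{S}_C$, and it requires an ingredient beyond those you list.
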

\begin{proof}
	Part (1) follows from Lemma~\ref{closedimmersion}. As a consequence, is $B$ is a finite $\mathbb{Q}_p$-algebra, a map $A \rightarrow B$ factors through $A^{\leq h}_{\operatorname{crys}}$ if and only if $A \rightarrow B$ is induced from a $B$-valued point of $\mathcal{L}^{\leq h}_{A,\operatorname{crys}}$. 
	
	In proving (2) we may assume $B$ is local. As in the proof of Lemma~\ref{closedimmersion}, any $B$-valued point of $\mathcal{L}^{\leq h}_{A,\operatorname{crys}}$ is induced from a $C$-valued point with $C$ finite flat over $\mathbb{Z}_p$. This in turn gives rise to an $\mathfrak{M}_C \in \mathcal{L}^{\leq h}_{\operatorname{crys}}(V_C)$. The fact that the $G_K$-action on $\mathfrak{M}_C \otimes_{\mathbb{Z}_p} W(C^\flat)$ satisfies our usual condition implies, after Theorem~\ref{GLS}, that $V_C = V_{A} \otimes_{A} C$ is a $\mathbb{Z}_p$-lattice inside the crystalline representation $V_C[\frac{1}{p}]$ whose Hodge--Tate weights are contained in $[0,h]$. Thus the same is true for $V_B = V_C[\frac{1}{p}] \otimes_{C[\frac{1}{p}]} B$.
	
	For the converse, suppose $A \rightarrow B$ is such that $V_B = V_{A} \otimes_{A} B$ is crystalline with Hodge--Tate weights contained in $[0,h]$. Then there is a finite flat $\mathbb{Z}_p$-algebra $C \subset B$ so that $A \rightarrow B$ factors through $C$. As $V_C \otimes_{\mathbb{Z}_p} \mathbb{Q}_p$ is a $G_K$-stable $\mathbb{Q}_p$-subspace of $V_B$, $V_C \otimes_{\mathbb{Z}_p} \mathbb{Q}_p$ is also crystalline with Hodge--Tate weights contained in $[0,h]$. Theorem~\ref{GLS} implies there exists a Breuil--Kisin module $\mathfrak{M}_C$ and a $\varphi,G_{K_\infty}$-equivariant identification
	$$
	\mathfrak{M}_C \otimes_{\mathfrak{S}} W(C^\flat) \cong V_C \otimes_{\mathbb{Z}_p} W(C^\flat)
	$$
	such that $(\sigma-1)(m) \in \mathfrak{M}_C \otimes_{\mathfrak{S}} [\pi^\flat]\varphi^{-1}(\mu)A_{\operatorname{inf}}$ for every $\sigma \in G_K$ and every $m \in \mathfrak{M}_C$. By functoriality $\mathfrak{M}_C$ is an $\mathfrak{S}_C$-module, but it need not be projective. However, in the second to last paragraph of the proof of \cite[1.6.4]{Kis08} it is shown that, at the cost of enlarging $C$, one can arrange that $\mathfrak{M}_C$ is projective over $\mathfrak{S}_C$. Thus $A \rightarrow B$ arises from a $C$-point of $\mathcal{L}$ for some $C \subset B$ finite flat over $\mathbb{Z}_p$, and therefore from a $B$-point of $\mathcal{L}$. We conclude that $A \rightarrow B$ factors through $A^{ \leq h}_{\operatorname{crys}}$.
\end{proof}

\begin{remark}\label{points}
	\begin{enumerate}
		\item The fact that $\mathfrak{M}_C$ need not be $\mathfrak{S}_C$-projective, even though $V_C$ is projective as a $C$-module is related to the fact that the functor from finite $E$-height representations to Breuil--Kisin modules is not exact. 
		\item There is one instance in which $V_C$ being $C$-projective implies $\mathfrak{M}_C$ is $\mathfrak{S}_C$-projective. This is when $C$ is the ring of integers of a finite extension of $\mathbb{Q}_p$. See for example \cite[Proposition 3.4]{GLS} for a proof. In particular, if $E/\mathbb{Q}_p$ is finite and $V$ is a $G_K$-stable $\mathcal{O}_E$-lattice inside a crystalline representation of $G_K$ then the Breuil--Kisin module associated to $V$ is an element of $\mathcal{L}^{\leq p}_{\operatorname{crys}}(V)$.
		\end{enumerate}
\end{remark}
\section{Strong divisibility}\label{strongdivisibility}

For the rest of the paper we assume $K$ is an unramified extension of $\mathbb{Q}_p$.
\subsection{Strong divisibility}\label{SD}

\begin{sub}
	Let $\mathbb{F}$ be a finite field of characteristic $p$ and $V_{\mathbb{F}}$ a finite free $\mathbb{F}$-module equipped with a continuous $\mathbb{F}$-linear action of $G_{K_\infty}$.
\end{sub}
\begin{definition}\label{strong}
	Let $\mathcal{L}^{\leq p}_{\operatorname{SD}}(V_{\mathbb{F}})$ denote the set of $\mathfrak{M} \in \mathcal{L}^{\leq p}(V_{\mathbb{F}})$ for which there exists a $k[[u]]$-basis $(m_i)$ of $\mathfrak{M}$ and integers $r_i$ such that $(u^{r_i}m_i)$ forms a $k[[u^p]]$-basis of $\varphi(\mathfrak{M})$. We call $\mathfrak{M}$ satisfying this condition strongly divisible.
\end{definition}

We are going to relate $\mathcal{L}^{\leq p}_{\operatorname{SD}}(V_{\mathbb{F}})$ with $\mathcal{L}^{\leq p}_{\operatorname{crys}}(V_{\mathbb{F}})$.\footnote{Note that the latter set only makes sense when the $G_{K_\infty}$-action on $V_{\mathbb{F}}$ extends to a continuous $G_K$-action.} Before doing so we record how some basic operations on $V_{\mathbb{F}}$ respect $\mathcal{L}_{\operatorname{SD}}^{\leq p}(V_{\mathbb{F}})$ and $\mathcal{L}_{\operatorname{crys}}^{\leq p}(V_{\mathbb{F}})$.

\begin{lemma}\label{exactsequences}
	Let $V_{\mathbb{F}}$ be as above and suppose $W_{\mathbb{F}}$ is another continuous representation of $G_{K_\infty}$ on an $\mathbb{F}$-vector space. Suppose $\mathfrak{M} \in \mathcal{L}^{\leq p}_{\operatorname{SD}}(V_{\mathbb{F}})$ and $\mathfrak{N} \in \mathcal{L}^{\leq p}(W_{\mathbb{F}})$.
	\begin{enumerate}
		\item Suppose there exists a surjective $\varphi$-equivariant map of $k[[u]]$-modules $f\colon \mathfrak{M} \rightarrow \mathfrak{N}$. Then $\mathfrak{N} \in \mathcal{L}^{\leq p}_{\operatorname{SD}}(W_{\mathbb{F}})$.
		\item Suppose there exists an injective $\varphi$-equivariant map of $k[[u]]$-modules $f\colon \mathfrak{N} \rightarrow \mathfrak{M}$ with $u$-torsionfree cokernel. Then $\mathfrak{N} \in \mathcal{L}^{\leq p}_{\operatorname{SD}}(W_{\mathbb{F}})$.
	\end{enumerate}
\end{lemma}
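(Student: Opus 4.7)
My plan is to prove both parts by exploiting the explicit matrix form of strong divisibility. Namely, writing a strongly divisible basis $(m_j)_{j=1}^n$ of $\mathfrak{M}$ with integers $r_j \in [0,p]$ means the matrix of Frobenius in this basis factors as $F = D C$ with $D = \operatorname{diag}(u^{r_j})$ and $C \in GL_n(k[[u^p]])$; equivalently, $\varphi(\mathfrak{M}) = \bigoplus_j k[[u^p]] u^{r_j} m_j$ inside $\mathfrak{M}$.

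The main technical lemma I aim to prove, from which both (1) and (2) follow, is: \emph{if $\mathfrak{N} \subset \mathfrak{M}$ is a $\varphi$-stable saturated $k[[u]]$-submodule, then there is a $k[[u]]$-basis of $\mathfrak{M}$ witnessing strong divisibility such that $\mathfrak{N}$ is spanned by a subset of this basis}. Granted this, (2) is immediate by reading off the spanning subset, while (1) follows by applying the lemma with $\mathfrak{N} = \ker f$ (which is $\varphi$-stable and saturated in $\mathfrak{M}$ since $\mathfrak{N}$ is $u$-torsionfree) and reading off the complementary basis vectors for the quotient $\mathfrak{M}/\ker f$.

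To prove the technical lemma, I would proceed by induction on the rank of $\mathfrak{N}$. The starting point is the intrinsic Hodge filtration $\operatorname{Fil}^s \bar{\mathfrak{M}} := \operatorname{span}_k\{\bar{m}_j: r_j \geq s\}$ on $\bar{\mathfrak{M}} = \mathfrak{M}/u\mathfrak{M}$, whose restriction to $\bar{\mathfrak{N}}$ makes sense because $\bar{\mathfrak{N}} \hookrightarrow \bar{\mathfrak{M}}$ injects by the saturation hypothesis. I would pick a nonzero graded class $\bar{n} \in \operatorname{gr}^s \bar{\mathfrak{N}}$ for the largest $s$ with $\operatorname{gr}^s \bar{\mathfrak{N}} \neq 0$, lift it to $n \in \mathfrak{N}$, and then perform a judicious basis change on $\mathfrak{M}$, replacing one of the $m_j$ with $r_j = s$ by $n$ after correcting $n$ modulo $\operatorname{span}\{m_{j'}: r_{j'} > s\} + u\mathfrak{M}$, and verify that the new basis still witnesses strong divisibility. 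Passing to $\mathfrak{N}/k[[u]] n \subset \mathfrak{M}/k[[u]] n$ and invoking induction completes the argument.

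The main obstacle is verifying that the replacement preserves strong divisibility, i.e., that the new Frobenius matrix still factors as a diagonal times a $k[[u^p]]$-invertible matrix. This is delicate because $k[[u^p]]$-invertibility depends sensitively on the precise form of the entries; the argument will need to use both the Hodge-filtration compatibility of the lift $n$ (ensuring the leading terms fall into the right $u$-adic stratum) and the constraint $r_j \in [0, p]$, which bounds the correction terms coming from adjacent Hodge strata and keeps them in $k[[u^p]]$ after dividing by $u^s$. I expect the cleanest way to organise the verification is via a direct matrix calculation in the block decomposition induced by the Hodge filtration, exploiting the identification $k[[u]] = \bigoplus_{i=0}^{p-1} u^i k[[u^p]]$ to track which coefficients remain in $k[[u^p]]$ after the change of basis.
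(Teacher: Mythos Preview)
The paper does not prove this lemma directly; it cites \cite[5.4.6]{B18}. So there is no argument in the paper to compare against, and you are attempting to supply one from scratch. Your reduction to the technical lemma (an SD basis of $\mathfrak{M}$ adapted to the $\varphi$-stable saturated submodule $\mathfrak{N}$) is sound, and that lemma is precisely what the cited reference establishes.

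However, your inductive proof of the technical lemma has a genuine gap. After swapping $n$ into the basis you want to pass to $\mathfrak{N}/k[[u]]n \subset \mathfrak{M}/k[[u]]n$ and invoke the induction hypothesis. But the hypothesis requires the ambient module to carry a Frobenius and be strongly divisible, and for that $k[[u]]n$ must be $\varphi$-stable. A generic $n\in\mathfrak{N}$ does not span a $\varphi$-stable line; indeed $\mathfrak{N}$ need not contain any $\varphi$-stable line at all (take $W_{\mathbb{F}}$ irreducible of dimension $>1$). So the quotient has no Frobenius and the induction does not apply. There is a second, related issue: your ``correction of $n$ modulo $\operatorname{span}\{m_{j'}:r_{j'}>s\}+u\mathfrak{M}$'' pushes $n$ out of $\mathfrak{N}$, which defeats the purpose of inserting it into the adapted basis. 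Finally, note that your filtration $\operatorname{Fil}^s\bar{\mathfrak{M}}=\operatorname{span}\{\bar m_j:r_j\ge s\}$ is not intrinsic once both weights $0$ and $p$ occur: with $r_1=0$, $r_2=p$ the pair $(m_1,m_2+m_1)$ is again an SD basis with the same weights (the relevant change-of-basis matrix over $k[[u^p]]$ is $\bigl(\begin{smallmatrix}1&u^p\\0&1\end{smallmatrix}\bigr)$), yet $\operatorname{Fil}^p$ changes from $k\bar m_2$ to $k(\bar m_1+\bar m_2)$.

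The argument is repairable, but you must use $\varphi$-stability of $\mathfrak{N}$ more directly rather than discarding it in the inductive step. One route is to work with the inclusion of free $k[[u^p]]$-modules $\varphi(\mathfrak{N})\subset\varphi(\mathfrak{M})=\bigoplus_j k[[u^p]]\,u^{r_j}m_j$ together with the inclusion $\mathfrak{N}\subset\mathfrak{M}$, and produce the adapted basis all at once via a simultaneous normal-form argument, rather than peeling off one vector at a time.
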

\begin{proof}
	This follows from part (1) of \cite[5.4.6]{B18}.
\end{proof}
\begin{lemma}\label{extend}
	For any finite extension $\mathbb{F}'$ of $\mathbb{F}$ the rule $\mathfrak{M} \mapsto \mathfrak{M} \otimes_{\mathbb{F}} \mathbb{F}'$ defines a map
	$$
	\mathcal{L}^{\leq h}(V_{\mathbb{F}}) \rightarrow \mathcal{L}^{\leq h}(V_{\mathbb{F}} \otimes_{\mathbb{F}} \mathbb{F}')
	$$
	Further, $\mathfrak{M} \in \mathcal{L}^{\leq p}_{\operatorname{SD}}(V_{\mathbb{F}})$ if and only if its image lies in $\mathcal{L}^{\leq p}_{\operatorname{SD}}(V_{\mathbb{F}} \otimes_{\mathbb{F}} \mathbb{F}')$. If $V_{\mathbb{F}}$ admits a $G_K$-action then likewise $\mathfrak{M} \in \mathcal{L}^{\leq h}_{\operatorname{crys}}(V_{\mathbb{F}})$ if and only if its image lies in $\mathcal{L}^{\leq h}_{\operatorname{crys}}(V_{\mathbb{F}} \otimes_{\mathbb{F}} \mathbb{F})$.
\end{lemma}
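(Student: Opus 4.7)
The assertion that $\mathfrak{M} \mapsto \mathfrak{M} \otimes_{\mathbb{F}} \mathbb{F}'$ defines a map $\mathcal{L}^{\leq h}(V_{\mathbb{F}}) \to \mathcal{L}^{\leq h}(V_{\mathbb{F}} \otimes_{\mathbb{F}} \mathbb{F}')$ is routine given flatness of $\mathbb{F}'$ over $\mathbb{F}$: finite projectivity over $\mathfrak{S}_{\mathbb{F}}$, the identity $\mathfrak{M} \otimes_{\mathfrak{S}} \mathcal{O}_{\mathcal{E}} = M$, and the containments $E(u)^h \mathfrak{M} \subset \mathfrak{M}^\varphi \subset \mathfrak{M}$ are all preserved by flat base change, and $V_{\mathbb{F}} \otimes_{\mathbb{F}} \mathbb{F}' \otimes_{\mathbb{Z}_p} W(C^\flat) = (V_{\mathbb{F}} \otimes_{\mathbb{Z}_p} W(C^\flat)) \otimes_{\mathbb{F}} \mathbb{F}'$.

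For the SD equivalence, the ``only if'' direction is a direct construction: fixing an $\mathbb{F}$-basis $(f_j)$ of $\mathbb{F}'$, if $(m_i)$ together with integers $r_i$ realise SD for $\mathfrak{M}$, then the $k[[u]]$-basis $(m_i \otimes f_j)_{i,j}$ of $\mathfrak{M} \otimes_{\mathbb{F}} \mathbb{F}'$ with weights $r_i$ (repeated across the index $j$) realises SD, using that the Frobenius on $\mathfrak{M} \otimes_{\mathbb{F}} \mathbb{F}'$ is $\varphi \otimes \mathrm{id}_{\mathbb{F}'}$ and hence $\mathbb{F}'$-linear, so that $\varphi(\mathfrak{M} \otimes_{\mathbb{F}} \mathbb{F}') = \varphi(\mathfrak{M}) \otimes_{\mathbb{F}} \mathbb{F}'$. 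The converse will be handled by faithfully flat descent along $\mathbb{F} \to \mathbb{F}'$. For each weight multiset $\vec r$---of which only finitely many are relevant, since $u^p \mathfrak{M} \subset \mathfrak{M}^\varphi$ forces $r_i \leq p$---the assertion that $\mathfrak{M}$ admits a basis realising SD with weights $\vec r$ corresponds to the non-emptiness of a locally closed subscheme of the scheme of $k[[u]]$-bases of $\mathfrak{M}$, cut out by the closed condition $u^{r_i} m_i \in \varphi(\mathfrak{M})$ and the open condition that these elements generate $\varphi(\mathfrak{M})$ as a $k[[u^p]]$-module. These subschemes commute with extension of scalars from $\mathbb{F}$ to $\mathbb{F}'$, so faithful flatness of the extension gives the descent.

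For the crystalline equivalence, I appeal to Lemma~\ref{ideal}: applied with $B = \mathbb{F}$, it yields an ideal $I \subset \mathbb{F}$ whose vanishing is equivalent to $\mathfrak{M} \in \mathcal{L}^{\leq h}_{\operatorname{crys}}(V_{\mathbb{F}})$. Since the minimal ideals $I(m,\sigma,i) \subset \mathbb{F}$ produced by Lemma~\ref{smallest} (applied to the flat $\mathbb{Z}_p$-module $Q = W(C^\flat)/A_{\operatorname{inf}}$) are compatible with flat base change, the ideal attached to $\mathfrak{M} \otimes_{\mathbb{F}} \mathbb{F}'$ is $I \cdot \mathbb{F}' = I \otimes_{\mathbb{F}} \mathbb{F}'$. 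Faithful flatness of $\mathbb{F}'/\mathbb{F}$ then yields $I = 0$ iff $I \cdot \mathbb{F}' = 0$, giving the equivalence. The principal technical point is the ``if'' direction of the SD part, where the existential nature of the definition makes descent non-obvious until one packages SD as a finite union of locally closed subschemes whose formation commutes with base change.
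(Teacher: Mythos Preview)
Your argument is essentially correct. For the existence of the map, the forward SD implication, and the crystalline equivalence, you proceed along the same lines as the paper, which simply declares these to ``follow immediately from the definitions.'' Your appeal to Lemma~\ref{ideal} for the crystalline part is clean; note you do not even need base-change compatibility of the individual $I(m,\sigma,i)$, only the universal property of $I$: since $\mathbb{F}$ is a field and $\mathbb{F}' \neq 0$, the map $\mathbb{F} \to \mathbb{F}'$ factors through $\mathbb{F}/I$ iff $I=0$.

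The one genuine difference is the descent direction for SD. Your approach---packaging ``admits an SD basis with fixed weight vector $\vec r$'' as the non-emptiness of a locally closed subscheme of the space of $k[[u]]$-bases of $\mathfrak{M}$, then invoking faithful flatness---can be made to work, but the space of $k[[u]]$-bases is not a finite-type $\mathbb{F}$-scheme, so you would need to argue that the SD condition is detectable after truncation modulo a bounded power of $u$, or else pass to an ind-scheme setting. This is doable but requires more care than your sketch indicates.

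The paper sidesteps this entirely. It observes that the inclusion $\mathfrak{M} \hookrightarrow \mathfrak{M} \otimes_{\mathbb{F}} \mathbb{F}'$ is a $\varphi$-equivariant injection of $k[[u]]$-modules whose cokernel $\mathfrak{M} \otimes_{\mathbb{F}} (\mathbb{F}'/\mathbb{F})$ is free over $k[[u]]$, hence $u$-torsionfree. Then Lemma~\ref{exactsequences}(2) applies directly: strong divisibility of $\mathfrak{M} \otimes_{\mathbb{F}} \mathbb{F}'$ forces strong divisibility of $\mathfrak{M}$. This is a one-line reduction to an already-established structural fact (itself imported from \cite[5.4.6]{B18}), and it avoids any scheme-theoretic bookkeeping. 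Your approach has the virtue of being self-contained, whereas the paper's is shorter but leans on the external reference.
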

\begin{proof}
	The only part which does not follow immediately from the definitions is that $\mathfrak{M} \otimes_{\mathbb{F}} \mathbb{F}' \in \mathcal{L}^{\leq p}_{\operatorname{SD}}(V_{\mathbb{F}} \otimes_{\mathbb{F}} \mathbb{F}')$ implies $\mathfrak{M} \in \mathcal{L}^{\leq p}_{\operatorname{SD}}(V_{\mathbb{F}})$. For this note that the inclusion $\mathfrak{M} \rightarrow \mathfrak{M} \otimes_{\mathbb{F}} \mathbb{F}'$ is $\varphi$-equivariant with $u$-torsionfree cokernel. Thus we can apply (2) of Lemma~\ref{exactsequences}.
\end{proof}

\begin{lemma}\label{unramtwist!}
	For any unramified $\mathbb{F}$-valued character $\psi$ of $G_K$ there is a bijection
	$$
	\mathcal{L}^{\leq h}(V_{\mathbb{F}}) \xrightarrow{\sim} \mathcal{L}^{\leq h}(V_{\mathbb{F}} \otimes_{\mathbb{F}} \mathbb{F}(\psi))
	$$
	which identifies $\mathcal{L}^{\leq p}_{\operatorname{SD}}(V_{\mathbb{F}})$ with $\mathcal{L}^{\leq p}_{\operatorname{SD}}(V_{\mathbb{F}} \otimes_{\mathbb{F}} \mathbb{F}(\psi))$ and, if $V_\mathbb{F}$ admits a $G_K$-action, identifies $\mathcal{L}^{\leq h}_{\operatorname{crys}}(V_{\mathbb{F}})$ with $\mathcal{L}^{\leq h}_{\operatorname{crys}}(V_{\mathbb{F}} \otimes_{\mathbb{F}} \mathbb{F}(\psi))$. 
	\end{lemma}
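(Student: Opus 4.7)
The plan is to produce the bijection by tensoring with a canonical rank-one Breuil--Kisin module attached to $\psi$. Concretely, since $K$ is unramified over $\mathbb{Q}_p$ and $\psi$ is unramified, the restriction $\psi|_{G_{K_\infty}}$ factors through the Frobenius on the residue field $k$, so the étale $\varphi$-module of $\mathbb{F}(\psi)|_{G_{K_\infty}}$ is free of rank one over $k((u)) \otimes_{\mathbb{F}_p} \mathbb{F}$ with $\varphi$ acting on a chosen basis $e$ by a unit $\lambda \in (k\otimes_{\mathbb{F}_p}\mathbb{F})^\times$. Taking the $\mathfrak{S}_{\mathbb{F}}$-submodule $\mathfrak{M}(\psi) := \mathfrak{S}_{\mathbb{F}} \cdot e$ yields a Breuil--Kisin module of $E$-height $0$, so $\mathfrak{M}(\psi) \in \mathcal{L}^{\leq h}(\mathbb{F}(\psi))$ and, tautologically, $\mathfrak{M}(\psi) \in \mathcal{L}^{\leq p}_{\operatorname{SD}}(\mathbb{F}(\psi))$ with all $r_i = 0$.

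The map of the lemma is then $\mathfrak{M} \mapsto \mathfrak{M} \otimes_{\mathfrak{S}_{\mathbb{F}}} \mathfrak{M}(\psi)$, with inverse $\mathfrak{N} \mapsto \mathfrak{N} \otimes_{\mathfrak{S}_{\mathbb{F}}} \mathfrak{M}(\psi^{-1})$ (using that $\mathfrak{M}(\psi) \otimes \mathfrak{M}(\psi^{-1})$ is canonically the trivial Breuil--Kisin module, by functoriality and multiplicativity of $\mathfrak{M}(-)$). Since $\mathfrak{M}(\psi)$ has height $0$, the cokernel of $\varphi^*(\mathfrak{M} \otimes \mathfrak{M}(\psi)) \to \mathfrak{M} \otimes \mathfrak{M}(\psi)$ is the same as that of $\varphi^*\mathfrak{M} \to \mathfrak{M}$, so the $E$-height $\leq h$ condition is preserved. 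For strong divisibility: if $(m_i)$ is a $k[[u]]$-basis of $\mathfrak{M}$ with $(u^{r_i} m_i)$ a $k[[u^p]]$-basis of $\varphi(\mathfrak{M})$, then $(m_i \otimes e)$ is a $k[[u]]\otimes\mathbb{F}$-basis of $\mathfrak{M} \otimes \mathfrak{M}(\psi)$ and $\varphi(m_i \otimes e) = \lambda \cdot (\varphi(m_i) \otimes e)$, so $(u^{r_i}(m_i \otimes e))$ forms a $k[[u^p]]\otimes\mathbb{F}$-basis of $\varphi(\mathfrak{M} \otimes \mathfrak{M}(\psi))$ since $\lambda$ is a unit. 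This gives the SD identification.

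For the crystalline condition the expected obstacle is verifying that $\mathfrak{M}(\psi)$ itself satisfies the Galois condition of Definition~\ref{Lcrys}, since the Galois action of $G_K$ on $\mathfrak{M}(\psi) \otimes W(C^\flat)$ need not be transparent from the construction over $G_{K_\infty}$. The cleanest way is to lift: let $\tilde\psi: G_K \to W(\mathbb{F})^\times$ be the Teichmüller lift of $\psi$, which is unramified and hence crystalline with Hodge--Tate weight $0$. By Remark~\ref{points}(2), the Breuil--Kisin module $\mathfrak{M}(\tilde\psi)$ lies in $\mathcal{L}^{\leq 0}_{\operatorname{crys}}(\tilde\psi)$, and reducing modulo $p$ gives $\mathfrak{M}(\psi) \in \mathcal{L}^{\leq 0}_{\operatorname{crys}}(\mathbb{F}(\psi))$ (since the defining condition $(\sigma-1)(m) \in \mathfrak{M}(\tilde\psi) \otimes [\pi^\flat]\varphi^{-1}(\mu) A_{\operatorname{inf}}$ descends).

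Finally, given $\mathfrak{M} \in \mathcal{L}^{\leq h}_{\operatorname{crys}}(V_{\mathbb{F}})$ and the factorisation
\[
(\mathfrak{M} \otimes \mathfrak{M}(\psi)) \otimes_{\mathfrak{S}} W(C^\flat) \;\cong\; \bigl(\mathfrak{M} \otimes_{\mathfrak{S}} W(C^\flat)\bigr) \otimes_{W(C^\flat)_{\mathbb{F}}} \bigl(\mathfrak{M}(\psi) \otimes_{\mathfrak{S}} W(C^\flat)\bigr),
\]
the induced $G_K$-action is the tensor product action, so the Leibniz identity
\[
(\sigma-1)(m \otimes f) \;=\; (\sigma-1)(m) \otimes \sigma(f) + m \otimes (\sigma-1)(f)
\]
together with the observations $\sigma(m) \in \mathfrak{M} \otimes A_{\operatorname{inf}}$ and $\sigma(f) \in \mathfrak{M}(\psi) \otimes A_{\operatorname{inf}}$ (consequences of the crystalline condition applied to each factor) shows both summands lie in $(\mathfrak{M} \otimes \mathfrak{M}(\psi)) \otimes [\pi^\flat]\varphi^{-1}(\mu) A_{\operatorname{inf}}$. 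The same argument applied to $\psi^{-1}$ shows the inverse map preserves $\mathcal{L}^{\leq h}_{\operatorname{crys}}$, completing the proof.
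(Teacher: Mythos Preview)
Your proof is correct and rests on the same idea as the paper's: twist by the rank-one Breuil--Kisin module attached to $\psi$. The paper makes this concrete by choosing $y \in (\overline{k}\otimes_{\mathbb{F}_p}\mathbb{F})^\times$ with $\sigma(y)=\psi(\sigma)^{-1}y$ and sending $\mathfrak{M}$ to $y\mathfrak{M}$ inside $V_{\mathbb{F}}(\psi)\otimes W(C^\flat)$; your basis vector $e$ is exactly this $y$, and your $\lambda$ is $\varphi(y)/y \in (k\otimes\mathbb{F})^\times$.

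The one genuine difference is in the crystalline step. The paper observes directly that the $G_K$-action on $y\mathfrak{M}\otimes W(C^\flat)$ is the original action conjugated by multiplication by the $A_{\operatorname{inf}}$-unit $y$, so $(\sigma-1)(ym)=y(\sigma-1)(m)$ and the condition transfers immediately. You instead first establish $\mathfrak{M}(\psi)\in\mathcal{L}^{\leq 0}_{\operatorname{crys}}(\mathbb{F}(\psi))$ by lifting $\psi$ to a crystalline character and reducing, then use the Leibniz identity to propagate the condition through the tensor product. Both work; the paper's route is shorter here, while yours has the advantage of proving the more structural fact that $\mathcal{L}^{\leq h}_{\operatorname{crys}}$ is multiplicative under tensoring with any height-zero crystalline object. (Incidentally, your lifting detour is avoidable: since $\sigma(y)=\psi(\sigma)^{-1}y$ and the action on $1\in\mathbb{F}(\psi)$ is by $\psi(\sigma)$, the element $e=y\cdot 1$ is literally $G_K$-fixed, so $(\sigma-1)(e)=0$.)
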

	\begin{proof}
	First we note there exists a $y \in (\overline{k} \otimes_{\mathbb{F}_p} \mathbb{F})^\times$ such that $\sigma(y) = \psi(\sigma)^{-1}y$ for all $\sigma \in G_K$, due to the assumption $\psi$ is unramified. Using $y$ we can describe the etale $\varphi$-module associated to $V_{\mathbb{F}}(\psi) := V_{\mathbb{F}} \otimes_{\mathbb{F}} \mathbb{F}(\psi)$ in terms of that associated to $V_{\mathbb{F}}$. To do this consider the $\mathbb{F}$-linear map $V_{\mathbb{F}} \rightarrow V_{\mathbb{F}}(\psi)$ given by $v \mapsto v \otimes 1$. Applying $\otimes_{\mathbb{Z}_p} W(C^\flat)$ induces an identification
	\begin{equation}\label{time}
	V_{\mathbb{F}} \otimes_{\mathbb{Z}_p} W(C^\flat) \xrightarrow{\sim} V_{\mathbb{F}}(\psi) \otimes_{\mathbb{Z}_p} W(C^\flat)
	\end{equation}
	which is $\varphi$-equivariant when both sides are equipped with the Frobenius which is trivial on $V_{\mathbb{F}}$ and $V_{\mathbb{F}}(\psi)$. If $M \subset V_{\mathbb{F}} \otimes_{\mathbb{Z}_p} W(C^\flat)$ is the etale $\varphi$-module associated to $V_{\mathbb{F}}$ then its image $M(\psi)$ under this map is a finitely generated $\mathcal{O}_{\mathcal{E}}$-submodule of $(V_{\mathbb{F}} \otimes_{\mathbb{F}} \mathbb{F}(\psi)) \otimes_{\mathbb{Z}_p} W(C^\flat)$ on which Frobenius acts by an isomorphism and on which $G_{K_\infty}$ acts on by the character $\psi$. Via the inclusion $\overline{k} \rightarrow C^\flat$ we can view $y$ as an element of $C^\flat \otimes_{\mathbb{F}_p} \mathbb{F}$ and so have $yM(\psi) \subset V_{\mathbb{F}}(\psi) \otimes_{\mathbb{Z}_p} W(C^\flat)$. The $G_{K_\infty}$-action on $yM(\psi)$ is then trivial and, since $\varphi(y)/y \in (k \otimes_{\mathbb{F}_p} \mathbb{F})^\times \subset \mathfrak{S}_{\mathbb{F}}^\times$, the Frobenius on $yM(\psi)$ is still an isomorphism. Hence $yM(\psi)$ equals the etale $\varphi$-module associated to $V_{\mathbb{F}}(\psi)$. 
	
	Using this we can describe a map $\mathcal{L}^{\leq h}(V_{\mathbb{F}}) \rightarrow \mathcal{L}^{\leq h}(V_{\mathbb{F}}(\psi))$ sending $\mathfrak{M} \subset M$ onto $y \mathfrak{M}(\psi) \subset yM(\psi)$ where $\mathfrak{M}(\psi)$ equals the image of $\mathfrak{M}$ under \eqref{time}. Clearly this is a bijection. When $h=p$ it also identifies $\mathcal{L}^{\leq p}_{\operatorname{SD}}(V_{\mathbb{F}})$ and $\mathcal{L}^{\leq p}_{\operatorname{SD}}(V_{\mathbb{F}}(\psi))$ when when $\mathfrak{M}$ admits a basis as in Definition~\ref{strong} then so does $y\mathfrak{M}(\psi)$, and vice-versa. Finally, if $V_{\mathbb{F}}$ admits a $G_K$-action and $\mathfrak{M} \in \mathcal{L}^{\leq h}(V_{\mathbb{F}})$ then the $G_K$-action on $y\mathfrak{M}(\psi) \otimes_{\mathfrak{S}} W(C^\flat)$ identifies with the $G_K$-action on $\mathfrak{M} \otimes_{\mathfrak{S}} W(C^\flat)$ twisted by $\psi$. It follows that identifies $\mathcal{L}^{\leq h}_{\operatorname{crys}}(V_{\mathbb{F}})$ and $\mathcal{L}^{\leq h}_{\operatorname{crys}}(V_{\mathbb{F}}(\psi))$ are also identified.
	\end{proof}
	\begin{sub}\label{functorialremark}
	Note that for any $\mathbb{F}$-algebra $B$ the argument above shows that, for any finite free $B$-module $V_B$ equipped with a continuous $G_K$-action, there are functorial bijections $\mathcal{L}^{\leq h}_{\operatorname{crys}}(V_B) \cong \mathcal{L}^{\leq h}_{\operatorname{crys}}(V_B \otimes_B B(\psi))$ and $\mathcal{L}^{\leq h}(V_B) \cong \mathcal{L}^{\leq h}(V_B \otimes_B B(\psi))$.
	\end{sub}

	Finally let $L/K$ be an unramified extension corresponding to a finite extension $l/k$ of residue fields. Set $L_\infty = LK_\infty$. If $\mathfrak{S}_L := W(l)[[u]]$ is embedded into $W(C^\flat)$ as with $\mathfrak{S}$, by mapping $u$ onto $[\pi^\flat]$, then we can make sense of $\mathcal{L}^{\leq h}(V_{\mathbb{F}}|_{G_{L_\infty}})$ as in Definition~\ref{Ldefn}, replacing $K$ and $K_\infty$ by $L$ and $L_\infty$. It's elements are modules over  $\mathfrak{S}_{L,\mathbb{F}} = \mathfrak{S}_L \otimes_{\mathbb{Z}_p} \mathbb{F}$. We write $f\colon \mathfrak{S} \rightarrow \mathfrak{S}_L$ for the inclusion induced by the inclusion $k \subset l$.
	\begin{lemma}\label{inducerestrict}
		\begin{enumerate}
			\item There is a map
			$$
			f^*\colon\mathcal{L}^{\leq h}(V_{\mathbb{F}}) \rightarrow \mathcal{L}^{\leq h}(V_{\mathbb{F}}|_{G_{L_\infty}})
			$$
			sending $\mathfrak{M} \in \mathcal{L}^{\leq h}(V_{\mathbb{F}})$ onto $\mathfrak{M} \otimes_{\mathfrak{S},f} \mathfrak{S}_L$. If $\mathfrak{M} \in \mathcal{L}^{\leq p}(V_{\mathbb{F}})$ then $\mathfrak{M} \in \mathcal{L}^{\leq p}_{\operatorname{SD}}(V_{\mathbb{F}})$ if and only if $f^*\mathfrak{M} = \mathfrak{M} \otimes_{\mathfrak{S}} \mathfrak{S}_L \in \mathcal{L}^{\leq p}_{\operatorname{SD}}(V_{\mathbb{F}}|_{G_{L_\infty}})$. If $V_{\mathbb{F}}$ admits a $G_K$-action then this map sends $\mathcal{L}^{\leq h}_{\operatorname{crys}}(V_{\mathbb{F}})$ into $\mathcal{L}^{\leq h}_{\operatorname{crys}}(V_{\mathbb{F}}|_{G_L})$.
			\item If $V_{\mathbb{F}}$ is a $G_{L_\infty}$-representation then restriction of scalars along $f$ describes a map
			$$
			f_*\colon\mathcal{L}^{\leq h}(V_{\mathbb{F}}) \rightarrow \mathcal{L}^{\leq h}(\operatorname{Ind}^{G_{K_\infty}}_{G_{L_\infty}}V_{\mathbb{F}})
			$$
		\end{enumerate}
	\end{lemma}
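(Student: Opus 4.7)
For Part (1), the first step is to identify the etale $\varphi$-module of $V_\mathbb{F}|_{G_{L_\infty}}$ with the base change $M \otimes_{\mathcal{O}_{\mathcal{E}}} \mathcal{O}_{\mathcal{E},L}$: the base change is finitely generated over $\mathcal{O}_{\mathcal{E},L}$, inherits a Frobenius isomorphism, and the identification \eqref{Acomparison} for $V_\mathbb{F}$ restricts to the required identification for $V_\mathbb{F}|_{G_{L_\infty}}$, so the uniqueness in \ref{etale} applies. Given this, $f^*\mathfrak{M} = \mathfrak{M} \otimes_\mathfrak{S} \mathfrak{S}_L$ inherits finite projectivity, the comparison $f^*\mathfrak{M} \otimes_{\mathfrak{S}_L} \mathcal{O}_{\mathcal{E},L} = M \otimes_{\mathcal{O}_{\mathcal{E}}} \mathcal{O}_{\mathcal{E},L}$, and the $E(u)^h$-height bound under flat base change along $\mathfrak{S} \to \mathfrak{S}_L$. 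For the crystalline condition, the isomorphism $f^*\mathfrak{M} \otimes_{\mathfrak{S}_L} W(C^\flat) \cong \mathfrak{M} \otimes_\mathfrak{S} W(C^\flat)$ intertwines the $G_K$-actions induced by \eqref{comparison}, so the defining condition for $\sigma \in G_L \subset G_K$ is a special case of the condition for $\sigma \in G_K$.

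For the equivalence of strong divisibility: the forward direction is immediate since a strongly divisible basis of $\mathfrak{M}$ base changes to one of $f^*\mathfrak{M}$. For the converse I would invoke Lemma~\ref{exactsequences}(2) applied to the inclusion $\mathfrak{M} \hookrightarrow f^*\mathfrak{M}$. The required hypothesis, that this inclusion is $\varphi$-equivariant with $u$-torsionfree cokernel, holds because $W(l)$ is a free $W(k)$-module with $1$ extending to a basis, so $\mathfrak{S}$ is a direct summand of $\mathfrak{S}_L$ and $f^*\mathfrak{M}/\mathfrak{M}$ is in fact projective over $\mathfrak{S}_\mathbb{F}$.

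For Part (2), given $\mathfrak{N} \in \mathcal{L}^{\leq h}(V_\mathbb{F})$ with associated etale $\varphi$-module $N$ over $\mathcal{O}_{\mathcal{E},L}$, I restrict scalars along $f$. Finite projectivity of $\mathfrak{N}$ over $\mathfrak{S}_\mathbb{F}$ follows from finite freeness of $\mathfrak{S}_L$ over $\mathfrak{S}$, and the $E(u)^h$-height condition is intrinsic to the map $\varphi^*\mathfrak{N} \to \mathfrak{N}$ and so is unchanged by restriction of scalars. The remaining point, which is the main content of the statement, is a Shapiro-type identification: via the uniqueness in \ref{etale}, one must check that $N$ viewed as an $\mathcal{O}_{\mathcal{E}}$-module is the etale $\varphi$-module of $\operatorname{Ind}^{G_{K_\infty}}_{G_{L_\infty}} V_\mathbb{F}$. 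This follows from the natural $G_{K_\infty}$-equivariant identification
$$
\bigl(\operatorname{Ind}^{G_{K_\infty}}_{G_{L_\infty}} V_\mathbb{F}\bigr) \otimes_{\mathbb{Z}_p} W(C^\flat) \;\cong\; \operatorname{Ind}^{G_{K_\infty}}_{G_{L_\infty}}\bigl(V_\mathbb{F} \otimes_{\mathbb{Z}_p} W(C^\flat)\bigr)
$$
combined with the decomposition of $\mathcal{O}_{\mathcal{E},L} \otimes_{\mathcal{O}_{\mathcal{E}}} W(C^\flat)$ into copies of $W(C^\flat)$ indexed by $G_{K_\infty}/G_{L_\infty}$-orbits of embeddings of $\mathcal{O}_{\mathcal{E},L}$. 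This Shapiro-type identification is the one piece of actual content; everything else is formal base-change bookkeeping.
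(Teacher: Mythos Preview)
Your proposal is correct and follows essentially the same approach as the paper. The paper defers the existence of $f^*$ and $f_*$ to \cite[6.2.1 and 6.2.4]{B18} where you spell out the details directly, but for the one step singled out as non-obvious---that $f^*\mathfrak{M}$ strongly divisible implies $\mathfrak{M}$ strongly divisible---both you and the paper use the inclusion $\mathfrak{M} \hookrightarrow \mathfrak{M} \otimes_{\mathfrak{S}} \mathfrak{S}_L$ with $u$-torsionfree cokernel and appeal to Lemma~\ref{exactsequences}(2).
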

	\begin{proof}
	The fact that there are maps $f^*$ and $f_*$ is explained in \cite[6.2.1 and 6.2.4]{B18}. The additional statements regarding the image of $f^*$ are all clear (for the observation that $\mathfrak{M} \otimes_{\mathfrak{S}} \mathfrak{S}_L$ strongly divisible implies $\mathfrak{M}$ is strongly divisible argue as in Lemma~\ref{extend} by considering the inclusion $\mathfrak{M} \rightarrow \mathfrak{M} \otimes_{\mathfrak{S}} \mathfrak{S}_L$ whose cokernel is torsionfree). 
	\end{proof}

\subsection{Strong divisibility in the irreducible case}
\begin{sub}\label{unique}
	If $K^{\operatorname{t}}$ denotes the maximal tamely ramified extension of $K$ then, since $K_\infty$ is totally ramified over $K$, $K^{\operatorname{t}} \cap K_\infty = K$. Thus the restriction map from $\operatorname{Gal}(K_\infty K^{\operatorname{t}}/K_\infty)$ to the tame quotient $\operatorname{Gal}(K^{\operatorname{t}}/K)$ of $G_K$ is an isomorphism. As such, any tamely ramified $G_K$-representation is uniquely determined by its restriction to $G_{K_\infty}$ and conversely, any tamely ramified representation of $G_{K_\infty}$ (i.e. one which factors through $\operatorname{Gal}(\overline{K}/K_{\infty}K^{\operatorname{t}})$) extends uniquely to a tame representation of $G_K$. In particular this applies to irreducible representations of $G_K$ and $G_{K_\infty}$ on $\mathbb{F}$-vector spaces, since both are tamely ramified.
\end{sub}
\begin{proposition}\label{irredequal}
	Suppose that $V_{\mathbb{F}}$ is irreducible as a $G_K$-representation. Then $\mathcal{L}^{\leq p}_{\operatorname{SD}}(V_{\mathbb{F}}) = \mathcal{L}^{\leq p}_{\operatorname{crys}}(V_{\mathbb{F}})$.
\end{proposition}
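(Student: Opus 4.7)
The plan is to establish the two inclusions of Proposition~\ref{irredequal} separately, using the functorial lemmas of the preceding subsection to reduce each to a tractable case.

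For the inclusion $\mathcal{L}^{\leq p}_{\operatorname{crys}}(V_{\mathbb{F}})\subseteq\mathcal{L}^{\leq p}_{\operatorname{SD}}(V_{\mathbb{F}})$, I would first enlarge $\mathbb{F}$ using Lemma~\ref{extend} so that it contains the residue field of an unramified extension $L/K$ over which $V_{\mathbb{F}}|_{G_L}$ decomposes as a direct sum of tame characters (such $L$ exists by \ref{unique}, since $V_{\mathbb{F}}$ is tamely ramified). Lemma~\ref{inducerestrict}(1) then reduces the problem to the analogous statement for $V_{\mathbb{F}}|_{G_{L_\infty}}$: the functor $f^*$ sends crystalline modules to crystalline modules, and $\mathfrak{M}$ is strongly divisible if and only if $f^*\mathfrak{M}$ is. The character decomposition of $V_{\mathbb{F}}|_{G_{L_\infty}}$ induces, by uniqueness of the associated Breuil--Kisin module, a decomposition of any $f^*\mathfrak{M}$ into rank-one pieces, reducing further to rank one. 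For a rank-one $\mathfrak{M}$ with generator $m$, one writes $\varphi(m)=au^r m$ for some $a\in\mathfrak{S}_{L,\mathbb{F}}^\times$ and $r\in[0,p]$; the crystalline Galois condition, combined with the formula $\sigma(u)=[\epsilon(\sigma)]u$, then forces $a$ to lie in $(k_L[[u^p]]\otimes_{\mathbb{F}_p}\mathbb{F})^\times$, which is exactly strong divisibility.

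For the reverse inclusion, the argument is of a different flavour, since $f^*$ preserves the crystalline property in only one direction. Given $\mathfrak{M}\in\mathcal{L}^{\leq p}_{\operatorname{SD}}(V_{\mathbb{F}})$, the strong divisibility data (a basis plus a diagonal matrix of $u$-powers) is rigid enough to allow the construction of a strongly divisible lift $\tilde\mathfrak{M}$ over $\mathfrak{S}_{\mathcal{O}_E}$ for some finite extension $E/\mathbb{Q}_p$. By the classical theory (i.e.\ the characteristic-zero form of Theorem~\ref{GLS}), $\tilde\mathfrak{M}$ is then the Breuil--Kisin module of a $G_K$-stable $\mathcal{O}_E$-lattice $\tilde V$ inside a crystalline representation with Hodge--Tate weights in $[0,p]$. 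The mod-$\varpi$ reduction of $\tilde V$ agrees with $V_{\mathbb{F}}$ as a $G_{K_\infty}$-representation, and because $V_{\mathbb{F}}$ is irreducible (hence tame) the uniqueness in \ref{unique} forces the agreement to hold as $G_K$-representations. Applying Remark~\ref{points}(2) to $\tilde\mathfrak{M}$ and reducing modulo $\varpi$ then places $\mathfrak{M}$ in $\mathcal{L}^{\leq p}_{\operatorname{crys}}(V_{\mathbb{F}})$.

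The main obstacle is the lifting step in the second inclusion: one must verify that any strongly divisible $\mathfrak{M}$ over $\mathfrak{S}_{\mathbb{F}}$ admits a strongly divisible characteristic-zero lift over some $\mathfrak{S}_{\mathcal{O}_E}$. This should follow by an inductive construction, exploiting that the Frobenius normal form of a strongly divisible module is rigid enough to lift step by step. The irreducibility hypothesis enters precisely here: it is what guarantees, via \ref{unique}, that the $G_K$-action on the lift is automatically determined (and therefore correct) given only the residual $G_{K_\infty}$-action.
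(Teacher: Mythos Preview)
Your proposal has genuine gaps in both directions, and the paper's approach is quite different from what you outline.

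\textbf{For $\mathcal{L}^{\leq p}_{\operatorname{crys}}\subseteq\mathcal{L}^{\leq p}_{\operatorname{SD}}$:} Your claim that the character decomposition of $V_{\mathbb{F}}|_{G_{L_\infty}}$ forces $f^*\mathfrak{M}$ to split into rank-one pieces is not justified. The uniqueness statement after Definition~\ref{Eheight} applies only to finite free $\mathbb{Z}_p$-modules; in characteristic $p$ the set $\mathcal{L}^{\leq p}$ can have many elements, and a lattice inside a direct sum of \'etale $\varphi$-modules need not respect the direct sum. (Also, your rank-one endgame is confused: a rank-one Breuil--Kisin module is \emph{automatically} strongly divisible, independent of any Galois condition---one can always change basis so that the unit $a$ becomes a constant.) The paper does prove this inclusion in general (Proposition~\ref{inclusion}), but via filtrations rather than direct sums, and that argument in turn relies on the irreducible case proved here.

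\textbf{For $\mathcal{L}^{\leq p}_{\operatorname{SD}}\subseteq\mathcal{L}^{\leq p}_{\operatorname{crys}}$:} This is where your approach fails more seriously. You propose to lift $\mathfrak{M}$ to a strongly divisible $\tilde{\mathfrak{M}}$ over $\mathcal{O}_E$ and invoke ``classical theory'' to conclude the lift is crystalline. But strong divisibility in Definition~\ref{strong} is a mod-$p$ combinatorial condition; Theorem~\ref{GLS} characterises crystalline via a Galois condition, not via any Frobenius normal form, so there is no such classical result to invoke. More to the point, producing a crystalline lift of an \emph{arbitrary} $\mathfrak{M}\in\mathcal{L}^{\leq p}_{\operatorname{SD}}(V_{\mathbb{F}})$ for irreducible $V_{\mathbb{F}}$ is precisely the content of Conjecture~\ref{conjecture} (together with Lemma~\ref{irredlift}), which the paper proves only in dimension two; Example~\ref{example} illustrates why this is delicate already in dimension seven. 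Your inductive sketch does not circumvent this.

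\textbf{What the paper does instead:} The paper treats both inclusions at once by a direct computation. Using $f_*$ (not $f^*$), it shows every $\mathfrak{M}\in\mathcal{L}^{\leq p}(V_{\mathbb{F}})$ embeds in $f_*\mathfrak{N}$ for a rank-one $\mathfrak{N}$ over $L$ (see~\ref{contain}). It then quotes an explicit combinatorial criterion from \cite{B18} for when such an $\mathfrak{M}\subset f_*\mathfrak{N}$ is strongly divisible (conditions (1) and (2) in~\ref{equivalent}), writes the $G_K$-action on $f_*\mathfrak{N}\otimes C^\flat$ explicitly via roots of $\sigma(u)/u$ (see~\ref{sdcondition}), and verifies by a short valuation calculation (Lemma~\ref{valu}) that the crystalline condition from~\ref{simply} is equivalent to condition (2). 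No lifting is involved.
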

	
	Before giving a proof we make the following observation:
\begin{sub}\label{simply}
	As we are working with $p$-torsion coefficients, the condition for $\mathfrak{M} \in \mathcal{L}^{\leq h}(V_{\mathbb{F}})$ to lie in $\mathcal{L}^{\leq h}_{\operatorname{crys}}(V_{\mathbb{F}})$ can be simplified. The $v^\flat$-valuation of $[\pi^\flat]$ modulo~$p$ is $1/e$ while the $v^\flat$-valuation of $\varphi^{-1}(\mu)$ modulo~$p$ is $1/(p-1)$. Thus $\mathfrak{M} \otimes_{\mathfrak{S}} [\pi^\flat] \varphi^{-1}(\mu) A_{\operatorname{inf}} = \mathfrak{M} \otimes_{k[[u]]} I$
	where $I \subset \mathcal{O}_{C^\flat}$ is the ideal $u^{1/e + 1/(p-1)}\mathcal{O}_{C^\flat}$. As $K/\mathbb{Q}_p$ is assumed unramified $e=1$ and so $\mathfrak{M} \in \mathcal{L}^{\leq h}(V_\mathbb{F})$ is contained in $\mathcal{L}^{\leq h}_{\operatorname{crys}}(V_{\mathbb{F}})$ if and only if the induced $G_K$-action on $\mathfrak{M} \otimes_{\mathfrak{S}} W(C^\flat) = \mathfrak{M} \otimes_{k[[u]]} C^\flat$ is such that
	$$
	(\sigma -1)(m) \in \mathfrak{M} \otimes_{k[[u]]} u^{p/(p-1)} \mathcal{O}_{C^\flat}
	$$
	for every $\sigma \in G_K$ and $m \in \mathfrak{M}$.
\end{sub} 
\begin{proof}[Proof of Proposition~\ref{irredequal}]
	Using Lemma~\ref{extend} we can assume $\mathbb{F}$ is sufficiently large so that \cite[2.1.2]{B18} applies. Thus there is an unramified extension $L/K$ such that $V_{\mathbb{F}} \cong \operatorname{Ind}_{G_L}^{G_K} W_\mathbb{F}$ for a one-dimensional $G_L$-representation $W_{\mathbb{F}}$. In particular $V_{\mathbb{F}}|_{G_{K_\infty}} \cong \operatorname{Ind}_{G_{L_\infty}}^{G_{K_\infty}} W_{\mathbb{F}}|_{G_{L_\infty}}$. 
	
	\begin{sub}\label{contain}
	We claim that if $\mathfrak{M} \in \mathcal{L}^{\leq p}(V_{\mathbb{F}})$ then there exists an $\mathfrak{N} \in \mathcal{L}^{\leq p}(W_{\mathbb{F}})$ so that $\mathfrak{M} \subset f_*\mathfrak{N}$ with $\mathfrak{M}[\frac{1}{u}] = (f_*\mathfrak{N})[\frac{1}{u}]$. This is essentially \cite[6.3.1]{B18} except that in \emph{loc. cit.} $\mathfrak{M}$ is assumed to be strongly divisible, an assumption which turns out to be unnecessary. To prove the claim consider the map $V_{\mathbb{F}}|_{G_{L_\infty}} \rightarrow W_{\mathbb{F}}$ corresponding to $V_{\mathbb{F}} \cong \operatorname{Ind}_{G_L}^{G_K} W_{\mathbb{F}}$ under Frobenius reciprocity. Lemma~\ref{exactO} below produces a $\varphi$-equivariant surjection $f^*\mathfrak{M} \rightarrow \mathfrak{N}$ for some $\mathfrak{N} \in \mathcal{L}^{\leq p}(W_{\mathbb{F}})$. Via the usual adjunction between $f_*$ and $f^*$ we obtain a non-zero map
	$$
	\mathfrak{M} \rightarrow f_*f^*\mathfrak{M} \rightarrow f_*\mathfrak{N}
	$$
	which is easily checked to be $\varphi$-equivariant. This map must be injective since a non-zero kernel would induce a non-zero $G_{K_\infty}$-subspace of $V_{\mathbb{F}}$. It must be an isomorphism after inverting $u$ because both $\mathfrak{M}$ and $f_*\mathfrak{N}$ have the same rank as $k[[u]]$-modules.
	\end{sub}

	\begin{lemma}\label{exactO}
		Let $0 \rightarrow W_{\mathbb{F}} \rightarrow V_{\mathbb{F}} \rightarrow Z_{\mathbb{F}} \rightarrow 0$ be a $G_{K_\infty}$-equivariant exact sequence. If $\mathfrak{M} \in \mathcal{L}^{\leq h}(V_{\mathbb{F}})$ then there exists $\mathfrak{W} \in \mathcal{L}^{\leq h}(W_{\mathbb{F}})$ and $\mathfrak{Z} \in \mathcal{L}^{\leq h}(Z_{\mathbb{F}})$ together with $\varphi$-equivariant exact sequence 
		$$
		0 \rightarrow \mathfrak{W} \rightarrow \mathfrak{M} \rightarrow \mathfrak{Z} \rightarrow 0
		$$
		which identifies with $0 \rightarrow W_{\mathbb{F}} \rightarrow V_{\mathbb{F}} \rightarrow Z_{\mathbb{F}} \rightarrow 0$ after base-changing to $W(C^\flat)$.
	\end{lemma}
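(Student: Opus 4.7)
The plan is to build $\mathfrak{W}$ and $\mathfrak{Z}$ as the obvious intersection and image inside the ambient \'etale $\varphi$-modules. First, I would use that the functor $V \mapsto M$ from $\mathbb{F}[G_{K_\infty}]$-modules to \'etale $(\varphi,\mathcal{O}_{\mathcal{E},\mathbb{F}})$-modules is an exact equivalence (applied to finite $\mathbb{F}$-dimensional $V$, by \cite{Fon00}). This produces a $\varphi,G_{K_\infty}$-equivariant short exact sequence $0 \to M_W \to M_V \to M_Z \to 0$ recovering the original sequence after $\otimes_{\mathcal{O}_{\mathcal{E}}} W(C^\flat)$. Set $\mathfrak{Z} \subset M_Z$ to be the image of the composite $\mathfrak{M} \hookrightarrow M_V \twoheadrightarrow M_Z$, and $\mathfrak{W} = \ker(\mathfrak{M} \to \mathfrak{Z}) = \mathfrak{M} \cap M_W$, giving by construction an exact sequence $0 \to \mathfrak{W} \to \mathfrak{M} \to \mathfrak{Z} \to 0$ of $\varphi$-stable $\mathfrak{S}_{\mathbb{F}}$-submodules that, after $\otimes_{\mathfrak{S}} W(C^\flat)$, recovers $0 \to W_{\mathbb{F}} \to V_{\mathbb{F}} \to Z_{\mathbb{F}} \to 0$ tensored with $W(C^\flat)$.

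Next I would verify $\mathfrak{Z} \in \mathcal{L}^{\leq h}(Z_{\mathbb{F}})$. Finite generation is clear (quotient of $\mathfrak{M}$); projectivity follows because $\mathfrak{Z}\subset M_Z$ is $u$-torsion-free and $\mathfrak{S}_{\mathbb{F}} = \prod_i k_i[[u]]$ is a finite product of DVRs. The identity $\mathfrak{Z}\otimes_{\mathfrak{S}}\mathcal{O}_{\mathcal{E}} = M_Z$ follows from $\mathfrak{M}\otimes_{\mathfrak{S}}\mathcal{O}_{\mathcal{E}}=M_V$ and surjectivity of $M_V \to M_Z$. For the $E(u)^h$ condition, the surjection $\mathfrak{M} \twoheadrightarrow \mathfrak{Z}$ induces a surjection $\operatorname{coker}(\varphi^*\mathfrak{M}\to\mathfrak{M}) \twoheadrightarrow \operatorname{coker}(\varphi^*\mathfrak{Z}\to\mathfrak{Z})$, and the left is killed by $E(u)^h$ by hypothesis.

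For $\mathfrak{W}$, finite generation follows from Noetherianity of $\mathfrak{S}_{\mathbb{F}}$; projectivity follows because $\mathfrak{Z}$ is projective, so $0 \to \mathfrak{W} \to \mathfrak{M} \to \mathfrak{Z} \to 0$ splits and $\mathfrak{W}$ is a summand of $\mathfrak{M}$. The identity $\mathfrak{W}\otimes_{\mathfrak{S}}\mathcal{O}_{\mathcal{E}} = M_W$ follows from exactness of localisation together with the already-established identities for $\mathfrak{M}$ and $\mathfrak{Z}$. The one step requiring care is the $E(u)^h$ condition on $\mathfrak{W}$. Here I would use that $\varphi\colon \mathfrak{S}_{\mathbb{F}}\to\mathfrak{S}_{\mathbb{F}}$ is flat (indeed free of rank $p$, since $\mathfrak{S}_{\mathbb{F}}=\prod_i k_i[[u]]$ and $\varphi$ is $u\mapsto u^p$ composed with a ring automorphism on the $k_i$-factor). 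Therefore $\varphi^*(-)=(-)\otimes_{\mathfrak{S}_{\mathbb{F}},\varphi}\mathfrak{S}_{\mathbb{F}}$ is exact and applying it to our short exact sequence gives the top row of a commutative diagram
\[
\begin{CD}
0 @>>> \varphi^*\mathfrak{W} @>>> \varphi^*\mathfrak{M} @>>> \varphi^*\mathfrak{Z} @>>> 0 \\
@. @VV{\alpha}V @VV{\beta}V @VV{\gamma}V @. \\
0 @>>> \mathfrak{W} @>>> \mathfrak{M} @>>> \mathfrak{Z} @>>> 0
\end{CD}
\]
with exact rows. Since $\beta$ is the restriction of the isomorphism $\varphi^*M_V\xrightarrow{\sim}M_V$ to the flat submodule $\varphi^*\mathfrak{M}\subset\varphi^*M_V$, it is injective; similarly $\alpha$ and $\gamma$ are injective. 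The snake lemma then produces a short exact sequence $0 \to \operatorname{coker}\alpha \to \operatorname{coker}\beta \to \operatorname{coker}\gamma \to 0$, and since $\operatorname{coker}\beta$ is killed by $E(u)^h$, so is the submodule $\operatorname{coker}\alpha$, as required. The main obstacle is precisely this Frobenius-pullback compatibility, which hinges on the flatness of $\varphi$ on $\mathfrak{S}_{\mathbb{F}}$; the rest of the argument is a formal exactness manipulation.
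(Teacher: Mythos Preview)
Your proof is correct and follows essentially the same approach as the paper: pass to the exact sequence of \'etale $\varphi$-modules, define $\mathfrak{W}=\mathfrak{M}\cap M_W$ and $\mathfrak{Z}=\operatorname{Im}(\mathfrak{M}\to M_Z)$, and verify the conditions. The only difference is in the height check for $\mathfrak{W}$: the paper argues directly inside $M_V$ that $u^h\mathfrak{W}=u^h\mathfrak{M}\cap M_W$ (using $u$-torsion-freeness of $\mathfrak{Z}$) and $\mathfrak{W}^\varphi=\mathfrak{M}^\varphi\cap M_W$, whence $u^h\mathfrak{W}\subset\mathfrak{W}^\varphi$, whereas you invoke flatness of $\varphi$ on $\mathfrak{S}_{\mathbb{F}}$ and the snake lemma --- both are fine, and the paper's route is marginally more elementary.
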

	\begin{proof}
	Since the equivalence between $G_{K_\infty}$-representations and etale $\varphi$-modules is exact there is a $\varphi$-equivariant exact sequence $0 \rightarrow N_{\mathbb{F}} \rightarrow M_{\mathbb{F}} \rightarrow P_{\mathbb{F}} \rightarrow 0$ of etale $\varphi$-modules which identifies with $0 \rightarrow W_{\mathbb{F}} \rightarrow V_{\mathbb{F}} \rightarrow Z_{\mathbb{F}} \rightarrow 0$ after base-change to $W(C^\flat)$. Take $\mathfrak{W} = \mathfrak{M} \cap N_{\mathbb{F}}$ and $\mathfrak{Z} = \operatorname{Im}(\mathfrak{M}) \subset P_{\mathbb{F}}$. It clear both are $\varphi$-stable projective $\mathfrak{S}_{\mathbb{F}}$-modules. It is also clear that $u^h \mathfrak{Z} \subset \mathfrak{Z}^\varphi$ since the the same is true of $\mathfrak{M}$, and so $\mathfrak{Z} \in \mathcal{L}^{\leq h}(Z_{\mathbb{F}})$. Since $\mathfrak{Z}$ is $u$-torsionfree, $u^h\mathfrak{W} = u^h\mathfrak{M} \cap N_{\mathbb{F}}$. As $\mathfrak{W}^\varphi = \mathfrak{M}^\varphi \cap N_{\mathbb{F}}$ we conclude $u^h\mathfrak{W} \subset \mathfrak{M}^\varphi$.
	\end{proof}
	\begin{sub}\label{explicitrankone}
	Return to the proof of Proposition~\ref{irredequal} and fix $\mathfrak{N} \in \mathcal{L}^{\leq p}(W_{\mathbb{F}})$ as in \ref{contain}. Since $W_{\mathbb{F}}$ is one-dimensional we can describe $\mathfrak{N}$ explicitly. We may suppose that $l$, the residue field of $L$, admits an embedding into $\mathbb{F}$. In this case $\mathfrak{S}_{L,\mathbb{F}} = l[[u]] \otimes_{\mathbb{F}_p} \mathbb{F} = \prod_{\theta \in \operatorname{Hom}_{\mathbb{F}_p}(l,\mathbb{F})} \mathbb{F}[[u]]$, where the identification is such that $l$ acts on the $\theta$-th component of the product through $\theta\colon l \rightarrow \mathbb{F}$. Viewing $\mathfrak{N}$ as an $\mathbb{F}[[u]]$-module via the diagonal embedding into $\mathfrak{S}_{L,\mathbb{F}}$, it follows from \cite[6.1.1]{B18} that $\mathfrak{N}$ admits an $\mathbb{F}[[u]]$-basis $(e_\theta)_{\theta \in \operatorname{Hom}_{\mathbb{F}_p}(l,\mathbb{F})}$ satisfying 
	\begin{equation}\label{explicitlydone}
	\varphi(e_{\theta \circ \varphi}) = xu^{r_\theta} e_\theta
	\end{equation}
	for some $x \in l \otimes_{\mathbb{F}_p} \mathbb{F}$ and integers $r_\theta \geq 0$. Since $\mathfrak{N} \in \mathcal{L}^{\leq p}(W_{\mathbb{F}})$ we have $r_{\theta} \in [0,p]$. This basis is chosen so that $l$ acts on $e_\theta$ through $\theta$. 
	\end{sub}
	\begin{sub}\label{equivalent}
	By twisting $V_{\mathbb{F}}$, and so $W_{\mathbb{F}}$, by an unramified character, which is harmless by Lemma~\ref{unramtwist!}, we may assume that $x$ in \eqref{explicitlydone} equals $1$. Under this assumption, \cite[6.5.1]{B18} says that a finite free $\mathfrak{S}_{\mathbb{F}}$-submodule $\mathfrak{M} \subset f_*\mathfrak{N}$ satisfying $\mathfrak{M}[\frac{1}{u}] = (f_*\mathfrak{N})[\frac{1}{u}]$ is an element of $\mathcal{L}^{\leq p}_{\operatorname{SD}}(V_{\mathbb{F}})$ if and only if: 
	\begin{enumerate}
		\item If $m \in \mathfrak{M}$ then $\varphi(m) \in \mathfrak{M}$, and if $\varphi(m) \in u^{p+1}\mathfrak{M}$ then $m \in u \mathfrak{M}$.
		\item For every $\mathbb{F}$-linear combination $\sum \alpha_\theta e_\theta$ which is contained in $\mathfrak{M}$, and every $0<r \leq p$, the $\mathbb{F}$-linear combination
		$$
		\sum_{r_\theta \equiv r~\operatorname{mod} p}\alpha_\theta e_\theta
		$$
		is contained in $\mathfrak{M}$ also.
	\end{enumerate}
	Observe that $u^p\mathfrak{M} \subset \mathfrak{M}^\varphi \subset \mathfrak{M}$ implies (1). Indeed, if $\varphi(m) \in u^{p+1}\mathfrak{M}$ then $\varphi(m) \in u\mathfrak{M}^\varphi$. If $e_i$ is a $k[[u]]$-basis of $\mathfrak{M}$ then $m = \sum \alpha_i e_i$ for $\alpha_i \in k[[u]]$ and $\varphi(m) = \sum \varphi(\alpha_i)\varphi(e_i)$; by definition the $\varphi(e_i)$ form a $k[[u]]$-basis of $\mathfrak{M}^\varphi$ so if $\varphi(m) \in u\mathfrak{M}^\varphi$ we must have each $\varphi(\alpha_i)$ divisible by $u$. This implies each $\alpha_i$ is also divisible by $u$. 
	
	Another consequence of (1) is that $ue_\theta \in \mathfrak{M}$ for every $\theta \in \operatorname{Hom}_{\mathbb{F}_p}(l,\mathbb{F})$. This is explained in the second paragraph after \cite[6.5.1]{B18}.
	\end{sub}
	\begin{sub}\label{sdcondition}
	To finish the proof we have to show that, if $\mathfrak{M} \in \mathcal{L}^{\leq p}(V_{\mathbb{F}})$ is contained in $f_*\mathfrak{N}$, then (2) is satisfied if and only if $\mathfrak{M} \in \mathcal{L}^{\leq p}_{\operatorname{crys}}(V_\mathbb{F})$. The $G_K$-action on $V_{\mathbb{F}}$ induces a continuous $C^\flat \otimes_{\mathbb{F}_p} \mathbb{F}$-semilinear $\varphi$-equivariant action of $G_K$ on $V_{\mathbb{F}}\otimes_{\mathbb{F}_p} C^\flat = \mathfrak{M} \otimes_{k[[u]]} C^\flat$. Conversely any such semilinear $G_K$-action induces a $G_K$-action on $V_{\mathbb{F}}$ extending the $G_{K_\infty}$-action. Thus \ref{unique} implies there is at most one such semilinear $G_K$-action. This semilinear action of $G_K$ can be written explicitly as follows:
	\begin{equation}\label{explicitGalois}
	\sigma(e_\theta) = \eta(\sigma)^{\Theta_\theta} e_\theta, \qquad \sigma \in G_K
	\end{equation}
	where $\Theta_\theta = \sum_{i=0}^{[l:\mathbb{F}_p]-1} p^ir_{\theta \circ \varphi^i}$ and $\eta(\sigma) \in \mathcal{O}_{C^\flat}$ is the unique $p^{[l:\mathbb{F}_p]} -1$-th root of $\sigma(u)/u$ whose image in the residue field of $\mathcal{O}_{C^\flat}$ is $1$. To verify this it suffices to check this does indeed define a group action, that this action is continuous, that it induces the trivial action of $G_{K_\infty}$ on $(f_*\mathfrak{N})[\frac{1}{u}] = \mathfrak{M}[\frac{1}{u}]$, and is $\varphi$-equivariant. The first three are straightforward to check, and checking the $\varphi$-equivariance comes down to the identity
	$$
	\sigma(u^{r_\theta}) \eta(\sigma)^{\Theta_\theta} = u^{r_\theta} \eta(\sigma)^{p\Theta_{\theta \circ \varphi}}
	$$ 
	which follows since 
	$$
	p\Theta_{\theta \circ \varphi} = \sum_{i=0}^{[l:\mathbb{F}_p]-1} p^{i+1} r_{\theta\circ \varphi^i} = (p^{[l:\mathbb{F}_p]-1}-1)r_\theta + \sum_{i=0}^{[l:\mathbb{F}_p]-1} p^ir_{\theta \circ \varphi^i}
	$$
	Therefore this must be the $G_K$-action coming from that on $V_{\mathbb{F}}$. To check the condition from \ref{simply} we shall need:
\end{sub}
	\begin{lemma}\label{valu}	
	For $\sigma \in G_K$ let $m = m(\sigma)$ be such that $\sigma(u)/u \in \mathbb{Z}_p(1)$ is a $\mathbb{Z}_p$-generator of $p^m \mathbb{Z}_p(1)$. Then, for $n \geq 0$,
	\begin{equation*}
	v^\flat(\eta(\sigma)^n -1) = \frac{p^{1+m + v_p(n)}}{p-1}
	\end{equation*}
	\end{lemma}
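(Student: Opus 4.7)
The plan is to reduce everything to an explicit characteristic~$p$ computation, exploiting that $\mathcal{O}_{C^\flat}$ is an $\mathbb{F}_p$-algebra, so that Frobenius is additive.

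First, I would compute $v^\flat(\sigma(u)/u - 1)$. By hypothesis, if $\epsilon$ is a $\mathbb{Z}_p$-generator of $\mathbb{Z}_p(1)$, then $\sigma(u)/u = \epsilon^a$ for some $a \in \mathbb{Z}_p$ with $v_p(a) = m$, so writing $a = p^m a'$ with $a' \in \mathbb{Z}_p^\times$ gives $\sigma(u)/u = \xi^{a'}$ with $\xi := \epsilon^{p^m}$. The standard computation $v^\flat(\epsilon - 1) = p/(p-1)$ (computed via the compatible system $(\zeta_{p^n}-1)$ and the formula $v^\flat = \lim p^n v(\tilde{y}_n)$) together with characteristic~$p$ additivity of Frobenius gives $v^\flat(\xi - 1) = v^\flat((\epsilon - 1)^{p^m}) = p^{m+1}/(p-1)$. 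To pass from $\xi - 1$ to $\xi^{a'} - 1$, I would note that $(1+z)^{a'}$ for $a' = a'_0 + p a'_1 + \cdots \in \mathbb{Z}_p^\times$ expands in characteristic $p$ as $\prod_i (1 + z^{p^i})^{a'_i}$, whose leading term is $a'_0 z$ with $a'_0 \in \mathbb{F}_p^\times$ a unit; hence $v^\flat((1+z)^{a'} - 1) = v^\flat(z)$ whenever $v^\flat(z)>0$. Applied with $z = \xi - 1$ this yields $v^\flat(\sigma(u)/u - 1) = p^{m+1}/(p-1)$.

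Next I would transfer this to $y := \eta(\sigma) - 1$. Setting $q = p^{[l:\mathbb{F}_p]}$, the defining relation $\eta(\sigma)^{q-1} = \sigma(u)/u$ becomes $(1+y)^{q-1} = \sigma(u)/u$. The key identity, valid in characteristic $p$ since $(1+y)^q = 1 + y^q$, is
\[
(1+y)^{q-1} - 1 = \frac{(1+y)^q - (1+y)}{1+y} = \frac{y(y^{q-1} - 1)}{1+y}.
\]
Because $\eta(\sigma) \equiv 1$ modulo the maximal ideal we have $v^\flat(y) > 0$, so $v^\flat(y^{q-1}) > 0$ and therefore $v^\flat(y^{q-1}-1) = 0$; similarly $v^\flat(1+y) = 0$. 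Taking $v^\flat$ of both sides of the identity therefore gives $v^\flat(y) = v^\flat(\sigma(u)/u - 1) = p^{m+1}/(p-1)$, which is the claimed formula in the case $n=1$ (and hence for any $n$ coprime to $p$, by the same Newton-expansion argument as above).

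Finally, for arbitrary $n \geq 1$, I would write $n = p^{v_p(n)} n'$ with $\gcd(n', p) = 1$. In characteristic $p$,
\[
(1+y)^n = \bigl((1+y)^{p^{v_p(n)}}\bigr)^{n'} = (1 + y^{p^{v_p(n)}})^{n'},
\]
so applying once more the fact that $(1+z)^{n'} - 1$ has the same $v^\flat$-valuation as $z$ (now with $n'$ a $p$-adic unit integer, so its reduction $n' \bmod p$ is nonzero) yields
\[
v^\flat(\eta(\sigma)^n - 1) = v^\flat\bigl(y^{p^{v_p(n)}}\bigr) = p^{v_p(n)} \cdot \tfrac{p^{m+1}}{p-1} = \tfrac{p^{1+m+v_p(n)}}{p-1}.
\]
The main technical point is really just the binomial-in-characteristic-$p$ lemma that inverting a $\mathbb{Z}_p^\times$-power preserves valuation, which is straightforward; everything else is bookkeeping around the identity $\eta(\sigma)^{q-1} = \sigma(u)/u$ and the reference value $v^\flat(\epsilon - 1) = p/(p-1)$.
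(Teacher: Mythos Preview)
Your proof is correct and follows exactly the reduction the paper has in mind: the paper's proof is the single sentence ``This easily reduces to the well-known calculation that $v^\flat(\epsilon-1)=p/(p-1)$,'' and you have simply written out that reduction in full, using characteristic-$p$ additivity of Frobenius and the identity $(1+y)^{q-1}-1 = y(y^{q-1}-1)/(1+y)$ to pass between $\eta(\sigma)-1$ and $\sigma(u)/u-1$. There is nothing to correct.
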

	\begin{proof}
		This easily reduces to the well-known calculation that $v^\flat(\epsilon -1) = p/(p-1)$ for any $\mathbb{Z}_p$-generator $\epsilon \in \mathbb{Z}_p(1)$, cf. for example \cite[\S5.1.2]{Fon94}.
	\end{proof}
	We have to show that (2) is equivalent to asking that $(\sigma-1)(m) \in \mathfrak{M} \otimes_{k[[u]]} u^{p/p-1}\mathcal{O}_{C^\flat}$ for every $m \in \mathfrak{M}$ and $\sigma \in G_K$ (cf. \ref{simply}). When $m = u^ie_\theta$ for $i \geq 1$ this follows easily from Lemma~\ref{valu} since $(\sigma-1)(u^ie_\theta) = ((\frac{\sigma(u)}{u})^i\eta(\sigma)^{\Theta_\theta} - 1)(u^ie_\theta) = (\eta(\sigma)^{\Theta_\theta + (p^{[l:\mathbb{F}_p]}-1)i} - 1)(u^ie_\theta)$. To complete the proof  we consider elements $\sum \alpha_\theta e_\theta \in \mathfrak{M}$ with $\alpha_\theta \in \mathbb{F}$. 
	We compute that 
	\begin{equation*}
	\begin{aligned}
	\frac{(\sigma-1)(\sum \alpha_\theta e_\theta)}{\eta(\sigma)-1} &= \sum \left( \frac{\eta(\sigma)^{\Theta_\theta} -1}{\eta(\sigma) -1} \right) \alpha_\theta e_\theta \\
	&= \sum \left( 1+ \eta(\sigma) + \ldots \eta(\sigma)^{\Theta_\theta -1} \right) \alpha_\theta e_\theta\\
	& = \sum r_\theta \alpha_\theta e_\theta + \sum \beta_\theta ue_\theta \text{ for some $\beta_\theta \in \mathcal{O}_{C^\flat}$}
 	\end{aligned}
	\end{equation*}
	The last equality follows because  $\eta(\sigma) -1 \in u^{p/p-1}\mathcal{O}_{C^\flat}$ by Lemma~\ref{valu}, and so $1 +\eta(\sigma) + \ldots + \eta(\sigma)^{\Theta_\theta -1} \equiv \Theta_\theta \equiv r_\theta$ modulo~$u^{p/p-1}\mathcal{O}_{C^\flat}$. Since $ue_\theta \in \mathfrak{M}$ for every $\theta$, it follows that $\frac{(\sigma-1)(\sum \alpha_\theta e_\theta)}{\eta(\sigma)- 1} \in \mathfrak{M} \otimes_{k[[u]]} \mathcal{O}_{C^\flat}$ if and only if 
	$$
	\sum r_\theta \alpha_\theta e_\theta \in \mathfrak{M}
	$$
	Since $v^\flat(\eta(\sigma) -1) \geq p/(p-1)$, with equality when $\sigma$ is chosen so that $\sigma(u)/u$ is a $\mathbb{Z}_p$-generator of $\mathbb{Z}_p(1)$, we conclude that $(\sigma-1)(\sum \alpha_\theta e_\theta) \in \mathfrak{M} \otimes_{k[[u]]} u^{p/p-1}\mathcal{O}_{C^\flat}$ for every $m \in \mathfrak{M}$ and $\sigma \in G_K$ if and only if for every $\mathbb{F}$-linear combination $\sum \alpha_\theta e_\theta \in \mathfrak{M}$ we have $\sum r_\theta\alpha_\theta e_\theta \in \mathfrak{M}$. It is easy to check the latter condition is equivalent to (2).
\end{proof}

\subsection{Strong divisibility in general}
\begin{proposition}\label{inclusion}
	Suppose $V_{\mathbb{F}}$ admits a continuous $\mathbb{F}$-linear $G_K$-action. Then $\mathcal{L}^{\leq p}_{\operatorname{crys}}(V_{\mathbb{F}}) \subset \mathcal{L}^{\leq p}_{\operatorname{SD}}(V_{\mathbb{F}})$.\footnote{Unlike in the irreducible case this inclusion is not always an equality. The problem arises from the possibility that $V_{\mathbb{F}}$ may admit two different $G_K$-actions extending a given $G_{K_\infty}$-action. Here is an example: suppose $V_{\mathbb{F}}$ admits an $\mathbb{F}$-basis $(f_1,f_2)$ so that 
	$$
	\sigma(f_1,f_2) = (f_1,f_2)\left( \begin{smallmatrix}
	1 & c(\sigma) \\ 0 & \chi_{\operatorname{cyc}}^{-1}(\sigma)
	\end{smallmatrix} \right)
	$$
	for a $1$-cocycle $c(\sigma)$. We compute that
	$$
	\sigma(f_1,u^{1/p-1}f_2) = (f_1,u^{1/p-1}f_2)\left( \begin{smallmatrix}
		1 & \sigma(u^{1/p-1})c(\sigma) \\ 0 & \frac{\sigma(u^{1/p-1})}{u^{1/p-1}}\chi_{\operatorname{cyc}}^{-1}(\sigma)
	\end{smallmatrix} \right)
	$$
	There exists cocycle $c$ such that $c(\sigma) = 0$ for $\sigma \in G_{K_\infty}$; this occurs when $V_{\mathbb{F}}$ is a tres ramifie extension (cf. \cite[5.4.2]{GLS15}).
	In this case the matrix representing $\sigma$ on $(f_1,u^{1/p-1}f_2)$ is the identity when $\sigma \in G_{K_\infty}$ so $\mathfrak{M}$, the $\mathfrak{S}_{\mathbb{F}}$-span of $f_1$ and $u^{1/p-1}f_2$, is contained in the etale $\varphi$-module associated to $V_{\mathbb{F}}$. Since $\varphi(f_1,u^{1/p-1}f_2) = (f_1,u^{1/p-1}f_2)\left( \begin{smallmatrix}
	1 & 0  \\ 0 & u
	\end{smallmatrix} \right)$ it is easy to see that $\mathfrak{M} \in \mathcal{L}^{\leq 1}_{\operatorname{SD}}(V_{\mathbb{F}})$. However $\mathfrak{M} \not\in \mathcal{L}^{\leq 1}_{\operatorname{crys}}(V_{\mathbb{F}})$ since $u^{1/p-1}c(\sigma)$ is not contained in $u^{p/p-1}\mathcal{O}_{C^\flat}$. The point is that $V_{\mathbb{F}}$ does not arise as the reduction modulo~$p$ of a crystalline representation with Hodge--Tate weights in $[0,1]$.
	}
\end{proposition}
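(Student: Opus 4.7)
The plan is to induct on the $G_K$-composition length of $V_{\mathbb{F}}$, with the base case (length one) being Proposition~\ref{irredequal}. Given $\mathfrak{M} \in \mathcal{L}^{\leq p}_{\operatorname{crys}}(V_{\mathbb{F}})$ and any irreducible $G_K$-subrepresentation $W_{\mathbb{F}} \subset V_{\mathbb{F}}$ with quotient $Z_{\mathbb{F}}$, Lemma~\ref{exactO} produces a $\varphi$-equivariant exact sequence
\[
0 \to \mathfrak{W} \to \mathfrak{M} \to \mathfrak{Z} \to 0
\]
with $\mathfrak{W} \in \mathcal{L}^{\leq p}(W_{\mathbb{F}})$ and $\mathfrak{Z} \in \mathcal{L}^{\leq p}(Z_{\mathbb{F}})$.

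The first step is to descend the crystalline condition to both pieces. For $\mathfrak{Z}$, projecting the containment $(\sigma-1)(m) \in \mathfrak{M} \otimes_{\mathfrak{S}} [\pi^\flat]\varphi^{-1}(\mu) A_{\operatorname{inf}}$ along the surjection $\mathfrak{M} \twoheadrightarrow \mathfrak{Z}$ immediately yields the desired condition. For $\mathfrak{W}$, the fact that $W_{\mathbb{F}}$ is $G_K$-stable forces $(\sigma-1)(w) \in \mathfrak{W} \otimes_{\mathfrak{S}} W(C^\flat)$ for $w \in \mathfrak{W}$; combining this with the $k[[u]]$-flatness of $\mathfrak{Z}$ (so that the sequence tensored with $I = [\pi^\flat]\varphi^{-1}(\mu)A_{\operatorname{inf}}$ remains exact) produces the intersection identity $\mathfrak{W} \otimes_{\mathfrak{S}} W(C^\flat) \cap \mathfrak{M} \otimes_{\mathfrak{S}} I = \mathfrak{W} \otimes_{\mathfrak{S}} I$, giving the crystalline condition for $\mathfrak{W}$. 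By the inductive hypothesis both $\mathfrak{W}$ and $\mathfrak{Z}$ are then strongly divisible.

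The main obstacle is the final step: extensions of strongly divisible modules need not be strongly divisible, so one must exploit the full crystalline condition on $\mathfrak{M}$ and not just the conditions on its graded pieces. The approach I would take is to adapt the explicit analysis from \ref{sdcondition}, whose core ingredient was the formula $\sigma(e_\theta) = \eta(\sigma)^{\Theta_\theta} e_\theta$ together with the valuation estimate Lemma~\ref{valu}. Choosing a lift to $\mathfrak{M}$ of a strongly divisible basis of $\mathfrak{Z}$ (provided by the induced structure from Proposition~\ref{irredequal}) and taking its union with a strongly divisible basis of $\mathfrak{W}$, one expands $(\sigma-1)$ applied to the lifts in this basis. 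The crystalline condition on $\mathfrak{M}$, written in the simplified form $(\sigma-1)(m) \in \mathfrak{M} \otimes_{k[[u]]} u^{p/(p-1)} \mathcal{O}_{C^\flat}$ from \ref{simply}, then constrains the $u$-valuations of the coefficients relating $\varphi$ on the $\mathfrak{Z}$-lifts to $\mathfrak{W}$; by comparison with the leading-order behaviour of $\eta(\sigma) - 1$, these ``off-diagonal'' coefficients should be forced into the powers of $u$ required for strong divisibility. One then modifies the basis accordingly to exhibit $\mathfrak{M} \in \mathcal{L}^{\leq p}_{\operatorname{SD}}(V_{\mathbb{F}})$.
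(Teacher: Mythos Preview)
Your inductive skeleton and the descent of the crystalline condition to $\mathfrak{W}$ and $\mathfrak{Z}$ are correct and match the paper (this is Proposition~\ref{Galexact}(1)). The gap is in the final step, where you underestimate what is needed.

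The paper does not attempt to verify strong divisibility of $\mathfrak{M}$ directly from a basis. Instead it first passes (via Lemmas~\ref{extend} and~\ref{inducerestrict}) to a finite unramified extension of $K$ so that \emph{every} Jordan--H\"older factor of $V_{\mathbb{F}}$ is one-dimensional, and then takes $Z_{\mathbb{F}}$ (the quotient, not the sub) to be one-dimensional. The point of this reduction is that the explicit formulae of \ref{sdcondition} that you want to invoke are only available for one-dimensional pieces; for a general irreducible $W_{\mathbb{F}}$ there is no comparably clean description of the $G_K$-action on a basis of $\mathfrak{W}$.

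More seriously, even after this reduction the ``off-diagonal'' analysis is not a single computation. The paper encodes the extension data in a single element $f \in \tfrac{1}{u^p}\operatorname{Hom}(\mathfrak{Z},\mathfrak{W})$ and uses Proposition~\ref{SDexact}(2): $\mathfrak{M}$ is strongly divisible iff $f \in \operatorname{Hom}(\mathfrak{Z},\mathfrak{W}) + \varphi(\operatorname{Hom}(\mathfrak{Z},\mathfrak{W}))$. The crystalline hypothesis gives $(\sigma-1)(f) \in \operatorname{Hom}(\mathfrak{Z},\mathfrak{W}) \otimes_{k[[u]]} u^{p/(p-1)}\mathcal{O}_{C^\flat}$, and one must show this forces the former containment. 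This is the content of the separate Claim following the proof, and it requires a \emph{second} induction, this time on the length of $W_{\mathbb{F}}$: the base case ($W_{\mathbb{F}}$ and $Z_{\mathbb{F}}$ both one-dimensional) is the explicit valuation computation you have in mind, but the inductive step needs a further d\'evissage of $\operatorname{Hom}(\mathfrak{Z},\mathfrak{W})$ along a filtration of $W_{\mathbb{F}}$, with a nontrivial rewriting of $f$ to absorb terms into $\varphi(\operatorname{Hom})$ before one can apply the inductive hypothesis to each graded piece. Your proposal does not anticipate this second induction, and without it the argument does not close.
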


	To prove this we will need to understand how $\mathcal{L}^{\leq p}_{\operatorname{SD}}$ and $\mathcal{L}^{\leq p}_{\operatorname{crys}}$ behave in short exact sequences.

\begin{sub}\label{ext}
	Let $0 \rightarrow W_{\mathbb{F}} \rightarrow V_{\mathbb{F}} \rightarrow Z_{\mathbb{F}} \rightarrow 0$ be a $G_{K_\infty}$-equivariant exact sequence and let $\mathfrak{M} \in \mathcal{L}^{\leq p}(V_{\mathbb{F}})$. Lemma~\ref{exactO} provides a $\varphi$-equivariant exact sequence
	\begin{equation}\label{exact}
	0 \rightarrow \mathfrak{W} \rightarrow \mathfrak{M} \rightarrow \mathfrak{Z} \rightarrow 0
	\end{equation}
	with $\mathfrak{W} \in \mathcal{L}^{\leq h}(W_{\mathbb{F}})$ and $\mathfrak{Z} \in \mathcal{L}^{\leq h}(Z_{\mathbb{F}})$. Choosing an $\mathfrak{S}_{\mathbb{F}}$-splitting of \eqref{exact} allows us to identify $\mathfrak{M} = \mathfrak{W} \oplus \mathfrak{Z}$ as $\mathfrak{S}_{\mathbb{F}}$-modules so that 
	$$
	\varphi_{\mathfrak{M}} = (\varphi_{\mathfrak{W}} + f \circ \varphi_{\mathfrak{Z}}, \varphi_{\mathfrak{Z}})
	$$
	for some $f \in \operatorname{Hom}(\mathfrak{Z},\mathfrak{W})[\frac{1}{u}]$. Here $\operatorname{Hom}(\mathfrak{Z},\mathfrak{W})$ denotes the module of $\mathfrak{S}_{\mathbb{F}}$-linear homomorphisms $\mathfrak{Z} \rightarrow \mathfrak{W}$. Since $\mathfrak{M}^\varphi \subset \mathfrak{M}$ we must have $f(\mathfrak{Z}^\varphi) \subset \mathfrak{W}$ and so, as $u^p \mathfrak{Z} \subset \mathfrak{Z}^\varphi$, it follows that $f \in \frac{1}{u^p} \operatorname{Hom}(\mathfrak{Z},\mathfrak{W})$.
	
	We equip $\operatorname{Hom}(\mathfrak{Z},\mathfrak{W})$ with the Frobenius $\varphi$ given by $\varphi(g) = \varphi_{\mathfrak{W}} \circ g \circ \varphi^{-1}_{\mathfrak{Z}}$. 	Since any two splitting of \eqref{exact} differ by an element $g \in \operatorname{Hom}(\mathfrak{Z},\mathfrak{W})$, by choosing a different splitting we replace $f$ by $f + (\varphi - 1)(g)$. If as usual $\operatorname{Hom}(\mathfrak{Z},\mathfrak{W})^\varphi$ denotes the $\mathfrak{S}_{\mathbb{F}}$-submodule of $\operatorname{Hom}(\mathfrak{Z},\mathfrak{W})[\frac{1}{E}]$ generated by $\varphi(\operatorname{Hom}(\mathfrak{Z},\mathfrak{W}))$ then
	\begin{equation}\label{homheight}
	u^p\operatorname{Hom}(\mathfrak{Z},\mathfrak{W}) \subset \operatorname{Hom}(\mathfrak{Z},\mathfrak{W})^\varphi \subset \tfrac{1}{u^p} \operatorname{Hom}(\mathfrak{Z},\mathfrak{W})
	\end{equation} 
	Furthermore, we can $G_K$-equivariantly identify $(\operatorname{Hom}(\mathfrak{Z},\mathfrak{W}) \otimes_{k[[u]]} C^\flat)^{\varphi = 1} = \operatorname{Hom}(Z_{\mathbb{F}},W_{\mathbb{F}})$
	(the $G_K$-action on $\operatorname{Hom}(Z_{\mathbb{F}},W_{\mathbb{F}})$ being given by $f \mapsto \sigma \circ f \circ \sigma^{-1}$) via the identifications $(\mathfrak{Z} \otimes_{k[[u]]} C^\flat)^{\varphi =1} = Z_{\mathbb{F}}$ and $(\mathfrak{W} \otimes_{k[[u]]} C^\flat)^{\varphi= 1} = W_{\mathbb{F}}$. 
\end{sub}

	\begin{proposition}\label{SDexact} In the situation of \ref{ext}:
		\begin{enumerate}
			\item If $\mathfrak{M} \in \mathcal{L}^{\leq p}_{\operatorname{SD}}(V_{\mathbb{F}})$ then $\mathfrak{W} \in \mathcal{L}^{\leq p}_{\operatorname{SD}}(W_{\mathbb{F}})$ and $\mathfrak{Z} \in \mathcal{L}^{\leq p}_{\operatorname{SD}}(Z_{\mathbb{F}})$.
			\item If $\mathfrak{W} \in \mathcal{L}^{\leq p}_{\operatorname{SD}}(W_{\mathbb{F}})$ and $\mathfrak{Z} \in \mathcal{L}^{\leq p}_{\operatorname{SD}}(Z_{\mathbb{F}})$ then $\mathfrak{M} \in \mathcal{L}^{\leq p}_{\operatorname{SD}}(V_{\mathbb{F}})$ if and only if there exists $g \in \operatorname{Hom}(\mathfrak{Z},\mathfrak{W})$ such that
			$$
			f + (\varphi- 1)(g) \in \operatorname{Hom}(\mathfrak{Z},\mathfrak{W})
			$$
		\end{enumerate}  
	\end{proposition}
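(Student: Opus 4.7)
Part (1) is a direct application of Lemma~\ref{exactsequences}: the projection $\mathfrak{M} \twoheadrightarrow \mathfrak{Z}$ is surjective and $\varphi$-equivariant, so part (1) of that lemma gives $\mathfrak{Z} \in \mathcal{L}^{\leq p}_{\operatorname{SD}}(Z_{\mathbb{F}})$. The inclusion $\mathfrak{W} \hookrightarrow \mathfrak{M}$ is $\varphi$-equivariant, and its cokernel $\mathfrak{Z}$ is $u$-torsion-free (since $\mathfrak{Z}$ is $\mathfrak{S}_{\mathbb{F}}$-projective), so part (2) of the same lemma gives $\mathfrak{W} \in \mathcal{L}^{\leq p}_{\operatorname{SD}}(W_{\mathbb{F}})$.

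For part (2), recall from \ref{ext} that changing the splitting of \eqref{exact} by $g \in \operatorname{Hom}(\mathfrak{Z},\mathfrak{W})$ replaces $f$ by $f + (\varphi-1)(g)$. Turning first to the backward implication, fix $g$ as in the hypothesis and change the splitting accordingly to reduce to the case $f \in \operatorname{Hom}(\mathfrak{Z},\mathfrak{W})$. Concatenating SD bases of $\mathfrak{W}$ and $\mathfrak{Z}$ (available by (1)) yields a $k[[u]]$-basis of $\mathfrak{M}$ in which the Frobenius is block upper-triangular with diagonal blocks in SD form. Since $f$ is integral, a direct computation, combined if necessary with a further integral change of basis, verifies that this gives an SD presentation of $\mathfrak{M}$.

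The forward implication is the main content. Assume $\mathfrak{M} \in \mathcal{L}^{\leq p}_{\operatorname{SD}}(V_{\mathbb{F}})$, and fix SD bases $(w_j)$ of $\mathfrak{W}$ and $(z_k)$ of $\mathfrak{Z}$, together with any lift $(\tilde{z}_k)$ of $(z_k)$ to $\mathfrak{M}$. In the basis $(w_j, \tilde{z}_k)$ the Frobenius takes block form $\bigl(\begin{smallmatrix} A & F \\ 0 & B \end{smallmatrix}\bigr)$ with $A, B$ in SD form and $F \in \frac{1}{u^p}\operatorname{Hom}(\mathfrak{Z},\mathfrak{W})$ by \ref{ext}. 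The plan is to modify the lift by an element $G \in \operatorname{Hom}(\mathfrak{Z},\mathfrak{W})$ so that the new off-diagonal block becomes integral; the corresponding $g$ then satisfies $f + (\varphi-1)(g) \in \operatorname{Hom}(\mathfrak{Z},\mathfrak{W})$. Producing such a $G$ is the main technical obstacle, which I would tackle by comparing the block-compatible basis $(w_j, \tilde{z}_k)$ with an SD basis $(m_i)$ of $\mathfrak{M}$ and extracting $G$ from the transition matrix, in the spirit of \cite[5.4.6]{B18}.
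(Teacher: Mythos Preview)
Your treatment of part (1) is correct and coincides with the paper's: both simply invoke Lemma~\ref{exactsequences}.

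For part (2), the paper does not give a direct argument at all; it simply cites \cite[Lemma 4.1.3]{B18b}. Your proposal instead sketches a self-contained proof, which is a reasonable plan, but as written it has genuine gaps in both directions.

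For the backward implication, after reducing to $f \in \operatorname{Hom}(\mathfrak{Z},\mathfrak{W})$ you assert that concatenating SD bases of $\mathfrak{W}$ and $\mathfrak{Z}$, ``combined if necessary with a further integral change of basis,'' yields an SD presentation of $\mathfrak{M}$. This is not obvious: with the splitting $\mathfrak{M} = \mathfrak{W} \oplus \mathfrak{Z}$ and $\varphi_{\mathfrak{M}} = (\varphi_{\mathfrak{W}} + f \circ \varphi_{\mathfrak{Z}}, \varphi_{\mathfrak{Z}})$, the image $\varphi(\mathfrak{M})$ does not split as $\varphi(\mathfrak{W}) \oplus \varphi(\mathfrak{Z})$, and the SD condition---that some $(u^{r_i} m_i)$ form a $k[[u^p]]$-basis of $\varphi(\mathfrak{M})$---is a statement about $k[[u^p]]$-module structure that does not follow formally from the block shape plus integrality of $f$. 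You would need to actually exhibit the SD basis, which is precisely the content of the cited lemma.

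For the forward implication you are explicit that ``producing such a $G$ is the main technical obstacle'' and only indicate that you ``would tackle'' it by comparing bases in the spirit of \cite[5.4.6]{B18}. That is a plan, not a proof; extracting an integral $G$ from the transition matrix between an SD basis of $\mathfrak{M}$ and a block-compatible basis requires a careful argument controlling $u$-divisibility, again essentially the content of \cite[4.1.3]{B18b}. If you want a self-contained proof here, you need to carry out that extraction; otherwise, as the paper does, cite the external lemma.
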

	\begin{proof}
		Part (1) is a consequence of Lemma~\ref{exactsequences}, while (2) follows from \cite[Lemma 4.1.3]{B18b}.
	\end{proof}

	\begin{sub}\label{gal}
	Now suppose $0 \rightarrow W_{\mathbb{F}} \rightarrow V_{\mathbb{F}} \rightarrow Z_{\mathbb{F}} \rightarrow 0$ is an exact sequence of $G_{K}$-representations. As in \ref{ext}, for any $\mathfrak{M} \in \mathcal{L}^{\leq p}(V_\mathbb{F})$, there is an exact sequence $0 \rightarrow \mathfrak{W} \rightarrow \mathfrak{M} \rightarrow \mathfrak{Z} \rightarrow 0$ so that, after choosing a splitting of this sequence and identifying $\mathfrak{M} = \mathfrak{W} \oplus \mathfrak{Z}$, we have $\varphi_{\mathfrak{M}} =(\varphi_{\mathfrak{W}} + f \circ \varphi_{\mathfrak{Z}},\varphi_{\mathfrak{Z}})$ for some $f \in \frac{1}{u^p}\operatorname{Hom}(\mathfrak{Z},\mathfrak{W})$.
	
	As $0 \rightarrow W_{\mathbb{F}} \rightarrow V_{\mathbb{F}} \rightarrow Z_{\mathbb{F}} \rightarrow 0$ and $0 \rightarrow \mathfrak{W} \rightarrow \mathfrak{M} \rightarrow \mathfrak{Z} \rightarrow 0$ become identified after applying $ \otimes_{k[[u]]} C^\flat$ we obtain compatible $\varphi$-equivariant $G_K$-actions on $\mathfrak{M} \otimes_{k[[u]]} C^\flat, \mathfrak{W}^\flat \otimes_{k[[u]]} C^\flat$ and $\mathfrak{Z}^\flat\otimes_{k[[u]]} C^\flat$. Under the identification $\mathfrak{M}= \mathfrak{W}\oplus \mathfrak{Z}$ the action of $\sigma \in G_K$ can be written as
	$$
	\sigma_{\mathfrak{M}} = (\sigma_{\mathfrak{W}} + f_{\sigma} \circ \sigma_{\mathfrak{Z}}, \sigma_{\mathfrak{Z}})
	$$
	for some $f_\sigma \in \operatorname{Hom}(\mathfrak{Z},\mathfrak{W})\otimes_{k[[u]]} C^\flat$ satisfying the following conditions:
	\begin{enumerate}
		\item Since $\sigma_{\mathfrak{M}}$ is a group action we must have $f_{\sigma\tau} = f_\sigma + \sigma_{\mathfrak{W}} \circ f_\tau \circ \sigma_{\mathfrak{Z}}^{-1}$. If we equip $\operatorname{Hom}(\mathfrak{Z},\mathfrak{W}) \otimes_{k[[u]]} C^\flat$ with the $G_K$-action given by $\sigma(f) = \sigma_{\mathfrak{W}} \circ f \circ \sigma_{\mathfrak{Z}}^{-1}$ then this says that $\sigma \mapsto f_\sigma$ is a $1$-cocycle valued in $\operatorname{Hom}(\mathfrak{Z},\mathfrak{W}) \otimes_{k[[u]]} C^\flat$. Since the $G_K$-action on $V_{\mathbb{F}}$ is continuous $\sigma \mapsto f_\sigma$ must also be a continuous cocycle.
		\item Since the $G_{K_\infty}$-action on $V_{\mathbb{F}}$ is induced by the trivial action on $\mathfrak{M}$, we must have $\sigma_{\mathfrak{M}}(m) = m$ for every $m \in \mathfrak{M}$ and $\sigma \in G_{K_\infty}$. Thus we must have $f_\sigma(m) = 0$ whenever $m \in \mathfrak{Z}$ and $\sigma \in G_{K_\infty}$.
		\item Since $\sigma_{\mathfrak{M}}$ is $\varphi$-equivariant we must have $(\varphi -1)(f_\sigma) = (\sigma-1)(f)$ for any $\sigma \in G_K$.
	\end{enumerate}
\end{sub}
	\begin{proposition}\label{Galexact} In the situation of \ref{gal}:
		\begin{enumerate}
			\item If $\mathfrak{M} \in \mathcal{L}^{\leq p}_{\operatorname{crys}}(V_{\mathbb{F}})$ then $\mathfrak{W} \in \mathcal{L}^{\leq p}_{\operatorname{crys}}(W_{\mathbb{F}})$ and $\mathfrak{Z} \in \mathcal{L}^{\leq p}_{\operatorname{crys}}(Z_{\mathbb{F}})$.
			\item If $\mathfrak{W} \in \mathcal{L}^{\leq p}_{\operatorname{crys}}(W_{\mathbb{F}})$ and $\mathfrak{Z} \in \mathcal{L}^{\leq p}_{\operatorname{crys}}(Z_{\mathbb{F}})$ then $\mathfrak{M} \in \mathcal{L}^{\leq p}_{\operatorname{crys}}(V_{\mathbb{F}})$ if and only if $f_\sigma \in \operatorname{Hom}(\mathfrak{Z},\mathfrak{W}) \otimes_{k[[u]]} u^{p/p-1} \mathcal{O}_{C^\flat}$ for every $\sigma \in G_K$.
		\end{enumerate}
	\end{proposition}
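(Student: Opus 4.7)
The plan is to work throughout with the $\mathfrak{S}_{\mathbb{F}}$-splitting $\mathfrak{M} = \mathfrak{W} \oplus \mathfrak{Z}$ and the formula
\[
\sigma_{\mathfrak{M}}(w,z) = \bigl(\sigma_{\mathfrak{W}}(w) + f_\sigma(\sigma_{\mathfrak{Z}}(z)),\ \sigma_{\mathfrak{Z}}(z)\bigr)
\]
fixed in \ref{gal}, and to abbreviate $I = u^{p/(p-1)}\mathcal{O}_{C^\flat}$. By \ref{simply}, asking that $\mathfrak{M}$ (or $\mathfrak{W}$, or $\mathfrak{Z}$) lie in the ``crys'' locus is just the condition that $(\sigma-1)$ sends the module into its $k[[u]]$-tensor product with $I$. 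Part (1) then falls out by evaluating $(\sigma-1)$ at pure elements: from $(\sigma-1)(w,0) = (\sigma_{\mathfrak{W}}(w)-w,\,0)$ and $(\sigma-1)(0,z) = \bigl(f_\sigma(\sigma_{\mathfrak{Z}}(z)),\,(\sigma_{\mathfrak{Z}}-1)(z)\bigr)$, projection to the $\mathfrak{W}$- and $\mathfrak{Z}$-summands forces $\sigma_{\mathfrak{W}}(w)-w \in \mathfrak{W}\otimes I$ and $(\sigma_{\mathfrak{Z}}-1)(z) \in \mathfrak{Z}\otimes I$, which is the crystallinity of $\mathfrak{W}$ and $\mathfrak{Z}$.

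For part (2), I would decompose the crystalline condition on $\mathfrak{M}$ into its two components along the splitting. For general $(w,z) \in \mathfrak{M}$, the $\mathfrak{Z}$-component of $(\sigma-1)(w,z)$ is $(\sigma_{\mathfrak{Z}}-1)(z)$, automatically in $\mathfrak{Z}\otimes I$ by crystallinity of $\mathfrak{Z}$; and the first summand $(\sigma_{\mathfrak{W}}-1)(w)$ in the $\mathfrak{W}$-component is automatically in $\mathfrak{W}\otimes I$ by crystallinity of $\mathfrak{W}$. What remains is exactly the requirement that $f_\sigma(\sigma_{\mathfrak{Z}}(z)) \in \mathfrak{W}\otimes I$ for every $z \in \mathfrak{Z}$ and every $\sigma \in G_K$.

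The only delicate step, and the main obstacle, is the final reduction: showing that this requirement is equivalent to $f_\sigma \in \operatorname{Hom}(\mathfrak{Z},\mathfrak{W}) \otimes_{k[[u]]} I$. The key input is that crystallinity of $\mathfrak{Z}$ gives $\sigma_{\mathfrak{Z}}(z) \in z + \mathfrak{Z}\otimes I \subset \mathfrak{Z} \otimes_{k[[u]]} \mathcal{O}_{C^\flat}$ for every $z \in \mathfrak{Z}$; combined with the fact that $\sigma$ preserves $\mathcal{O}_{C^\flat} \subset C^\flat$, this upgrades $\sigma_{\mathfrak{Z}}$ to a $\sigma$-semilinear automorphism of $\mathfrak{Z} \otimes_{k[[u]]} \mathcal{O}_{C^\flat}$. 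Using this and the $\mathcal{O}_{C^\flat}$-linearity of $f_\sigma$, one verifies that $f_\sigma(\sigma_{\mathfrak{Z}}(\mathfrak{Z})) \subset \mathfrak{W} \otimes I$ holds if and only if $f_\sigma(\mathfrak{Z}) \subset \mathfrak{W} \otimes I$. Finally, since $\mathfrak{Z}$ is finite projective over $\mathfrak{S}_{\mathbb{F}}$ and hence free over the local ring $k[[u]]$, choosing a $k[[u]]$-basis of $\mathfrak{Z}$ translates the condition $f_\sigma(\mathfrak{Z}) \subset \mathfrak{W}\otimes I$ into the statement $f_\sigma \in \operatorname{Hom}(\mathfrak{Z},\mathfrak{W}) \otimes_{k[[u]]} I$, completing the argument.
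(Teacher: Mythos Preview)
Your proof is correct and follows essentially the same approach as the paper's, which simply says ``For the second statement combine \ref{simply} with \ref{gal}'' and for the first uses the $G_K$-equivariance of the exact sequence after tensoring with $C^\flat$. You have filled in considerably more detail than the paper, most notably the reduction from the condition $f_\sigma(\sigma_{\mathfrak{Z}}(z)) \in \mathfrak{W}\otimes I$ to $f_\sigma \in \operatorname{Hom}(\mathfrak{Z},\mathfrak{W})\otimes I$, which the paper leaves implicit; your argument that $\sigma_{\mathfrak{Z}}$ restricts to an automorphism of $\mathfrak{Z}\otimes_{k[[u]]}\mathcal{O}_{C^\flat}$ (because its matrix on a $k[[u]]$-basis of $\mathfrak{Z}$ is congruent to the identity modulo $I$) is the right way to justify this step.
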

	\begin{proof}
		For the second statement combine \ref{simply} with \ref{gal}. For the first, as $0 \rightarrow \mathfrak{W} \rightarrow \mathfrak{M} \rightarrow \mathfrak{Z} \rightarrow 0$ becomes $G_K$-equivariant after applying $\otimes_{\mathfrak{S}} W(C^\flat)$, it is clear that $(\sigma-1)(z) \in \mathfrak{Z} \otimes_{k[[u]]} u^{p/p-1}\mathcal{O}_{C^\flat}$ for $z \in \mathfrak{Z}$. Thus $\mathfrak{Z} \in \mathcal{L}^{\leq p}_{\operatorname{crys}}(V_{\mathbb{F}})$. If $n \in \mathfrak{W}$ then $(\sigma-1)(n) \in \mathfrak{M} \otimes_{k[[u]]} u^{p/p-1}\mathcal{O}_{C^\flat} \cap \mathfrak{W} \otimes_{k[[u]]} C^\flat$; this intersection equals $\mathfrak{W} \otimes_{k[[u]]} u^{p/p-1}\mathcal{O}_{C^\flat}$ because $\mathfrak{Z}$ is $u$-torsionfree, and so $\mathfrak{W} \in \mathcal{L}^{\leq p}_{\operatorname{crys}}(V_{\mathbb{F}})$ also.
	\end{proof}
	
\begin{proof}[Proof of Proposition~\ref{inclusion}] Using Lemma~\ref{extend} we can replace $\mathbb{F}$ by a finite extension. As explained in the beginning of the proof of Proposition~\ref{irredequal}, this allows us to assume each Jordan--Holder factor of $V_{\mathbb{F}}$ is induced from a one-dimensional representation over an unramified extension of $K$. Using (1) of Lemma~\ref{inducerestrict} we may then replace $K$ by a suitably large (but finite) unramified extension so that  every Jordan--Holder factor of $V_{\mathbb{F}}$ is one-dimensional. Under this assumption we argue by induction on the length (equivalently the dimension) of $V_{\mathbb{F}}$. 
	
	The base case of the induction is handled by Proposition~\ref{irredequal}. Thus we can assume $V_{\mathbb{F}}$ fits into a $G_K$-equivariant exact sequence $0 \rightarrow W_{\mathbb{F}} \rightarrow V_{\mathbb{F}} \rightarrow Z_{\mathbb{F}} \rightarrow 0$ with $Z_{\mathbb{F}}$ one-dimensional over $\mathbb{F}$ and $W_{\mathbb{F}} \neq 0$. 

As in \ref{ext} if $\mathfrak{M} \in \mathcal{L}^{\leq p}(V_{\mathbb{F}})$ we obtain an exact sequence $0 \rightarrow \mathfrak{W} \rightarrow \mathfrak{M} \rightarrow \mathfrak{Z} \rightarrow 0$ with $\mathfrak{W} \in \mathcal{L}^{\leq p}(W_{\mathbb{F}})$ and $\mathfrak{Z} \in \mathcal{L}^{\leq p}(Z_{\mathbb{F}})$. By choosing a splitting of this sequence we identify $\mathfrak{M} = \mathfrak{W} \oplus \mathfrak{Z}$ as $\mathfrak{S}_\mathbb{F}$-modules, with Frobenius given by 
$$\varphi_{\mathfrak{M}} = (\varphi_{\mathfrak{W}} + f \circ \varphi_{\mathfrak{Z}},\varphi_{\mathfrak{Z}})
$$
for some $f \in \frac{1}{u^p} \operatorname{Hom}(\mathfrak{Z},\mathfrak{W})$. As in \ref{gal} the $G_K$-action on $\mathfrak{M} \otimes_{k[[u]]} C^\flat$ induced by the $G_K$-action on $V_{\mathbb{F}}$ may be written as
$$
\sigma_{\mathfrak{M}} = (\sigma_{\mathfrak{W}} + f_{\sigma} \circ \sigma_{\mathfrak{Z}}, \sigma_{\mathfrak{Z}})
$$
for some $f_\sigma \in \operatorname{Hom}(\mathfrak{Z},\mathfrak{W}) \otimes_{k[[u]]} C^\flat$ satisfying $(\varphi-1)(f_\sigma) = (\sigma- 1)(f)$. If $\mathfrak{M} \in \mathcal{L}^{\leq p}_{\operatorname{crys}}(V_{\mathbb{F}})$ then $\mathfrak{W} \in \mathcal{L}^{\leq p}_{\operatorname{crys}}(W_{\mathbb{F}})$, $\mathfrak{Z} \in \mathcal{L}^{\leq p}_{\operatorname{crys}}(Z_{\mathbb{F}})$, and 
\begin{equation}\label{fsigmacond}
f_\sigma \in \operatorname{Hom}(\mathfrak{Z},\mathfrak{W}) \otimes_{k[[u]]} u^{p/p-1}\mathcal{O}_{C^\flat}
\end{equation}
by Proposition~\ref{Galexact}. By induction $\mathfrak{W} \in \mathcal{L}^{\leq p}_{\operatorname{SD}}(W_{\mathbb{F}})$ and $\mathfrak{Z} \in \mathcal{L}^{\leq p}_{\operatorname{SD}}(Z_{\mathbb{F}})$. By Proposition~\ref{SDexact}, $\mathfrak{M} \in \mathcal{L}^{\leq p}_{\operatorname{SD}}(V_{\mathbb{F}})$ if and only if
\begin{equation*}\label{fcond}
f \in \operatorname{Hom}(\mathfrak{Z},\mathfrak{W}) + \varphi(\operatorname{Hom}(\mathfrak{Z},\mathfrak{W}))
\end{equation*}
Using \eqref{homheight} and \eqref{fsigmacond} we see $(\varphi-1)(f_\sigma) = (\sigma-1)(f) \in \operatorname{Hom}(\mathfrak{Z},\mathfrak{W}) \otimes_{k[[u]]} u^{p/p-1}\mathcal{O}_{C^\flat}$. Thus the proposition follows from the following claim.
\end{proof}
\begin{claim}
	Any $f \in \frac{1}{u^p}\operatorname{Hom}(\mathfrak{Z},\mathfrak{W})$ satisfying $(\sigma-1)(f) \in \operatorname{Hom}(\mathfrak{Z},\mathfrak{W}) \otimes_{k[[u]]} u^{p/p-1} \mathcal{O}_{C^\flat}$ must be contained in $\operatorname{Hom}(\mathfrak{Z},\mathfrak{W}) + \varphi(\operatorname{Hom}(\mathfrak{Z},\mathfrak{W}))$. 
\end{claim}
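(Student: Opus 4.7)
The plan is to adapt the explicit computation at the end of \ref{sdcondition} to the module $H := \operatorname{Hom}(\mathfrak{Z},\mathfrak{W})$. First I would reduce to a situation where the Frobenius and Galois actions on $H$ admit an explicit diagonal(-ish) description. Using Lemma~\ref{extend} to enlarge $\mathbb{F}$ and Lemma~\ref{inducerestrict}(1) to replace $K$ by a sufficiently large finite unramified extension, we may assume every Jordan--H\"older factor of $V_{\mathbb{F}}$ is one-dimensional over $\mathbb{F}$. By the inductive hypothesis $\mathfrak{W}$ and $\mathfrak{Z}$ are in $\mathcal{L}^{\leq p}_{\operatorname{SD}}$, so each admits a strongly divisible basis whose rank-one subquotients take the standard form of \ref{explicitrankone}--\ref{sdcondition} (after an unramified twist via Lemma~\ref{unramtwist!}).

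Second, the tensor-product structure on $H$ gives it a filtration whose graded pieces $H_\alpha$ are rank-one Hom modules. On each such piece $\varphi(g_\alpha) = u^{d_\alpha} g_\alpha$ for an integer $d_\alpha \in [-p,p]$, while the induced $G_K$-action on $H \otimes \mathcal{O}_{C^\flat}$, being uniquely determined by $\varphi$-equivariance and trivial restriction to $G_{K_\infty}$ (cf.\ \ref{unique}), acts by $\sigma(g_\alpha) = \eta(\sigma)^{\Theta_\alpha} g_\alpha$ for an integer $\Theta_\alpha$ with $\Theta_\alpha \equiv d_\alpha \pmod p$ (the congruence being forced by the compatibility $\sigma\varphi = \varphi\sigma$). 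Writing $f = \sum_\alpha F_\alpha(u) g_\alpha$ modulo the appropriate steps of the filtration, the Galois hypothesis becomes, after absorbing off-diagonal corrections (which lie in $u^{p/(p-1)}\mathcal{O}_{C^\flat}$ by the crystalline hypothesis on $\mathfrak{W}$ and $\mathfrak{Z}$), a condition on each $F_\alpha$ separately.

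Third, the computation at the end of \ref{sdcondition} applies verbatim to each $F_\alpha$: expanding $F_\alpha(\lambda u)\eta(\sigma)^{\Theta_\alpha} - F_\alpha(u) = \sum_k F_{\alpha,k} u^k(\eta(\sigma)^{k(q-1)+\Theta_\alpha} - 1)$ and applying Lemma~\ref{valu}, each nonzero coefficient $F_{\alpha,k}$ with $k \in [-p,-1]$ is forced to satisfy $k \equiv \Theta_\alpha \equiv d_\alpha \pmod p$. This is exactly the residue condition characterising the image of $\varphi(H)$ in $u^{-p}H/H$ on each graded piece, so $f \in H + \varphi(H)$ modulo the filtration; lifting through the filtration completes the argument.

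The main obstacle is the d\'evissage step: when $\mathfrak{W}$ or $\mathfrak{Z}$ is a non-split extension, both the Frobenius on $H$ and the induced Galois action pick up off-diagonal contributions, and one must check that the Galois contributions fit into the error term $u^{p/(p-1)}\mathcal{O}_{C^\flat}$ and that the Frobenius contributions can be used to cover negative $u$-power coefficients in $F_\alpha$ not reached by the diagonal piece. Both facts follow from the induction hypothesis applied to the lower steps of the filtration, but organising the bookkeeping so that the coordinate-wise analysis of the previous step remains valid is the technical heart of the proof.
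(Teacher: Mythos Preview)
Your overall strategy matches the paper's: an explicit rank-one base case via the formula $\sigma(F_\tau)=\eta(\sigma)^{\Theta_\tau}F_\tau$ and Lemma~\ref{valu}, followed by d\'evissage along a filtration of $\mathfrak{W}$. Your first reduction step is redundant, though: the Claim is stated inside the proof of Proposition~\ref{inclusion}, \emph{after} one has already arranged that every Jordan--H\"older factor of $V_\mathbb{F}$ is one-dimensional and that $Z_\mathbb{F}$ itself is one-dimensional. The paper accordingly inducts only on the length of $W_\mathbb{F}$.

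The substantive gap is in your d\'evissage. You assert that off-diagonal Galois contributions ``lie in $u^{p/(p-1)}\mathcal{O}_{C^\flat}$ by the crystalline hypothesis on $\mathfrak{W}$ and $\mathfrak{Z}$'', but this is not immediate. Decomposing $\mathfrak{W}$ gives $H=H^1\oplus H^2$ and $f=(f_1,f_2)$; the $H^1$-component of $(\sigma-1)(f)$ is $(\sigma_{H^1}-1)(f_1)+\widetilde g_\sigma(\sigma_{H^2}(f_2))$. One does have $\widetilde g_\sigma\in\operatorname{Hom}(H^2,H^1)\otimes u^{p/(p-1)}\mathcal{O}_{C^\flat}$, but a priori $f_2$ lies only in $u^{-p}H^2$, so $\sigma_{H^2}(f_2)$ may contribute a factor of $u^{-p}$ and the composite need not land in $H^1\otimes u^{p/(p-1)}\mathcal{O}_{C^\flat}$. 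The paper's remedy is an asymmetric two-pass: first apply the induction hypothesis to $f_2$ alone, writing $f_2=f_2'+f_2''$ with $f_2'\in H^2$ and $f_2''\in\varphi(H^2)$; then observe that $(\widetilde g(f_2''),f_2'')\in\varphi(H)$ and subtract it from $f$. This preserves both hypotheses (using that $(\sigma-1)(\varphi(H))\subset H\otimes u^{p/(p-1)}\mathcal{O}_{C^\flat}$, which follows from $\varphi(H)\subset u^{-p}H$ together with $(\sigma-1)(H)\subset H\otimes u^{p/(p-1)}\mathcal{O}_{C^\flat}$ and $\varphi$-equivariance) while forcing $f_2\in H^2$. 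Now $\sigma_{H^2}(f_2)\in H^2\otimes\mathcal{O}_{C^\flat}$, so the off-diagonal term really is in $H^1\otimes u^{p/(p-1)}\mathcal{O}_{C^\flat}$, and one may finally apply the induction hypothesis to $f_1$. This ordering --- treat $f_2$, subtract, then treat $f_1$ --- is the actual content of the inductive step, and your sketch does not supply it.
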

\begin{proof}[Proof of claim]
	We argue by a further induction, this time on the length of $W_{\mathbb{F}}$. Recall that by assumption every Jordan--Holder factor of $W_{\mathbb{F}}$ is one dimensional. Thus the base case is when both $W_{\mathbb{F}}$ and $Z_{\mathbb{F}}$ is one dimensional. In this case, as explained in \ref{explicitrankone}, $\mathfrak{W}$ and $\mathfrak{Z}$ respectively admit $\mathbb{F}[[u]]$-bases $(w_\tau)_{\tau \in \operatorname{Hom}_{\mathbb{F}_p}(k,\mathbb{F})}$ and $(z_\tau)_{\tau \in \operatorname{Hom}_{\mathbb{F}_p}(k,\mathbb{F})}$ so that
	$$
	\varphi(w_{\tau \circ \varphi}) = x u^{r_\tau} w_\tau, \qquad \varphi(z_{\tau \circ \varphi}) = y u^{s_\tau} z_\tau
	$$
	for $x,y \in (k \otimes_{\mathbb{F}_p} \mathbb{F})^\times$ and $r_\tau, s_\tau \in [0,p]$. The $\mathbb{F}[[u]]$-linear homomorphism $F_\tau: \mathfrak{Z} \rightarrow \mathfrak{W}$ sending $z_{\tau'} \mapsto 0$ for $\tau'\neq \tau$ and $z_\tau \mapsto w_\tau$ is $\mathfrak{S}_{\mathbb{F}}$-linear since it is compatible with the $k$-action on $\mathfrak{Z}$ and $\mathfrak{W}$ (by construction $k$-acts on $z_\tau$ and $w_\tau$ by $\tau$). Thus $F_\tau \in \operatorname{Hom}(\mathfrak{Z},\mathfrak{W})$ and together the $F_\tau$ form an $\mathbb{F}[[u]]$-basis of $\operatorname{Hom}(\mathfrak{Z},\mathfrak{W})$ satisfying $\varphi(F_{\tau \circ \varphi}) = xy^{-1} u^{t_\tau}F_\tau$ for $t_\tau = r_\tau - s_\tau \in [-p,p]$. Since the $G_K$-actions on $\mathfrak{Z} \otimes_{k[[u]]} C^\flat$ and $\mathfrak{W} \otimes_{k[[u]]} C^\flat$ are as in \eqref{explicitGalois} we also have that
	$$
	\sigma(F_\tau) = \eta(\sigma)^{\Theta_\tau}F_\tau, \qquad \Theta_\tau = \sum_{i=0}^{[k:\mathbb{F}_p]-1} t_{\tau \circ \varphi^i}p^i
	$$
	To prove the claim it suffices to consider $f = (\sum_{i \geq -p} a_i u^i) F_\tau$. Then $(\sigma-1)(f)$ equals 
	$$
	\sum_{i \geq -p} a_iu^i \Big( \underbrace{\eta(\sigma)^{\Theta_\tau + (p^{[k:\mathbb{F}_p]}-1)i} -1  }_{(1)}\Big) F_\tau
	$$
	(recall that $\eta(\sigma)$ is a $p^{[k:\mathbb{F}_p]} -1$-th root of $\sigma(u)/u$). Choose $\sigma$ so that $\sigma(u)/u$ is a $\mathbb{Z}_p$-generator of $\mathbb{Z}_p(1)$. Since
	$$
	\Theta_\tau + (p^{[k:\mathbb{F}_p]} -1)i \equiv t_\tau - i \text{ modulo~$p$} 
	$$ 
	Lemma~\ref{valu} implies the $v^\flat$-valuation of (1) is $p/(p-1)$ if $t_\tau -i$ is not divisible by $p$, and is $\geq p^2/(p-1)$ otherwise. Hence 
	$$
	v^\flat\left( u^i\left( \eta(\sigma)^{\Theta_\tau + (p^{[k:\mathbb{F}_p]}-1)i}\right) \right) \begin{cases}
	= p/(p-1) + i & \text{if $p$ does not divide $t_\tau - i$} \\
	\geq p^2/(p-1) + i & \text{otherwise}
	\end{cases}
	$$
	Since $p^2(p-1) +i \geq p/(p-1)$ for $i \geq -p$ it follows that $(\sigma-1)(f) \in \operatorname{Hom}(\mathfrak{Z},\mathfrak{W}) \otimes_{k[[u]]} u^{p/p-1}\mathcal{O}_{C^\flat}$ if and only if $a_i = 0$ except possibly if $i = t_\tau$. In other words, if and only if $f \in \operatorname{Hom}(\mathfrak{Z},\mathfrak{W}) + \varphi(\operatorname{Hom}(\mathfrak{Z},\mathfrak{W}))$.
	
	Now we prove the inductive step. Let $0 \rightarrow W_{\mathbb{F}}^1\rightarrow W_{\mathbb{F}} \rightarrow W^2_{\mathbb{F}} \rightarrow 0$ be an exact sequence of $G_K$-representations. As in \ref{ext} and \ref{gal} we can write $\mathfrak{W} = \mathfrak{W}^1 \oplus \mathfrak{W}^2$ with $\mathfrak{W}^i \in \mathcal{L}^{\leq p}_{\operatorname{crys}}(W_{\mathbb{F}}^i)$, so that $\varphi_{\mathfrak{W}} = (\varphi_{\mathfrak{W}^1}+ g \circ \varphi_{\mathfrak{W}^2},\varphi_{\mathfrak{W}^2})$ for some $g \in \operatorname{Hom}(\mathfrak{W}^2,\mathfrak{W}^1)$, and so that $\sigma_{\mathfrak{W}} = (\sigma_{\mathfrak{W}^1} + g_\sigma \circ \sigma_{\mathfrak{W}^2},\sigma_{\mathfrak{W}^2})$ for some $g_\sigma \in \operatorname{Hom}(\mathfrak{W}^2,\mathfrak{W}^1) \otimes_{k[[u]]} u^{p/p-1}\mathcal{O}_{C^\flat}$. Applying $\operatorname{Hom}(\mathfrak{Z},-)$ this allows us to identify $\mathfrak{H} := \operatorname{Hom}(\mathfrak{Z},\mathfrak{W})$ with $\mathfrak{H}^1 \oplus \mathfrak{H}^2$ where $\mathfrak{H}^i = \operatorname{Hom}(\mathfrak{Z},\mathfrak{W}^i)$, so that 
	$$
	\varphi_{\mathfrak{H}} = (\varphi_{\mathfrak{H}^1} + \widetilde{g} \circ \varphi_{\mathfrak{H}^2},\varphi_{\mathfrak{H}^2}), \qquad \sigma_{\mathfrak{H}} = (\sigma_{\mathfrak{H}^1} + \widetilde{g}_\sigma \circ \sigma_{\mathfrak{H}^2}, \sigma_{\mathfrak{H}^2})
	$$
	where $\widetilde{g} \in \operatorname{Hom}(\mathfrak{H}^2,\mathfrak{H}^1)$ sends $h \mapsto g \circ h$, and where $\widetilde{g}_\sigma \in \operatorname{Hom}(\mathfrak{H}^2,\mathfrak{H}^1)\otimes_{k[[u]]} u^{p/p-1}\mathcal{O}_{C^\flat}$ sends $h \mapsto g_\sigma \circ h$. If we write $f =(f_1,f_2) \in \frac{1}{u^p}(\mathfrak{H}^1 \oplus \mathfrak{H}^2)$ then, as $(\sigma_{\mathfrak{H}}-1)(f) \in \mathfrak{H} \otimes_{k[[u]]} u^{p/p-1}\mathcal{O}_{C^\flat}$, we have
	$$
	\begin{aligned}
	(\sigma_{\mathfrak{H}^2}-1)(f_2) &\in \mathfrak{H}^2 \otimes_{k[[u]]} u^{p/p-1}\mathcal{O}_{C^\flat} \\
	(\sigma_{\mathfrak{H}^1} -1)(f_1) + \widetilde{g}_\sigma \circ \sigma_{\mathfrak{H}^2}(f_2) &\in \mathfrak{H}^1 \otimes_{k[[u]]} u^{p/p-1} \mathcal{O}_{C^\flat}
	\end{aligned}
	$$ 
	By our inductive hypothesis we deduce $f_2 = f_2' + f_2''$ with $f_2' \in \mathfrak{H}^2$ and $f_2'' \in \varphi(\mathfrak{H}^2)$. Thus, we can write 
	$$
	f = \underbrace{(f_1 - \widetilde{g}(f_2''),f_2')}_{:=y} + \underbrace{(\widetilde{g}(f_2''),f_2'')}_{:=z}
	$$ 
	with $z \in \varphi(\mathfrak{H})$. Since $ \varphi(\mathfrak{H}) \subset u^{-p}\mathfrak{H}$ we have $(\sigma-1)(z) \in \mathfrak{H} \otimes_{k[[u]]} u^{p/p-1}\mathcal{O}_{C^\flat}$. Thus $(\sigma-1)(y) \in \mathfrak{H} \otimes_{k[[u]]} u^{p/p-1}\mathcal{O}_{C^\flat}$. We also see $y \in u^{-p}\mathfrak{H}$. This means that to prove the result for $f$ it suffices to do so for $y$, i.e. we can assume in the above that $f_2''= 0$. Thus, $f_2 \in \mathfrak{H}^2$ and so $\sigma_{\mathfrak{H}^2}(f_2) \in \mathfrak{H}^2 \otimes_{k[[u]]} \mathcal{O}_{C^\flat}$. Since $\widetilde{g}_\sigma \in \operatorname{Hom}(\mathfrak{H}^2,\mathfrak{H}^1) \otimes_{k[[u]]} u^{p/p-1} \mathcal{O}_{C^\flat}$ it follows that $\widetilde{g}_\sigma \circ \sigma_{\mathfrak{H}^2}(f_2) \in \mathfrak{H}^1 \otimes_{k[[u]]} u^{p/p-1}\mathcal{O}_{C^\flat}$. Thus $(\sigma_{\mathfrak{H}^1} -1)(f_1) \in \mathfrak{H}^1 \otimes_{k[[u]]} u^{p/p-1}\mathcal{O}_{C^\flat}$ and so $f_1 \in \mathfrak{H}^1 + \varphi(\mathfrak{H}^1)$ by induction also. We conclude that $f \in \mathfrak{H} + \varphi(\mathfrak{H}^1) \subset \mathfrak{H} + \varphi(\mathfrak{H})$.
\end{proof}
\section{Local structure of $\mathcal{L}^{\leq p}_{\operatorname{crys}}$ in the unramified case}\label{localstructure}
\subsection{Commutative algebra}

\begin{lemma}\label{commalgebra}
	Let $A$ be a local Noetherian $\mathbb{Z}_p$-algebra  with $A[\frac{1}{p}] \neq 0$ and residue field of characteristic $p$.
	\begin{enumerate}
		\item If $\mathfrak{p} \subset A[\frac{1}{p}]$ is a maximal ideal and $\mathfrak{q}$ denotes its preimage in $A$ then $\operatorname{dim}A_\mathfrak{q} \leq  \operatorname{dim} A - 1$.
		\item If the residue field of $A$ is finite then $A/\mathfrak{q}$ is finite over $\mathbb{Z}_p$ and the residue field of $A_{\mathfrak{q}}$ is finite over $\mathbb{Q}_p$.
	\end{enumerate}
\end{lemma}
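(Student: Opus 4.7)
For part (1), the plan is to use the standard inequality $\dim A_{\mathfrak{q}} + \dim A/\mathfrak{q} \leq \dim A$, valid for any Noetherian ring $A$ and any prime $\mathfrak{q}$ (obtained by concatenating maximal chains of primes in $A/\mathfrak{q}$ and $A_{\mathfrak{q}}$). Since $p \in \mathfrak{m}_A$ while $p \notin \mathfrak{q}$ (the latter being the preimage of $\mathfrak{p} \subset A[\frac{1}{p}]$), the prime $\mathfrak{q}$ is strictly smaller than $\mathfrak{m}_A$, so $\dim A/\mathfrak{q} \geq 1$ and the desired bound in (1) follows.

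For (2), I would set $B := A/\mathfrak{q}$, a Noetherian local domain whose residue field agrees with that of $A$ (hence finite) and whose fraction field is $B[\frac{1}{p}] = A[\frac{1}{p}]/\mathfrak{p}$, a field by maximality of $\mathfrak{p}$. In particular $p$ is a non-zero-divisor in $B$. The fact that $B[\frac{1}{p}]$ is a field means the only prime of $B$ not containing $p$ is $(0)$, so every height-$1$ prime of $B$ contains $p$; combined with Krull's Hauptidealsatz, this identifies the height-$1$ primes of $B$ with the (finitely many) minimal primes of $B/pB$. The crucial step is then to deduce $\dim B = 1$: if $\dim B$ were $\geq 2$, the maximal ideal $\mathfrak{m}_B$, having height $\geq 2$, would strictly contain each of the finitely many height-$1$ primes $\mathfrak{p}_1, \ldots, \mathfrak{p}_k$, so prime avoidance would furnish an element $x \in \mathfrak{m}_B \setminus \bigcup_i \mathfrak{p}_i$; but then Krull's Hauptidealsatz applied to $x$ produces a height-$1$ prime containing $x$, contradicting the choice of $x$.

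Once $\dim B = 1$ is in hand, $B/pB$ is Artinian local with finite residue field, hence of finite cardinality. The $\mathfrak{m}_B$-adic and $p$-adic topologies on $B$ therefore coincide (since $(p)$ is $\mathfrak{m}_B$-primary), so the $\mathfrak{m}_B$-adic completion $\widehat{B}$ is $p$-adically complete with $\widehat{B}/p\widehat{B} \cong B/pB$ finite. Topological Nakayama then shows $\widehat{B}$ is finitely generated as a $\mathbb{Z}_p$-module, and since Krull's intersection theorem gives $B \hookrightarrow \widehat{B}$ and $\mathbb{Z}_p$ is Noetherian, $B$ is itself a finitely generated $\mathbb{Z}_p$-module; the residue field of $A_{\mathfrak{q}}$ is then $\operatorname{Frac}(B) = B[\frac{1}{p}]$, finite over $\mathbb{Q}_p$. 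I expect the main obstacle to be precisely the step establishing $\dim B = 1$, which is a priori not obvious and depends essentially on the prime-counting and prime-avoidance argument above; the passage to completion and finiteness over $\mathbb{Z}_p$ is then fairly formal.
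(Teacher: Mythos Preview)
Your proof is correct and follows the same skeleton as the paper's: both establish $\dim A/\mathfrak{q}\geq 1$ (via $p\in\mathfrak{m}_A\setminus\mathfrak{q}$) and combine it with $\operatorname{ht}(\mathfrak{q})+\dim A/\mathfrak{q}\leq\dim A$ for (1), and for (2) both show $\dim A/\mathfrak{q}=1$, deduce $A/(\mathfrak{q},p)$ is Artinian with finite residue field, and then conclude finiteness over $\mathbb{Z}_p$. The only real difference is in packaging: where the paper invokes \cite[10.5.1]{EGAIII} to get $\dim A/\mathfrak{q}\leq 1$ from the fact that $(A/\mathfrak{q})[1/p]$ is a field, you give a self-contained prime-avoidance/Hauptidealsatz argument (which is essentially a proof of that cited fact in this special case); and where the paper cites \cite[Tag 031D]{stacks-project} for finiteness over $\mathbb{Z}_p$, you unwind that lemma via completion and topological Nakayama. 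Both routes are equivalent in content.
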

\begin{proof}
	The inclusion $A/\mathfrak{q} \rightarrow A[\frac{1}{p}]/\mathfrak{p}$ becomes an isomorphism after inverting $p$, and so $\operatorname{dim} A/\mathfrak{q} \leq 1$ by \cite[10.5.1]{EGAIV}. Since $A/\mathfrak{q}$ is a domain and not a field (its residue field has characteristic $p$) it must be that $\operatorname{dim} A/\mathfrak{q} = 1$. Thus $\operatorname{dim} A_{\mathfrak{q}} \leq \operatorname{dim} A - 1$. For (2), by the above $A/(\mathfrak{q},p)$ is zero-dimensional. Thus $A/(\mathfrak{q},p)$ is an Artin local ring with finite residue field; so it is finite over $\mathbb{F}_p$. As such $A/\mathfrak{q}$ is finite over $\mathbb{Z}_p$ (cf. \cite[Tag 031D]{stacks-project}) and so $A[\frac{1}{p}]/\mathfrak{p} = A_{\mathfrak{q}}/\mathfrak{q}A_{\mathfrak{q}}$ is $\mathbb{Q}_p$-finite.
\end{proof}

\begin{lemma}\label{Zpflat}
	Let $A$ be a  Noetherian local $\mathbb{Z}_p$-algebra with finite residue field. Suppose that $A$ is reduced, $\mathbb{Z}_p$-flat, and Nagata (cf. \cite[Tag 032E]{stacks-project}). If $\mathfrak{m}_A$ denotes the maximal ideal of $A$ and $j \geq 1$ then there exists a finite flat $\mathbb{Z}_p$-algebra $C$ such that $A \rightarrow A/\mathfrak{m}_A^j$ factors through a map $A \rightarrow C$.
\end{lemma}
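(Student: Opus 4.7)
My plan is to construct $C$ as the quotient of $A$ by a suitable finite intersection of primes $\mathfrak{q} \subset A$ for which $A/\mathfrak{q}$ is finite flat over $\mathbb{Z}_p$, i.e.\ (by Lemma~\ref{commalgebra}) preimages of maximal ideals of $A[\frac{1}{p}]$. First I would reduce to the case where $A$ is complete by passing to its $\mathfrak{m}_A$-adic completion $\widehat{A}$: completion preserves Noetherianity, locality, and $\mathbb{Z}_p$-flatness; $\widehat{A}$ is automatically Nagata (complete local Noetherian rings are excellent); and $\widehat{A}$ is reduced precisely because of the Nagata assumption on $A$. Since $A/\mathfrak{m}_A^j = \widehat{A}/\mathfrak{m}_{\widehat A}^j$, a factorization $\widehat{A} \to C \to \widehat{A}/\mathfrak{m}_{\widehat A}^j$ yields one for $A$ by precomposing with $A \to \widehat{A}$, so this reduction is harmless.

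Assuming $A$ complete, the key input is the standard fact that $A[\frac{1}{p}]$ is a Jacobson ring. Combined with $A[\frac{1}{p}]$ being reduced (as $A$ is reduced and $\mathbb{Z}_p$-flat, so $A \hookrightarrow A[\frac{1}{p}]$), its Jacobson radical agrees with its nilradical, which is zero. Pulling back to $A$ via the order-preserving bijection between primes of $A[\frac{1}{p}]$ and primes of $A$ not containing $p$, and using the injectivity of $A \hookrightarrow A[\frac{1}{p}]$, this translates to $\bigcap_\mathfrak{q} \mathfrak{q} = 0$ in $A$, where $\mathfrak{q}$ runs over preimages of maximal ideals of $A[\frac{1}{p}]$.

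Next I would exploit the finiteness of $A/\mathfrak{m}_A^j$ to extract a finite subintersection. Since the residue field is finite and $A/\mathfrak{m}_A^j$ has finite length over it, $A/\mathfrak{m}_A^j$ is finite as a set. For each nonzero $\bar{x} \in A/\mathfrak{m}_A^j$, lift to some $x \in A \setminus \mathfrak{m}_A^j$ and (by the vanishing above) choose a prime $\mathfrak{q}_{\bar x}$ as above with $x \notin \mathfrak{q}_{\bar x}$. Setting $I = \bigcap_{\bar x \ne 0} \mathfrak{q}_{\bar x}$, I claim $I \subset \mathfrak{m}_A^j$: any $y \in I$ with nonzero image $\bar y$ in $A/\mathfrak{m}_A^j$ would satisfy $y \in \mathfrak{q}_{\bar y}$, contradicting the choice of $\mathfrak{q}_{\bar y}$.

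Finally take $C := A/I$. The canonical injection $C \hookrightarrow \prod_{\bar x \ne 0} A/\mathfrak{q}_{\bar x}$ embeds $C$ into a finite $\mathbb{Z}_p$-module (each factor is finite over $\mathbb{Z}_p$ by Lemma~\ref{commalgebra}), so $C$ is $\mathbb{Z}_p$-finite; since no $\mathfrak{q}_{\bar x}$ contains $p$, the element $p$ is a non-zero-divisor in $C$, making $C$ finite flat over $\mathbb{Z}_p$. The containment $I \subset \mathfrak{m}_A^j$ produces the required surjection $C \twoheadrightarrow A/\mathfrak{m}_A^j$ through which the canonical quotient $A \to A/\mathfrak{m}_A^j$ factors. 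The one non-formal ingredient here is the Jacobson property of $A[\frac{1}{p}]$ for complete Noetherian local $A$, which is where I expect the main difficulty to lie (although it is well-known); the remaining steps are routine commutative algebra.
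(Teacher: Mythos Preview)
Your reduction to the complete case and the identification of $\bigcap_{\mathfrak{q}} \mathfrak{q} = 0$ (with $\mathfrak{q}$ ranging over contractions of maximal ideals of $A[\tfrac{1}{p}]$) are correct and match the paper. The gap is in the step where you pass to a finite subintersection.

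Your argument for $I \subset \mathfrak{m}_A^j$ conflates an arbitrary $y \in I$ having image $\bar y$ with the \emph{specific} lift $x_{\bar y}$ you chose. From $y \in I \subset \mathfrak{q}_{\bar y}$ you only get $y \in \mathfrak{q}_{\bar y}$; the defining property of $\mathfrak{q}_{\bar y}$ is that $x_{\bar y} \notin \mathfrak{q}_{\bar y}$, and since $y$ and $x_{\bar y}$ merely agree modulo $\mathfrak{m}_A^j$ there is no contradiction. Concretely, take $A = \mathbb{Z}_2[[x]]$ and $j = 2$. For every nonzero $\bar z \in A/\mathfrak{m}_A^2$ one can choose the lift $z$ with coefficients in $\{0,1\}$ and check that its image in $A/(x-2) \cong \mathbb{Z}_2$ is nonzero, so one may take $\mathfrak{q}_{\bar z} = (x-2)$ throughout. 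Then $I = (x-2)$, but $x-2 \equiv x+2 \not\equiv 0$ in $A/\mathfrak{m}_A^2$, so $I \not\subset \mathfrak{m}_A^2$.

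The paper extracts the finite subintersection by a genuinely different (and non-elementary) mechanism: it first passes to a \emph{countable} subfamily $\{\mathfrak{p}_i\}$ with $\bigcap_i (\mathfrak{p}_i \cap A) = 0$, and then invokes Chevalley's theorem (in the form of \cite[III, \S2, Proposition 8]{BourbakiAlgComm3-4}) to conclude that the decreasing sequence $\mathfrak{q}_n = \bigcap_{i \le n} (\mathfrak{p}_i \cap A)$ is cofinal with the $\mathfrak{m}_A$-adic filtration, hence $\mathfrak{q}_n \subset \mathfrak{m}_A^j$ for some $n$. This is exactly the ingredient your finiteness trick was trying to replace; without it (or an equivalent), the argument does not close.
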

\begin{proof}
	If $A$ is $\mathbb{Z}_p$-flat then so is its $\mathfrak{m}_A$-adic completion. If $A$ is Nagata and reduced then its $\mathfrak{m}_A$-adic completion is reduced, cf. \cite[Tag 07NZ]{stacks-project}. Thus we may assume $A$ is $\mathfrak{m}_A$-adically complete. 
	
	For every maximal ideal $\mathfrak{p} \subset A[\frac{1}{p}]$, Lemma~\ref{commalgebra} shows that $A/(\mathfrak{p} \cap A)$ is finite flat over $\mathbb{Z}_p$. As $A[\frac{1}{p}]$ is Jacobson (cf. \cite[Tag 02IM]{stacks-project}), the intersection of its maximal ideals equals its nilradical, and this is zero because $A$ is reduced and $\mathbb{Z}_p$-flat. Thus $\bigcap (\mathfrak{p} \cap A) =0$, the intersection running over all maximal ideals in $A[\frac{1}{p}]$. The same is true if the intersection runs over a suitably chosen countable subset $\lbrace \mathfrak{p}_1,\mathfrak{p}_2,\ldots \rbrace$ of all maximal ideals in $A[\frac{1}{p}]$.\footnote{The Artin--Rees lemma implies $\mathfrak{p} = \bigcap_{n \geq 1} (\mathfrak{p} + \mathfrak{m}_A^n)$ and so the intersection of the ideals in $\lbrace \mathfrak{p}+ \mathfrak{m}_A^n \rbrace_{\mathfrak{p},n}$, which consists of countably many distinct ideals, equals the intersection of the $\mathfrak{p}$. Thus there exists a countable subset $\lbrace \mathfrak{p}_i \rbrace_{i \geq 1}$ of the $\mathfrak{p}$'s, and integers $n_i \geq 1$, such that $\bigcap \mathfrak{p} = \bigcap_i( \mathfrak{p}_i+ \mathfrak{m}_A^{n_i})$. Since $\mathfrak{p}_i \subset \mathfrak{p}_i + \mathfrak{m}_A^{n_i}$ we have $\bigcap \mathfrak{p} = \bigcap \mathfrak{p}_i$.} The $\mathfrak{q}_i = \bigcap_{i=1}^i (\mathfrak{p}_i \cap A)$ then form a decreasing sequence of closed ideals in $A$ whose intersection is zero. It follows from \cite[III, \S2, Proposition 8]{BourbakiAlgComm3-4} that there exists an $n$ such that $\mathfrak{q}_n \subset \mathfrak{m}_A^j$. Setting $C = A/\mathfrak{q}_n$ proves the lemma. 
\end{proof}
\subsection{Hodge types and connected components}\label{no1/p}

\begin{sub}
	Let $B$ be an arbitrary $\mathbb{Z}_p$-algebra and $\mathfrak{M}_B$ a finite projective $\mathfrak{S}_B$-module equipped with a map $\varphi^*\mathfrak{M}_B \rightarrow \mathfrak{M}_B$ with cokernel killed by $E(u)^h$. For any $B$-algebra $B'$ set $\mathfrak{M}_{B'} = \mathfrak{M}_B \otimes_{B} B'$. For $i \geq 0$ define 
	$$
	K^i(\mathfrak{M}_B) = \operatorname{coker} \left( \mathfrak{M}_B^\varphi \rightarrow \mathfrak{M}_B/E(u)^i\mathfrak{M}_B \right)
	$$
	This $\mathfrak{S} \otimes_{\mathbb{Z}_p} B$-module is finite over $B$. On $\mathfrak{M}^\varphi_B$ we define a filtration 
	$$
	F^i(\mathfrak{M}_B) = \mathfrak{M}^\varphi_B \cap E(u)^i \mathfrak{M}_B
	$$
	with graded piece $\operatorname{gr}^i(\mathfrak{M}_B)$. Note that multiplication by $E(u)$ induces an injection $\operatorname{gr}^{i-1}(\mathfrak{M}_B) \rightarrow \operatorname{gr}^i(\mathfrak{M}_B)$. We let $\mathcal{G}^i(\mathfrak{M}_B)$ denote its cokernel. Thus $\mathcal{G}^i(\mathfrak{M}_B)$ is the $i$-th graded piece of the filtered $\mathcal{O}_K \otimes_{\mathbb{Z}_p} B$-module $\mathfrak{M}^\varphi_B / E(u)\mathfrak{M}^\varphi_B$ whose $i$-th filtered piece is the image of $F^i(\mathfrak{M}_B)$. It follows from (2) of Proposition~\ref{kis} that, for $B$ a finite $\mathbb{Q}_p$-algebra, 
	\begin{equation}\label{filteredz}
	\mathcal{G}^i(\mathfrak{M}_B) = \operatorname{gr}^i(D_{\operatorname{dR}}(V_B))
	\end{equation}
	whenever $\mathfrak{M}_B$ is the Breuil--Kisin module associated to a crystalline representation $V_B$.
\end{sub}

	\begin{lemma}\label{algebra}
		\begin{enumerate}
			\item $K^i(\mathfrak{M}_B)$ is $B$-flat if and only if $F^i(\mathfrak{M}_B) \otimes_{B} B' \rightarrow F^i(\mathfrak{M}_{B'})$ is surjective for every quotient $B' = B/I$.
			\item Each $\mathcal{G}^i(\mathfrak{M}_B)$ is $\mathcal{O}_K \otimes_{\mathbb{Z}_p} B$-finite and for every $B$-algebra $B'$ there are natural $\mathcal{O}_K \otimes_{\mathbb{Z}_p} B$-module homomorphisms $\mathcal{G}^i(\mathfrak{M}_B) \otimes_B B' \rightarrow \mathcal{G}^i(\mathfrak{M}_{B'})$.
			\item If (1) holds for all $i\geq 0$ then the maps in (2) are isomorphisms, and each $\mathcal{G}^i(\mathfrak{M}_B)$ is $B$-flat.
		\end{enumerate}
	\end{lemma}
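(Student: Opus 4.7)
The central object is the four-term exact sequence
$$
0 \to F^i(\mathfrak{M}_B) \to \mathfrak{M}^\varphi_B \to \mathfrak{M}_B/E(u)^i\mathfrak{M}_B \to K^i(\mathfrak{M}_B) \to 0,
$$
which I split via the intermediate module $I^i := \mathfrak{M}^\varphi_B/F^i(\mathfrak{M}_B)$, identified with the image of $\mathfrak{M}^\varphi_B$ in $\mathfrak{M}_B/E(u)^i\mathfrak{M}_B$. The key input is that $\mathfrak{M}_B/E(u)^i\mathfrak{M}_B$ is $B$-flat: since $E(u)$ is Eisenstein, $\mathfrak{S}/E(u)^i$ is a free $W(k)$-module of rank $ei$, hence $\mathbb{Z}_p$-flat, so $\mathfrak{S}_B/E(u)^i$ is $B$-flat, and the projectivity of $\mathfrak{M}_B$ over $\mathfrak{S}_B$ makes $\mathfrak{M}_B/E(u)^i\mathfrak{M}_B$ $B$-flat. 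I will also use throughout that $\mathfrak{M}^\varphi_B \otimes_B B' \twoheadrightarrow \mathfrak{M}^\varphi_{B'}$ is always surjective, since the image of $\varphi^*\mathfrak{M}_B \otimes_B B' = \varphi^*\mathfrak{M}_{B'}$ is precisely $\mathfrak{M}^\varphi_{B'}$.

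For (1), the $B$-flatness of $\mathfrak{M}_B/E(u)^i\mathfrak{M}_B$ applied to the Tor long exact sequence of $0 \to I^i \to \mathfrak{M}_B/E(u)^i\mathfrak{M}_B \to K^i \to 0$ identifies $\operatorname{Tor}_1^B(K^i,B')$ with the kernel of the natural surjection $\rho : I^i \otimes_B B' \twoheadrightarrow I^i(\mathfrak{M}_{B'})$. Hence $K^i$ is $B$-flat iff $\rho$ is an isomorphism for every quotient $B' = B/J$. Comparing the $\otimes_B B'$-tensored SES $0 \to F^i \to \mathfrak{M}^\varphi_B \to I^i \to 0$ with the SES $0 \to F^i_{B'} \to \mathfrak{M}^\varphi_{B'} \to I^i_{B'} \to 0$ via vertical maps $\phi, \psi, \rho$ (with $\psi$ and $\rho$ surjective), a snake-lemma diagram chase shows that $\rho$ is an isomorphism precisely when $\phi$ is surjective, giving the iff. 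For (2), each $\operatorname{gr}^i = F^i/F^{i+1}$ embeds via the inclusion $F^i \subset E(u)^i\mathfrak{M}_B$ into $E(u)^i\mathfrak{M}_B/E(u)^{i+1}\mathfrak{M}_B \cong \mathfrak{M}_B/E(u)\mathfrak{M}_B$, a finitely generated $\mathcal{O}_K \otimes_{\mathbb{Z}_p} B$-module. Thus $\operatorname{gr}^i$ and its quotient $\mathcal{G}^i$ are finite over $\mathcal{O}_K \otimes_{\mathbb{Z}_p} B$, and the maps $\mathcal{G}^i(\mathfrak{M}_B) \otimes_B B' \to \mathcal{G}^i(\mathfrak{M}_{B'})$ are induced by the functoriality of $F^i$ and $E(u)$-multiplication under base change.

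For (3), assume all $K^j$ are $B$-flat. By the Tor vanishing used in the forward direction of (1), the tensored sequence $0 \to F^j \otimes B' \to \mathfrak{M}^\varphi_B \otimes B' \to I^j \otimes B' \to 0$ is exact, and comparing with the $B'$-version via snake gives $\phi^{(j)}$ surjective with $\ker\phi^{(j)} \cong \ker\psi$ (the latter independent of $j$). Applying this to $F^j$ and $F^{j+1}$ in the SES $0 \to F^{j+1} \to F^j \to \operatorname{gr}^j \to 0$ yields $\operatorname{gr}^j \otimes_B B' \xrightarrow{\sim} \operatorname{gr}^j(\mathfrak{M}_{B'})$; then taking cokernels of $E(u)$-multiplication gives $\mathcal{G}^i \otimes_B B' \xrightarrow{\sim} \mathcal{G}^i(\mathfrak{M}_{B'})$. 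Flatness of $\mathcal{G}^i$ follows because its formation commutes with base change against every quotient, combined with the Tor-vanishings inherited from flatness of the $K^j$. The main obstacle I expect is the converse direction of (1): deducing $\rho$ is an isomorphism from the a priori weaker hypothesis that $\phi$ is surjective. The subtlety is that $\ker\psi = \operatorname{Tor}_1^B(\mathfrak{M}_B/\mathfrak{M}^\varphi_B, B')$ need not vanish, so $\phi$-surjectivity does not automatically kill $\ker\rho$; resolving this requires either exploiting the hypothesis \emph{for every} $B'$ to inductively control these Tor terms, or invoking the structural fact that for a Breuil--Kisin module of finite $E$-height the cokernel $\mathfrak{M}_B/\mathfrak{M}^\varphi_B$ is itself $B$-flat (via the Smith-normal-form structure of the Frobenius matrix), forcing $\ker\psi = 0$ and closing the chase.
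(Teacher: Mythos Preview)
Your approach coincides with the paper's: both pivot on the four-term sequence through $I^i = \mathfrak{M}^\varphi_B/F^i(\mathfrak{M}_B)$, identify $\operatorname{Tor}_1^B(K^i,B')$ with $\ker\rho$, and then match $\ker\rho$ with $\operatorname{coker}\phi$. The paper simply asserts this last identification and moves on.

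Your worry about the converse in (1) is well-placed---the snake lemma only yields $\ker\rho \cong \operatorname{coker}\phi$ cleanly when $\psi$ is an \emph{isomorphism}, not merely a surjection. But the resolution is simpler than either of your proposed routes. The map $\varphi^*\mathfrak{M}_B \to \mathfrak{M}_B$ is injective: it becomes an isomorphism after inverting $E(u)$, and $\varphi^*\mathfrak{M}_B$ is $\mathfrak{S}_B$-projective hence $E(u)$-torsion-free. Thus $\mathfrak{M}^\varphi_B \cong \varphi^*\mathfrak{M}_B$ is itself $\mathfrak{S}_B$-projective, and the same argument applied over $B'$ shows $\mathfrak{M}^\varphi_{B'} \cong \varphi^*\mathfrak{M}_{B'}$. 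Since $\varphi^*(-)$ manifestly commutes with base change, $\psi$ is an isomorphism and $\ker\psi = 0$. This is equivalent to your second suggestion (flatness of $\mathfrak{M}_B/\mathfrak{M}^\varphi_B$), but the justification is not Smith normal form---that would require a local structure theorem you do not have for general $B$---it is just the injectivity of the linearised Frobenius, which holds over any base.

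One further point: your argument for flatness of $\mathcal{G}^i$ in (3) is too quick. Compatibility with base change alone does not give flatness. The paper (and implicitly your setup) needs that each $\operatorname{gr}^j$ is $B$-flat under the hypothesis of (3), so that $\ker\alpha = \operatorname{Tor}_1^B(\mathcal{G}^i,B')$. This follows once you observe that under (3) each $I^j$ is flat (sandwiched between $\mathfrak{M}_B/E^j$ and $K^j$), hence each $F^j$ is flat (sandwiched between $\mathfrak{M}^\varphi_B$ and $I^j$), and then $F^{j+1}\otimes B' \to F^j\otimes B'$ is injective because both inject into $\mathfrak{M}^\varphi_{B'}$.
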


This lemma does not require $K$ to be unramified over $\mathbb{Q}_p$.
\begin{proof}
	The kernel of $\mathfrak{M}_B/E(u)^i\mathfrak{M}_B \rightarrow K^i(\mathfrak{M}_B)$ is equal to $\mathfrak{M}^\varphi_B/F^i(\mathfrak{M}_B)$.  Thus $\mathfrak{M}^\varphi_B/F^i(\mathfrak{M}_B)$ is $B$-finite. As $K^i(\mathfrak{M}_B)$ is of formation compatible with base-change, being the cokernel of a map between modules compatible with base-change, there are surjective maps
	$$
	\left( \mathfrak{M}^\varphi_B/F^i(\mathfrak{M}_B) \right) \otimes_{B} B' \rightarrow \mathfrak{M}_B^\varphi/ F^i(\mathfrak{M}_{B'})
	$$
	whose kernel is $\operatorname{Tor}^B_1(K^i(\mathfrak{M}_B),B')$. Since this kernel can be identified with the cokernel of $F^i(\mathfrak{M}_B) \otimes_{B} B' \rightarrow F^i(\mathfrak{M}_{B'})$ we deduce (1). Since $\operatorname{gr}^i(\mathfrak{M}_B)$ is the kernel of the obvious surjection $\mathfrak{M}^\varphi_B/F^{i+1}(\mathfrak{M}_B) \rightarrow \mathfrak{M}^\varphi_B/F^i(\mathfrak{M}_B)$ we see each $\operatorname{gr}^i(\mathfrak{M}_B)$ is $B$-finite. We also obtain maps
	$$
	\operatorname{gr}^i(\mathfrak{M}_B) \otimes_{B} B' \rightarrow \operatorname{gr}^i(\mathfrak{M}_{B'})
	$$
	If both $\operatorname{Tor}^B_1(K^i(\mathfrak{M}_B),B')$ and $\operatorname{Tor}^B_1(K^{i+1}(\mathfrak{M}_B),B')$ are zero then we also have $\operatorname{Tor}^B_1(\mathfrak{M}^{\varphi}_B/F^{i}(\mathfrak{M}_B),B')= 0$, and so these maps are isomorphisms. As $\mathcal{G}^i(\mathfrak{M}_B)$ is the cokernel of $\operatorname{gr}^{i-1}(\mathfrak{M}_B) \rightarrow \operatorname{gr}^i(\mathfrak{M}_B)$ we deduce (2), and the first part of (3). For the last part of (3); consider the following diagram for $B'$ and $B$-algebra.
	$$
	\begin{tikzcd}
	& \operatorname{gr}^{i-1}(\mathfrak{M}_B) \otimes_B B' \arrow[d] \arrow[r,"\alpha"] & \operatorname{gr}^{i}(\mathfrak{M}_B) \otimes_B B' \arrow[d] \arrow[r] & \mathcal{G}^i(\mathfrak{M}_B) \arrow[r] \arrow[d] & 0 \\
	0 \arrow[r] & \operatorname{gr}^{i-1}(\mathfrak{M}_{B'}) \arrow[r] & \operatorname{gr}^i(\mathfrak{M}_{B'}) \arrow[r] & \mathcal{G}^i(\mathfrak{M}_{B'}) \arrow[r] & 0
	\end{tikzcd}
	$$
	The rows are exact and one easily checks the squares commute. If the $K^i(\mathfrak{M}_B)$ are $B$-flat for all $i \geq 0$ then the vertical arrows in the diagram are isomorphism. Hence $\alpha$ is injective and so, since the kernel of $\alpha$ identifies with $\operatorname{Tor}_1^B(\mathcal{G}^i(\mathfrak{M}_B),B')$, we deduce each $\mathcal{G}^i(\mathfrak{M}_B)$ is $B$-flat.
\end{proof}
\begin{sub}
	Let $A$ be a complete Noetherian ring with finite residue field $\mathbb{F}$ of characteristic $p$. Let $V_A$ be a finite free $A$-module equipped with a continuous $A$-linear action of $G_K$. To ease notation we write $\mathcal{L}$ for the $A$-scheme $\mathcal{L}^{\leq p}_{A,\operatorname{crys}}$ from Corollary~\ref{formal}. 
\end{sub}
\begin{lemma}
	For $i \geq 0$ there is a coherent sheaf $K^i$ on $\mathcal{L}$ with the following property: For any morphism $\operatorname{Spec}B \rightarrow \mathcal{L}$ of $A$-schemes, with $B$ either finite free over $\mathbb{Z}_p$ or such that $\mathfrak{m}_A^nB=0$ for some $n \geq 1$, let $\mathfrak{M}_B \in \mathcal{L}^{\leq p}_{\operatorname{crys}}(V_B)$ be the associated Breuil--Kisin module. Then the global sections of the pullback of $K^i$ to $\operatorname{Spec}B$ are computed by $K^i(\mathfrak{M}_B)$. 
\end{lemma}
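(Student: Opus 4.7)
The plan is to construct the sheaves $K^i$ by first building compatible coherent sheaves on the infinitesimal thickenings $\mathcal{L}_n := \mathcal{L}^{\leq p}_{A/\mathfrak{m}_A^n,\operatorname{crys}}$ and then algebraizing via Grothendieck's existence theorem. First I would use that on each $\mathcal{L}_n$ the identity morphism corresponds (by Corollary~\ref{formal}) to a universal Breuil--Kisin module $\mathfrak{M}_n^{\operatorname{univ}}$, a sheaf of $\mathfrak{S} \otimes_{\mathbb{Z}_p} \mathcal{O}_{\mathcal{L}_n}$-modules locally free of finite rank. Because $\mathfrak{S}/E(u)^i$ is free of finite rank over $W(k)$, the quotient $\mathfrak{M}_n^{\operatorname{univ}}/E(u)^i\mathfrak{M}_n^{\operatorname{univ}}$ is coherent over $\mathcal{O}_{\mathcal{L}_n}$, and so
\[
K_n^i := \operatorname{coker}\bigl( (\mathfrak{M}_n^{\operatorname{univ}})^\varphi \rightarrow \mathfrak{M}_n^{\operatorname{univ}}/E(u)^i\mathfrak{M}_n^{\operatorname{univ}} \bigr)
\]
is a coherent sheaf on $\mathcal{L}_n$. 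The functoriality in Corollary~\ref{formal} yields $\mathfrak{M}_{n+1}^{\operatorname{univ}} \otimes_{A/\mathfrak{m}_A^{n+1}} A/\mathfrak{m}_A^n = \mathfrak{M}_n^{\operatorname{univ}}$, and since cokernels commute with base change this gives canonical isomorphisms $K_{n+1}^i \otimes_{\mathcal{O}_{\mathcal{L}_{n+1}}} \mathcal{O}_{\mathcal{L}_n} = K_n^i$. The $K_n^i$ therefore assemble into a coherent sheaf on the formal completion $\widehat{\mathcal{L}}$ of $\mathcal{L}$ along its closed fibre.

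Next I would invoke Grothendieck's existence theorem (\cite[Th\'eor\`eme 5.1.4]{EGAIII}), which applies since $\mathcal{L} \rightarrow \operatorname{Spec} A$ is projective and $A$ is complete Noetherian, to obtain a unique coherent sheaf $K^i$ on $\mathcal{L}$ whose restriction to $\mathcal{L}_n$ is $K_n^i$. For a morphism $\operatorname{Spec} B \rightarrow \mathcal{L}$ with $\mathfrak{m}_A^n B = 0$ the morphism factors through $\mathcal{L}_n$, the associated $\mathfrak{M}_B$ is $\mathfrak{M}_n^{\operatorname{univ}} \otimes_{\mathcal{O}_{\mathcal{L}_n}} B$, and base-change compatibility of the cokernel identifies the pullback $(K^i)|_B$ with $K^i(\mathfrak{M}_B)$.

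The main obstacle is the case of $B$ finite free over $\mathbb{Z}_p$, where the $B$-point of $\mathcal{L}$ does not itself factor through any $\mathcal{L}_n$. Here \ref{Cpoints} produces $\mathfrak{M}_B$ as the inverse limit $\varprojlim \mathfrak{M}_{B/\mathfrak{m}_A^n B}$, using that on $B$ the $\mathfrak{m}_A$-adic and $p$-adic topologies coincide. The key observations to make are: (a) $\mathfrak{M}_B/E(u)^i\mathfrak{M}_B$ is finite over $B$, so $K^i(\mathfrak{M}_B)$ is a finite $B$-module, hence a finite $A$-module, hence $\mathfrak{m}_A$-adically complete and separated by Krull; (b) projectivity of $\mathfrak{M}_B$ over $\mathfrak{S}_B$ together with the inverse limit construction give $\mathfrak{M}_B \otimes_B B/\mathfrak{m}_A^n B = \mathfrak{M}_{B/\mathfrak{m}_A^n B}$, and so $K^i(\mathfrak{M}_B) \otimes_B B/\mathfrak{m}_A^n B = K^i(\mathfrak{M}_{B/\mathfrak{m}_A^n B})$; (c) the pullback $(K^i)|_B$ is likewise finite over $B$, $\mathfrak{m}_A$-adically complete, and by the previous paragraph has the same reductions $(K^i)|_{B/\mathfrak{m}_A^nB} = K^i(\mathfrak{M}_{B/\mathfrak{m}_A^n B})$. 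Both modules are thus canonically identified with $\varprojlim K^i(\mathfrak{M}_{B/\mathfrak{m}_A^n B})$, and organising these identifications into a single natural isomorphism is straightforward once one uses the uniqueness in formal GAGA to produce the comparison map at each finite level.
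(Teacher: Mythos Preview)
Your proposal is correct and follows essentially the same approach as the paper: build a compatible system of coherent sheaves on the $\mathcal{L}_n$ using base-change compatibility of the cokernel defining $K^i(-)$, then algebraize via Grothendieck's existence theorem. The paper's proof is in fact considerably terser than yours---it simply asserts that the algebraized sheaf has ``the desired properties'' without separately treating the case of $B$ finite free over $\mathbb{Z}_p$---so your explicit handling of that case via $\mathfrak{m}_A$-adic completeness and inverse limits is a welcome elaboration rather than a deviation.
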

\begin{proof}
	Since the formation of $K^i(-)$ is compatible with base-change (being the cokernel of a map between modules compatible with base-change) we obtain a compatible system of coherent sheaves on $\mathcal{L} \otimes_{A} A/\mathfrak{m}_A^n$ for $n \geq 1$. Grothendieck's existence theorem \cite[5.1.6]{EGAIII} (see also \cite[Tag 088C]{stacks-project}) produces the sheaf $K^i$ on $\mathcal{L}$ with the desired properties.
\end{proof}

The following is the key lemma and, unlike the previous results of this section, crucially uses the assumption that $K/\mathbb{Q}_p$ is unramified and that we restrict to weights $\leq p$.
\begin{lemma}\label{Lcirc}
	Let $\mathcal{L}^\circ \rightarrow \mathcal{L}$ denote the closed immersion defined by the ideal consisting of sections which are either nilpotent or $p$-power torsion. If $p =2$ assume that $K_\infty \cap K(\mu_{p^\infty}) = K$.\footnote{In \cite[Lemma 2.1]{Wang17} it is shown that the compatible system $\pi^{1/p^\infty}$ from \ref{notation} can always be chosen so that $K_\infty \cap K(\mu_{p^\infty}) = K$.} Then the pull-back of $K^i$ to $\mathcal{L}^\circ$ is flat.
\end{lemma}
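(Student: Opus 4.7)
The plan is to check flatness of the pullback of $K^i$ to $\mathcal{L}^\circ$ stalk-locally at each closed point. Fix such a point and let $B$ denote its completed local ring. By construction of $\mathcal{L}^\circ$, $B$ is a complete local Noetherian ring, reduced, $\mathbb{Z}_p$-flat, with finite residue field $\kappa$; being essentially of finite type over the complete Noetherian ring $A$, it is moreover Nagata, so Lemma~\ref{Zpflat} applies. By the local flatness criterion for finite modules over a Noetherian local ring together with Lemma~\ref{algebra}(1), I would reduce flatness of $K^i(\mathfrak{M}_B)$ to the surjectivity of the natural map
\[
F^i(\mathfrak{M}_B) \otimes_B \kappa \longrightarrow F^i(\mathfrak{M}_\kappa).
\]

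For each $j \geq 1$, Lemma~\ref{Zpflat} supplies a finite flat $\mathbb{Z}_p$-algebra $C$ together with a map $B \to C$ through which $B \twoheadrightarrow B/\mathfrak{m}_B^j$ factors. The resulting $C$-valued point corresponds, by Remark~\ref{points}(2), to the Breuil--Kisin module $\mathfrak{M}_C$ of a crystalline lattice $V_C$ with Hodge--Tate weights in $[0,p]$. Proposition~\ref{kis}(2) then gives a concrete handle on $\mathfrak{M}_C^\varphi \cap E(u)^i\mathfrak{M}_C$ after inverting $p$, in terms of the Hodge filtration on $D_{\mathrm{dR}}(V_C)_K$; and since $C$ is a finite product of discrete valuation rings one can intersect with $\mathfrak{M}_C$ to upgrade this to an integral statement. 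On the special fibre side, Proposition~\ref{inclusion} gives $\mathfrak{M}_\kappa \in \mathcal{L}^{\leq p}_{\operatorname{SD}}(V_\kappa)$, so $F^i(\mathfrak{M}_\kappa)$ can be read off from a strongly divisible basis, as in~\ref{explicitrankone}.

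The key step, which I expect to be the main obstacle, is to use these two explicit descriptions to lift generators of $F^i(\mathfrak{M}_\kappa)$ to elements of $F^i(\mathfrak{M}_B)$: given such a generator, I would first lift it to a $C$-point for $j$ large and then deform the resulting element of $F^i(\mathfrak{M}_C)$ back to $B$ using $\mathfrak{m}_B$-adic completeness. The unramifiedness of $K$ (so $E(u) = u-p$) and the bound $h \leq p$ enter crucially, since they ensure that the filtration jumps across the special fibre are controlled by the strong-divisibility data of Section~\ref{strongdivisibility} without introducing $p$-power torsion in $K^i$. For $p = 2$ the same inputs are only available under the hypothesis $K_\infty \cap K(\mu_{p^\infty}) = K$, which is precisely why this hypothesis is imposed here (compare Lemma~\ref{valu} and the cocycle analysis in the proof of Proposition~\ref{irredequal}).
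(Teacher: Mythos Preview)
Your overall shape is the paper's: reduce to closed points of $\mathcal{L}^\circ$, use that the local ring there is reduced, $\mathbb{Z}_p$-flat, and Nagata, and invoke Lemma~\ref{Zpflat} to pass to a finite flat $\mathbb{Z}_p$-algebra $C$. But the heart of the argument---what happens over $C$---is where your proposal breaks down.

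First, the paper does \emph{not} attempt to lift generators of $F^i(\mathfrak{M}_\kappa)$ back up to $F^i(\mathfrak{M}_B)$. Your proposed mechanism for this (``lift to a $C$-point for $j$ large and then deform the resulting element of $F^i(\mathfrak{M}_C)$ back to $B$'') cannot work as stated: the map runs $B \to C$, not $C \to B$, and the $C$'s for different $j$ are unrelated, so $\mathfrak{m}_B$-adic completeness gives you no leverage. Instead, the paper proves directly that $K^i(\mathfrak{M}_C)$ is $C$-flat for any local finite flat $\mathbb{Z}_p$-algebra $C$ (Lemma~\ref{flat}); flatness over each Artinian quotient $B_n = \mathcal{O}_{\mathcal{L}^\circ,x}/\mathfrak{m}^n$ then follows by base change along the surjection $C \twoheadrightarrow B_n$, and this suffices.

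Second, your proposed inputs for understanding $F^i(\mathfrak{M}_C)$ are the wrong ones. Remark~\ref{points}(2) applies only when $C$ is the ring of integers in a finite extension of $\mathbb{Q}_p$, whereas the $C$ produced by Lemma~\ref{Zpflat} need not be a product of discrete valuation rings. Proposition~\ref{kis}(2) only gives information after inverting $p$, and ``intersecting with $\mathfrak{M}_C$'' does not by itself yield an integral statement of the required form. The paper's actual input for $p$-torsion-freeness of $K^i(\mathfrak{M}_C)$ is a result of Gee--Liu--Savitt \cite[Theorem 4.20]{GLS} (extended to $p=2$ by Wang under the hypothesis $K_\infty \cap K(\mu_{p^\infty}) = K$), which produces an $\mathfrak{S}$-basis $(e_j)$ of $\mathfrak{M}_C$ with $\mathfrak{M}_C^\varphi$ generated by $E(u)^{r_j} e_j$; this is precisely where the $p=2$ assumption enters, not via Lemma~\ref{valu}. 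For flatness modulo $p$, the paper uses Proposition~\ref{inclusion} (which you do cite) together with results from \cite{B18} asserting that $F^i(\mathfrak{M}) \to F^i(\mathfrak{N})$ is surjective for any $\varphi$-equivariant surjection $\mathfrak{M} \to \mathfrak{N}$ with $\mathfrak{M}$ strongly divisible and $\mathfrak{N}$ free over $k[[u]]$.
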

\begin{proof}
	It suffices to prove flatness at the closed points of $\mathcal{L}^\circ$. Thus, for a closed point $x \in \mathcal{L}$, it suffices to show $K^i(\mathfrak{M}_B)$ is $B$-flat whenever $B = \mathcal{O}_{\mathcal{L},x}/\mathfrak{m}_{\mathcal{L},x}^n$ for some $n \geq 1$ and any $\mathfrak{M}_B \in \mathcal{L}^{\leq p}_{\operatorname{crys}}(V_B)$. By definition $\mathcal{O}_{\mathcal{L},x}$ is $\mathbb{Z}_p$-flat and reduced. It is also Nagata (since it is a localisation of a finite type algebra over a complete local ring, cf. \cite[Tag 032E]{stacks-project}). Therefore Lemma~\ref{Zpflat}, applied with $A = \mathcal{O}_{\mathcal{L},x}$, reduces the problem to that of showing $K^i(\mathfrak{M}_C)$ is $C$-flat whenever $C$ is a finite flat $\mathbb{Z}_p$-algebra. This is the case by Lemma~\ref{flat} below (this is where the assumption that $K_\infty \cap K(\mu_{p^\infty}) = K$ when $p=2$ is used).
\end{proof}

\begin{lemma}\label{flat}
	Let $C$ be a local finite flat $\mathbb{Z}_p$-algebra, and suppose $\mathfrak{M}_C \in \mathcal{L}^{\leq p}_{\operatorname{crys}}(V_C)$ for some continuous representation $V_C$ of $G_K$ on a finite free $C$-module. If $p =2$ assume that $K_\infty \cap K(\mu_{p^\infty}) = K$. Then $K^i(\mathfrak{M}_C)$ is $C$-flat.
\end{lemma}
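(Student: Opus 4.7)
The plan is to show $K^i(\mathfrak{M}_C)$ is free as a $C$-module, which for a finitely generated module over the local Noetherian ring $C$ is equivalent to $C$-flatness. The strategy combines Proposition~\ref{kis} (for the generic fibre), Proposition~\ref{inclusion} (for the special fibre), and a Nakayama-type rank comparison.

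Since $\mathfrak{M}_C \in \mathcal{L}^{\leq p}_{\operatorname{crys}}(V_C)$ and $C$ is finite flat over $\mathbb{Z}_p$, Theorem~\ref{GLS} gives that $V_C[\tfrac{1}{p}]$ is crystalline with Hodge--Tate weights in $[0,p]$. By Proposition~\ref{kis}(2), after inverting $p$ the submodule $F^i(\mathfrak{M}_C)[\tfrac{1}{p}]$ is the preimage in $\mathfrak{M}^\varphi_C[\tfrac{1}{p}]$ of the Hodge filtration $F^iD_{\operatorname{crys}}(V_C[\tfrac{1}{p}])_K$. This computes the $C[\tfrac{1}{p}]$-rank of $K^i(\mathfrak{M}_C)[\tfrac{1}{p}]$ on every connected component of $\operatorname{Spec}C[\tfrac{1}{p}]$ as a sum indexed by the multiset of Hodge--Tate weights on that component.

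Because $K^i$ is a cokernel, the base change $K^i(\mathfrak{M}_C) \otimes_C \mathbb{F} \to K^i(\mathfrak{M}_\mathbb{F})$ is an isomorphism, where $\mathfrak{M}_\mathbb{F} := \mathfrak{M}_C \otimes_C \mathbb{F}$. The Galois condition cutting out $\mathcal{L}^{\leq p}_{\operatorname{crys}}$ is stable under base change, so $\mathfrak{M}_\mathbb{F} \in \mathcal{L}^{\leq p}_{\operatorname{crys}}(V_\mathbb{F})$, and Proposition~\ref{inclusion} (which requires $K_\infty \cap K(\mu_{p^\infty}) = K$ when $p=2$) yields $\mathfrak{M}_\mathbb{F} \in \mathcal{L}^{\leq p}_{\operatorname{SD}}(V_\mathbb{F})$. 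Strong divisibility furnishes a $k[[u]]$-basis $(m_j)$ of $\mathfrak{M}_\mathbb{F}$ and integers $r_j \in [0,p]$ with $\mathfrak{M}^\varphi_\mathbb{F} = \bigoplus_j u^{r_j}k[[u]]m_j$. Since $K/\mathbb{Q}_p$ is unramified one has $E(u) \equiv u \bmod p$, so a direct calculation gives $K^i(\mathfrak{M}_\mathbb{F}) \cong \bigoplus_j k[[u]]/u^{\min(i,r_j)}$, determining its $\mathbb{F}$-dimension $N$.

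The crux of the argument is that the integers $r_j$ produced by strong divisibility coincide with the multiset of Hodge--Tate weights of $V_C[\tfrac{1}{p}]$ on \emph{every} component of $\operatorname{Spec}C[\tfrac{1}{p}]$; granting this, the integer $N$ also equals the $C[\tfrac{1}{p}]$-rank of $K^i(\mathfrak{M}_C)[\tfrac{1}{p}]$ on every such component. Taking a surjection $\pi\colon C^N \twoheadrightarrow K^i(\mathfrak{M}_C)$ arising from a minimal set of generators, the reduction $\pi\otimes_C \mathbb{F}$ is an isomorphism (so $\ker\pi \subseteq \mathfrak{m}_C C^N$), while $\pi[\tfrac{1}{p}]$ is a surjection of projective $C[\tfrac{1}{p}]$-modules of equal rank on every component, hence an isomorphism (so $\ker\pi$ is $p$-power torsion). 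Since $C^N$ is $\mathbb{Z}_p$-free, $\ker\pi = 0$ and $K^i(\mathfrak{M}_C)$ is free. The main obstacle will be matching the mod-$p$ strong divisibility data with the generic Hodge--Tate weights uniformly across every component of $\operatorname{Spec}C[\tfrac{1}{p}]$; this should follow from the integral compatibility between Breuil--Kisin modules and the crystalline Hodge filtration provided by Proposition~\ref{kis}(2), but making it precise when $C[\tfrac{1}{p}]$ is disconnected is the delicate point.
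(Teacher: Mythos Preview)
Your proposal has a genuine gap at exactly the point you flag as ``the delicate point''. The matching between the mod-$p$ exponents $r_j$ coming from strong divisibility of $\mathfrak{M}_{\mathbb{F}}$ and the Hodge--Tate weights of $V_C[\tfrac{1}{p}]$ is precisely what the lemma is designed to establish: by Lemma~\ref{algebra}, the compatibility $\mathcal{G}^i(\mathfrak{M}_C)\otimes_C \mathbb{F} \cong \mathcal{G}^i(\mathfrak{M}_{\mathbb{F}})$ is a \emph{consequence} of the flatness of the $K^i(\mathfrak{M}_C)$, so invoking it to prove that flatness is circular. Proposition~\ref{kis}(2) only gives information after inverting $p$ and cannot by itself bridge the gap to the special fibre. (Incidentally, the hypothesis $K_\infty\cap K(\mu_{p^\infty})=K$ for $p=2$ is not needed for Proposition~\ref{inclusion}; see below for where it actually enters.)

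The paper avoids this circularity by a different decomposition. Using the local criterion for flatness, it reduces to showing separately that $K^i(\mathfrak{M}_C)$ is $\mathbb{Z}_p$-flat and that $K^i(\mathfrak{M}_C)\otimes_C C/pC$ is $C/pC$-flat. For the first, the key input you are missing is an \emph{integral} structure theorem: \cite[Theorem~4.20]{GLS} (extended to $p=2$ in \cite{Wang17}, which is where the hypothesis $K_\infty\cap K(\mu_{p^\infty})=K$ is used) produces a $\mathfrak{S}$-basis $(e_j)$ of $\mathfrak{M}_C$ itself---not merely of its reduction---such that $\mathfrak{M}_C^\varphi$ is generated by $E(u)^{r_j}e_j$. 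From this one reads off $p$-torsion-freeness of $K^i(\mathfrak{M}_C)$ directly. For the second, the paper applies Proposition~\ref{inclusion} to $\mathfrak{M}_{C/pC}$ and then cites results of \cite{B18} to show $F^i(\mathfrak{M}_B)\to F^i(\mathfrak{M}_{B'})$ is surjective for every quotient $B\to B'$ of $C/pC$, which by Lemma~\ref{algebra}(1) gives flatness over $C/pC$. If you supplement your argument with the integral GLS result, your Nakayama strategy can be made to work, but as written it is incomplete.
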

\begin{proof}
	It suffices to show $K^i(\mathfrak{M}_C)$ is $\mathbb{Z}_p$-flat and that $K^i(\mathfrak{M}_{C} \otimes_C C/pC) = K^i(\mathfrak{M}_C) \otimes_C C/pC$ is $C/pC$-flat, cf. \cite[Tag 00ML]{stacks-project}. If $p>2$ then, since $V_C[\frac{1}{p}]$ is crystalline, \cite[Theorem 4.20]{GLS} ensures the existence of an $\mathfrak{S}$-basis $(e_j)$ of $\mathfrak{M}_C$ such that $\mathfrak{M}_C^\varphi$ is generated over $\mathfrak{S}$ by $E(u)^{r_i}e_i$ for certain integers $r_i$. If $p =2$ and $K_\infty \cap K(\mu_{p^\infty}) = K$ then the same is true, as explained in \cite[Section 4]{Wang17}. This implies $K^i(\mathfrak{M}_C)$ is $p$-torsion free.
	
	To show flatness modulo~$p$ set $B =C/pC$ and let $B \rightarrow B'$ be a surjective homomorphism. After Lemma~\ref{algebra} it suffices to show the natural map $F^i(\mathfrak{M}_B) \rightarrow F^i(\mathfrak{M}_{B'})$ is surjective. Proposition~\ref{inclusion} implies $\mathfrak{M}_B$ is a strongly divisible. It follows from \cite[5.4.6]{B18} and \cite[5.4.2]{B18} that if $\mathfrak{M}$ is any strongly divisible Breuil--Kisin module and $\mathfrak{M} \rightarrow \mathfrak{N}$ is a $\varphi$-equivariant surjection into a Breuil--Kisin module which is free as a $k[[u]]$-module then $F^i(\mathfrak{M}) \rightarrow F^i(\mathfrak{N})$ is surjective. Applying this with $\mathfrak{M} = \mathfrak{M}_B$ and $\mathfrak{N} = \mathfrak{M}_{B'}$ proves the lemma. 
\end{proof}
\begin{remark}
	Note the second paragraph in the proof of Lemma~\ref{flat} implies $K^i$ is flat on $\mathcal{L} \otimes_{A} A/\mathfrak{m}_A$. Thus it seems possible that $K^i$ is flat on the whole of $\mathcal{L}$, though we do not know how to prove this.
\end{remark}
\begin{corollary}
	In the situation of Lemma~\ref{Lcirc} there is, for each $i \geq 0$, a coherent sheaf $\mathcal{G}^i$ on $\mathcal{L}^\circ$ with the following properties. For any morphism $\operatorname{Spec}B \rightarrow \mathcal{L}^\circ$ of $A$-schemes, with $B$ either finite free over $\mathbb{Z}_p$ or such that $\mathfrak{m}_A^nB = 0$ for some $n \geq 1$, let $\mathfrak{M}_B \in \mathcal{L}^{\leq p}_{\operatorname{crys}}(V_B)$ be the associated Breuil--Kisin module. Then the global sections of the pullback of $\mathcal{G}^i$ to $\operatorname{Spec}B$ are computed by $\mathcal{G}^i(\mathfrak{M}_B)$. Furthermore, $\mathcal{G}^i$ is flat on $\mathcal{L}^\circ$. 
\end{corollary}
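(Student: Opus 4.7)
The strategy is to imitate the construction of $K^i$ from the preceding lemma: assemble $\mathcal{G}^i$ from its Artinian truncations using Grothendieck's existence theorem, and read off flatness from Lemma~\ref{Lcirc} via part~(3) of Lemma~\ref{algebra}.

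First I would observe that on each thickening $\mathcal{L}^\circ_n := \mathcal{L}^\circ \otimes_A A/\mathfrak{m}_A^n$ we may form the modules $\mathcal{G}^i(\mathfrak{M}_B)$ for the Breuil--Kisin module $\mathfrak{M}_B$ associated to any affine chart $\operatorname{Spec} B \to \mathcal{L}^\circ_n$. Lemma~\ref{Lcirc} guarantees that $K^j(\mathfrak{M}_B)$ is $B$-flat for every $j$, so the hypothesis of Lemma~\ref{algebra}(3) is satisfied, and consequently $\mathcal{G}^i(\mathfrak{M}_B)$ is $B$-flat and of formation compatible with base-change among such $B$. This produces a coherent, flat sheaf $\mathcal{G}^i_n$ on $\mathcal{L}^\circ_n$ whose formation along $n \mapsto n+1$ gives an inverse system on the formal scheme $\varprojlim \mathcal{L}^\circ_n$. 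Applying Grothendieck's existence theorem \cite[5.1.6]{EGAIII}, with the restriction to $\mathcal{L}^\circ$ of the very ample line bundle from Proposition~\ref{kisprop}, algebraises this system to a coherent sheaf $\mathcal{G}^i$ on $\mathcal{L}^\circ$. The identification of its pullback with $\mathcal{G}^i(\mathfrak{M}_B)$ for $B$ with $\mathfrak{m}_A^n B = 0$ is automatic from the construction, and the flatness of $\mathcal{G}^i$ as an $\mathcal{O}_{\mathcal{L}^\circ}$-module follows from the local criterion of flatness applied at closed points, since the finite-level flatness was already arranged.

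To treat the remaining case of $C$ finite flat over $\mathbb{Z}_p$, I would run the inverse-limit argument from \ref{Cpoints}: a morphism $\operatorname{Spec} C \to \mathcal{L}^\circ$ is equivalent to the compatible system of morphisms $\operatorname{Spec} C_j \to \mathcal{L}^\circ$ with $C_j = C/p^j C$, which produces Breuil--Kisin modules $\mathfrak{M}_{C_j}$ with $\mathfrak{M}_C = \varprojlim \mathfrak{M}_{C_j}$. The base-change isomorphisms $\mathcal{G}^i(\mathfrak{M}_{C_{j+1}}) \otimes_{C_{j+1}} C_j \xrightarrow{\sim} \mathcal{G}^i(\mathfrak{M}_{C_j})$ supplied by Lemma~\ref{algebra}(3) give, upon passing to the limit, an identification $\varprojlim \mathcal{G}^i(\mathfrak{M}_{C_j}) = \mathcal{G}^i(\mathfrak{M}_C)$, using that each $\mathcal{G}^i(\mathfrak{M}_{C_j})$ is $C_j$-finite and hence $p$-adically separated, and this inverse limit is by definition the pullback of $\mathcal{G}^i$ along $\operatorname{Spec} C \to \mathcal{L}^\circ$.

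The real content — flatness of $K^i$ on $\mathcal{L}^\circ$ and hence the base-change compatibility of $\mathcal{G}^i$ — has already been captured by Lemma~\ref{Lcirc} and Lemma~\ref{algebra}(3); so the only obstacle is essentially the bookkeeping required to promote a compatible family of finite-level statements to a global coherent-sheaf statement via Grothendieck existence, which is routine.
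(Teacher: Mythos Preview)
Your proposal is correct and follows essentially the same approach as the paper: use flatness of $K^i$ on $\mathcal{L}^\circ$ (Lemma~\ref{Lcirc}) together with Lemma~\ref{algebra}(3) to obtain a compatible system of flat coherent sheaves on the thickenings $\mathcal{L}^\circ \otimes_A A/\mathfrak{m}_A^n$, then algebraise via Grothendieck's existence theorem and read off flatness from the finite levels. You are in fact slightly more thorough than the paper, which does not spell out the inverse-limit argument for the case of $C$ finite flat over $\mathbb{Z}_p$.
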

\begin{proof}
	Since $K^i$ is flat on $\mathcal{L}^\circ$, Lemma~\ref{algebra} implies that on each $\mathcal{L}^\circ \otimes_A A/\mathfrak{m}_A^n$ the formation of $\mathcal{G}^i(-)$ is compatible with base-change. Thus we obtain a compatible system of coherent sheaves $\mathcal{G}^i$ on the $\mathcal{L}^\circ \otimes_{A} A/\mathfrak{m}_A^n$. By (3) of Lemma~\ref{algebra} we also know these sheaves are flat on $\mathcal{L}^\circ \otimes_A A/\mathfrak{m}_A^n$. Grothendieck's existence theorem \cite[5.1.6]{EGAIII} produces a sheaf $\mathcal{G}^i$ as desired; that it is flat follows because each $\mathcal{G}^i \otimes_{A} A/\mathfrak{m}_A^n$ is flat.
\end{proof}
\begin{sub}
	Let $E$ be a finite extension of $\mathbb{Q}_p$ such that $A$ is an $\mathcal{O}_E$-algebra. Let us fix a $p$-adic Hodge type $\mathbf{v}$, i.e.  a finite free $K \otimes_{\mathbb{Q}_p} E$-module $D_{\mathbf{v}}$ equipped with a grading $\operatorname{gr}^i(D_{\mathbf{v}})$ by $K \otimes_{\mathbb{Q}_p} E$-submodules.\footnote{Below, when we speak of a $p$-adic Hodge type $\mathbf{v}$, the field $E$ will be implicit in the data of $\mathbf{v}$.} Assume this grading is concentrated in degree $[0,p]$. If $B$ is a finite local $E$-algebra then we say a crystalline representation $V_B$ has $p$-adic Hodge type $\mathbf{v}$ if there are isomorphisms
	$$
	\operatorname{gr}^i(D_{\operatorname{crys}}(V_B)) \xrightarrow{\sim} \operatorname{gr}^i(D_{\mathbf{v}}) \otimes_{E} B
	$$ 
	for all $i \in \mathbb{Z}$. Since we are assuming that $K$ is unramified over $\mathbb{Q}_p$, $p$-adic Hodge types can be described integrally: there exists a finite free $\mathcal{O}_K \otimes_{\mathbb{Z}_p} \mathcal{O}_E$-module $D^\circ_{\mathbf{v}}$ with a grading $\operatorname{gr}^i(D^\circ_{\mathbf{v}})$ by $\mathcal{O}_K \otimes_{\mathbb{Z}_p} \mathcal{O}_E$-submodules, so that $D_\mathbf{v} \cong D_{\mathbf{v}}^\circ \otimes_{\mathcal{O}_{K}} K$ as graded modules. This is because $\mathcal{O}_K$ is unramified over $\mathbb{Z}_p$ and so $\mathcal{O}_K \otimes_{\mathbb{Z}_p} \mathcal{O}_E$ is a product of unramified extensions of $\mathcal{O}_E$.
\end{sub}
\begin{sub}\label{vcomponent}
	Let $B$ be a $\mathbb{Z}_p$-algebra. Since $\mathcal{O}_K$ is unramified over $\mathbb{Z}_p$, a finite $\mathcal{O}_K \otimes_{\mathbb{Z}_p} B$-module which is flat over $B$ is flat over $\mathcal{O}_K \otimes_{\mathbb{Z}_p} B$ (cf. \cite[Tag 00MH]{stacks-project}). Provided $K_\infty \cap K(\mu_{p^\infty}) = K$ if $p = 2$, this implies that the flat coherent sheaf $\mathcal{G}^i$ on $\mathcal{L}^\circ$ is a flat sheaf of $\mathcal{O}_K \otimes_{\mathbb{Z}_p} \mathcal{O}_{\mathcal{L}^\circ}$-modules. As such, if for each $p$-adic Hodge type $\mathbf{v}$ we define $\mathcal{L}^{\mathbf{v}}$ to be the set of $x \in \mathcal{L}^\circ$ with 
	$$
	\mathcal{G}^i_x \cong \operatorname{gr}^i(D^\circ_{\mathbf{v}}) \otimes_{\mathcal{O}_E} \mathcal{O}_{\mathcal{L}^\circ,x}
	$$
	as $\mathcal{O}_K \otimes_{\mathbb{Z}_p} \mathcal{O}_{\mathcal{L},x}$-modules for each $i \geq 0$, then $\mathcal{L}^{\mathbf{v}}$ is a union of connected components of $\mathcal{L}^\circ$.
\end{sub}
\begin{proposition}\label{crysdef}
	If $p=2$ assume that $K_\infty \cap K(\mu_{p^\infty}) = K$. Let $A^{\mathbf{v}}_{\operatorname{crys}}$ denote the quotient of $A$ corresponding to the scheme-theoretic image of $\mathcal{L}^{\mathbf{v}} \rightarrow \operatorname{Spec} A$. Then
	\begin{enumerate}
		\item The morphism $\mathcal{L}^{\mathbf{v}}  \rightarrow \operatorname{Spec}A^{\mathbf{v}}_{\operatorname{crys}}$ becomes an isomorphism after inverting $p$.
		\item For any finite reduced $\mathbb{Q}_p$-algebra $B$, a map $A \rightarrow B$ factors through $A^{\mathbf{v}}_{\operatorname{crys}}$ if and only if $V_B = V_{A} \otimes_{A} B$ is crystalline with $p$-adic Hodge type $\mathbf{v}$.
	\end{enumerate}
\end{proposition}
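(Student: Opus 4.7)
The plan is to mimic the template established by Proposition~\ref{quotient}, using the coherent sheaves $\mathcal{G}^i$ on $\mathcal{L}^\circ$ to cut out the prescribed Hodge type. The hypothesis on $K_\infty \cap K(\mu_{p^\infty})$ when $p=2$ is needed only to invoke Lemma~\ref{Lcirc} and its corollary, so that the sheaves $\mathcal{G}^i$ exist and are flat on $\mathcal{L}^\circ$.

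For part (1), I would first note that by Lemma~\ref{closedimmersion} the morphism $\mathcal{L}^{\leq p}_{A,\operatorname{crys}} \to \operatorname{Spec} A$ becomes a closed immersion after inverting $p$. Since $\mathcal{L}^\circ$ is the closed subscheme cut out by the ideal of nilpotent or $p$-power-torsion sections, after inverting $p$ it becomes the reduction of $\mathcal{L}^{\leq p}_{A,\operatorname{crys}}[\tfrac{1}{p}]$, still a closed subscheme of $\operatorname{Spec} A[\tfrac{1}{p}]$. By \ref{vcomponent}, $\mathcal{L}^{\mathbf{v}}$ is a union of connected components of $\mathcal{L}^\circ$, so $\mathcal{L}^{\mathbf{v}}[\tfrac{1}{p}]$ is an open and closed, hence affine, closed subscheme of $\operatorname{Spec} A[\tfrac{1}{p}]$. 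Its scheme-theoretic image is by definition $\operatorname{Spec} A^{\mathbf{v}}_{\operatorname{crys}}[\tfrac{1}{p}]$, which therefore coincides with $\mathcal{L}^{\mathbf{v}}[\tfrac{1}{p}]$.

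For part (2), since a finite reduced $\mathbb{Q}_p$-algebra is a product of fields, I would reduce to $B$ being a finite extension of $\mathbb{Q}_p$. Suppose first that $A \to B$ factors through $A^{\mathbf{v}}_{\operatorname{crys}}$. By (1) this corresponds to a $B$-point of $\mathcal{L}^{\mathbf{v}}$, which by the argument from Lemma~\ref{closedimmersion} is induced from a $C$-valued point for some finite flat $\mathbb{Z}_p$-algebra $C \subset B$. The associated $\mathfrak{M}_C \in \mathcal{L}^{\leq p}_{\operatorname{crys}}(V_C)$ makes $V_B = V_C[\tfrac{1}{p}]$ crystalline with Hodge--Tate weights in $[0,p]$ by Proposition~\ref{quotient}. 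The pull-back of $\mathcal{G}^i$ to $\operatorname{Spec} C$ is $\mathcal{G}^i(\mathfrak{M}_C)$ which is $C$-flat (Lemma~\ref{Lcirc}), so its further base change to $B$ is $\mathcal{G}^i(\mathfrak{M}_B)$; by \eqref{filteredz} this equals $\operatorname{gr}^i D_{\operatorname{dR}}(V_B) = \operatorname{gr}^i(D_{\operatorname{crys}}(V_B)_K)$. The hypothesis that the point lies in $\mathcal{L}^{\mathbf{v}}$ then yields an isomorphism of this with $\operatorname{gr}^i(D^\circ_{\mathbf{v}}) \otimes_{\mathcal{O}_E} B = \operatorname{gr}^i(D_{\mathbf{v}}) \otimes_E B$, so $V_B$ has $p$-adic Hodge type $\mathbf{v}$. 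Conversely, given crystalline $V_B$ of Hodge type $\mathbf{v}$, Proposition~\ref{quotient} furnishes a $B$-point of $\mathcal{L}^{\leq p}_{A,\operatorname{crys}}$; this automatically factors through $\mathcal{L}^\circ$ since $B$ is a characteristic-zero field, so nilpotent and $p$-power-torsion sections of $\mathcal{O}_\mathcal{L}$ pull back to zero. Reversing the identification via \eqref{filteredz} shows the point also lies in $\mathcal{L}^{\mathbf{v}}$, hence $A \to B$ factors through $A^{\mathbf{v}}_{\operatorname{crys}}$.

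The main obstacle I anticipate is the careful translation between the integral condition defining $\mathcal{L}^{\mathbf{v}}$ (an isomorphism of $\mathcal{O}_K \otimes_{\mathbb{Z}_p} \mathcal{O}_{\mathcal{L}^\circ,x}$-modules involving $D^\circ_{\mathbf{v}}$) and the rational statement about $\operatorname{gr}^i D_{\operatorname{crys}}(V_B)_K$. This relies on the unramified hypothesis on $K/\mathbb{Q}_p$ in two places: once to produce the integral $p$-adic Hodge type $D^\circ_{\mathbf{v}}$ refining $D_{\mathbf{v}}$, and once (via Lemma~\ref{Lcirc}) to guarantee that $\mathcal{G}^i$ is a flat $\mathcal{O}_K \otimes_{\mathbb{Z}_p} \mathcal{O}_{\mathcal{L}^\circ}$-module, so that pointwise isomorphisms propagate to identifications over connected components as used in \ref{vcomponent}.
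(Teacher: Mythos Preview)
Your proposal is correct and follows essentially the same approach as the paper: invoke Lemma~\ref{closedimmersion} for part (1), and for part (2) argue as in Proposition~\ref{quotient}, using \eqref{filteredz} together with the flatness of the $\mathcal{G}^i$ on $\mathcal{L}^\circ$ to translate between the integral condition defining $\mathcal{L}^{\mathbf{v}}$ and the rational Hodge type of $V_B$. The paper's proof is terser but structurally identical, working with a reduced finite flat $\mathbb{Z}_p$-algebra $C\subset B$ throughout rather than passing back and forth between $B$ and $C$.
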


By construction $A^{\mathbf{v}}_{\operatorname{crys}}$ is reduced and $\mathbb{Z}_p$-flat and so (2) uniquely determines this quotient.
\begin{proof}
	For (1) use that, after Lemma~\ref{closedimmersion}, $\mathcal{L}^\circ \rightarrow \operatorname{Spec}A$ becomes a closed immersion after inverting $p$. For (2) argue as in Proposition~\ref{quotient}, the point being that if $C$ is a reduced finite flat $\mathbb{Z}_p$-algebra then any $\mathfrak{M}_C \in \mathcal{L}^{\leq p}_{\operatorname{crys}}(V_C)$ induces a $C$-valued point of $\mathcal{L}^\circ$, and this point factors through $\mathcal{L}^{\mathbf{v}}$ if and only if $\mathcal{G}^i(\mathfrak{M}_C) \cong \operatorname{gr}^i(D^\circ_{\mathbf{v}})$ for each $i \geq 0$. 
\end{proof}
\subsection{Cyclotomic-freeness}

\begin{sub}\label{setup}
	For this subsection let $\mathbb{F}$ be a finite field of characteristic $p$ and consider $Z_{\mathbb{F}}$ and $W_{\mathbb{F}}$, both finite dimensional $\mathbb{F}$-vector spaces equipped with a continuous $\mathbb{F}$-linear action of $G_K$. Further let $\mathfrak{Z} \in \mathcal{L}^{\leq p}_{\operatorname{crys}}(Z_{\mathbb{F}})$ and $\mathfrak{W} \in \mathcal{L}^{\leq p}_{\operatorname{crys}}(W_{\mathbb{F}})$. It will be useful to consider the following hypothesis:
\end{sub}
\begin{hypothesis}\label{hyp}
	\begin{enumerate}
		\item 	Every continuous cocycle $G_K \rightarrow \operatorname{Hom}(\mathfrak{Z},\mathfrak{W}) \otimes_{k[[u]]} u^{p/p-1}\mathcal{O}_{C^\flat}$ given by $\sigma \mapsto F_\sigma$ from with (i) $(\varphi -1)(F_\sigma) = 0$ for all $\sigma \in G_K$, and (ii)
 $F_\sigma = 0$ for all $\sigma \in G_{K_\infty}$, is zero.
		\item If $V_{\mathbb{F}}$ is a continuous representation of $G_K$ on a finite dimensional $\mathbb{F}$-vector space, and $\mathfrak{M} \in \mathcal{L}^{\leq p}_{\operatorname{crys}}(V_{\mathbb{F}})$ then (1) is satisfied when $\mathfrak{Z} = \mathfrak{W} = \mathfrak{M}$.
	\end{enumerate}

\end{hypothesis}

\begin{lemma}\label{end}
	Suppose (1) of Hypothesis~\ref{hyp} is satisfied. Then 
	\begin{enumerate}
		\item Under the identification $\operatorname{Hom}(\mathfrak{Z},\mathfrak{W}) \otimes_{k[[u]]} C^\flat = \operatorname{Hom}(Z_{\mathbb{F}},W_{\mathbb{F}}) \otimes_{\mathbb{F}_p} C^\flat$ there are inclusions $\operatorname{Hom}(\mathfrak{Z},\mathfrak{W})^{\varphi =1} \subset \operatorname{Hom}(Z_{\mathbb{F}},W_{\mathbb{F}})^{G_K}$.
		\item Let $0 \rightarrow W_{\mathbb{F}} \rightarrow Y \rightarrow Z_{\mathbb{F}} \rightarrow 0$ be an exact sequence of $G_{K_\infty}$-representations and let $\mathfrak{Y} \in \mathcal{L}^{\leq p}_{\operatorname{SD}}(Y)$ be such that Lemma~\ref{exactO} induces an exact sequence $0 \rightarrow \mathfrak{W} \rightarrow \mathfrak{Y} \rightarrow \mathfrak{Z} \rightarrow 0$. Then there exists at most one way of extending the $G_{K_\infty}$-action on $Y$ to a $G_K$-action so that $\mathfrak{Y} \in \mathcal{L}^{\leq p}_{\operatorname{crys}}(Y)$ and $0 \rightarrow W_{\mathbb{F}} \rightarrow Y \rightarrow Z_{\mathbb{F}} \rightarrow 0$ is $G_K$-equivariant.
	\end{enumerate}
\end{lemma}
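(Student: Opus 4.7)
The plan is to prove both parts by producing a continuous $1$-cocycle on $G_K$ taking values in $\operatorname{Hom}(\mathfrak{Z},\mathfrak{W})\otimes_{k[[u]]}u^{p/p-1}\mathcal{O}_{C^\flat}$ and satisfying conditions (i) and (ii) of Hypothesis~\ref{hyp}(1), then invoking that hypothesis to force the cocycle to vanish.

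For (1), given $F\in\operatorname{Hom}(\mathfrak{Z},\mathfrak{W})^{\varphi=1}$, I would set $F_\sigma:=(\sigma-1)(F)$ for the $G_K$-action on $\operatorname{Hom}(\mathfrak{Z},\mathfrak{W})\otimes_{k[[u]]}C^\flat$ described in \ref{ext}. Then $\sigma\mapsto F_\sigma$ is automatically a continuous $1$-cocycle; since $\varphi$ commutes with the $G_K$-action and $F$ is $\varphi$-fixed one has $(\varphi-1)(F_\sigma)=0$; and for $\sigma\in G_{K_\infty}$ the trivial action on $\mathfrak{Z}$ and $\mathfrak{W}$ yields $F_\sigma=0$. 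The substantive point is to check that $F_\sigma$ lies in $\operatorname{Hom}(\mathfrak{Z},\mathfrak{W})\otimes u^{p/p-1}\mathcal{O}_{C^\flat}$. For $m\in\mathfrak{Z}$ one expands
$$F_\sigma(m)=(\sigma_{\mathfrak{W}}-1)(F(m))+\sigma_{\mathfrak{W}}\bigl(F((\sigma_{\mathfrak{Z}}^{-1}-1)(m))\bigr).$$
The first summand lies in $\mathfrak{W}\otimes u^{p/p-1}\mathcal{O}_{C^\flat}$ because $\mathfrak{W}\in\mathcal{L}^{\leq p}_{\operatorname{crys}}(W_{\mathbb{F}})$ (using \ref{simply}); the second because $(\sigma_{\mathfrak{Z}}^{-1}-1)(m)\in\mathfrak{Z}\otimes u^{p/p-1}\mathcal{O}_{C^\flat}$ by the same crystalline condition for $\mathfrak{Z}$, $F$ is $k[[u]]$-linear so it carries this into $\mathfrak{W}\otimes u^{p/p-1}\mathcal{O}_{C^\flat}$, and the latter is $\sigma_{\mathfrak{W}}$-stable (because $\sigma(u)/u$ is a unit in $\mathcal{O}_{C^\flat}$, so $u^{p/p-1}\mathcal{O}_{C^\flat}$ is $G_K$-stable, and $\mathfrak{W}\in\mathcal{L}^{\leq p}_{\operatorname{crys}}$). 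Hypothesis~\ref{hyp}(1) then gives $F_\sigma\equiv 0$, which is precisely the claimed $G_K$-invariance of $F$.

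For (2), I would suppose $Y$ admits two $G_K$-actions extending the common $G_{K_\infty}$-action and both placing $\mathfrak{Y}$ in $\mathcal{L}^{\leq p}_{\operatorname{crys}}(Y)$. Fix an $\mathfrak{S}_{\mathbb{F}}$-splitting of $0\to\mathfrak{W}\to\mathfrak{Y}\to\mathfrak{Z}\to 0$; by \ref{gal} each action is encoded by a continuous cocycle $\sigma\mapsto F_\sigma^{(i)}\in\operatorname{Hom}(\mathfrak{Z},\mathfrak{W})\otimes_{k[[u]]}C^\flat$ with respect to the \emph{same} conjugation action, because $\sigma_{\mathfrak{W}}$ and $\sigma_{\mathfrak{Z}}$ depend only on the fixed subquotients $W_{\mathbb{F}}$ and $Z_{\mathbb{F}}$. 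Proposition~\ref{Galexact}(2) places each $F_\sigma^{(i)}$ in $\operatorname{Hom}(\mathfrak{Z},\mathfrak{W})\otimes u^{p/p-1}\mathcal{O}_{C^\flat}$, and since the Frobenius-defect $f$ in \ref{gal} depends only on $\mathfrak{Y}$ both cocycles satisfy $(\varphi-1)(F_\sigma^{(i)})=(\sigma-1)(f)$. Hence $G_\sigma:=F_\sigma^{(1)}-F_\sigma^{(2)}$ is a continuous cocycle in $\operatorname{Hom}(\mathfrak{Z},\mathfrak{W})\otimes u^{p/p-1}\mathcal{O}_{C^\flat}$ with $(\varphi-1)(G_\sigma)=0$, and $G_\sigma=0$ for $\sigma\in G_{K_\infty}$ since the two actions restrict to the same $G_{K_\infty}$-action. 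Hypothesis~\ref{hyp}(1) forces $G_\sigma\equiv 0$, so the two $G_K$-actions agree.

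The only non-formal step is the integrality estimate in (1) showing $F_\sigma$ lands in the prescribed $u^{p/p-1}$-ideal; once that is done, everything else unfolds mechanically from the cocycle formalism of \ref{ext} and \ref{gal} together with Proposition~\ref{Galexact}.
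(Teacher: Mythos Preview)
Your proof is correct and follows essentially the same approach as the paper's: for (1) you form the coboundary cocycle $\sigma\mapsto(\sigma-1)(F)$ and invoke Hypothesis~\ref{hyp}(1), and for (2) you subtract the two cocycles parametrising the two $G_K$-actions via \ref{gal} and again invoke the hypothesis. The only difference is that the paper asserts without comment that $(\sigma-1)(F)$ lands in $\operatorname{Hom}(\mathfrak{Z},\mathfrak{W})\otimes_{k[[u]]}u^{p/(p-1)}\mathcal{O}_{C^\flat}$, whereas you supply the explicit verification by decomposing $F_\sigma(m)$ and using the crystalline condition on $\mathfrak{Z}$ and $\mathfrak{W}$ separately; this extra care is justified and the argument is sound.
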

\begin{proof}
	For (1) note that in general $\operatorname{Hom}(\mathfrak{Z},\mathfrak{W})^{\varphi =1} \subset \operatorname{Hom}(Z_{\mathbb{F}},W_{\mathbb{F}})^{G_{K_\infty}}$. As a consequence, if $f \in \operatorname{Hom}(\mathfrak{Z},\mathfrak{W})^{\varphi =1}$ then the $1$-cocycle $\sigma \mapsto (\sigma-1)(f)$ satisfies the conditions of Hypothesis~\ref{hyp}. Thus $(\sigma -1)(f) = 0$ and so $f$ is $G_K$-equivariant. 
	
	For (2) recall from \ref{gal} that, after choosing an $\mathfrak{S}_{\mathbb{F}}$ splitting of $0 \rightarrow \mathfrak{W} \rightarrow \mathfrak{Y} \rightarrow \mathfrak{Z} \rightarrow 0$ so that $\varphi_{\mathfrak{Y}} = (\varphi_{\mathfrak{W}} + f \circ \varphi_{\mathfrak{Z}},\varphi_\mathfrak{Z})$, the possible ways of extending the $G_{K_\infty}$-action on $Y$ to a $G_K$-action as required by the lemma are in bijection with the set of $1$-cocycles $\sigma \mapsto f_\sigma$ taking values in $\operatorname{Hom}(\mathfrak{Z},\mathfrak{W}) \otimes_{k[[u]]} u^{p/p-1}\mathcal{O}_{C^\flat}$ and satisfying $(\varphi -1)(f_\sigma) = (\sigma-1)(f)$ and $f_\sigma = 0$ for $\sigma \in G_{K_\infty}$. As the difference of two such cocycles is a cocycle as in Hypothesis~\ref{hyp} we obtain (2).
\end{proof}

\begin{lemma}\label{inductionstep}
	Let $0 \rightarrow Z_{\mathbb{F},1} \rightarrow Z_{\mathbb{F}} \rightarrow Z_{\mathbb{F},2} \rightarrow 0$ be a $G_K$-equivariant exact sequence, and suppose $0 \rightarrow \mathfrak{Z}_1 \rightarrow \mathfrak{Z} \rightarrow \mathfrak{Z}_2 \rightarrow 0$ is the corresponding $\varphi$-equivariant exact sequence from Lemma~\ref{exactO}. If (1) of Hypothesis~\ref{hyp} is satisfied when $\mathfrak{Z}$ is replaced by $\mathfrak{Z}_1$ and $\mathfrak{Z}_2$ then (1) of Hypothesis~\ref{hyp} is satisfied itself.
\end{lemma}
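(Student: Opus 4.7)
The plan is straightforward once one notices the two hypotheses plug into the two ends of the short exact sequence: I would push the cocycle $\sigma \mapsto F_\sigma$ through $0 \to \mathfrak{Z}_1 \to \mathfrak{Z} \to \mathfrak{Z}_2 \to 0$, first applying Hypothesis~\ref{hyp}(1) for $\mathfrak{Z}_1$ to kill the restriction, then applying it for $\mathfrak{Z}_2$ to kill the induced cocycle on the quotient.

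In more detail, let $\sigma \mapsto F_\sigma \in \operatorname{Hom}(\mathfrak{Z},\mathfrak{W}) \otimes_{k[[u]]} u^{p/p-1}\mathcal{O}_{C^\flat}$ be a continuous cocycle satisfying (i) and (ii). First I would pre-compose with $i\colon \mathfrak{Z}_1 \hookrightarrow \mathfrak{Z}$ to obtain $\sigma \mapsto F_\sigma \circ i$ valued in $\operatorname{Hom}(\mathfrak{Z}_1,\mathfrak{W}) \otimes_{k[[u]]} u^{p/p-1}\mathcal{O}_{C^\flat}$. Because $i$ is $\varphi$-equivariant and, after tensoring with $W(C^\flat)$, becomes $G_K$-equivariant (this is exactly the content of Lemma~\ref{exactO} combined with \ref{gal}), the restricted assignment is again a continuous $1$-cocycle satisfying both (i) and (ii). The hypothesis on $\mathfrak{Z}_1$ then forces $F_\sigma \circ i = 0$ for every $\sigma \in G_K$.

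Next I would pass to the quotient. Since $\mathfrak{Z}_1, \mathfrak{Z}, \mathfrak{Z}_2$ are all finite projective over $\mathfrak{S}_{\mathbb{F}}$, the sequence splits as $\mathfrak{S}_{\mathbb{F}}$-modules. Under any such splitting the vanishing of $F_\sigma \circ i$ produces a unique $\overline{F}_\sigma \in \operatorname{Hom}(\mathfrak{Z}_2,\mathfrak{W}) \otimes_{k[[u]]} u^{p/p-1}\mathcal{O}_{C^\flat}$ with $F_\sigma = \overline{F}_\sigma \circ q$, where $q\colon \mathfrak{Z} \twoheadrightarrow \mathfrak{Z}_2$ is the projection. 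Using that $q$ is $\varphi$-equivariant and, after base change to $W(C^\flat)$, $G_K$-equivariant, together with the surjectivity of $q$, the cocycle identity and both conditions (i), (ii) transfer formally from $F$ to $\overline{F}$: for instance, $F_{\sigma\tau} = F_\sigma + \sigma \cdot F_\tau$ rewrites as $\overline{F}_{\sigma\tau} \circ q = (\overline{F}_\sigma + \sigma \cdot \overline{F}_\tau) \circ q$, and the right-cancellation of the surjection $q$ gives the cocycle relation for $\overline{F}$; analogous manipulations handle $(\varphi-1)\overline{F}_\sigma = 0$ and vanishing on $G_{K_\infty}$. Applying the hypothesis to $\mathfrak{Z}_2$ then yields $\overline{F}_\sigma = 0$, and hence $F_\sigma = 0$, as required.

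I anticipate no real obstacle here; the only care needed is bookkeeping — checking that the natural restriction and descent operations preserve the growth condition $u^{p/p-1}\mathcal{O}_{C^\flat}$ (which is automatic since both $i$ and the splitting are $\mathfrak{S}_{\mathbb{F}}$-linear) and that each of the three structures (cocycle, $\varphi$-invariance, vanishing on $G_{K_\infty}$) really descends to $\mathfrak{Z}_2$, for which surjectivity of $q$ after base change is enough.
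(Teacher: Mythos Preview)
Your proposal is correct and follows essentially the same approach as the paper: both arguments pass the cocycle through the short exact sequence $0 \to \operatorname{Hom}(\mathfrak{Z}_2,\mathfrak{W}) \to \operatorname{Hom}(\mathfrak{Z},\mathfrak{W}) \to \operatorname{Hom}(\mathfrak{Z}_1,\mathfrak{W}) \to 0$ obtained by applying $\operatorname{Hom}(-,\mathfrak{W})$, first killing the image in $\operatorname{Hom}(\mathfrak{Z}_1,\mathfrak{W})$ (your restriction $F_\sigma \circ i$) and then the lift in $\operatorname{Hom}(\mathfrak{Z}_2,\mathfrak{W})$ (your $\overline{F}_\sigma$). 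The paper's display of this sequence has the indices $1$ and $2$ swapped, but the argument is the same as yours.
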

\begin{proof}
	Applying $\operatorname{Hom}(-,\mathfrak{W})$ to $0 \rightarrow \mathfrak{Z}_1 \rightarrow \mathfrak{Z} \rightarrow \mathfrak{Z}_2 \rightarrow 0$ yields a $\varphi$-equivariant exact sequence 
	\begin{equation}\label{homexact}
	0 \rightarrow \operatorname{Hom}(\mathfrak{Z}_1,\mathfrak{W}) \rightarrow \operatorname{Hom}(\mathfrak{Z},\mathfrak{W}) \rightarrow \operatorname{Hom}(\mathfrak{Z}_2,\mathfrak{W}) \rightarrow 0
	\end{equation}
	which is $G_K$-equivariant after applying $\otimes_{k[[u]]} C^\flat$. Thus, if $\sigma \mapsto F_\sigma$ is a $1$-cocycle as in Hypothesis~\ref{hyp} then so is its image in $\operatorname{Hom}(\mathfrak{Z}_2,\mathfrak{W}) \otimes_{k[[u]]} u^{p/p-1}\mathcal{O}_{C^\flat}$. We conclude that if (1) of Hypothesis~\ref{hyp} is satisfied with $\mathfrak{Z}$ replaced with $\mathfrak{Z}_2$ then this image must be zero. The sequence \ref{homexact} remains exact after applying $\otimes_{k[[u]]} u^{p/p-1}\mathcal{O}_{C^\flat}$ since each of its terms is $k[[u]]$-free. Therefore $F_{\sigma} \in \operatorname{Hom}(\mathfrak{Z}_{1},\mathfrak{W}) \otimes_{k[[u]]} u^{p/p-1}\mathcal{O}_{C^\flat}$ for each $\sigma$. If (1) of Hypothesis~\ref{hyp} is satisfied with $\mathfrak{Z}$ replaced with $\mathfrak{Z}_1$ then we must have $F_\sigma = 0$. Hence (1) of Hypothesis~\ref{hyp} itself is satisfied.
\end{proof}

\begin{proposition}\label{what}
	(1) of Hypothesis~\ref{hyp} is satisfied when either of the following two conditions hold:
	\begin{enumerate}
		\item $\mathfrak{Z} \in \mathcal{L}^{\leq p-1}_{\operatorname{crys}}(Z_{\mathbb{F}})$.
		\item Every Jordan--Holder factor of $Z_{\mathbb{F}}$ is absolutely irreducible and if $Z$ is a Jordan--Holder factor of $Z_{\mathbb{F}}$ so that $Z \otimes_{\mathbb{F}} \overline{\mathbb{F}}(1)$ is unramified, then $Z \otimes_{\mathbb{F}} \overline{\mathbb{F}}(1)$ is not a Jordan--Holder factor of $W_{\mathbb{F}} \otimes_{\mathbb{F}} \overline{\mathbb{F}}$.\footnote{Here $\overline{\mathbb{F}}(1)$ denotes the one-dimensional representation of $G_K$ over $\overline{\mathbb{F}}$ on which $G_K$-acts by the cyclotomic character. Likewise for $\overline{\mathbb{F}}(-1)$, but for the inverse of the cyclotomic character.}
	\end{enumerate}
\end{proposition}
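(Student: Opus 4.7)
The plan is as follows. First I would reduce to the case where both $Z_{\mathbb{F}}$ and $W_{\mathbb{F}}$ are absolutely irreducible. For $Z$, this is Lemma~\ref{inductionstep} directly. For $W$, I would perform the analogous reduction: given a $G_K$-equivariant short exact sequence $0 \to W_\mathbb{F}^1 \to W_\mathbb{F} \to W_\mathbb{F}^2 \to 0$, Lemma~\ref{exactO} provides $0 \to \mathfrak{W}^1 \to \mathfrak{W} \to \mathfrak{W}^2 \to 0$, and applying the exact functor $\operatorname{Hom}(\mathfrak{Z},-)$ (exact since $\mathfrak{Z}$ is projective over $\mathfrak{S}_\mathbb{F}$) yields a $\varphi$-equivariant short exact sequence which is $G_K$-equivariant after $\otimes_{k[[u]]}C^\flat$; a cocycle satisfying Hypothesis~\ref{hyp}(1) for $(\mathfrak{Z},\mathfrak{W})$ projects to one for $(\mathfrak{Z},\mathfrak{W}^2)$, which vanishes by induction, so it takes values in $\operatorname{Hom}(\mathfrak{Z},\mathfrak{W}^1) \otimes u^{p/(p-1)}\mathcal{O}_{C^\flat}$ and vanishes again by induction. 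Conditions (1) and (2) both pass to Jordan--Holder sub- and quotient-factors: height $\leq p-1$ is preserved under the constructions of Lemma~\ref{exactO}, and (2) is by definition a condition on JH factors.

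Once $Z_\mathbb{F}, W_\mathbb{F}$ are absolutely irreducible, I would write them (after enlarging $\mathbb{F}$) as $\operatorname{Ind}_{G_L}^{G_K}\chi_Z$ and $\operatorname{Ind}_{G_{L'}}^{G_K}\chi_W$ for unramified $L,L'/K$ via \cite[2.1.2]{B18}. Setting $K' = LL'$, since $K'/K$ is unramified and $K_\infty/K$ is totally ramified, $K' \cap K_\infty = K$; via $\operatorname{Gal}(K'K_\infty/K_\infty) \cong \operatorname{Gal}(K'/K)$ one obtains $G_K = G_{K_\infty} \cdot G_{K'}$ as a set. Combined with $F|_{G_{K_\infty}} = 0$ and the cocycle relation $F_{\sigma\tau} = F_\sigma + \sigma\cdot F_\tau$, this reduces the problem to showing $F|_{G_{K'}}=0$. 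Over $K'$, both $Z_\mathbb{F}|_{G_{K'}}$ and $W_\mathbb{F}|_{G_{K'}}$ split as direct sums of 1-dimensional characters; iterating the reductions of the previous paragraph, but now for $G_{K'}$-exact sequences, I would further reduce to the case where $Z$ and $W$ are themselves 1-dim characters of $G_{K'}$, with $F|_{G_{K'_\infty}}=0$.

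In this rank-$1$ situation I would carry out the explicit calculation dictated by \ref{explicitrankone} and \ref{equivalent}. Here $\operatorname{Hom}(\mathfrak{Z},\mathfrak{W})$ has an $\mathbb{F}[[u]]$-basis $\{F_\theta\}_{\theta \in \operatorname{Hom}_{\mathbb{F}_p}(k',\mathbb{F})}$ with $\varphi(F_{\theta\circ\varphi}) = cu^{t_\theta}F_\theta$ for $t_\theta = r^W_\theta - r^Z_\theta \in [-p,p]$, and the $G_{K'}$-action is $\sigma(F_\theta) = \eta(\sigma)^{\Theta^{\mathrm{Hom}}_\theta}F_\theta$ with $\Theta^{\mathrm{Hom}}_\theta = \sum_i p^i t_{\theta\circ\varphi^i}$. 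A $\varphi$-invariant element of $\operatorname{Hom}(\mathfrak{Z},\mathfrak{W}) \otimes_{k'[[u]]} C^\flat$ is then of the form $\beta F_\theta$ where $\beta^{p^m-1}$ is a unit times $u^{-\Theta^{\mathrm{Hom}}_\theta}$ (with $m = [k':\mathbb{F}_p]$), so $v^\flat(\beta) = -\Theta^{\mathrm{Hom}}_\theta/(p^m-1)$, and the condition $\beta F_\theta \in \operatorname{Hom}(\mathfrak{Z},\mathfrak{W}) \otimes u^{p/(p-1)}\mathcal{O}_{C^\flat}$ becomes the inequality $-\Theta^{\mathrm{Hom}}_\theta\cdot(p-1) \geq p(p^m-1)$.

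Under condition (1), $r^Z_\theta \leq p-1$ gives $t_\theta \geq -(p-1)$, hence $-\Theta^{\mathrm{Hom}}_\theta \leq p^m-1 < p(p^m-1)/(p-1)$, so the valuation inequality fails and every $\varphi$-invariant satisfying the valuation condition is zero, giving $F_\sigma = 0$. Under condition (2), equality in the valuation bound can be achieved only when $r^Z_\theta = p$ and $r^W_\theta = 0$ for every $\theta$; in this borderline case, \eqref{explicitGalois} together with $\eta(\sigma)^{p^m-1}=\sigma(u)/u$ identifies $Z$, up to unramified twist, with a character whose cyclotomic twist $Z \otimes \overline{\mathbb{F}}(1)$ is unramified, and $W$ with an unramified character. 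Since $Z_\mathbb{F}$ absolutely irreducible with $Z_\mathbb{F}\otimes\overline{\mathbb{F}}(1)$ unramified forces $Z_\mathbb{F}$ to be 1-dimensional over $K$, the Jordan--Holder hypothesis of (2) then forbids $Z_\mathbb{F}\otimes\overline{\mathbb{F}}(1)$ from being a constituent of $W_\mathbb{F}\otimes\overline{\mathbb{F}}$; identifying the $\varphi$-invariant line with a cyclotomic twist of $\operatorname{Hom}(Z_\mathbb{F},W_\mathbb{F})$ as a $G_K$-representation forces $F_\sigma = 0$. The hardest step will be this final identification: after restriction to $G_{K'}$ the summand structure of $W_\mathbb{F}|_{G_{K'}}$ can admit coincidences that are absent at the $G_K$-level, so one must use the full $G_K$-cocycle structure (in particular the non-trivial action of $G_K/G_{K'}$ permuting the pairs $(Z'_i, W'_j)$ when $W_\mathbb{F}$ has $K$-dimension greater than one) rather than only the $G_{K'}$-restriction to rule out the borderline case.
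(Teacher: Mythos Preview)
Your plan works cleanly for condition~(1): the successive reductions on $Z_{\mathbb F}$ (via Lemma~\ref{inductionstep}) and on $W_{\mathbb F}$ (via the dual argument you sketch), followed by passage to $K'$ and a further reduction to rank one, are all valid, and your explicit valuation computation correctly rules out any nonzero $\varphi$-invariant in the required range when each $r^Z_\theta \le p-1$. This is a genuinely different route from the paper's, which never reduces $W_{\mathbb F}$, never writes down explicit bases, and instead runs a direct growth argument on the iterates $\varphi^n(z)$ for $z\in\mathfrak Z\setminus u\mathfrak Z$ to show that $\mathfrak Z^\varphi=u^p\mathfrak Z$ is the only obstruction; your approach trades that uniform estimate for an explicit rank-one computation, at the cost of more bookkeeping.

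For condition~(2), however, there is a genuine gap at exactly the point you flag as ``hardest''. In the borderline case your computation shows that a nonzero $\varphi$-invariant with valuation exactly $p/(p-1)$ \emph{can} exist, so $F_\sigma$ need not vanish on valuation grounds alone. You then need the cocycle condition together with $F|_{G_{K_\infty}}=0$ to kill it, but the sentence ``identifying the $\varphi$-invariant line with a cyclotomic twist of $\operatorname{Hom}(Z_{\mathbb F},W_{\mathbb F})$ forces $F_\sigma=0$'' does not accomplish this: the $\varphi$-invariants of $\operatorname{Hom}(\mathfrak Z,\mathfrak W)\otimes C^\flat$ are literally $\operatorname{Hom}(Z_{\mathbb F},W_{\mathbb F})$ (no twist), and knowing $F_\sigma$ lives there says nothing. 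What is actually required is injectivity of the restriction map
\[
H^1\bigl(G_K,\operatorname{Hom}(Z_{\mathbb F},W_{\mathbb F})\otimes\overline{\mathbb F}\bigr)\longrightarrow H^1\bigl(G_{K_\infty},\operatorname{Hom}(Z_{\mathbb F},W_{\mathbb F})\otimes\overline{\mathbb F}\bigr),
\]
and likewise on $H^0$; this is the content of \cite[2.3.5]{B18b}, which the paper invokes and which uses exactly the Jordan--H\"older hypothesis in~(2). Your reduction to $G_{K'}$ makes this step harder, not easier: once you have passed to rank-one pieces over $K'$, the condition you would need is that $Z_i(1)$ is not isomorphic to any $W_j$ as $G_{K'}$-characters, and this can fail even when condition~(2) holds for the original $G_K$-representations (an irreducible $W_{\mathbb F}$ of dimension $>1$ may well acquire an unramified constituent upon restriction to $G_{K'}$). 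Your closing remark that one must ``use the full $G_K$-cocycle structure'' is exactly right, but you have not said how; the clean way is to \emph{not} reduce $W_{\mathbb F}$, show directly (as the paper does) that a nonzero $F_\sigma$ forces $\mathfrak Z^\varphi=u^p\mathfrak Z$ and hence $Z_{\mathbb F}$ one-dimensional with $Z_{\mathbb F}(1)$ unramified, and then appeal to the cohomological injectivity result.
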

\begin{proof}
	By inducting on the length of $Z_{\mathbb{F}}$, and using Lemma~\ref{inductionstep}, we can reduce to the case that $Z_{\mathbb{F}}$ is irreducible. Let $\sigma \mapsto F_\sigma$ be as in Hypothesis~\ref{hyp} and suppose $\sigma \in G_K$ is such that $F_\sigma \neq 0$. Let $\mathfrak{J}$ be the kernel of the restriction of $F_\sigma$ to $\mathfrak{Z}$. Since $F_\sigma$ is $\varphi$-equivariant, $\mathfrak{J}$ is a $\varphi$-stable $\mathfrak{S}_{\mathbb{F}}$-submodule of $\mathfrak{Z}$. Since the image of $F_\sigma$ is $u$-torsionfree, $\mathfrak{J} \otimes_{k[[u]]} C^\flat = \mathfrak{Z} \otimes_{k[[u]]} C^\flat$ only if $\mathfrak{J} =\mathfrak{Z}$, and this does not happen since $F_\sigma \neq 0$. Since $Z_{\mathbb{F}}$ is irreducible as a $G_K$-representation it is irreducible as a $G_{K_\infty}$-representation, cf. \ref{unique}. Therefore $\mathfrak{J}= 0$ and $F_\sigma$ is injective, otherwise the $G_{K_\infty}$-representation $(\mathfrak{J} \otimes_{k[[u]]} C^\flat)^{\varphi =1} \subset Z_{\mathbb{F}}$ contradicts the $G_{K_\infty}$-irreducibility of $Z_{\mathbb{F}}$.
	
	For each $z \in \mathfrak{Z} \setminus u \mathfrak{Z}$ and each $n \geq 1$ there exists $\delta_n \in \mathbb{Z}$ and $z_n \in \mathfrak{Z} \setminus u\mathfrak{Z}$ such that 
	$$
	\varphi^n(z) = u^{p^n+\ldots+p -\delta_n} z_n
	$$   
	Using that $u^p \mathfrak{Z} \subset \mathfrak{Z}^{\varphi}$ we deduce that $\delta_n \geq 0$ (the point being that $\varphi(z) \not\in u^{p+1}\mathfrak{Z}$, if it was then $\varphi(z)/u \in \mathfrak{Z}^\varphi$ which implies $z \in u^{1/p} \mathfrak{Z} \otimes_{k[[u]]} \mathcal{O}_{C^\flat}$, a contradiction). In particular there is a $\delta' \geq 0$ such that $\varphi(z_n) = u^{p-\delta'}z_{n+1}$ and so, since
	$$
	\varphi^{n+1}(z) = u^{p^{n+1}+ \ldots + p^2 - p\delta_n} \varphi(z_n) = u^{p^{n+1}+\ldots+ p^2 + p -p\delta_n - \delta'} z_{n+1}
	$$
	we see that $\delta_{n+1} = p\delta_n + \delta' \geq p\delta_n$. In particular, if $\delta_N > 0$ for some $N$ then $\delta_n \rightarrow \infty$ as $n \rightarrow \infty$. As $F_\sigma$ is injective and $\mathfrak{Z}$ is finitely generated there exists $\gamma >0$ such that $F_\sigma(z) \not\in \mathfrak{W} \otimes_{k[[u]]} u^{\gamma+p/(p-1)} \mathcal{O}_{C^\flat}$ for any $z \in \mathfrak{Z} \setminus u \mathfrak{Z}$. This implies
	$$
	F_\sigma(\varphi^n(z)) = u^{p^n+\ldots + p -\delta_n} F_\sigma(z_n) \not\in \mathfrak{W} \otimes_{k[[u]]} u^{p^n+\ldots+p - \delta_n + \gamma + p/(p-1)} \mathcal{O}_{C^\flat}
	$$
	for any $z \in \mathfrak{Z}\setminus u\mathfrak{Z}$ and $n \geq 0$. As $F_\sigma$ is $\varphi$-equivariant and $F_\sigma \in \operatorname{Hom}(\mathfrak{Z},\mathfrak{W}) \otimes_{k[[u]]} u^{p/(p-1)}\mathcal{O}_{C^\flat}$ we also deduce
	$$
	F_\sigma(\varphi^n(z)) = \varphi^n(F_\sigma(z)) \in \mathfrak{W} \otimes_{k[[u]]} u^{p^{n+1}/(p-1)} \mathcal{O}_{C^\flat}
	$$
	Note that $p^n+\ldots+p -\delta_n + \gamma -p/(p-1) = p^{n+1}/(p-1) -\delta_n + \gamma$. If $\delta_N > 0$ for some $N$ then, by choosing $n$ large enough that $-\delta_n + \gamma < 0$, we obtain a contradiction. We conclude that $F_\sigma = 0$ unless, for all $z \in \mathfrak{Z} \setminus u\mathfrak{Z}$, $\varphi(z) = u^pz'$ for some $z' \in \mathfrak{Z} \setminus u\mathfrak{Z}$. Equivalently $F_\sigma=0$ unless $\mathfrak{Z}^\varphi = u^p \mathfrak{Z}$. 
	
	This completes the proof when $\mathfrak{Z} \in \mathcal{L}^{\leq p-1}_{\operatorname{crys}}(Z_{\mathbb{F}})$ since then $u^{p-1}\mathfrak{Z} \subset \mathfrak{Z}^\varphi$. Therefore assume we are as in (2). If $\mathfrak{Z}^\varphi = u^p\mathfrak{Z}$ then $\varphi$ induces a semilinear automorphism of $ u^{-p/p-1}\mathfrak{Z}$, and hence a $\overline{k}$-semilinear automorphism of $\widetilde{\mathfrak{Z}} := (u^{p/p-1}\mathfrak{Z}) \otimes_{k[[u]]} \overline{k}$. Via the $\varphi$-equivariant section of $\mathcal{O}_{C^\flat} \rightarrow \overline{k}$ given by Teichmuller representatives, we $\varphi$-equivariantly view $\widetilde{\mathfrak{Z}}$ as a subset of $\mathfrak{Z} \otimes_{k[[u]]} C^\flat$. Since $\overline{k}$ is algebraically closed $\widetilde{\mathfrak{Z}}$ is generated by $\varphi$-invariant elements, and so $\widetilde{\mathfrak{Z}}^{\varphi =1} = Z_{\mathbb{F}}$. It is a straightforward exercise so show that, as a $G_{K_\infty}$-representation, $\widetilde{\mathfrak{Z}}^{\varphi =1}$ is a twist of an unramified representation by the inverse of the cyclotomic character. Thus $\widetilde{\mathfrak{Z}}^{\varphi =1}$ has the same description as a $G_K$-representation, cf. \ref{unique}. As $Z_{\mathbb{F}}$ is absolutely irreducible it follows that $Z_{\mathbb{F}}$ is one-dimensional.
	
	We have shown that if a non-zero cocycle $\sigma \mapsto F_\sigma$ exists as in Hypothesis~\ref{hyp} then $Z_{\mathbb{F}} \otimes_{\mathbb{F}} \overline{\mathbb{F}}(1)$ is an unramified character. Since $F_\sigma|_{G_{K_\infty}} = 0$ this cocycle represents a class in $H^1(G_K,\operatorname{Hom}(Z_{\mathbb{F}} \otimes_{\mathbb{F}} \overline{\mathbb{F}},W_{\mathbb{F}}\otimes_{\mathbb{F}} \overline{\mathbb{F}}))$ which is killed by restriction to $G_{K_\infty}$. If $Z_{\mathbb{F}} \otimes_{\mathbb{F}} \overline{\mathbb{F}}(1)$ is not a Jordan--Holder factor of $W_{\mathbb{F}} \otimes_{\mathbb{F}} \overline{\mathbb{F}}$ then \cite[2.3.5]{B18b} implies this restriction map is injective, so $F_\sigma = (\sigma-1)(F)$ for some $F \in \operatorname{Hom}(Z_{\mathbb{F}} \otimes_{\mathbb{F}} \overline{\mathbb{F}},W_{\mathbb{F}}\otimes_{\mathbb{F}} \overline{\mathbb{F}})$ which is fixed by $G_{K_\infty}$. Applying \cite[2.3.5]{B18b} again then implies $F$ is fixed by $G_K$, so $F_\sigma = 0$. We conclude (1) of Hypothesis~\ref{hyp} is satisfied.
\end{proof}

This motivates the following definition.
\begin{definition}\label{cyclofree}
	\begin{enumerate}
		\item We say $V_{\mathbb{F}}$ is cyclotomic-free if every Jordan--Holder factor of $V_{\mathbb{F}}$ is absolutely irreducible, and if $Z$ is a Jordan--Holder factor of $V_{\mathbb{F}}$ such that $Z \otimes_{\mathbb{F}} \overline{\mathbb{F}}(1)$ is unramified then $Z \otimes_{\overline{\mathbb{F}}} \overline{\mathbb{F}}(1)$ is not a Jordan--Holder factor of $V_{\mathbb{F}} \otimes_{\mathbb{F}} \overline{\mathbb{F}}$.
		\item We say $V_{\mathbb{F}}$ is strongly cyclotomic-free is $V_{\mathbb{F}}|_{G_L}$ is cyclotomic-free for all finite unramified extensions $L/K$. Equivalently $V_{\mathbb{F}}$ is strongly cyclotomic-free if each Jordan--Holder factor is absolutely irreducible, and if an unramified twist of $\mathbb{F}(-1)$ is a Jordan--Holder factor of $V_{\mathbb{F}}$ then no Jordan--Holder factor of $V_{\mathbb{F}}$ is unramified.
	\end{enumerate}
\end{definition}

Most of our results will only require us to assume cyclotomic-freeness. However, to prove potential diagonalisability it will be necessary to replace a representation $V_{\mathbb{F}}$ by the restriction $V_{\mathbb{F}}|_{G_L}$ for some sufficiently large unramified extension $L/K$ so that $V_{\mathbb{F}}|_{G_L}$ has every Jordan--Holder factor one-dimensional. To apply our results we will need $V_{\mathbb{F}}|_{G_L}$ to be cyclotomic-free. The following example indicates why we therefore require $V_{\mathbb{F}}$ to be satisfy a stronger property than cyclotomic-freeness.

\begin{example} Assume $p=2$ and let $\psi$ be a non-trivial character of $G_K$ which becomes trivial when restricted to $G_L$ for $L/K$ a finite unramified extension. Let $V_{\mathbb{F}}$ be an irreducible representation of $G_K$ of dimension $[L:K]$. Assuming $\mathbb{F}$ to be sufficiently large, $L/K$ is then the smallest unramified extension such that $V_{\mathbb{F}}|_{G_L}$ has every Jordan--Holder factor one-dimensional. However if $V'_{\mathbb{F}} = V_{\mathbb{F}} \oplus \mathbb{F}(\psi)$ then $V'_{\mathbb{F}}|_{G_L}$ is not cyclotomic-free since $\mathbb{F}(\psi)|_{G_L}$ is trivial and, because $p=2$, the cyclotomic character is also trivial.
\end{example}
\begin{corollary}
	If $V_{\mathbb{F}}$ is cyclotomic-free then (2) of Hypothesis~\ref{hyp} holds for all $\mathfrak{M} \in \mathcal{L}^{\leq p}_{\operatorname{crys}}(V_{\mathbb{F}})$.
\end{corollary}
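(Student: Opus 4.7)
The plan is to deduce this as an immediate specialisation of Proposition~\ref{what}(2) applied with the choice $\mathfrak{Z} = \mathfrak{W} = \mathfrak{M}$, so that $Z_{\mathbb{F}} = W_{\mathbb{F}} = V_{\mathbb{F}}$. First I would unpack the content of (2) of Hypothesis~\ref{hyp}: given any $\mathfrak{M} \in \mathcal{L}^{\leq p}_{\operatorname{crys}}(V_{\mathbb{F}})$, one must show that every continuous cocycle $\sigma \mapsto F_{\sigma}$ valued in $\operatorname{End}(\mathfrak{M}) \otimes_{k[[u]]} u^{p/(p-1)}\mathcal{O}_{C^\flat}$ satisfying $(\varphi-1)(F_\sigma) = 0$ and $F_\sigma = 0$ for $\sigma \in G_{K_\infty}$ must vanish. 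This is by construction just (1) of Hypothesis~\ref{hyp} in the special case $\mathfrak{Z} = \mathfrak{W} = \mathfrak{M}$, so both parts of the hypothesis are aligned.

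Next I would verify that Definition~\ref{cyclofree}(1) is precisely condition~(2) of Proposition~\ref{what} once $Z_{\mathbb{F}}$ and $W_{\mathbb{F}}$ are both set equal to $V_{\mathbb{F}}$. Cyclotomic-freeness demands that every Jordan--Holder factor of $V_{\mathbb{F}}$ is absolutely irreducible, and that whenever a factor $Z$ is such that $Z \otimes_{\mathbb{F}} \overline{\mathbb{F}}(1)$ is unramified, $Z \otimes_{\mathbb{F}} \overline{\mathbb{F}}(1)$ does not occur as a Jordan--Holder factor of $V_{\mathbb{F}} \otimes_{\mathbb{F}} \overline{\mathbb{F}}$. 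Reading Proposition~\ref{what}(2) back with $W_{\mathbb{F}} = V_{\mathbb{F}}$ gives literally the same requirement. Proposition~\ref{what}(2) then yields Hypothesis~\ref{hyp}(1) for the pair $(\mathfrak{M}, \mathfrak{M})$, which is exactly Hypothesis~\ref{hyp}(2) for $\mathfrak{M}$.

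The argument is therefore a pure translation of language: all the substance lies in Proposition~\ref{what}, and there is no genuine obstacle here. The one conceptual point worth flagging is that the cyclotomic-freeness condition on a single representation $V_{\mathbb{F}}$ is strong enough to simultaneously play the role of both the source and target factor in Proposition~\ref{what}(2); so no auxiliary hypothesis (such as strong cyclotomic-freeness, or a restriction to unramified extensions of $K$) is needed at this stage of the argument.
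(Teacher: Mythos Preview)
Your proposal is correct and matches the paper's own proof exactly: the paper simply says ``This follows from Proposition~\ref{what} applied with $\mathfrak{Z} = \mathfrak{W} = \mathfrak{M}$.'' Your additional unpacking of why cyclotomic-freeness coincides with condition~(2) of Proposition~\ref{what} when $Z_{\mathbb{F}} = W_{\mathbb{F}} = V_{\mathbb{F}}$ is accurate and is precisely the content implicit in that one-line citation.
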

\begin{proof}
	This follows from Proposition~\ref{what} applied with $\mathfrak{Z} =\mathfrak{W} = \mathfrak{M}$.
\end{proof}
\subsection{Local analysis of $\mathcal{L}^{\leq p}_{\operatorname{crys}}$} 
\begin{sub}\label{def}
With notation as in \ref{setup}, a deformation of $V_{\mathbb{F}}$ to a complete local $W(\mathbb{F})$-algebra $A$, with residue field $\mathbb{F}$, is a finite free $A$-module $V_A$ equipped with a continuous $A$-linear action of $G_K$ together with a $G_K$-equivariant isomorphism $V_A \otimes_{A} \mathbb{F} \cong V_{\mathbb{F}}$.
	
	 Fix an $\mathbb{F}$-basis $\xi_{\mathbb{F}}$ of $V_{\mathbb{F}}$. Then a framed deformation of $V_{\mathbb{F}}$ is a deformation $V_A$ together with an $A$-basis $\xi_A$ which gets identified with $\xi_{\mathbb{F}}$ after applying $\otimes_{A} \mathbb{F}$. The functor
	 $$
	 D^\square_{V_{\mathbb{F}}}(A) = \lbrace \text{isomorphism classes of framed deformations of $V_{\mathbb{F}}$} \rbrace
	 $$
	 is representable by a complete local Noetherian $W(\mathbb{F})$-algebra $R = R^{\square}_{V_{\mathbb{F}}}$. Let $V_R$ denote the universal framed deformation. Applying Corollary~\ref{formal} to $R$ and $V_R$ gives a projective $R$-scheme $\mathcal{L} := \mathcal{L}^{\leq p}_{R,\operatorname{crys}}$.
\end{sub}

\begin{lemma}\label{dimm}
	Let $\mathbb{F}'$ be a finite extension of $\mathbb{F}$ and $x$ an $\mathbb{F}'$-valued point of $\mathcal{L}$. Suppose the corresponding $\mathfrak{M}_x \in \mathcal
	L^{\leq p}_{\operatorname{crys}}(V_{\mathbb{F}'})$ satisfies (2) of Hypothesis~\ref{hyp}. Then\footnote{It would be better to write $ \sum_{n+m =i} \operatorname{dim}_{\mathbb{F}} \mathcal{G}^n(\mathfrak{M}_{x}) - \operatorname{dim}_{\mathbb{F}}\mathcal{G}^m(\mathfrak{M}_x)$ as $\mathcal{G}^i(\operatorname{Hom}(\mathfrak{M}_x,\mathfrak{M}_x))$, but we have only defined $\mathcal{G}^i(-)$ for finite projective $\mathfrak{S}_{\mathbb{F}}$-modules equipped with maps $\varphi^*\mathfrak{M} \rightarrow \mathfrak{M}$. The image of the Frobenius on $\operatorname{Hom}(\mathfrak{M}_x,\mathfrak{M}_x)$ will not, in general, be contained in $\operatorname{Hom}(\mathfrak{M}_x,\mathfrak{M}_x)$.}
	$$
	\operatorname{dim}_{\mathbb{F}'} \mathcal{O}_{\mathcal{L},x}(\mathbb{F}'[\epsilon])  \leq d^2 + \sum_{i > 0} \sum_{n-m =i} \operatorname{dim}_{\mathbb{F}'} \mathcal{G}^n(\mathfrak{M}_{x}) - \operatorname{dim}_{\mathbb{F}'}\mathcal{G}^m(\mathfrak{M}_x)
	$$
	Here $\mathbb{F}'[\epsilon]$ is the ring of dual numbers over $\mathbb{F}'$.
\end{lemma}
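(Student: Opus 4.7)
The $\mathbb{F}'[\epsilon]$-points of $\mathcal{O}_{\mathcal{L},x}$ parametrise pairs $(V', \mathfrak{M}')$ lifting $(V_{\mathbb{F}'}, \mathfrak{M}_x)$ over $\mathbb{F}'[\epsilon]$, with $V'$ a framed deformation of $V_{\mathbb{F}}$ and $\mathfrak{M}' \in \mathcal{L}^{\leq p}_{\operatorname{crys}}(V')$. The plan is to decompose the bound into a framing contribution of $d^2$ and a contribution from deformations of $\mathfrak{M}_x$ as a Breuil--Kisin module, using Hypothesis~\ref{hyp}(2) as the crucial rigidity input.

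First I would establish an $\mathbb{F}'[\epsilon]$-coefficient analogue of Lemma~\ref{end}(2): the crystalline $G_K$-action on $V'$ is uniquely determined by the abstract $\varphi$-Breuil--Kisin lift $\mathfrak{M}'$ together with the induced identification of its etale realisation with $V'|_{G_{K_\infty}}$. Concretely, the difference of two crystalline $G_K$-actions on $\mathfrak{M}' \otimes_{\mathfrak{S}} W(C^\flat)$ reducing to that coming from $V_{\mathbb{F}'}$ produces $\epsilon$ times a continuous cocycle $\sigma \mapsto F_\sigma$ valued in $\operatorname{End}(\mathfrak{M}_x) \otimes_{k[[u]]} u^{p/(p-1)}\mathcal{O}_{C^\flat}$. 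The $\varphi$-equivariance of both actions yields $(\varphi-1)(F_\sigma) = 0$, and the triviality of $G_{K_\infty}$ on $\mathfrak{M}_x$ gives $F_\sigma|_{G_{K_\infty}} = 0$, so Hypothesis~\ref{hyp}(2) forces $F_\sigma = 0$.

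Given this rigidity, the remaining freedom in assembling a framed tangent vector out of an abstract Breuil--Kisin lift of $\mathfrak{M}_x$ is a choice of $\mathbb{F}'[\epsilon]$-basis of the resulting representation lifting $\xi_{\mathbb{F}'}$; such bases form a torsor under $1 + \epsilon\operatorname{Mat}_d(\mathbb{F}')$, contributing the asserted $d^2$. Hence $\dim_{\mathbb{F}'}\mathcal{O}_{\mathcal{L},x}(\mathbb{F}'[\epsilon])$ is bounded by $d^2$ plus the dimension of the space of lifts of $\mathfrak{M}_x$ as a $\varphi$-Breuil--Kisin module of height $\leq p$ over $\mathfrak{S}_{\mathbb{F}'[\epsilon]}$.

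The final step is to bound this latter dimension by $\sum_{i > 0}\sum_{n - m = i}(\dim\mathcal{G}^n(\mathfrak{M}_x) - \dim\mathcal{G}^m(\mathfrak{M}_x))$. I would analyse first-order deformations of the semilinear map $\varphi^*\mathfrak{M}_x \to \mathfrak{M}_x$, modulo gauge transformations of $\mathfrak{M}_x \otimes \mathbb{F}'[\epsilon]$, as a semilinear-algebra $\operatorname{Hom}$-space controlled filtration-piece by filtration-piece using $F^i(\mathfrak{M}_x) = \mathfrak{M}_x^\varphi \cap E(u)^i\mathfrak{M}_x$ and its graded quotients. The height $\leq p$ condition couples the deformation to the filtration, and each graded piece of index $i$ corresponds to deformations shifting a piece of filtration degree $m$ to degree $n$ with $n - m = i$, each contributing at most $\dim\mathcal{G}^n - \dim\mathcal{G}^m$ degrees of freedom; summing over $i > 0$ gives the claimed bound. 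This last step, the filtration-by-filtration calculation, is the main obstacle: Steps 1 and 2 are formal once Hypothesis~\ref{hyp}(2) is available, while Step 3 requires a careful unpacking of how the height $\leq p$ condition interacts with the Hodge filtration in the style of the affine Grassmannian analysis of \cite[§2.3]{Kis08}.
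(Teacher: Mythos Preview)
Your strategy parallels the paper's---both use Hypothesis~\ref{hyp}(2) as the rigidity input and reduce to a dimension count for $\operatorname{Ext}^1_{\operatorname{SD}}(\mathfrak{M}_x,\mathfrak{M}_x)$---but Steps~2 and~3 mishandle the contribution of $\operatorname{Hom}(\mathfrak{M}_x,\mathfrak{M}_x)^{\varphi=1}$, and as written the numbers do not add up.

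The space of $\varphi$-Breuil--Kisin lifts modulo gauge is $\operatorname{Ext}^1_{\operatorname{SD}}(\mathfrak{M}_x,\mathfrak{M}_x)\cong\operatorname{Hom}(\mathfrak{M}_x,\mathfrak{M}_x)/F^0$, and by \cite[4.2.5]{B18b} its dimension is
\[
\dim_{\mathbb{F}'}\operatorname{Hom}(\mathfrak{M}_x,\mathfrak{M}_x)^{\varphi=1}\;+\;\operatorname{Card}\bigl\{(i,j)\in\mathcal{W}^2:i>j\bigr\},
\]
where $\mathcal{W}$ is the multiset containing $i$ with multiplicity $\dim\mathcal{G}^i(\mathfrak{M}_x)$. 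Since the identity lies in $\operatorname{Hom}^{\varphi=1}$, the first summand is always positive, so your Step~3 bound on ``lifts modulo gauge'' is strictly too small. Correspondingly, Step~2 overcounts: automorphisms $1+\epsilon h$ of a BK lift with $h\in\operatorname{Hom}(\mathfrak{M}_x,\mathfrak{M}_x)^{\varphi=1}$ are, by Lemma~\ref{end}(1), $G_K$-equivariant on the resulting $V'$, and hence can be absorbed into a change of framing. Thus the fibres of your ``(BK lifts modulo gauge)$\times$(framings)$\to$(tangent vectors)'' have dimension $\dim\operatorname{Hom}^{\varphi=1}$, not zero---indeed that map is not even well-defined without first choosing a representative lift in each gauge class.

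The paper resolves this by running the argument in the other direction. It constructs the linear map $\mathcal{O}_{\mathcal{L},x}(\mathbb{F}'[\epsilon])\to\operatorname{Ext}^1_{\operatorname{SD}}$ and then bounds its \emph{kernel}: your Step~1 argument (a $\varphi$-equivariant splitting forces a $G_K$-equivariant one) shows the kernel lands in the kernel of the composite to $\operatorname{Ext}^1_{G_K}(V_{\mathbb{F}'},V_{\mathbb{F}'})$; a torsor argument then shows the latter kernel has dimension $d^2-\dim\operatorname{Hom}^{\varphi=1}$ (the fibre of $\mathcal{O}_{\mathcal{L},x}(\mathbb{F}'[\epsilon])\to R(\mathbb{F}'[\epsilon])$ over the base point being a torsor for $\operatorname{Hom}(V_{\mathbb{F}'},V_{\mathbb{F}'})^{G_K}/\operatorname{Hom}(\mathfrak{M}_x,\mathfrak{M}_x)^{\varphi=1}$). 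The two occurrences of $\dim\operatorname{Hom}^{\varphi=1}$ cancel. Your outline can be repaired by tracking this term explicitly, but the clean ``$d^2+{}$(BK lifts)'' split you propose does not hold.
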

\begin{proof}
	Replacing $V_{\mathbb{F}}$ by $V_{\mathbb{F}} \otimes_{\mathbb{F}} \mathbb{F}'$ we may suppose $\mathbb{F} = \mathbb{F}'$. An element of $\mathcal{O}_{\mathcal{L},x}(\mathbb{F}[\epsilon])$ give rise to a map $R \rightarrow \mathbb{F}[\epsilon]$, and so a framed deformation $V_{\mathbb{F}[\epsilon]}$ of $V_{\mathbb{F}}$ to $\mathbb{F}[\epsilon]$, and an element $\mathfrak{M}_{\mathbb{F}[\epsilon]} \in \mathcal{L}^{\leq p}_{\operatorname{crys}}(V_{\mathbb{F}[\epsilon]})$ satisfying $\mathfrak{M}_{\mathbb{F}[\epsilon]} \otimes_{\mathbb{F}[\epsilon]} \mathbb{F} = \mathfrak{M}_{x}$. When viewed as an $\mathfrak{S}_{\mathbb{F}}$-module $\mathfrak{M}_{\mathbb{F}[\epsilon]}$ fits into an exact sequence
	$$
	0 \rightarrow \mathfrak{M}_x \xrightarrow{\epsilon} \mathfrak{M}_{\mathbb{F}[\epsilon]} \rightarrow \mathfrak{M}_x \rightarrow 0
	$$
	As explained in Proposition~\ref{SDexact}, an $\mathfrak{S}_{\mathbb{F}}$-splitting of this sequence can be chosen so that $\varphi_{\mathfrak{M}_{\mathbb{F}[\epsilon]}} = (\varphi_{\mathfrak{M}_x} + f \circ \varphi_{\mathfrak{M}_x},\varphi_{\mathfrak{M}_x})$ for some $f \in  \operatorname{Hom}(\mathfrak{M}_x,\mathfrak{M}_x)$. Since choosing a different splitting replaces $f$ by $f + (\varphi-1)(g)$ for some $g \in \operatorname{Hom}(\mathfrak{M}_x,\mathfrak{M}_x)$ we obtain a well-defined map
	\begin{equation}\label{map}
	\mathcal{O}_{\mathcal{L},x}(\mathbb{F}[\epsilon]) \rightarrow \operatorname{Hom}(\mathfrak{M}_x,\mathfrak{M}_x)/ F^0 \operatorname{Hom}(\mathfrak{M}_x,\mathfrak{M}_x)
	\end{equation}
	where $F^0 \operatorname{Hom}(\mathfrak{M}_x,\mathfrak{M}_x)$ consists of those $g \in \operatorname{Hom}(\mathfrak{M}_x,\mathfrak{M}_x)$ for which $\varphi(g) \in \operatorname{Hom}(\mathfrak{M}_x,\mathfrak{M}_x)$. We remark that the target of \eqref{map} can be identified with $\operatorname{Ext}^1_{\operatorname{SD}}(\mathfrak{M}_x,\mathfrak{M}_x)$, the first Yoneda extension group in the exact category of strongly divisible Breuil--Kisin modules, cf. \cite[\S 4.1]{B18b}. It is easy to check this map is $\mathbb{F}$-linear. 
	
	If $\mathcal{W}$ is the multiset of integers containing $i$ with multiplicity equal to the $\mathbb{F}$-dimension of $\mathcal{G}^i(\mathfrak{M}_x)$ then \cite[4.2.5]{B18b} implies the right-hand side of \eqref{map} has $\mathbb{F}$-dimension equal to 
	$$
	\operatorname{dim}_{\mathbb{F}} \operatorname{Hom}(\mathfrak{M}_x,\mathfrak{M}_x)^{\varphi =1} + \operatorname{Card}\left\lbrace i-j>0 \mid i,j \in \mathcal{W} \right\rbrace
	$$
	Clearly the value of the double sum in the statement of the lemma equals the cardinality of $\left\lbrace i-j>0 \mid i,j \in \mathcal{W} \right\rbrace$, and so it remains to compute the dimension of the kernel of \eqref{map}. To do this we first claim this kernel is contained in the kernel of the composite
$$
\mathcal{O}_{\mathcal{L},x}(\mathbb{F}[\epsilon]) \rightarrow R(\mathbb{F}[\epsilon]) \rightarrow \operatorname{Ext}^1(V_{\mathbb{F}},V_{\mathbb{F}})
$$
Here the last maps sends $A \rightarrow \mathbb{F}[\epsilon]$ onto the exact sequence $0 \rightarrow V_{\mathbb{F}} \xrightarrow{\epsilon} V_R \otimes_R \mathbb{F}[\epsilon] \rightarrow V_{\mathbb{F}} \rightarrow 0$. If $\mathfrak{M}_{\mathbb{F}[\epsilon]}$ corresponds to an element in the kernel of \eqref{map} then the surjection $\mathfrak{M}_{\mathbb{F}[\epsilon]} \rightarrow \mathfrak{M}_x$ admits a $\varphi$-equivariant $\mathfrak{S}_{\mathbb{F}}$-linear splitting $s$. Since $\mathfrak{M}_{\mathbb{F}[\epsilon]} \rightarrow \mathfrak{M}$ becomes $G_K$-equivariant after applying $\otimes_{k[[u]]} C^\flat$ it follows that $(\sigma-1)(s) := \sigma \circ s \circ \sigma^{-1} - s$ is an element of $\operatorname{Hom}(\mathfrak{M}_x,\mathfrak{M}_x) \otimes_{k[[u]]} u^{\frac{p}{p-1}} \mathcal{O}_{C^\flat}$ for each $\sigma \in G_K$. Using Hypothesis~\ref{hyp} we deduce that $s$ is $G_K$-equivariant, and so $s$ induces a $G_K$-equivariant splitting of $V_{\mathbb{F}[\epsilon]} \rightarrow V_{\mathbb{F}}$. This proves the claim.

To finish the proof it therefore suffices to show that the kernel of $\mathcal{O}_{\mathcal{L},x}(\mathbb{F}[\epsilon]) \rightarrow R(\mathbb{F}[\epsilon]) \rightarrow \operatorname{Ext}^1(V_{\mathbb{F}},V_{\mathbb{F}})$ has dimension equal to 
$$
\leq d^2 - \operatorname{Hom}(\mathfrak{M}_x,\mathfrak{M}_x)^{\varphi=1}
$$
We claim that the kernel of the first map in the this composite is a torsor for 
$$
\operatorname{Hom}(V_{\mathbb{F}},V_{\mathbb{F}})^{G_K}/\operatorname{Hom}(\mathfrak{M}_x,\mathfrak{M}_x)^{\varphi=1}
$$
(note this makes sense since by Lemma~\ref{end} we do have $\operatorname{Hom}(\mathfrak{M}_x,\mathfrak{M}_x)^{\varphi=1} \subset \operatorname{Hom}(V_{\mathbb{F}},V_{\mathbb{F}})^{G_K}$). Since the kernel of the second map in this composite is clearly a torsor for $\operatorname{Hom}(V_{\mathbb{F}},V_{\mathbb{F}}) / \operatorname{Hom}(V_{\mathbb{F}},V_{\mathbb{F}})^{G_K}$, proving this claim will complete the argument.

To do this, note that any $h \in \operatorname{Hom}(V_{\mathbb{F}},V_{\mathbb{F}})^{G_K}$ produces an automorphism $a+b\epsilon \mapsto a + h(b)\epsilon$ of $V_{\mathbb{F}} \otimes_{\mathbb{F}} \mathbb{F}[\epsilon]$ which, when viewed as an automorphism of $V_{\mathbb{F}} \otimes_{\mathbb{F}} \mathbb{F}[\epsilon] \otimes_{\mathbb{F}_p} C^\flat$, acts on the set $X \subset \mathcal{L}_{\operatorname{crys}}^{\leq p}(V_{\mathbb{F}} \otimes_{\mathbb{F}} \mathbb{F}[\epsilon])$ containing those elements  corresponding to elements in the kernel of $\mathcal{O}_{\mathcal{L},x}(\mathbb{F}[\epsilon]) \rightarrow R(\mathbb{F}[\epsilon])$. This action is also transitive. To see this note that any two elements of $X$ are abstractly isomorphic as Breuil--Kisin modules by a $\varphi$-equivariant map inducing the identity modulo $\epsilon$. By Lemma~\ref{end} this isomorphism induces an automorphism of $V_{\mathbb{F}} \otimes_{\mathbb{F}} \mathbb{F}[\epsilon]$ which, being the identity modulo $\epsilon$, comes from some $h \in \operatorname{Hom}(V_\mathbb{F},V_{\mathbb{F}})^{G_K}$. Finally we note that $\operatorname{Hom}(\mathfrak{M}_{x},\mathfrak{M}_{x})^{\varphi =1}$ is the stabliser of any point of $X$ under this action.
\end{proof}
\begin{proposition}\label{smooooth}
	If $p=2$ assume that $K_\infty \cap K(\mu_{p^\infty}) = K$. Let $x \in \mathcal{L} $ be an $\mathbb{F}'$-valued closed point of $\mathcal{L}$ with $\mathcal{O}_{\mathcal{L},x}[\frac{1}{p}] \neq 0$, and assume that the corresponding element of $\mathcal{L}^{\leq p}_{\operatorname{crys}}(V_{\mathbb{F}'})$ satisfies (2) of Hypothesis~\ref{hyp}. Then $\mathcal{O}_{\mathcal{L},x}$ is $\mathbb{Z}_p$-flat and $\mathcal{O}_{\mathcal{L},x}/p$ is regular. The completion of $\mathcal{O}_{\mathcal{L},x}$ is a power series over $W(\mathbb{F}')$.
\end{proposition}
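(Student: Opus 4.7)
The plan is to sandwich $\dim A$, where $A=\mathcal{O}_{\mathcal{L},x}$, between a tangent-space upper bound provided by Lemma~\ref{dimm} and a Krull-dimension lower bound coming from Kisin's formal smoothness theorem for the generic fibre of the crystalline deformation ring. Regularity will then follow by the standard squeeze, after which the assertions of $\mathbb{Z}_p$-flatness, regularity of $A/pA$ and the power-series structure of $\widehat{A}$ all become formal. Write $\mathfrak{m}$ for the maximal ideal of $A$ and set $N=d^2+\sum_{i>0}\dim_{\mathbb{F}'}\mathcal{G}^i(\operatorname{Hom}(\mathfrak{M}_x,\mathfrak{M}_x))$, interpreting the summands as in the footnote to Lemma~\ref{dimm} so that each $\mathcal{G}^i$ is taken with respect to the $\mathcal{O}_K\otimes_{\mathbb{Z}_p}\mathbb{F}'$-module structure and decomposes embedding by embedding.

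For the upper bound, since $\mathbb{F}'[\epsilon]$ has characteristic $p$, any local homomorphism $A\to\mathbb{F}'[\epsilon]$ factors through $A/pA$, so $\mathcal{O}_{\mathcal{L},x}(\mathbb{F}'[\epsilon])$ computes $\operatorname{Hom}_{\mathbb{F}'}(\mathfrak{m}/(\mathfrak{m}^2+pA),\mathbb{F}')$. Lemma~\ref{dimm}, which applies because $\mathfrak{M}_x$ satisfies Hypothesis~\ref{hyp}(2) by assumption, bounds this by $N$, and hence $\dim_{\mathbb{F}'}(\mathfrak{m}/\mathfrak{m}^2)\leq N+1$ and $\dim A\leq N+1$.

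For the lower bound, $x$ automatically lies in $\mathcal{L}^\circ$, since nilpotent and $p$-power-torsion sections of $A$ both belong to $\mathfrak{m}$ (the latter because $A[\tfrac{1}{p}]\neq 0$ implies $p$ is not nilpotent, so a $p$-torsion element cannot be a unit). The sheaves $\mathcal{G}^i$ are flat on $\mathcal{L}^\circ$ by the corollary following Lemma~\ref{Lcirc}, so $x$ lies in a unique component $\mathcal{L}^{\mathbf{v}}$, with $\mathbf{v}$ the Hodge type read off from the $\mathcal{O}_K\otimes\mathbb{F}'$-module structure of $\mathcal{G}^i(\mathfrak{M}_x)$. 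Pick a maximal ideal $\mathfrak{p}\subset A[\tfrac{1}{p}]$ with preimage $\mathfrak{q}\subset A$; Lemma~\ref{commalgebra} gives $\dim A/\mathfrak{q}=1$, while Proposition~\ref{crysdef}(1) together with Kisin's theorem that $R^{\mathbf{v}}_{\operatorname{crys}}[\tfrac{1}{p}]$ is formally smooth of dimension $N$ over $\mathbb{Q}_p$ identifies $A_{\mathfrak{q}}=A[\tfrac{1}{p}]_{\mathfrak{p}}$ with a regular local $\mathbb{Q}_p$-algebra of dimension $N$. Therefore $\dim A\geq \dim A_{\mathfrak{q}}+\dim A/\mathfrak{q}=N+1$.

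Combining the two bounds, $\dim A = \dim_{\mathbb{F}'}(\mathfrak{m}/\mathfrak{m}^2) = N+1$, so $A$ is regular, hence a domain, and $\mathbb{Z}_p$-flat (since $p\neq 0$ in $A$). The strict inequality $\dim_{\mathbb{F}'}(\mathfrak{m}/(\mathfrak{m}^2+pA))\leq N<N+1=\dim_{\mathbb{F}'}(\mathfrak{m}/\mathfrak{m}^2)$ forces $p\notin\mathfrak{m}^2$, so $p$ is part of a regular system of parameters and $A/pA$ is regular of dimension $N$; the Cohen structure theorem then yields $\widehat{A}\cong W(\mathbb{F}')[[t_1,\ldots,t_N]]$. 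The main obstacle in this outline is matching the constants: one has to justify that the quantity $N$ extracted from Lemma~\ref{dimm} agrees, embedding by embedding, with the dimension of $R^{\mathbf{v}}_{\operatorname{crys}}[\tfrac{1}{p}]$ supplied by Kisin — without this precise matching the squeeze argument does not close and one would only obtain an inequality rather than regularity.
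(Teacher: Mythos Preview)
Your outline is essentially the paper's own proof: bound the mod-$p$ tangent space by Lemma~\ref{dimm}, bound the dimension from below via Kisin's formula for $\dim R^{\leq p}_{\operatorname{crys}}[\tfrac{1}{p}]$ at a closed point $y$ above $x$, and squeeze. One minor difference: you squeeze on $\dim A$ and conclude $A$ regular directly (hence a domain, hence $\mathbb{Z}_p$-flat, and then $p\notin\mathfrak{m}^2$ gives $A/p$ regular), whereas the paper squeezes on $\dim A/p$, deduces its regularity, and then runs a separate argument with the $p^\infty$-torsion ideal to get $\mathbb{Z}_p$-flatness. Your route is a little cleaner.

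The ``matching of constants'' you flag as the main obstacle is exactly what the paper spends its effort on, and it is resolved concretely rather than abstractly. With $C=\mathcal{O}_{\mathcal{L},x}/\mathfrak{q}$ finite flat over $\mathbb{Z}_p$, Lemma~\ref{flat} gives $K^i(\mathfrak{M}_C)$ flat over $C$, so Lemma~\ref{algebra} yields $\mathcal{G}^i(\mathfrak{M}_x)\cong \mathcal{G}^i(\mathfrak{M}_C)\otimes_C\mathbb{F}'$; combined with $\mathcal{G}^i(\mathfrak{M}_C)\otimes_C B\cong\operatorname{gr}^i D_{\operatorname{dR}}(V_B)$ from \eqref{filteredz} this gives $\dim_{\mathbb{F}'}\mathcal{G}^i(\mathfrak{M}_x)=\dim_B\operatorname{gr}^i D_{\operatorname{dR}}(V_B)$, which is precisely the identification of your $N$ with Kisin's dimension. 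Your alternative route via flatness of $\mathcal{G}^i$ on $\mathcal{L}^\circ$ and the decomposition into $\mathcal{L}^{\mathbf{v}}$ also works, but to close it you must observe that the connected scheme $\operatorname{Spec} C$ (being reduced and $\mathbb{Z}_p$-flat) maps into $\mathcal{L}^\circ$ and hits both $x$ and $y$, so they lie in the same $\mathcal{L}^{\mathbf{v}}$; without this the link between the Hodge type at $x$ and at $y$ is missing.
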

\begin{proof}
	Let $\mathfrak{p} \in \mathcal{O}_{\mathcal{L},x}[\frac{1}{p}]$ be a maximal ideal and let $\mathfrak{q}$ be its preimage in $\mathcal{O}_{\mathcal{L},x}$. Set $B$ equal to the residue field of $(\mathcal{O}_{\mathcal{L},x})_{\mathfrak{q}}$ and $C = \mathcal{O}_{\mathcal{L},x}/\mathfrak{q} \subset B$. By Lemma~\ref{commalgebra} we know $B$ is a finite extension of $\mathbb{Q}_p$ and $C$ is finite flat over $\mathbb{Z}_p$. 
	
	Let $y \in \mathcal{L}$ denote the image of $\operatorname{Spec} B \rightarrow \mathcal{L}$. Since $\mathcal{L}[\frac{1}{p}]$ is Jacobson and $B$ is finite over $\mathbb{Q}_p$, $y$ is closed in $\mathcal{L}[\frac{1}{p}]$, cf. \cite[Tag 01TB]{stacks-project}. The map $\operatorname{Spec} C \rightarrow \mathcal{L}$ corresponds to $\mathfrak{M}_C \in \mathcal{L}^{\leq p}_{\operatorname{crys}}(V_C)$ with $\mathfrak{M}_C \otimes_{C} \mathbb{F}'$ the Breuil--Kisin module corresponding to $x$. Lemma~\ref{flat} and Lemma~\ref{algebra} imply that $\mathcal{G}^i(\mathfrak{M}_C \otimes_{C}\mathbb{F}') \cong \mathcal{G}^i(\mathfrak{M}_C)  \otimes_{C} \mathbb{F}'$. If $V_B = V_C \otimes_C B$ is the representation of $G_K$ induced by $y$ then $V_B$ is crystalline and we also have $\mathcal{G}^i(\mathfrak{M}_C)  \otimes_{C} B \cong \operatorname{gr}^i(D_{\operatorname{dR}}(V_B))$, cf. \ref{filteredz}. If $\mathfrak{M}_x \in \mathcal{L}^{\leq p}_{\operatorname{crys}}(V_{\mathbb{F}})$ corresponds to $x$ then we deduce
	$$
	\operatorname{dim}_{\mathbb{F}'} \mathcal{G}^i(\mathfrak{M}_C \otimes_C \mathbb{F}') =  \operatorname{dim}_B \operatorname{gr}^i(D_{\operatorname{dR}}(V_B))
	$$
	From Proposition~\ref{quotient} we know $\mathcal{L}[\frac{1}{p}] = \operatorname{Spec} R^{\leq p}_{\operatorname{crys}}[\frac{1}{p}]$. By \cite[2.6.2]{Kis08} and \cite[3.3.8]{Kis08} the connected component of $R^{\leq p}_{\operatorname{crys}}[\frac{1}{p}]$ containing $y$ is equidimensional of dimension 
	$$
	\begin{aligned}
	d^2 + \sum_{i > 0} \operatorname{dim}_{\mathbb{F}}\operatorname{gr}^i&(D_{\operatorname{dR}}(\operatorname{Hom}(V_B,V_B)) = \\ &d^2 + \sum_{i > 0 } \sum_{n-m= i} \operatorname{dim}_{\mathbb{F}}\operatorname{gr}^n(D_{\operatorname{dR}}(V_B)) - \operatorname{dim}_{\mathbb{F}}\operatorname{gr}^m(D_{\operatorname{dR}}(V_B))
	\end{aligned}
	$$
	 Since $y$ is a closed point of $\mathcal{L}[\frac{1}{p}]$ this is the dimension of $\mathcal{O}_{\mathcal{L},y} = (\mathcal{O}_{\mathcal{L},x})_{\mathfrak{q}}$. From Lemma~\ref{dimm} we deduce
	\begin{equation*}\label{leqbutactuallly=}
	 \operatorname{dim}_{\mathbb{F}} \mathcal{O}_{\mathcal{L},x}(\mathbb{F}[\epsilon]) \leq \operatorname{dim} \mathcal{O}_{\mathcal{L},y}
	\end{equation*}
	On the other hand
	$$
	\operatorname{dim} \mathcal{O}_{\mathcal{L},y} \leq  \operatorname{dim} \mathcal{O}_{\mathcal{L},x} - 1 \leq \operatorname{dim} \mathcal{O}_{\mathcal{L},x}/p
	$$
	We've used Lemma~\ref{commalgebra} for the first inequality and \cite[Tag 00OM]{stacks-project} for the second. Hence $\mathcal{O}_{\mathcal{L},x}/p$ is regular and these two displayed inequalities are equalities.
	
	To show $\mathcal{O}_{\mathcal{L},x}$ is $\mathbb{Z}_p$-flat, note that $p$ is in the maximal ideal of $\mathcal{O}_{\mathcal{L},x}$, and so $\operatorname{dim}\mathcal{O}_{\mathcal{L},x}[\frac{1}{p}] \leq \operatorname{dim} \mathcal{O}_{\mathcal{L},x} - 1 = \operatorname{dim} \mathcal{O}_{\mathcal{L},y}$. As $\mathcal{O}_{\mathcal{L},y}$ is obtained from $\mathcal{O}_{\mathcal{L},x}[\frac{1}{p}]$ by localisation $\operatorname{dim}\mathcal{O}_{\mathcal{L},y} \leq \operatorname{dim} \mathcal{O}_{\mathcal{L},x}[\frac{1}{p}]$ and we have equality. Let $I \subset \mathcal{O}_{\mathcal{L},x}$ be the ideal of elements killed by a power of $p$. Then $\mathcal{O}_{\mathcal{L},x}[\frac{1}{p}] = (\mathcal{O}_{\mathcal{L},x}/I)[\frac{1}{p}]$ and so
	$$
	\operatorname{dim} \mathcal{O}_{\mathcal{L},x}/p = \operatorname{dim}\mathcal{O}_{\mathcal{L},x} - 1 = \operatorname{dim}\mathcal{O}_{\mathcal{L},x}[\tfrac{1}{p}] \leq \operatorname{dim} \mathcal{O}_{\mathcal{L},x}/I-1 \leq  \operatorname{dim}\mathcal{O}_{\mathcal{L},x}/(I,p)
	$$
	We conclude $\mathcal{O}_{\mathcal{L},x}/p$ and $\mathcal{O}_{\mathcal{L},x}/(I,p)$ have the same dimension. Since $\mathcal{O}_{\mathcal{L},x}/p$ is regular the image of $I$ in $\mathcal{O}_{\mathcal{L},x}/p$ must be zero, and so $I \subset p \mathcal{O}_{\mathcal{L},x}$. As such any $x \in I$ can be written as $x= py$; by the definition of $I$ we see $y \in I$ and so $I \subset \cap p^n \mathcal{O}_{\mathcal{L},x}  = 0$. We conclude $\mathcal{O}_{\mathcal{L},x}$ is $\mathbb{Z}_p$-flat. That the completion of $\mathcal{O}_{\mathcal{L},x}$ at its maximal ideal is a power series ring is then a standard consequence of the fact that $\mathcal{O}_{\mathcal{L},x}$ is $\mathbb{Z}_p$-flat and $\mathcal{O}_{\mathcal{L},x}/p$ is regular.
\end{proof}
\begin{corollary}\label{regular}
Assume that $V_{\mathbb{F}}$ is cyclotomic-free and that $K_\infty \cap K(\mu_{p^\infty}) = K$ if $p=2$. The closed subscheme of $\mathcal{L}$ defined by the ideal of $p$-power torsion sections is equal to a union of connected components of $\mathcal{L}$, and is regular. This closed subscheme therefore coincides with $\mathcal{L}^\circ$ defined in Lemma~\ref{Lcirc}.
\end{corollary}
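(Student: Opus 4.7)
The plan is as follows. Let $\mathcal{I} \subset \mathcal{O}_{\mathcal{L}}$ denote the coherent ideal sheaf of $p$-power torsion sections and write $\mathcal{L}' := V(\mathcal{I})$ for the closed subscheme in question. By construction a closed point $x$ of $\mathcal{L}$ lies in $\mathcal{L}'$ exactly when $\mathcal{I}_x \neq \mathcal{O}_{\mathcal{L},x}$, equivalently when $\mathcal{O}_{\mathcal{L},x}[\tfrac{1}{p}] \neq 0$. Cyclotomic-freeness of $V_{\mathbb{F}}$ is a condition on Jordan--H\"older factors over $\overline{\mathbb{F}}$, and so is preserved under base change to the (finite) residue field $\mathbb{F}'$ at any closed point; applying Proposition~\ref{what}(2) with $\mathfrak{Z} = \mathfrak{W} = \mathfrak{M}_x$ then yields Hypothesis~\ref{hyp}(2) for the Breuil--Kisin module $\mathfrak{M}_x$ attached to any closed point $x$ of $\mathcal{L}$. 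Hence Proposition~\ref{smooooth} applies at every closed point of $\mathcal{L}'$, giving that $\mathcal{O}_{\mathcal{L},x}$ is $\mathbb{Z}_p$-flat (so $\mathcal{I}_x = 0$) and that its completion is a power series ring over $W(\mathbb{F}')$.

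Next I would show that $\mathcal{L}' = U$, where $U := \{y \in \mathcal{L} : \mathcal{I}_y = 0\}$ is the open complement of the support of the coherent sheaf $\mathcal{I}$. The inclusion $U \subset \mathcal{L}'$ is immediate. For the reverse, any $y \in \mathcal{L}'$ specializes to some closed point $x$ of $\mathcal{L}$, since $\overline{\{y\}}$ is a non-empty closed subset of the Noetherian scheme $\mathcal{L}$; then $x \in \overline{\{y\}} \subset \mathcal{L}'$ and the first paragraph gives $\mathcal{I}_x = 0$. As $\mathcal{I}_y$ is a localization of $\mathcal{I}_x$ we conclude $\mathcal{I}_y = 0$, i.e.\ $y \in U$. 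Thus $\mathcal{L}' = U$ is simultaneously open and closed in $\mathcal{L}$, so is a union of connected components.

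Finally, regularity can be tested on completions, so $\mathcal{O}_{\mathcal{L}',x} = \mathcal{O}_{\mathcal{L},x}$ is regular at every closed point $x$ of $\mathcal{L}'$ by Proposition~\ref{smooooth}. Any other point of $\mathcal{L}'$ specializes to such an $x$ by the previous argument, and localizations of regular local rings stay regular, so $\mathcal{L}'$ is regular throughout. In particular it is reduced, so the nilpotent sections on $\mathcal{L}'$ are already zero; as $\mathcal{L}^\circ$ is obtained from $\mathcal{L}$ by killing sections that are either nilpotent or $p$-power torsion, this forces $\mathcal{L}^\circ = \mathcal{L}'$. The only subtlety in this argument lies in checking that the hypothesis of Proposition~\ref{smooooth} holds at every closed point of $\mathcal{L}'$, and this is exactly where the cyclotomic-freeness assumption enters, via its invariance under the residue-field extension $\mathbb{F} \to \mathbb{F}'$.
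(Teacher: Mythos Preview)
Your proof is correct and follows essentially the same line as the paper's: apply Proposition~\ref{smooooth} at each closed point where $\mathcal{O}_{\mathcal{L},x}[\tfrac{1}{p}] \neq 0$ (using cyclotomic-freeness to verify Hypothesis~\ref{hyp}(2)), deduce $\mathbb{Z}_p$-flatness and regularity there, and conclude that the $p$-power-torsion-free locus is open, closed, and regular. The only cosmetic difference is that the paper cites openness of the flat locus (\cite[Tag~00RC]{stacks-project}) to get openness, whereas you argue directly via the support of the coherent ideal $\mathcal{I}$ and specialization to closed points; both arguments are equivalent here.
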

\begin{proof}
Since $V_{\mathbb{F}}$ is cyclotomic-free, Proposition~\ref{smooooth} implies that a closed point $x \in \mathcal{L}$ is contained in $\mathcal{L}^\circ$ if and only if $\mathcal{O}_{\mathcal{L},x}$ is $\mathbb{Z}_p$-flat. Further, if this is the case then $\mathcal{O}_{\mathcal{L},x}$ is regular. Flatness implies $\mathcal{L}^\circ \subset \mathcal{L}$ is open, cf. \cite[Tag 00RC]{stacks-project}. We see $\mathcal{L}^\circ$ is regular as it is regular at closed points.
\end{proof}

\section{Applications to deformation rings}\label{applicationstodefrings}
\subsection{Potential diagonalisability}

\begin{sub}
	Now consider a finite extension $E$ of $\mathbb{Q}_p$, with residue field $\mathbb{F}$, and a finite free $\mathcal{O}_E$-module $V$ equipped with a continuous $\mathcal{O}_E$-linear action of $G_K$. Assume that $V[\frac{1}{p}]$ is crystalline of $p$-adic Hodge type $\mathbf{v}$. Let $V_{\mathbb{F}} = V \otimes_{\mathcal{O}_{E}} \mathbb{F}$ and let $R^{\mathbf{v}}_{\operatorname{crys}}$ denote the quotient of $R^{\square}_{V_{\mathbb{F}}}$ from Proposition~\ref{crysdef} (or more generally the quotient defined in \cite{Kis08} if $\mathbf{v}$ is not concentrated in degrees $[0,p]$). If $\xi$ is an $\mathcal{O}_E$-basis of $V$, we say $(V,\xi)$ is diagonalisable if the corresponding point of $R^{\mathbf{v}}_{\operatorname{crys}}$ lies on the same irreducible component\footnote{Since $R^{\mathbf{v}}_{\operatorname{crys}}$ is $\mathbb{Z}_p$-flat, this is equivalent to asking that the image of their generic fibres lie on the same irreducible component of $R^{\mathbf{v}}_{\operatorname{crys}}[\frac{1}{p}]$.} of $R^{\mathbf{v}}_{\operatorname{crys}}$ as an $\mathcal{O}_{\overline{E}}$-valued point (here $\mathcal{O}_{\overline{E}}$ denotes the ring of integers in an algebraic closure of $E$) whose corresponding representation is a direct sum of crystalline characters. Say $V$ is potentially diagonalisable if $V|_{G_L}$ is diagonalisable for some finite extension $L$ of $K$. These notions were introduced in \cite[1.4]{BLGGT}.
\end{sub}

\begin{lemma}\label{potdiagfacts}
	\begin{enumerate}
		\item Whether or not $V$ is potentially diagonalisable is independent of the choice of $\xi$.
		\item If  $V'$ is a $G_K$-stable $\mathcal{O}_E$-lattice inside $V[\frac{1}{p}]$ then $V$ is potentially diagonalisable if and only if $V'$ is.
		\item If $V[\frac{1}{p}]$ admits a $G_K$-stable filtration then $V$ is potentially diagonalisable if and only if each graded piece is potentially diagonalisable. In particular this is the case if each graded piece is one-dimensional.
	\end{enumerate} 
\end{lemma}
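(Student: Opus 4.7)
For (1), my plan is to observe that changing the basis $\xi \rightsquigarrow \xi'$ is implemented by an element of the group $\Gamma$ of $\mathcal{O}_E$-automorphisms of $V$ reducing to the identity on $V_{\mathbb{F}}$, which acts on $R^{\square}_{V_{\mathbb{F}}}$ by conjugation and preserves the crystalline quotient $R^{\mathbf{v}}_{\operatorname{crys}}$ (since being crystalline of fixed Hodge type is intrinsic to the representation, not to the chosen framing). The $\Gamma$-orbit of any $\mathcal{O}_{\overline{E}}$-point is the image of a formally smooth formal scheme over $\mathcal{O}_{\overline{E}}$, hence lies inside a single irreducible component of $R^{\mathbf{v}}_{\operatorname{crys}}$. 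This applies equally to the point coming from $(V,\xi)$ and to any direct sum of crystalline characters, so potential diagonalisability does not depend on $\xi$.

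For (2)---which I expect to be the main obstacle---I would first reduce to the case of \emph{neighbouring} lattices $V \subset V'$ with $V'/V$ of length one over $\mathcal{O}_E$, since any two $G_K$-stable lattices in $V[\frac{1}{p}]$ can be joined by a finite chain of such neighbours. The key step is then to pass to a finite Galois extension $L/K$ large enough that the image of $G_L$ on $V'_{\mathbb{F}}$ is a pro-$p$-group: over such $L$, the residual representations $V_{\mathbb{F}}|_{G_L}$ and $V'_{\mathbb{F}}|_{G_L}$ become conjugate in $GL_d(\overline{\mathbb{F}})$, so both lattices yield $\mathcal{O}_{\overline{E}}$-points of a common crystalline deformation ring. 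These points coincide after inverting $p$, both corresponding to the representation $V[\frac{1}{p}]|_{G_L}$, and since each irreducible component of a $\mathbb{Z}_p$-flat reduced crystalline deformation ring is determined by its generic fibre, the two points lie on the same component. Comparing irreducible components across \emph{different} residual representations is the delicate technical heart of the argument in \cite[Section 1.4]{BLGGT}.

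For (3), the ``only if'' direction follows from (2) together with the observation that sub- and quotient representations of a diagonalisable representation are themselves diagonalisable. For the ``if'' direction, given potentially diagonalisable graded pieces $W_1,\ldots,W_n$ of a $G_K$-equivariant filtration of $V[\frac{1}{p}]$, I would restrict to a large enough $L/K$ making each $W_i|_{G_L}$ diagonalisable. Choosing compatible $\mathcal{O}_E[G_L]$-lattices in the $W_i$, the extension classes defining $V|_{G_L}$ in terms of its associated graded may be scaled by a parameter $t \in \mathbb{A}^1$, producing a connected family of crystalline lattices interpolating between $V|_{G_L}$ at $t=1$ and $\bigoplus_i W_i|_{G_L}$ at $t=0$. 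All fibres lie on a single irreducible component of the relevant crystalline deformation ring, and each $W_i|_{G_L}$ can in turn be deformed along its own component to a direct sum of crystalline characters; so $V|_{G_L}$ lies on the same component as such a direct sum. The case of one-dimensional graded pieces is then immediate, since every crystalline character is trivially diagonalisable.
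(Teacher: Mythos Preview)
The paper gives no argument of its own here; it cites \cite[1.4.1]{BLGGT} for (1) and (2) and \cite[2.1.2]{GL14} for (3). Your sketch for (1) is correct and standard.

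Your argument for (2) contains a genuine error. The claim that, for neighbouring lattices $V \subset V'$, passing to $L$ with pro-$p$ residual image forces $V_{\mathbb{F}}|_{G_L} \cong V'_{\mathbb{F}}|_{G_L}$ is false. Take $K = \mathbb{Q}_p$, let $G_K$ act on $E^2$ through the unramified quotient via $\mathrm{Frob} \mapsto \left(\begin{smallmatrix} 1 & 1 \\ 0 & 1 \end{smallmatrix}\right)$, and set $V = \mathcal{O}_E^2$ and $V' = \pi^{-1}\mathcal{O}_E e_1 \oplus \mathcal{O}_E e_2$. These are neighbouring $G_K$-stable lattices and the image of $G_K$ is already pro-$p$, yet $V_{\mathbb{F}}$ is a nonsplit self-extension of the trivial character while $V'_{\mathbb{F}}$ is trivial. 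The argument in \cite{BLGGT} is simpler and needs no chain of neighbouring lattices: choose a single finite $L/K$ over which \emph{both} $V_{\mathbb{F}}$ and $V'_{\mathbb{F}}$ become \emph{trivial} (possible since each has finite image); then $V|_{G_L}$ and $V'|_{G_L}$ are deformations of the same residual representation with the same image in $\operatorname{Spec} R^{\mathbf{v}}_{\operatorname{crys}}[\tfrac{1}{p}]$, and hence lie on the same component since this ring is $\mathbb{Z}_p$-flat and reduced with regular generic fibre.

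For (3), your ``if'' direction via scaling the extension classes is essentially the argument in \cite{GL14}. But your ``only if'' direction rests entirely on the unjustified assertion that subs and quotients of a diagonalisable representation are diagonalisable. This is not obvious: lying on the same component as $\bigoplus_i \chi_i$ does not single out a subset of the $\chi_i$ corresponding to a given sub $W$, nor produce a deformation from $W$ to any partial sum. One can use the scaling argument to reduce to showing that $W \oplus Z$ potentially diagonalisable implies each of $W$ and $Z$ is, but that cancellation step still needs an argument and you have not supplied one.
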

\begin{proof}
	Both (1) and (2) follow from \cite[1.4.1]{BLGGT}. For (3) we refer to \cite[2.1.2]{GL14}.
\end{proof}

We now prove the theorem from the introduction. Recall the definition of strongly cyclotomic-free is given in Definition~\ref{cyclofree}.

\begin{theorem}\label{potdiag}
	Assume $\mathbb{F}$ is sufficiently large and that the $p$-adic Hodge-type $\mathbf{v}$ is concentrated in degree $[0,p]$. If $V_\mathbb{F}$ is strongly cyclotomic-free then $V$ is potentially diagonalisable.
\end{theorem}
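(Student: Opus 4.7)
The plan is to combine the regularity of $\mathcal{L}^\circ$ from Corollary~\ref{regular} with a deformation argument on the special fibre connecting $V$ to a direct sum of crystalline characters. First I would reduce to a favourable residual: using Lemma~\ref{potdiagfacts}(2) to change the lattice inside $V[\tfrac{1}{p}]$ and passing to $V|_{G_L}$ for a sufficiently large unramified $L/K$ (using~\ref{unique} together with Definition~\ref{cyclofree}(2) to preserve strong cyclotomic-freeness), we may assume $V_{\mathbb{F}} \cong \bigoplus_i \chi_i$ is a direct sum of characters. Then $\mathfrak{M}_V$ (Remark~\ref{points}(2)) gives an $\mathcal{O}_E$-point $x$ of $\mathcal{L}^{\mathbf{v}}$, where $\mathcal{L} := \mathcal{L}^{\leq p}_{R^\square_{V_{\mathbb{F}}}, \operatorname{crys}}$; by Corollary~\ref{regular}, $\mathcal{L}^\circ$ is regular so each connected component of $\mathcal{L}^{\mathbf{v}}$ is irreducible, and Proposition~\ref{crysdef}(1) identifies these components (after inverting $p$) with the irreducible components of $R^{\mathbf{v}}_{\operatorname{crys}}[\tfrac{1}{p}]$. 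It thus suffices to produce, on the connected component of $\mathcal{L}^{\mathbf{v}}$ containing $x$, an $\mathcal{O}_{\overline{E}}$-point whose associated Galois representation is a direct sum of crystalline characters.

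To construct such a diagonal point, I would work in the special fibre. Let $\overline{\mathfrak{M}} := \mathfrak{M}_V \otimes_{\mathcal{O}_E} \mathbb{F}$ correspond to a closed point $\bar{x}$. By Proposition~\ref{inclusion}, $\overline{\mathfrak{M}}$ is strongly divisible; iterating the short-exact-sequence analysis of~\ref{ext}--\ref{gal} presents $\overline{\mathfrak{M}}$ as a successive extension of the rank-one Breuil--Kisin modules $\mathfrak{N}_i$ of the $\chi_i$ (described explicitly as in~\ref{explicitrankone}) by extension data $(f, f_\sigma)$ satisfying Propositions~\ref{SDexact}(2) and~\ref{Galexact}(2). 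Since both conditions are linear in $(f, f_\sigma)$, the rescaled family $t \cdot (f, f_\sigma)$ for $t \in \mathbb{A}^1$ defines a morphism $\mathbb{A}^1 \to \mathcal{L}^{\leq p}_{\operatorname{crys}}(V_{\mathbb{F}})$ specialising to $\overline{\mathfrak{M}}$ at $t=1$ and to the split object $\mathfrak{M}^{\operatorname{diag}} := \bigoplus_i \mathfrak{N}_i$ at $t=0$. Hence $\bar{x}$ and the point $\bar{x}^{\operatorname{diag}}$ corresponding to $\mathfrak{M}^{\operatorname{diag}}$ lie on the same connected component of $\mathcal{L}^{\mathbf{v}}$; since each $\mathfrak{N}_i$ lifts uniquely to the Breuil--Kisin module of a crystalline character $\widetilde{\chi}_i$, the direct sum $\bigoplus_i \widetilde{\chi}_i$ gives the desired diagonal $\mathcal{O}_E$-point of $\mathcal{L}^{\mathbf{v}}$ on the connected component of $x$, proving that $V$ is diagonalisable.

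The main obstacle is making the $\mathbb{A}^1$-family precise: rescaling the Galois cocycle $f_\sigma$ varies the induced $G_K$-action on $\overline{\mathfrak{M}} \otimes_{k[[u]]} C^\flat$, and one must check that the resulting mod-$p$ representation stays framed-isomorphic to $V_{\mathbb{F}}$ throughout so that the rescaled pair gives a genuine $\mathbb{A}^1$-valued point of $\operatorname{Spec} R^\square_{V_{\mathbb{F}}}$. In the semisimple setting this ought to follow because rescaling merely scales an extension class in $\operatorname{Ext}^1_{G_K}(\chi_i, \chi_j)$ (giving representations isomorphic to $V_{\mathbb{F}}$ for each $t$, with $t = 0$ recovering $V_{\mathbb{F}}$ as the trivial extension of the $\chi_i$), and the cyclotomic-free hypothesis via Hypothesis~\ref{hyp} and Lemma~\ref{end} ensures the Breuil--Kisin datum uniquely determines the framed lift. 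The first reduction step (producing a lattice with semisimple residual $V_{\mathbb{F}}$) will also require care, but can be approached by a standard lattice-chain argument inside $V[\tfrac{1}{p}]$ since strong cyclotomic-freeness depends only on the Jordan--Hölder factors of $V_{\mathbb{F}}$ and is thus preserved by any lattice change.
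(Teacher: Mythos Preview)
Your first reduction step --- replacing the lattice so that $V_{\mathbb{F}}$ becomes a direct sum of characters --- is a genuine gap, and the argument cannot be repaired at this point. Given an irreducible $E$-representation with reducible semisimplification $\bar\chi_1\oplus\bar\chi_2$, the $G_K$-stable lattices (up to homothety) form a finite segment, and it is easy to write down examples where every lattice in this segment has non-split reduction. Concretely, on the free group on $\sigma,\tau$ take $\rho(\sigma)=\left(\begin{smallmatrix}1&1\\0&2\end{smallmatrix}\right)$ and $\rho(\tau)=\left(\begin{smallmatrix}1&0\\\pi&2\end{smallmatrix}\right)$ (with $p$ odd); one checks that the only two stable lattices give non-split extensions in opposite directions. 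The same phenomenon occurs for $G_K$. So Lemma~\ref{potdiagfacts}(2) does not let you assume $V_{\mathbb{F}}$ is semisimple, and without that your $\mathbb{A}^1$-rescaling cannot stay inside $\mathcal{L}^{\leq p}_{\operatorname{crys}}(V_{\mathbb{F}})$: rescaling $(f,f_\sigma)$ then genuinely changes the isomorphism class of the underlying $G_K$-module.

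The paper sidesteps both this reduction and the $\mathbb{A}^1$-family entirely. It passes to an unramified extension so that every Jordan--H\"older factor of $V_{\mathbb{F}}$ is one-dimensional (no semisimplicity needed), and then, rather than moving inside the closed fibre towards a diagonal Breuil--Kisin module, it lifts the \emph{same} closed point $\overline{\mathfrak{M}}$ to a second crystalline $\mathcal{O}_E$-point $V'$ which is built as a successive extension of crystalline characters. The construction (Lemma~\ref{liftingext}, Corollary~\ref{lift}) uses the formal smoothness of $\widehat{\mathcal{O}}_{\mathcal{L},x}$ from Proposition~\ref{smooooth} to lift each extension class one step at a time. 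Since $V$ and $V'$ now share the closed point $\overline{\mathfrak{M}}$ they lie on the same connected, hence irreducible, component of $\mathcal{L}^\circ$, and $V'$ is potentially diagonalisable by Lemma~\ref{potdiagfacts}(3). Note that the paper's $V'$ need not be a direct sum of characters --- only an iterated extension of them --- which is exactly what allows the argument to go through without any hypothesis on the extension structure of $V_{\mathbb{F}}$.
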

\begin{proof}
	First, if $p=2$ then we choose our compatible system of $p$-th-power roots of a uniformiser of $K$ so that $K_\infty \cap K(\mu_{p^\infty}) = K$. This can always be done, cf. \cite[Lemma 2]{Wang17}. As is $V_{\mathbb{F}}$ strongly cyclotomic-free, $V_{\mathbb{F}}|_{G_L}$ is strongly cyclotomic-free for any finite unramified extension $L/K$. As $\mathbb{F}$ is sufficiently large we may therefore assume each Jordan--Holder factor of $V_{\mathbb{F}}$ is one-dimensional. Under these assumptions we claim there exists another deformation $V'$ of $V_{\mathbb{F}}$ such that:
	\begin{itemize}
		\item $V'[\frac{1}{p}]$ is crystalline with Hodge--Tate weights in $[0,p]$.
		\item Every Jordan--Holder factor of $V'[\frac{1}{p}]$ is one-dimensional.
		\item If $\mathfrak{M}$ and $\mathfrak{M}'$ are the Breuil--Kisin modules associated to $V$ and $V'$ respectively, then $\mathfrak{M} \otimes_{\mathcal{O}_{E}} \mathbb{F} = \mathfrak{M}' \otimes_{\mathcal{O}_{E}} \mathbb{F}$ in $\mathcal{L}^{\leq p}_{\operatorname{crys}}(V_{\mathbb{F}})$.
	\end{itemize}  
	Such a $V'$ is constructed below, cf. Corollary~\ref{lift} below.
	
	Assuming for now that $V'$ can be constructed we now explain how this implies potential diagonalisability of $V$. Let $\mathcal{L} = \mathcal{L}^{\leq p}_{R,\operatorname{crys}}$ and $\mathcal{L}^\circ \subset \mathcal{L}$ be the $\mathbb{Z}_p$-flat locus from Corollary~\ref{regular}. Then it follows that $\mathfrak{M}$ and $\mathfrak{M}'$ induce $\mathcal{O}_E$-valued points of $\mathcal{L}^{\circ}$ (cf. (2) of Remark~\ref{points}). The image of the closed point in $\operatorname{Spec}\mathcal{O}_E$ under these two maps coincide, and so both $\mathcal{O}_E$-points lie on the same connected component of $\mathcal{L}^{\circ}$. By Corollary~\ref{regular}, $\mathcal{L}^\circ$ is normal, and so these points lie on the same irreducible component. Hence their images in $R^{\mathbf{v}}_{\operatorname{crys}}$ lie in the same irreducible component. By (3) of Lemma~\ref{potdiagfacts} we know $V'$ is potentially diagonalisable, and so $V$ is potentially diagonalisable also.
\end{proof}

To complete proof of Theorem~\ref{potdiag} we must construct a $V'$ as above. Thus we make the following definition:
\begin{definition}\label{crystlift}
	Let $\mathfrak{M}_{\mathbb{F}} \in \mathcal{L}^{\leq p}(V_{\mathbb{F}})$. We say $\mathfrak{M}_{\mathbb{F}}$ admits a crystalline lift if there exists a finite extension $E/\mathbb{Q}_p$ with residue field $\mathbb{F}'$ containing $\mathbb{F}$ and a finite free $\mathcal{O}_E$-module $V$ equipped with a continuous $\mathcal{O}_E$-linear action of $G_K$ so that (i) $V[\frac{1}{p}]$ is crystalline with Hodge--Tate weights in $[0,p]$, (ii) $V \otimes_{\mathcal{O}_{E}} \mathbb{F}' = V_{\mathbb{F}} \otimes_{\mathbb{F}} \mathbb{F}'$, and (iii) if $\mathfrak{M}$ denotes the Breuil--Kisin module associated to $V$ then $\mathfrak{M} \otimes_{\mathcal{O}_{E}} \mathbb{F}' = \mathfrak{M}_{\mathbb{F}} \otimes_{\mathbb{F}} \mathbb{F}'$ in $\mathcal{L}^{\leq p}(V_{\mathbb{F}'})$
\end{definition}
 
 For the next lemma consider a $G_{K_\infty}$-equivariant exact sequence $0 \rightarrow W_{\mathbb{F}} \rightarrow V_{\mathbb{F}} \rightarrow Z_{\mathbb{F}} \rightarrow 0$ of finite dimensional $\mathbb{F}$-vector spaces. If $\mathfrak{M}_{\mathbb{F}} \in \mathcal{L}^{\leq p}(V_{\mathbb{F}})$ then Lemma~\ref{exactO} produces an exact sequence $0 \rightarrow \mathfrak{W}_{\mathbb{F}} \rightarrow \mathfrak{M}_{\mathbb{F}} \rightarrow \mathfrak{Z}_{\mathbb{F}} \rightarrow 0$. 
 \begin{lemma}\label{liftingext}
 	Assume that $\mathfrak{W} \in \mathcal{L}^{\leq p}_{\operatorname{crys}}(W)$ and $\mathfrak{Z} \in \mathcal{L}^{\leq p}_{\operatorname{crys}}(Z)$ are crystalline lifts of $W_{\mathbb{F}}$ and $Z_{\mathbb{F}}$ respectively. Assume that $\mathfrak{W}_{\mathbb{F}} \oplus \mathfrak{Z}_{\mathbb{F}}$ satisfies (2) of Hypothesis~\ref{hyp}, and that $\mathfrak{M}_{\mathbb{F}}$ from the previous paragraph is strongly divisible. Then there exists a crystalline lift $\mathfrak{M} \in \mathcal{L}^{\leq p}_{\operatorname{crys}}(V)$ of $\mathfrak{M}_{\mathbb{F}}$ such that
 	\begin{enumerate}
 		\item $\mathfrak{M}$ fits into a $\varphi$-equivariant exact sequence $0 \rightarrow \mathfrak{W} \rightarrow \mathfrak{M} \rightarrow \mathfrak{Z} \rightarrow 0$ of $\mathfrak{S}_{\mathcal{O}_E}$-modules which recovers $0 \rightarrow \mathfrak{W}_{\mathbb{F}} \rightarrow \mathfrak{M}_{\mathbb{F}} \rightarrow \mathfrak{Z}_{\mathbb{F}} \rightarrow 0$ after applying basechanging to $\mathbb{F}'$.
 		\item $V$ fits into a $G_{K}$-equivariant exact sequence $0 \rightarrow W \rightarrow V \rightarrow Z \rightarrow 0$ which recovers $0 \rightarrow \mathfrak{W} \rightarrow \mathfrak{M} \rightarrow \mathfrak{Z} \rightarrow 0$ after basechanging to $W(C^\flat)$.
 	\end{enumerate}
 \end{lemma}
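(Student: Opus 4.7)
The strategy is to construct $\mathfrak{M}$ and $V$ in two stages: first assemble $\mathfrak{M}$ as a Breuil--Kisin module extension of $\mathfrak{Z}$ by $\mathfrak{W}$ over $\mathfrak{S}_{\mathcal{O}_E}$ lifting the given mod-$p$ extension, then produce a compatible $G_K$-action on the associated étale $\varphi$-module satisfying the crystalline condition of Theorem~\ref{GLS}.

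For the first stage, I would use Proposition~\ref{SDexact}(2), together with the strong divisibility of $\mathfrak{M}_{\mathbb{F}}$, to choose a splitting of $0 \to \mathfrak{W}_{\mathbb{F}} \to \mathfrak{M}_{\mathbb{F}} \to \mathfrak{Z}_{\mathbb{F}} \to 0$ for which the mod-$p$ Frobenius has the form $(\varphi_{\mathfrak{W}_{\mathbb{F}}} + \bar{f} \circ \varphi_{\mathfrak{Z}_{\mathbb{F}}}, \varphi_{\mathfrak{Z}_{\mathbb{F}}})$ with $\bar{f}$ already integral, i.e. in $\operatorname{Hom}(\mathfrak{Z}_{\mathbb{F}}, \mathfrak{W}_{\mathbb{F}})$ (as opposed to the generic $u^{-p}\operatorname{Hom}$). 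Lift $\bar{f}$ to an arbitrary $f \in \operatorname{Hom}(\mathfrak{Z}, \mathfrak{W})$ and define $\mathfrak{M} = \mathfrak{W} \oplus \mathfrak{Z}$ with Frobenius $(\varphi_{\mathfrak{W}} + f \circ \varphi_{\mathfrak{Z}}, \varphi_{\mathfrak{Z}})$. Integrality of $f$ together with the $E$-heights of $\mathfrak{W}$ and $\mathfrak{Z}$ yield $E(u)^p \mathfrak{M} \subset \mathfrak{M}^\varphi$, so $\mathfrak{M}$ has $E$-height $\leq p$. Its reduction mod $p$ recovers $\mathfrak{M}_{\mathbb{F}} \otimes_{\mathbb{F}} \mathbb{F}'$, and its associated étale $\varphi$-module defines a $G_{K_\infty}$-representation $V$ over $\mathcal{O}_E$ that is by construction an extension of $Z|_{G_{K_\infty}}$ by $W|_{G_{K_\infty}}$.

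For the second stage, extending the $G_{K_\infty}$-action on $V$ to a $G_K$-action making $0 \to W \to V \to Z \to 0$ exact of $G_K$-representations and placing $\mathfrak{M}$ in $\mathcal{L}^{\leq p}_{\operatorname{crys}}(V)$ amounts, as in \ref{gal}, to producing a continuous $1$-cocycle $\sigma \mapsto F_\sigma$ valued in $\operatorname{Hom}(\mathfrak{Z}, \mathfrak{W}) \otimes_{\mathfrak{S}} [\pi^\flat]\varphi^{-1}(\mu) A_{\operatorname{inf}}$, vanishing on $G_{K_\infty}$ and satisfying $(\varphi-1)(F_\sigma) = (\sigma-1)(f)$ in $\operatorname{Hom}(\mathfrak{Z}, \mathfrak{W}) \otimes W(C^\flat)$. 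The crystallinity of $\mathfrak{W}$ and $\mathfrak{Z}$ guarantees that $(\sigma-1)(f)$ already lies in the target crystallinity ideal, so the equation is consistent. I would construct $(F_\sigma)$ by successive approximation modulo $p^n$: the inductive step reduces to solving the mod-$p$ version of the same equation for the next coefficient. Uniqueness of the solution modulo the kernel of $\varphi-1$ is precisely what Hypothesis~\ref{hyp}(2), applied to $\mathfrak{W}_{\mathbb{F}} \oplus \mathfrak{Z}_{\mathbb{F}}$ and fed through Lemma~\ref{end}, provides, and this uniqueness is what forces the family $\{F_\sigma\}$ to automatically satisfy the cocycle identity and $G_{K_\infty}$-triviality as $\sigma$ varies.

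The main obstacle will be the mod-$p$ existence step: showing that $(\sigma-1)(\bar{f})$ lies in the image of $\varphi-1$ acting on $\operatorname{Hom}(\mathfrak{Z}_{\mathbb{F}}, \mathfrak{W}_{\mathbb{F}}) \otimes u^{p/(p-1)}\mathcal{O}_{C^\flat}$, in a manner consistent in $\sigma$. This is essentially the reverse direction of the calculation performed inside the claim in the proof of Proposition~\ref{inclusion}; once that mod-$p$ analysis is in place, lifting from mod $p^n$ to mod $p^{n+1}$ follows the same pattern, passage to the limit yields a continuous cocycle, and Theorem~\ref{GLS} then gives that $V[\tfrac{1}{p}]$ is crystalline with Hodge--Tate weights in $[0,p]$ and that $\mathfrak{M}$ is its Breuil--Kisin module.
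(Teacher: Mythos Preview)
Your approach is genuinely different from the paper's, and the gap you flag as ``the main obstacle'' is real and not filled by what you have written.

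The paper's proof is geometric. It works over the framed deformation ring $R = R^\square_{W_{\mathbb{F}}\oplus Z_{\mathbb{F}}}$ and its moduli space $\mathcal{L} = \mathcal{L}^{\leq p}_{R,\operatorname{crys}}$. The direct sum $\mathfrak{W}_{\mathbb{F}}\oplus\mathfrak{Z}_{\mathbb{F}}$ is a closed point $x$ of $\mathcal{L}$, and $\mathfrak{W}\oplus\mathfrak{Z}$ is an $\mathcal{O}_E$-point $y$ through $x$. The extension $\mathfrak{M}_{\mathbb{F}}$ (viewed as a self-extension of $\mathfrak{W}_{\mathbb{F}}\oplus\mathfrak{Z}_{\mathbb{F}}$) is a class in $\operatorname{Ext}^1_{\operatorname{SD}}$; Proposition~\ref{smooooth} shows both that the completed local ring $\widehat{\mathcal{O}}_{\mathcal{L},x}$ is a power series ring and that the map \eqref{map} from the tangent space of $\mathcal{L}$ at $x$ to $\operatorname{Ext}^1_{\operatorname{SD}}$ is \emph{surjective}. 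One then lifts the resulting tangent vector $x'\colon\operatorname{Spec}\mathbb{F}[\epsilon]\to\mathcal{L}$ to $\operatorname{Spec}\mathcal{O}_E[\epsilon]\to\mathcal{L}$ using formal smoothness, and extracts $\mathfrak{M}$ and $V$ from the induced self-extension of $\mathfrak{W}\oplus\mathfrak{Z}$. The crystallinity of $V$ and the correct $G_K$-structure come for free because everything happens inside $\mathcal{L}^{\leq p}_{\operatorname{crys}}$.

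Your direct construction runs into exactly the existence question that the paper's surjectivity statement answers. You need, for each $\sigma$, an $F_\sigma$ in the crystallinity ideal with $(\varphi-1)(F_\sigma)=(\sigma-1)(f)$; the footnote after Proposition~\ref{inclusion} shows that strong divisibility alone does not force such an $F_\sigma$ to exist, so your appeal to ``the reverse direction'' of the claim there is not enough. In the paper, this existence is obtained \emph{indirectly}: the inequality of Lemma~\ref{dimm} becomes an equality in the proof of Proposition~\ref{smooooth} by comparison with Kisin's computation of $\dim R^{\leq p}_{\operatorname{crys}}[\tfrac{1}{p}]$, and that equality is precisely the surjectivity of the tangent map onto $\operatorname{Ext}^1_{\operatorname{SD}}$. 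In other words, the paper packages the hard existence step into a dimension count against the generic fibre; your successive-approximation plan would have to reproduce that existence by hand, and you have not indicated how to do so.
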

In particular if both $Z_{\mathbb{F}}$ and $W_{\mathbb{F}}$ are cyclotomic-free then $Z_{\mathbb{F}} \oplus W_{\mathbb{F}}$ is cyclotomic-free also, and so in this case (1) of Hypothesis~\ref{hyp} is satisfied for $\mathfrak{W}_{\mathbb{F}} \oplus \mathfrak{Z}_{\mathbb{F}}$. We remark also that a similar result is proven \cite[5.3.1]{B18b} but with a different notion of cyclotomic-free. 
\begin{proof}
	The element $\mathfrak{W}_{\mathbb{F}} \oplus \mathfrak{Z}_{\mathbb{F}} \in \mathcal{L}^{\leq p}_{\operatorname{crys}}(W_{\mathbb{F}} \oplus Z_{\mathbb{F}})$ defines a closed point $x$ of the $R = R^\square_{W_{\mathbb{F}} \oplus Z_{\mathbb{F}}}$-scheme  $\mathcal{L} = \mathcal{L}^{\leq p}_{R,\operatorname{crys}}$ from Corollary~\ref{formal}. Also $\mathfrak{W} \oplus \mathfrak{Z}$ defines an $\mathcal{O}_E$-valued point $y$ of $\mathcal{L}$ through which $x$ factors.
	
	View the extension $0 \rightarrow \mathfrak{W}_{\mathbb{F}} \rightarrow \mathfrak{M}_{\mathbb{F}} \rightarrow \mathfrak{Z}_{\mathbb{F}} \rightarrow 0$ as an extension of $\mathfrak{W}_{\mathbb{F}} \oplus \mathfrak{Z}_{\mathbb{F}}$ by itself. By assumption $\mathfrak{M}_{\mathbb{F}}$ is strongly divisible so we obtain an element of the set $\operatorname{Ext}^1_{\operatorname{SD}}(\mathfrak{W}_{\mathbb{F}} \oplus \mathfrak{Z}_{\mathbb{F}},\mathfrak{W}_{\mathbb{F}} \oplus \mathfrak{Z}_{\mathbb{F}})$ described in the proof of Lemma~\ref{dimm}. It follows from the proof of Proposition~\ref{smooooth} that the map $\mathcal{O}_{\mathcal{L},x}(\mathbb{F}[\epsilon]) \rightarrow \operatorname{Ext}^1_{\operatorname{SD}}(\mathfrak{W}_{\mathbb{F}}\oplus \mathfrak{Z}_{\mathbb{F}},\mathfrak{W}_{\mathbb{F}} \oplus \mathfrak{Z}_{\mathbb{F}})$ in \eqref{map} is surjective. Thus there is a tangent vector $x'\colon \operatorname{Spec}\mathbb{F}[\epsilon] \rightarrow \mathcal{L}$ mapping onto this extension class. Since the completion of $\mathcal{O}_{\mathcal{L},x}$ is a power series ring over $W(\mathbb{F})$ the point $y$ factors through a morphism $\operatorname{Spec}\mathcal{O}_E[\epsilon] \rightarrow \mathcal{L}$ lifting $x'$ (here $\mathcal{O}_E[\epsilon]$ denotes the ring of dual numbers over $\mathcal{O}_E$). This morphism induces an extension $0 \rightarrow W \oplus Z \rightarrow V' \rightarrow W \oplus Z \rightarrow 0$ as well as an $\mathfrak{M}' \in \mathcal{L}^{\leq p}_{\operatorname{crys}}(V')$ fitting into an exact sequence
	$$
	0 \rightarrow \mathfrak{W} \oplus \mathfrak{Z} \rightarrow \mathfrak{M}' \rightarrow \mathfrak{W} \oplus \mathfrak{Z} \rightarrow 0
	$$
	From this we obtain the representation $V$ and $\mathfrak{M} \in \mathcal{L}^{\leq p}_{\operatorname{crys}}(V)$ as desired.
\end{proof}

\begin{corollary}\label{lift}
	Suppose $V_{\mathbb{F}}$ is cyclotomic-free and every Jordan--Holder factor is one-dimensional. Let $\mathfrak{M}_{\mathbb{F}} \in \mathcal{L}^{\leq p}_{\operatorname{SD}}(V_{\mathbb{F}})$. Then $\mathfrak{M}_{\mathbb{F}}$ admits a crystalline lift $V$ so that every Jordan--Holder factor of $V[\frac{1}{p}]$ is one-dimensional.
\end{corollary}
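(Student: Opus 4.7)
My approach is to induct on $\dim_{\mathbb{F}} V_{\mathbb{F}}$. For the base case, $\dim V_{\mathbb{F}} = 1$, I would use the explicit description of rank-one strongly divisible Breuil--Kisin modules from \ref{explicitrankone}: after enlarging $\mathbb{F}$ and applying an unramified twist as permitted by Lemma~\ref{unramtwist!}, we may write $\mathfrak{M}_{\mathbb{F}}$ with an $\mathbb{F}[[u]]$-basis $(e_\theta)_{\theta \in \operatorname{Hom}_{\mathbb{F}_p}(k,\mathbb{F})}$ and $\varphi(e_{\theta \circ \varphi}) = u^{r_\theta} e_\theta$ for integers $r_\theta \in [0,p]$. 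Since $K/\mathbb{Q}_p$ is unramified the uniformiser satisfies $v_p(\pi) = 1$, so $E(u) = u - \pi \equiv u \pmod{p}$. Replacing $u^{r_\theta}$ by $E(u)^{r_\theta}$ then produces a rank-one Breuil--Kisin module over $\mathfrak{S}_{\mathcal{O}_E}$ whose mod-$p$ reduction recovers $\mathfrak{M}_{\mathbb{F}}$; this is the Breuil--Kisin module of a crystalline character with labelled Hodge--Tate weights $(r_\theta)$. Untwisting by a lift of the unramified character used above gives the required crystalline lift.

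For the inductive step I would pick a one-dimensional $G_K$-stable quotient $V_{\mathbb{F}} \twoheadrightarrow Z_{\mathbb{F}}$ (possible because every Jordan--Holder factor of $V_{\mathbb{F}}$ is one-dimensional) and set $W_{\mathbb{F}} = \ker(V_{\mathbb{F}} \rightarrow Z_{\mathbb{F}})$. Lemma~\ref{exactO} then produces a $\varphi$-equivariant exact sequence $0 \rightarrow \mathfrak{W}_{\mathbb{F}} \rightarrow \mathfrak{M}_{\mathbb{F}} \rightarrow \mathfrak{Z}_{\mathbb{F}} \rightarrow 0$, and part (1) of Proposition~\ref{SDexact} upgrades it to give $\mathfrak{W}_{\mathbb{F}} \in \mathcal{L}^{\leq p}_{\operatorname{SD}}(W_{\mathbb{F}})$ and $\mathfrak{Z}_{\mathbb{F}} \in \mathcal{L}^{\leq p}_{\operatorname{SD}}(Z_{\mathbb{F}})$. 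The Jordan--Holder factors of $W_{\mathbb{F}}$ and $Z_{\mathbb{F}}$ are subsets of those of $V_{\mathbb{F}}$, so both remain cyclotomic-free with one-dimensional Jordan--Holder factors. The inductive hypothesis therefore yields crystalline lifts $W, Z$ with one-dimensional Jordan--Holder factors in $W[\tfrac{1}{p}]$ and $Z[\tfrac{1}{p}]$, together with Breuil--Kisin modules $\mathfrak{W} \in \mathcal{L}^{\leq p}_{\operatorname{crys}}(W)$ and $\mathfrak{Z} \in \mathcal{L}^{\leq p}_{\operatorname{crys}}(Z)$ reducing to $\mathfrak{W}_{\mathbb{F}}$ and $\mathfrak{Z}_{\mathbb{F}}$ (enlarging $\mathbb{F}$ and $E$ as needed).

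Finally I would invoke Lemma~\ref{liftingext}, whose non-obvious input is condition (2) of Hypothesis~\ref{hyp} applied to $\mathfrak{W}_{\mathbb{F}} \oplus \mathfrak{Z}_{\mathbb{F}}$. Because $\mathfrak{W}_{\mathbb{F}}$ and $\mathfrak{Z}_{\mathbb{F}}$ both admit crystalline lifts, their direct sum lies in $\mathcal{L}^{\leq p}_{\operatorname{crys}}(W_{\mathbb{F}} \oplus Z_{\mathbb{F}})$; moreover the Jordan--Holder factors of $W_{\mathbb{F}} \oplus Z_{\mathbb{F}}$ coincide (as a multiset) with those of $V_{\mathbb{F}}$, so $W_{\mathbb{F}} \oplus Z_{\mathbb{F}}$ is still cyclotomic-free. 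Part (2) of Proposition~\ref{what}, applied with $\mathfrak{Z} = \mathfrak{W} = \mathfrak{W}_{\mathbb{F}} \oplus \mathfrak{Z}_{\mathbb{F}}$, then supplies (1) of Hypothesis~\ref{hyp} for this direct sum, which together with the previous sentence yields (2). Lemma~\ref{liftingext} now produces a crystalline lift $V$ of $\mathfrak{M}_{\mathbb{F}}$ sitting in a $G_K$-equivariant exact sequence $0 \rightarrow W \rightarrow V \rightarrow Z \rightarrow 0$, and the Jordan--Holder factors of $V[\tfrac{1}{p}]$ are those of $W[\tfrac{1}{p}]$ and $Z[\tfrac{1}{p}]$, which are all one-dimensional. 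The main delicate point is the bookkeeping in the inductive step: verifying that Hypothesis~\ref{hyp} (2) for the direct sum $\mathfrak{W}_{\mathbb{F}} \oplus \mathfrak{Z}_{\mathbb{F}}$ holds (rather than only for $\mathfrak{M}_{\mathbb{F}}$, which is merely strongly divisible a priori), and ensuring cyclotomic-freeness and the one-dimensional Jordan--Holder factor property propagate to the sub and quotient.
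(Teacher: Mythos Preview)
Your proof is correct and follows essentially the same approach as the paper: induction on $\dim_{\mathbb{F}} V_{\mathbb{F}}$, with the one-dimensional base case handled by an explicit lift (the paper simply cites \cite[Lemma~6.3]{GLS}) and the inductive step handled by Lemma~\ref{liftingext}. You have usefully spelled out the verification that $\mathfrak{W}_{\mathbb{F}} \oplus \mathfrak{Z}_{\mathbb{F}}$ satisfies the required part of Hypothesis~\ref{hyp}, which the paper only sketches in the remark following Lemma~\ref{liftingext}.
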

\begin{proof}
	If $V_{\mathbb{F}}$ is one dimensional then the result is easy (see for example part (1) of \cite[Lemma 6.3]{GLS}). For the general case induct on the length of $V_{\mathbb{F}}$ using Lemma~\ref{liftingext}.
\end{proof}

In particular we see $\mathfrak{M} \in \mathcal{L}^{\leq p}_{\operatorname{crys}}(V_{\mathbb{F}})$.
\subsection{A possible improvement} We would now like to explain how Theorem~\ref{potdiag} can be strengthened, assuming a conjectural statement regarding the fibre of $\mathcal{L}$ over the closed point of $\operatorname{Spec}R$.
\begin{sub}
 As usual let $\mathbb{F}$ denote a finite field of characteristic $p$ and let $V_{\mathbb{F}}$ denote a finite-dimensional $\mathbb{F}$-vector space equipped with a continuous $\mathbb{F}$-linear action of $G_K$. Let $R = R^\square_{V_{\mathbb{F}}}$ and $\mathcal{L} = \mathcal{L}^{\leq p}_{R,\operatorname{crys}}$. We also let $\mathcal{L}_{\mathbb{F}} = \mathcal{L} \otimes_R \mathbb{F}$ be the fibre of $\mathcal{L}$ over the closed point of $\operatorname{Spec}R$.
\end{sub}
\begin{sub}\label{notationally}
	Let us first assume $V_{\mathbb{F}}$ is absolutely irreducible and $\mathbb{F}$ is sufficiently large, so that $V_{\mathbb{F}} = \operatorname{Ind}_{G_L}^{G_K} W_{\mathbb{F}}$ with $L/K$ unramified and $W_{\mathbb{F}}$ one-dimensional. We also assume that the residue field $l$ of $L$ embeds into $\mathbb{F}$. Recall from Lemma~\ref{inducerestrict} that there is a map $f_*\colon \mathcal{L}^{\leq p}(W_{\mathbb{F}}) \rightarrow \mathcal{L}^{\leq p}(V_{\mathbb{F}})$. It follows from \ref{sdcondition} and Lemma~\ref{valu} that the image of this map lies in $\mathcal{L}^{\leq p}_{\operatorname{crys}}(V_{\mathbb{F}})$. 
\end{sub}
\begin{conjecture}\label{conjecture}
	With $V_{\mathbb{F}}$ as in \ref{notationally}, every closed point of $\mathcal{L}_{\mathbb{F}}$ lies in the same connected component as a closed point arising from $f_*\mathfrak{N}$ for some $\mathfrak{N} \in \mathcal{L}^{\leq p}(W_{\mathbb{F}})$.
\end{conjecture}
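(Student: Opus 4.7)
The plan is to exploit the explicit combinatorial description of closed points of $\mathcal{L}_{\mathbb{F}}$ provided by Proposition~\ref{irredequal}, and then to construct connected families inside $\mathcal{L}_{\mathbb{F}}$ realising the required degenerations. Given a closed point $\mathfrak{M}$ of $\mathcal{L}_{\mathbb{F}}$, valued in some finite extension $\mathbb{F}'/\mathbb{F}$, Proposition~\ref{irredequal} gives $\mathfrak{M} \in \mathcal{L}^{\leq p}_{\operatorname{SD}}(V_{\mathbb{F}'})$, and \ref{contain} produces $\mathfrak{N} \in \mathcal{L}^{\leq p}(W_{\mathbb{F}'})$ with $\mathfrak{M} \subset f_*\mathfrak{N}$ and $\mathfrak{M}[\tfrac{1}{u}] = (f_*\mathfrak{N})[\tfrac{1}{u}]$. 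After the unramified twist of \ref{equivalent} normalising $x=1$, the explicit basis $(e_\theta)_\theta$ of $f_*\mathfrak{N}$ satisfying $\varphi(e_{\theta\circ\varphi}) = u^{r_\theta} e_\theta$ is available, and $\mathfrak{M}$ is characterised by a tuple $(W_r)_{0<r\leq p}$ of $\mathbb{F}'$-subspaces $W_r \subset V_r := \operatorname{span}_{\mathbb{F}'}\{e_\theta : r_\theta \equiv r~\operatorname{mod} p\}$ satisfying conditions (1) and (2) of \ref{equivalent}; the point $f_*\mathfrak{N}$ itself corresponds to the maximal tuple $W_r = V_r$ for all $r$.

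The next step is to produce the natural moduli $Y$ parametrising such tuples, as a closed subscheme of $\prod_r \operatorname{Gr}(V_r)$ cut out by the $\varphi$-semilinear condition $\Phi(W) \subset W_p$, where $\Phi \colon V \to V_p$ is the $p$-linear map encoding the constant term of $\varphi$ acting on $V = \bigoplus_r V_r$. A universal family of $\mathfrak{S}_{\mathcal{O}_Y}$-submodules of $f_*\mathfrak{N} \otimes \mathcal{O}_Y$ then yields a morphism $Y \to \mathcal{L}_{\mathbb{F}}$: the calculation of \ref{sdcondition} goes through verbatim over any $\mathbb{F}'$-algebra $B$ and shows the resulting family lies in $\mathcal{L}^{\leq p}_{\operatorname{crys}}$.

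The hard part is connectedness. Within a fixed dimension tuple $(\dim W_r)_r$ the ambient product of Grassmannians is smooth and connected, so one expects the $\varphi$-stable locus $Y$ to be connected too, but $f_*\mathfrak{N}$ corresponds to a dimension tuple typically different from that of a general $\mathfrak{M}$. To connect different dimension tuples one must allow $\mathfrak{N}$ (equivalently, the weights $(r_\theta)$) to vary, producing a larger moduli on which both the subspaces $W_r$ and the Frobenius structure of $f_*\mathfrak{N}$ deform. The approach I would pursue is to introduce a $\mathbb{G}_m$-action on the universal $f_*\mathfrak{N}$ by rescaling the $e_\theta$ with weights compatible with Frobenius, and to analyse the resulting attracting $\mathbb{A}^1$-limits inside $\mathcal{L}_{\mathbb{F}}$: the hope is that every closed point degenerates to a fixed point of the form $f_*\mathfrak{N}'$. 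The main obstacle is verifying that these $\mathbb{A}^1$-limits remain inside $\mathcal{L}^{\leq p}_{\operatorname{crys}}$ and not just inside the larger $\mathcal{L}^{\leq p}$, which is presumably why a direct analysis is feasible only in the two-dimensional case treated by the author.
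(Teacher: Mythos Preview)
This statement is a conjecture, not a theorem: the paper does not prove it in general, only when $\dim V_{\mathbb{F}}=2$ (the Proposition immediately following), and Example~\ref{example} is included precisely to illustrate why higher dimensions are harder. Your proposal is likewise not a proof: you outline a moduli-theoretic setup and a $\mathbb{G}_m$-degeneration strategy, then explicitly flag the main obstacle and leave it unresolved. So the honest summary is that neither you nor the author has a proof of the general statement.

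It is still worth comparing your sketch with the paper's two-dimensional argument. The paper does not build a moduli space $Y$ of subspace-tuples; it works directly with the decomposition $\mathfrak{M} = \bigoplus_{\tau \in \operatorname{Hom}_{\mathbb{F}_p}(k,\mathbb{F})} \mathfrak{M}_\tau$ along embeddings of $k$ (not the $r$-decomposition you emphasise). Each rank-two piece $\mathfrak{M}_\tau$ is either of type (i), generated by $e_\theta + \alpha e_{\theta\circ\varphi^h}$ and $u e_\theta$ for some $\alpha\in\mathbb{F}^\times$, or of type (ii), generated by monomials $u^{x_\theta}e_\theta$. Setting $d(\mathfrak{M})$ to be the number of type-(i) summands, one has $d(\mathfrak{M})=0$ exactly when $\mathfrak{M}=f_*\mathfrak{N}'$ for some $\mathfrak{N}'$. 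A short combinatorial lemma (Lemma~\ref{moving}) shows that consecutive type-(i) summands share the same parameter $\alpha$ and the same weight $r_\theta=r_{\theta\circ\varphi^h}$; this lets one pick a maximal block of type-(i) summands and write down an explicit family $\mathfrak{M}_{\mathbb{F}[T]}$ over $\mathbb{A}^1$ scaling the off-diagonal $\alpha$'s by $T$, specialising to $\mathfrak{M}$ at $T=1$ and to a module with strictly smaller $d$ at $T=0$. The check that this family lies in $\mathcal{L}^{\leq p}_{\operatorname{crys}}$ is done pointwise, by computing the Frobenius matrices $A_{\tau'}$ and verifying the exponent constraints, using that $\mathcal{L}_{\mathbb{F}}$ is closed in the affine Grassmannian --- not by arguing that \ref{sdcondition} goes through over an arbitrary base as you suggest. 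This is close in spirit to your $\mathbb{G}_m$-limit idea, but the inductive invariant $d(\mathfrak{M})$ and Lemma~\ref{moving} have no analogue in your outline.

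There is also a concrete issue with your parametrisation. Since $\mathfrak{M}$ is an $\mathfrak{S}_{\mathbb{F}}=(k\otimes_{\mathbb{F}_p}\mathbb{F})[[u]]$-module, the subspace $W=\mathfrak{M}/u f_*\mathfrak{N}$ decomposes along the embeddings $\tau$ of $k$ as well as along the residues $r$; your tuple $(W_r)_r$ omits this. More seriously, the claim that condition (1) of \ref{equivalent} is encoded by a single closed condition ``$\Phi(W)\subset W_p$'' is too coarse: the second half of (1) --- that $\varphi(m)\in u^{p+1}\mathfrak{M}$ forces $m\in u\mathfrak{M}$ --- does not reduce to a condition on $W$ alone, and in the paper's computation it translates into exponent inequalities on the matrices $A_{\tau'}$ that depend on the ambient $\mathfrak{N}$ (the integers $\delta_j$). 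Your proposed moduli $Y$ does not see this, which is exactly where a general argument would have to do real work.
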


We are going to prove this when $V_{\mathbb{F}}$ is $2$-dimensional. Before doing so we record some consequences of this conjecture.
\begin{lemma}\label{irredlift}
	Suppose Conjecture~\ref{conjecture} holds, and if $p=2$ that $K_\infty \cap K(\mu_{p^\infty}) = K$. Then any $\mathfrak{M} \in \mathcal{L}^{\leq p}_{\operatorname{crys}}(V_{\mathbb{F}})$ admits a crystalline lift.
\end{lemma}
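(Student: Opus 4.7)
My approach is to use Conjecture~\ref{conjecture} to reduce the problem to lifting a Breuil--Kisin module of the form $f_*\mathfrak{N}$ in the same connected component of $\mathcal{L}_{\mathbb{F}}$ as $\mathfrak{M}$; such a lift is constructed directly by inducing from a one-dimensional crystalline representation of $G_L$, and then transferred back to $\mathfrak{M}$ via the component structure of $\mathcal{L}^\circ$.

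Let $x \in \mathcal{L}_{\mathbb{F}}$ denote the closed point corresponding to $\mathfrak{M}$. Conjecture~\ref{conjecture} produces $\mathfrak{N} \in \mathcal{L}^{\leq p}(W_{\mathbb{F}})$ such that the closed point $y$ corresponding to $f_*\mathfrak{N}$ lies in the same connected component of $\mathcal{L}_{\mathbb{F}}$ as $x$. Since $\mathcal{L} \to \operatorname{Spec} R$ is proper and $R$ is local, the map $\pi_0(\mathcal{L}_{\mathbb{F}}) \to \pi_0(\mathcal{L})$ is a bijection, so $x$ and $y$ lie on a common connected component of $\mathcal{L}$.

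Next, to lift $f_*\mathfrak{N}$, I would use that $W_{\mathbb{F}}$ is one-dimensional: there is a finite extension $E/\mathbb{Q}_p$ and a crystalline character $W$ of $G_L$ on a free $\mathcal{O}_E$-module with Hodge--Tate weights in $[0,p]$ lifting $W_{\mathbb{F}}$ whose associated Breuil--Kisin module lifts $\mathfrak{N}$ (this is the one-dimensional base case used in the proof of Corollary~\ref{lift}). Because $L/K$ is unramified, $V := \operatorname{Ind}_{G_L}^{G_K} W$ is crystalline, and functoriality of $f_*$ identifies its Breuil--Kisin module with $f_*$ of that of $W$; this produces an $\mathcal{O}_E$-valued point of $\mathcal{L}$ with special fibre $y$, so in particular $y \in \mathcal{L}^\circ$.

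Finally I would transfer the lift from $y$ to $x$. Since $V_{\mathbb{F}}$ is absolutely irreducible it is cyclotomic-free unless $V_{\mathbb{F}} \otimes_{\mathbb{F}} \overline{\mathbb{F}}(1)$ is both unramified and isomorphic to $V_{\mathbb{F}} \otimes_{\mathbb{F}} \overline{\mathbb{F}}$; granting this, Corollary~\ref{regular} shows $\mathcal{L}^\circ$ is a union of connected components of $\mathcal{L}$ and hence contains $x$. Proposition~\ref{smooooth} then realises $\widehat{\mathcal{O}_{\mathcal{L}, x}}$ as a power series ring over $W(\mathbb{F}')$ for some finite $\mathbb{F}'/\mathbb{F}$; any surjection onto $W(\mathbb{F}')$ (for instance the one killing all variables) yields a $W(\mathbb{F}')$-valued point of $\mathcal{L}$ with special fibre $x$, and the identification in \ref{Cpoints} converts this into a crystalline lift of $\mathfrak{M}$. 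The main obstacle is the cyclotomic-freeness verification required to apply Corollary~\ref{regular} at $x$; the exceptional case would need to be handled separately, perhaps by a direct deformation-theoretic argument at $x$ using $\widetilde{y}$ rather than appealing to component structure.
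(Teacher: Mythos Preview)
Your approach is essentially identical to the paper's: both use Conjecture~\ref{conjecture} to move within a connected component to a point of the form $f_*\mathfrak{N}$, lift that point explicitly by inducing a crystalline character from $G_L$, and then invoke Corollary~\ref{regular} (i.e.\ that $\mathcal{L}^\circ$ is a union of connected components, with formally smooth local rings) to transfer the lift back to $\mathfrak{M}$. The paper organises this as ``$\mathcal{O}_{\mathcal{L},x}[\frac{1}{p}]\neq 0$ $\Leftrightarrow$ some point on the component of $x$ admits a crystalline lift'', but the content is the same.

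Your flagged ``main obstacle'' is not a genuine gap. The exceptional case you isolate---$V_{\mathbb{F}}\otimes\overline{\mathbb{F}}(1)$ unramified and isomorphic to $V_{\mathbb{F}}\otimes\overline{\mathbb{F}}$---forces $\dim_{\mathbb{F}} V_{\mathbb{F}}=1$, because an absolutely irreducible unramified representation over $\overline{\mathbb{F}}$ is one-dimensional. But when $V_{\mathbb{F}}$ is one-dimensional we have $L=K$ and $f_*$ is the identity, so every $\mathfrak{M}\in\mathcal{L}^{\leq p}_{\operatorname{crys}}(V_{\mathbb{F}})$ is already of the form $f_*\mathfrak{N}$ and is lifted directly by a crystalline character; the connected-component machinery and Corollary~\ref{regular} are not needed at all in this case. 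Thus $V_{\mathbb{F}}$ absolutely irreducible of dimension $\geq 2$ is automatically cyclotomic-free, and the one-dimensional case is trivial. The paper silently uses this; you should simply state it rather than leave it as an unresolved caveat.
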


In particular we deduce $\mathcal{L}^\circ  = \mathcal{L}$ in this situation.
\begin{proof}
We have to show the local ring of $\mathcal{L}$ at the closed point corresponding to $\mathfrak{M}$ is non-zero after inverting $p$. After Corollary~\ref{regular} it suffices to show every connected component of $\mathcal{L}$ contains at least one closed point admitting a crystalline lift. Using the conjecture we are reduced to proving that, if $\mathfrak{N} \in \mathcal{L}^{\leq p}(W_{\mathbb{F}})$, $f_*\mathfrak{N}$ admits a crystalline lift, and this is easy. Choose a crystalline character lifting $\mathfrak{N}$ and consider the induction of that character from $G_L$ to $G_K$.
\end{proof}
\begin{lemma}\label{generallift}
	Suppose Conjecture~\ref{conjecture} for all absolutely irreducible $V_{\mathbb{F}}$, and if $p=2$ that $K_\infty \cap K(\mu_{p^\infty}) =K$. Then any $\mathfrak{M} \in \mathcal{L}^{\leq p}_{\operatorname{SD}}(V_{\mathbb{F}}')$ with $V_{\mathbb{F}}'$ cyclotomic-free (but not necessarily irreducible) admits a crystalline lift $V$ such that every Jordan--Holder factor of $V[\frac{1}{p}]$ has irreducible reduction modulo~$p$.
\end{lemma}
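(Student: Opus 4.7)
The strategy is induction on the length of $V_{\mathbb{F}}'$. Since cyclotomic-freeness forces every Jordan--Holder factor to be absolutely irreducible, the base case is when $V_{\mathbb{F}}'$ is itself absolutely irreducible. In that case Proposition~\ref{irredequal} gives $\mathcal{L}^{\leq p}_{\operatorname{SD}}(V_{\mathbb{F}}') = \mathcal{L}^{\leq p}_{\operatorname{crys}}(V_{\mathbb{F}}')$, so the given $\mathfrak{M}$ lies in the crystalline locus, and Lemma~\ref{irredlift} (where the conjecture is invoked) produces a crystalline lift $V$. The construction in the proof of Lemma~\ref{irredlift} exhibits $V$ as the induction, from an unramified extension $L/K$, of a crystalline character of $G_L$; in particular $V[\tfrac{1}{p}]$ has irreducible reduction $V_{\mathbb{F}}'$, so it is itself irreducible, and its unique Jordan--Holder factor has irreducible reduction modulo~$p$.

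For the inductive step, choose an absolutely irreducible quotient $Z_{\mathbb{F}}$ of $V_{\mathbb{F}}'$ and let $W_{\mathbb{F}}$ be the kernel, giving a $G_K$-equivariant exact sequence $0 \to W_{\mathbb{F}} \to V_{\mathbb{F}}' \to Z_{\mathbb{F}} \to 0$. Both $W_{\mathbb{F}}$ and $Z_{\mathbb{F}}$ inherit cyclotomic-freeness, since their Jordan--Holder factors lie among those of $V_{\mathbb{F}}'$. Applying Lemma~\ref{exactO} to $\mathfrak{M}$ yields an exact sequence $0 \to \mathfrak{W}_{\mathbb{F}} \to \mathfrak{M} \to \mathfrak{Z}_{\mathbb{F}} \to 0$, and Proposition~\ref{SDexact}(1) shows that $\mathfrak{W}_{\mathbb{F}} \in \mathcal{L}^{\leq p}_{\operatorname{SD}}(W_{\mathbb{F}})$ and $\mathfrak{Z}_{\mathbb{F}} \in \mathcal{L}^{\leq p}_{\operatorname{SD}}(Z_{\mathbb{F}})$. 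The base case applied to $\mathfrak{Z}_{\mathbb{F}}$ and the inductive hypothesis applied to $\mathfrak{W}_{\mathbb{F}}$ then furnish crystalline lifts $\mathfrak{Z} \in \mathcal{L}^{\leq p}_{\operatorname{crys}}(Z)$ and $\mathfrak{W} \in \mathcal{L}^{\leq p}_{\operatorname{crys}}(W)$, each with the required irreducible-reduction property on generic-fibre Jordan--Holder factors.

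To assemble the pieces, I invoke Lemma~\ref{liftingext}. Its hypothesis that $\mathfrak{W}_{\mathbb{F}} \oplus \mathfrak{Z}_{\mathbb{F}}$ satisfies~(2) of Hypothesis~\ref{hyp} follows from Proposition~\ref{what}(2), since $W_{\mathbb{F}} \oplus Z_{\mathbb{F}}$ has the same Jordan--Holder factors as $V_{\mathbb{F}}'$ and is therefore cyclotomic-free. The lemma then produces a crystalline lift $V$ of $V_{\mathbb{F}}'$ together with a $G_K$-equivariant exact sequence $0 \to W \to V \to Z \to 0$; the generic-fibre Jordan--Holder factors of $V$ are those of $W[\tfrac{1}{p}]$ together with those of $Z[\tfrac{1}{p}]$, and so all have irreducible reductions modulo~$p$ by construction.

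I do not foresee a serious obstacle: the ingredients (Lemmas~\ref{irredlift} and~\ref{liftingext}, Propositions~\ref{irredequal}, \ref{SDexact}, and~\ref{what}) are already at hand, and the inductive assembly is routine. The only bookkeeping issue is that the crystalline lifts produced at each stage may live over successively larger finite extensions of the original coefficient field; this is harmless since crystalline liftability and strong divisibility descend and ascend along extensions of $\mathbb{F}$ by Lemma~\ref{extend}, and one may simply enlarge $\mathbb{F}$ at the start to accommodate all the lifts.
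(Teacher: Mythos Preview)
Your proposal is correct and follows exactly the paper's approach: induct on the length of $V_{\mathbb{F}}'$, handling the irreducible base case via Lemma~\ref{irredlift} and the inductive step via Lemma~\ref{liftingext}, precisely as in Corollary~\ref{lift}. One small imprecision: the crystalline lift produced by Lemma~\ref{irredlift} for a given $\mathfrak{M}$ is not literally exhibited as an induced character (the induced character only shows that the relevant connected component of $\mathcal{L}$ meets $\mathcal{L}^\circ$); nonetheless your conclusion that $V[\tfrac{1}{p}]$ is irreducible follows directly from the fact that its reduction $V_{\mathbb{F}}'$ is irreducible, so the argument stands.
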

\begin{proof}
	Using Lemma~\ref{irredlift} this follows as in Corollary~\ref{lift}, by inductively applying Lemma~\ref{liftingext}.
\end{proof}
\begin{corollary}
	Suppose conjecture~\ref{conjecture} holds for all absolutely irreducible $V_{\mathbb{F}}$. Then Theorem~\ref{potdiag} holds with strong cyclotomic-freeness replaced by cyclotomic-freeness.
\end{corollary}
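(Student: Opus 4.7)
The plan is to follow the proof of Theorem~\ref{potdiag}, substituting the stronger Lemma~\ref{generallift} (available under the conjecture) for Corollary~\ref{lift}. If $p=2$ I would choose the system $\pi^{1/p^\infty}$ so that $K_\infty \cap K(\mu_{p^\infty})=K$. Given a crystalline lift $V$ of $V_{\mathbb{F}}$ of Hodge type $\mathbf{v}$ concentrated in $[0,p]$, let $\mathfrak{M}$ be its Breuil--Kisin module; by Remark~\ref{points}(2) and Proposition~\ref{inclusion}, its reduction $\mathfrak{M}_{\mathbb{F}}$ lies in $\mathcal{L}^{\leq p}_{\operatorname{SD}}(V_{\mathbb{F}})$. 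Lemma~\ref{generallift} then produces a crystalline lift $V'$ of $V_{\mathbb{F}}$ whose Breuil--Kisin module also reduces to $\mathfrak{M}_{\mathbb{F}}$ (after harmlessly enlarging $\mathbb{F}$), with the additional property that every Jordan--Holder factor of $V'[\tfrac{1}{p}]$ has absolutely irreducible reduction.

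Arguing exactly as in the proof of Theorem~\ref{potdiag}, the $\mathcal{O}_E$-points of $\mathcal{L}^{\circ}$ attached to $V$ and $V'$ agree at the closed point, so by the regularity in Corollary~\ref{regular} they lie on a common irreducible component; hence their images in $R^{\mathbf{v}}_{\operatorname{crys}}$ do as well. By Lemma~\ref{potdiagfacts}(3) applied to the Jordan--Holder filtration of $V'[\tfrac{1}{p}]$, it now suffices to show that every Jordan--Holder factor $W$ of $V'[\tfrac{1}{p}]$ is potentially diagonalisable.

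Fix such a $W$ and let $\overline{W}$ denote its absolutely irreducible reduction. After enlarging $\mathbb{F}$, \cite[2.1.2]{B18} gives $\overline{W} \cong \operatorname{Ind}^{G_K}_{G_{L_0}} \overline{\chi}$ for some finite unramified extension $L_0/K$ and character $\overline{\chi}$ of $G_{L_0}$. Setting $\mathcal{L}_{\overline{W}}=\mathcal{L}^{\leq p}_{R^\square_{\overline{W}},\operatorname{crys}}$, Conjecture~\ref{conjecture} places the mod-$p$ Breuil--Kisin module of $W$ in the same connected component of $(\mathcal{L}_{\overline{W}})_{\mathbb{F}}$ as $f_*\mathfrak{N}$ for some $\mathfrak{N}\in\mathcal{L}^{\leq p}(\overline{\chi})$; by Lemma~\ref{irredlift} the scheme $\mathcal{L}_{\overline{W}}^{\circ}=\mathcal{L}_{\overline{W}}$ is regular and $\mathbb{Z}_p$-flat, so $W$ and $f_*\mathfrak{N}$ lie on the same irreducible component of $\mathcal{L}_{\overline{W}}^{\circ}$. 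Choosing a crystalline character $\chi$ of $G_{L_0}$ lifting $\overline{\chi}$ with labelled Hodge--Tate weights arranged so that $\operatorname{Ind}^{G_K}_{G_{L_0}}\chi$ has Hodge type $\mathbf{v}_W$, the induced representation is a crystalline lift of $f_*\mathfrak{N}$ giving an $\mathcal{O}_E$-point of $\mathcal{L}_{\overline{W}}^{\mathbf{v}_W}$ on the same component as $W$. Hence $W$ and $\operatorname{Ind}^{G_K}_{G_{L_0}}\chi$ lie on the same component of $R^{\mathbf{v}_W}_{\overline{W},\operatorname{crys}}$; the latter is potentially diagonalisable (its restriction to $G_{L_0}$ is a direct sum of crystalline characters), whence so is $W$.

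The main obstacle is the final Hodge-type matching: I must ensure that the character $\chi$ can be chosen so that $\operatorname{Ind}^{G_K}_{G_{L_0}}\chi$ both realises Hodge type $\mathbf{v}_W$ and lies on the same component of $\mathcal{L}_{\overline{W}}^\circ$ as $W$. Equivalently, the $\mathfrak{N}$ produced by the conjecture must have the combinatorial type (the integers $r_\theta$ in the description \ref{explicitrankone}) compatible with $\mathbf{v}_W$. This should follow from the open-and-closed nature of the Hodge-type stratification \ref{vcomponent} on the $\mathbb{Z}_p$-flat scheme $\mathcal{L}_{\overline{W}}^\circ$, together with the explicit dictionary between the $r_\theta$ and the labelled Hodge--Tate weights of a crystalline lift.
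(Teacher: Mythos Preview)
Your approach is essentially the same as the paper's: first reduce to the case of a representation with irreducible mod~$p$ reduction via Lemma~\ref{generallift} and Lemma~\ref{potdiagfacts}(3), then in the irreducible case use the conjecture to connect to an induced crystalline character. The paper simply orders the two steps in the opposite way.

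The obstacle you flag at the end is not real. You do not need to arrange the Hodge--Tate weights of $\chi$ by hand, nor appeal to any dictionary between the $r_\theta$ and labelled weights. Since the closed point under $W$ lies in $\mathcal{L}_{\overline{W}}^{\mathbf{v}_W}$, and by \ref{vcomponent} this is a union of connected components of $\mathcal{L}_{\overline{W}}^\circ = \mathcal{L}_{\overline{W}}$, the point $f_*\mathfrak{N}$ (being on the same connected component) already lies in $\mathcal{L}_{\overline{W}}^{\mathbf{v}_W}$. Now do what the proof of Lemma~\ref{irredlift} does: choose any crystalline character whose Breuil--Kisin module reduces to $\mathfrak{N}$. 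The induction then gives an $\mathcal{O}_E$-point of $\mathcal{L}_{\overline{W}}$ passing through $f_*\mathfrak{N}$, hence lying in $\mathcal{L}_{\overline{W}}^{\mathbf{v}_W}$, so its Hodge type is $\mathbf{v}_W$ automatically.
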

\begin{proof}
	First suppose the $V \otimes_{\mathcal{O}_{E}} \mathbb{F} = V_{\mathbb{F}}$ is irreducible. Conjecture~\ref{conjecture} then implies $V$ lies in the same irreducible component of $R^{\mathbf{v}}_{\operatorname{crys}}$ as a point obtained by inducing a crystalline character over an unramified extension. Since such points are potentially diagonalisable (after a finite extension they become a sum of crystalline characters) this proves the result in the irreducible case.
	
	 For the general case, we know after Lemma~\ref{generallift} that $V$ lies in the same component as a point whose Jordan--Holder factors are all irreducible modulo~$p$. The previous paragraph implies each of these Jordan--Holder factors is potentially diagonalisable, and so Lemma~\ref{potdiagfacts} implies the point itself is potentially diagonalisable. We conclude $V$ is also.
\end{proof}
\begin{proposition}
	Conjecture~\ref{conjecture} holds if $V_{\mathbb{F}}$ is two-dimensional.
\end{proposition}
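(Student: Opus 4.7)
The plan is to exploit the explicit description of $\mathcal{L}^{\leq p}_{\operatorname{crys}}(V_{\mathbb{F}})$ from the proof of Proposition~\ref{irredequal}. Since $V_{\mathbb{F}}$ is $2$-dimensional and absolutely irreducible, after enlarging $\mathbb{F}$ we may write $V_{\mathbb{F}} \cong \operatorname{Ind}_{G_L}^{G_K} W_{\mathbb{F}}$ with $L/K$ the unramified quadratic extension and $W_{\mathbb{F}}$ a character. Proposition~\ref{irredequal} identifies $\mathcal{L}^{\leq p}_{\operatorname{crys}}(V_{\mathbb{F}})$ with $\mathcal{L}^{\leq p}_{\operatorname{SD}}(V_{\mathbb{F}})$, and \ref{contain} places every $\mathfrak{M}$ in this set inside some $f_*\mathfrak{N}$ with equal $u$-generic fibre; after twisting by an unramified character (harmless by Lemma~\ref{unramtwist!}) we normalise $\mathfrak{N}$ as in \eqref{explicitlydone}. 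It then suffices to connect $\mathfrak{M}$ to $f_*\mathfrak{N}$ by a path in $\mathcal{L}_{\mathbb{F}}$.

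Condition (1) of \ref{equivalent} forces $u f_*\mathfrak{N} \subset \mathfrak{M}$, so $\mathfrak{M}$ is determined by the $\mathbb{F}$-subspace $\bar{\mathfrak{M}} := \mathfrak{M}/u f_*\mathfrak{N}$ of $\overline{f_*\mathfrak{N}} = \bigoplus_{\theta} \mathbb{F}\, e_\theta$. Condition (2) decomposes $\bar{\mathfrak{M}}$ as a direct sum along the partition of $\operatorname{Hom}_{\mathbb{F}_p}(l,\mathbb{F})$ by $r_\theta \bmod p$, and the Frobenius-stability $\mathfrak{M}^{\varphi} \subset \mathfrak{M}$, combined with the explicit formula $\varphi(e_{\theta\circ\varphi}) = u^{r_\theta} e_\theta$, imposes a linear coupling between blocks whenever $r_\theta \in \{0,p\}$. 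The key input from the $2$-dimensionality is that $f_*\mathfrak{N}$ has rank $2$ over $\mathfrak{S}_{\mathbb{F}}$: for each $\tau \in \operatorname{Hom}_{\mathbb{F}_p}(k,\mathbb{F})$ exactly two embeddings $\theta$ restrict to $\tau$, and within each residue block of $\overline{f_*\mathfrak{N}}$ the admissible $\bar{\mathfrak{M}}$ range over a product of Grassmannians of $2$-dimensional spaces, i.e.\ a product of points and $\mathbb{P}^1$'s, which is irreducible.

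I would then construct an explicit $\mathbb{A}^1$-family $\{\mathfrak{M}_t\}$ of sublattices of $f_*\mathfrak{N}$ interpolating block-by-block between $\bar{\mathfrak{M}}$ and $\overline{f_*\mathfrak{N}}$ along this irreducible moduli. Each intermediate $\mathfrak{M}_t$ satisfies condition (2) by construction, and the block-by-block nature of $\varphi$ makes Frobenius-stability a closed linear condition on each slice that one verifies is preserved along the family. Via the universal object this yields a morphism $\mathbb{A}^1_{\mathbb{F}} \to \mathcal{L}_{\mathbb{F}}$ with endpoints $\mathfrak{M}$ and $f_*\mathfrak{N}$, which is enough to conclude.

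The main obstacle will be the case analysis needed to verify Frobenius-stability along the family: when $r_\theta \in \{0,p\}$ the Frobenius couples the block at $\theta$ to the block at $\theta\circ\varphi^{-1}$, and one must arrange the interpolation so these couplings are compatible for every $t$. In the $2$-dimensional case the possible shapes of the multiset $\{r_\theta \bmod p\}$ are limited enough that a direct enumeration, or an induction on the number of embeddings with $r_\theta \in \{0,p\}$, handles each configuration; the structural reason this works is that $f_*\mathfrak{N}$ always corresponds to the largest stratum of the relevant local model, so it is reachable from any other $\mathfrak{M}$ via the irreducible family described above.
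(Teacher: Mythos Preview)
Your overall strategy---embed $\mathfrak{M}$ in $f_*\mathfrak{N}$, pass to the finite quotient $\bar{\mathfrak{M}} = \mathfrak{M}/u f_*\mathfrak{N}$, and build an $\mathbb{A}^1$-family in $\mathcal{L}_{\mathbb{F}}$---is exactly what the paper does. The gap is in your structural claim about the moduli of admissible $\bar{\mathfrak{M}}$.

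You assert that the admissible $\bar{\mathfrak{M}}$ range over ``a product of points and $\mathbb{P}^1$'s, which is irreducible'', and that Frobenius only couples blocks when $r_\theta \in \{0,p\}$. Both statements are incorrect. First, the dimension of $\bar{\mathfrak{M}}_\tau$ inside the $2$-dimensional space $\mathbb{F} e_\theta \oplus \mathbb{F} e_{\theta\circ\varphi^h}$ is not fixed: it can be $0$, $1$, or $2$ (these are the paper's cases (i) and (ii)), so you are not looking at a single Grassmannian per $\tau$. Second, and more seriously, the $E$-height condition $u^p\mathfrak{M} \subset \mathfrak{M}^\varphi$ (the second half of condition (1) in \ref{equivalent}) couples adjacent $\tau$-blocks for \emph{every} $r_\theta$, not just $r_\theta \in \{0,p\}$: if $\bar{\mathfrak{M}}_\tau$ is the line through $e_\theta + \alpha e_{\theta\circ\varphi^h}$ and $r_\theta > 0$, then applying $\varphi^{-1}$ forces $e_{\theta\circ\varphi} + \alpha e_{\theta\circ\varphi^{h+1}} \in \bar{\mathfrak{M}}_{\tau\circ\varphi}$ with the \emph{same} $\alpha$. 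This is the content of the paper's key lemma (Lemma~\ref{moving}), and it means the moduli is not a product over $\tau$.

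The paper replaces your irreducibility claim with a discrete invariant $d(\mathfrak{M})$, the number of $\tau$ for which $\bar{\mathfrak{M}}_\tau$ is a ``generic'' line (case (i)). Lemma~\ref{moving} shows that the type-(i) $\tau$'s occur in runs with a common parameter $\alpha$, and the paper deforms one entire such run at once by scaling $\alpha \mapsto T\alpha$; the boundary matrices at the endpoints of the run are then checked explicitly to verify the $E$-height condition along the family. The endpoint at $T=0$ is $f_*\mathfrak{N}'$ for a \emph{smaller} lattice $\mathfrak{N}' \subset \mathfrak{N}$, not the original $f_*\mathfrak{N}$; your proposal to connect to the maximal $f_*\mathfrak{N}$ is not what is needed (nor what is established), and the conjecture only asks for some $\mathfrak{N}'$.
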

\begin{proof}
	Replacing $V_\mathbb{F}$ by an unramified twist (which is allowable by Lemma~\ref{unramtwist!} and the comment made in \ref{functorialremark}) we can assume the situation is as in the proof of Proposition~\ref{irredequal}, cf. in particular \ref{contain} and \ref{explicitrankone}. Thus there is an $\mathfrak{N} \in \mathcal{L}^{\leq p}(W_{\mathbb{F}})$ with generators $(e_\theta)_{\theta \in \operatorname{Hom}_{\mathbb{F}_p}(l,\mathbb{F})}$ satisfying	
	$$
	\varphi(e_{\theta \circ \varphi}) = u^{r_\theta}e_\theta, \qquad 0 \leq r_\theta \leq p
	$$
	together with an inclusion $\mathfrak{M} \subset f_*\mathfrak{N}$ (where $f_*\mathfrak{N}$ denotes $\mathfrak{N}$ viewed as an $\mathfrak{S}_{\mathbb{F}}$-module). For $\mathfrak{M}$ to be contained in $\mathcal{L}^{\leq p}_{\operatorname{crys}}(V_{\mathbb{F}})$ it is necessary and sufficient that:
	\begin{itemize}
		\item If $m \in \mathfrak{M}$ then $\varphi(m) \in \mathfrak{M}$. If $\varphi(m) \in u^{p+1}\mathfrak{M}$ then $m \in u\mathfrak{M}$. 
		\item If $\sum \alpha_\theta e_\theta \in \mathfrak{M}$ with $\alpha_\theta \in \mathbb{F}$ then $\sum_{r_\theta \equiv r \operatorname{mod}p} \alpha_\theta e_\theta \in \mathfrak{M}$.
	\end{itemize}
	(cf. \ref{equivalent}). Recall that the first condition is implied by $u^p \mathfrak{M} \subset \mathfrak{M}^\varphi \subset \mathfrak{M}$, and it implies $ue_\theta \in \mathfrak{M}$ for every $\theta$. 
	
	For $\tau \in \operatorname{Hom}_{\mathbb{F}_p}(k,\mathbb{F})$ we write $\mathfrak{M}_\tau$ for the summand of $\mathfrak{M}$ on which $k$ acts through $\tau$ (with $\mathfrak{M}$ viewed as an $\mathbb{F}[[u]]$-module). If $\theta \in \operatorname{Hom}_{\mathbb{F}_p}(l,\mathbb{F})$ is such that $\theta|_k =\tau$ then elements of $\mathfrak{M}_\tau$ have the form $\alpha e_\theta + \beta e_{\theta \circ \varphi^h}$ where $h = [K:\mathbb{Q}_p]$. In particular the possible shapes of the $\mathfrak{M}_\tau$ can be divided into two:
	\begin{enumerate}[label=(\roman*)]
		\item Either there is an $\alpha \in \mathbb{F}^\times$ so that $e_\theta + \alpha e_{\theta \circ \varphi^h}$ and $ue_\theta$ generate $\mathfrak{M}_\tau$ over $\mathbb{F}[[u]]$ (for some $\theta$ with $\theta|_k = \tau$). 
		\item Or no such $\alpha$ exists. Thus $\mathfrak{M}_\tau$ is generated by $u^{x_\theta}e_\theta$ and $u^{x_{\theta \circ \varphi^h}}e_{\theta \circ \varphi^h}$ for some $x_{\theta},x_{\theta \circ \varphi^h} \in [0,1]$ (again $\theta$ is some embedding with $\theta|_k =\tau$).
	\end{enumerate}
	Set $d(\mathfrak{M})$ equal to the number of $\tau$ as in case (i). Note that if $d(\mathfrak{M}) = 0$ then $\mathfrak{M} = f_*\mathfrak{N}'$ where $\mathfrak{N}' \in \mathcal{L}^{\leq p}(W_{\mathbb{F}})$ is the $\mathfrak{S}_{\mathbb{F}}$-submodule of $\mathfrak{N}$ generated by $u^{x_\theta}e_\theta$. Arguing by induction it therefore suffices to show $\mathfrak{M}$ lies in the same connected component of $\mathcal{L}_{\mathbb{F}}$ as an $\mathfrak{M}' \subset f_*\mathfrak{N}$ with $d(\mathfrak{M}') < d(\mathfrak{M})$. For this we will need a lemma.
	\begin{lemma}\label{moving}
		Suppose $\mathfrak{M}_\tau$ and $\mathfrak{M}_{\tau \circ \varphi}$ are as in (i) and $\theta \in \operatorname{Hom}_{\mathbb{F}_p}(l,\mathbb{F})$ with $\theta|_k = \tau$. Then $\mathfrak{M}_\tau$ is generated by $e_\theta + \alpha e_{\theta \circ \varphi^h}$ and $ue_\theta$, for some $\alpha \in \mathbb{F}^\times$, if and only if $\mathfrak{M}_{\tau \circ \varphi}$ is generated by $e_{\theta \circ \varphi} + \alpha e_{\theta \circ \varphi^{h+1}}$ and $ue_{\theta \circ \varphi}$. Furthermore, $r_\theta = r_{\theta \circ \varphi^h}$.
	\end{lemma}
	\begin{proof}
		The second bullet point above implies $r_\theta \equiv r_{\theta \circ \varphi^h}$ modulo~$p$, so we have equality except possibly if $r_\theta = 0$ or $p$. Suppose $r_\theta  =0$ so that $r_{\theta \circ \varphi^h}$ equals $0$ or $p$. As such, if $e_{\theta \circ \varphi} + \alpha e_{\theta \circ \varphi^{h+1}} \in \mathfrak{M}_{\tau \circ \varphi}$, then applying $\varphi$ shows that $e_{\theta} + \alpha u^{r_{\theta \circ \varphi^h}} e_{\theta \circ \varphi^h} \in \mathfrak{M}_\tau$. If $r_{\theta \circ \varphi^h} = p$ then $e_\theta \in \mathfrak{M}_\tau$ which contradicts the fact that $\mathfrak{M}_\tau$ is as in (i). Thus $r_{\theta \circ \varphi^h} = 0$ and $e_\theta + \alpha e_{\theta \circ \varphi^h} \in \mathfrak{M}_\tau$. This proves the lemma when $r_\theta  = 0$. 
		
		Now suppose $r_\theta >0$. Then $r_{\theta \circ \varphi^h} = r_\theta$ except possibly if $r_\theta = p$ and $r_{\theta \circ \varphi^h} = 0$. Applying the previous paragraph with $\theta$ replaced by $\theta \circ \varphi^h$ shows the exceptional case is impossible. Suppose $\alpha \in \mathbb{F}$ is such that $e_\theta + \alpha e_{\theta \circ \varphi^h} \in \mathfrak{M}_\tau$. The first bullet point above implies that $e_{\theta \circ \varphi} + \alpha e_{\theta \circ \varphi^{h+1}} \in \mathfrak{M}_{\tau \circ \varphi}$ (since $\varphi$ maps $u(e_{\theta \circ \varphi} + \alpha e_{\theta \circ \varphi^{h+1}})$ onto $u^{p+r_\theta} (e_\theta + \alpha e_{\theta \circ \varphi^h})$). This proves the lemma when $r_\theta >0$. 
	\end{proof}
	
	If $M_\mathbb{F}$ is the etale $\varphi$-module associated to $V_{\mathbb{F}}$ set $M_{\mathbb{F}[T]} = M_{\mathbb{F}} \otimes_{\mathbb{F}} \mathbb{F}[T]$, for a formal variable $T$. We are going to construct $\mathfrak{M}_{\mathbb{F}[T]} \in \mathcal{L}^{\leq p}_{\operatorname{crys}}(V_{\mathbb{F}} \otimes_{\mathbb{F}} \mathbb{F}[T])$ with $\mathfrak{M}_{\mathbb{F}[T]} \subset (f_*\mathfrak{N}) \otimes_{\mathbb{F}} \mathbb{F}[T]$, which at $T = 1$ recovers $\mathfrak{M}$ and which at $T= 0$ produces $\mathfrak{M}'$ with $d(\mathfrak{M}') < d(\mathfrak{M})$. As $\mathfrak{M}_{\mathbb{F}[T]}$ induces a morphism $\mathbb{A}^1 \rightarrow \mathcal{L}_{\mathbb{F}}$ connecting $\mathfrak{M}$ and $\mathfrak{M}'$ this will complete the proof.
	
	To produce $\mathfrak{M}_{\mathbb{F}[T]}$ choose $\tau \in \operatorname{Hom}_{\mathbb{F}_p}(k,\mathbb{F})$ so that $J = \lbrace \tau \circ \varphi^n, \tau \circ \varphi^{n-1},\ldots, \tau \rbrace$ is such that $\mathfrak{M}_{\tau \circ \varphi^j}$ is as in (i) for $0 \leq j \leq n$ and as in (ii) for $j = -1$ and $n+1$. We can assume such a $\tau$ exists for the following reasons. We can always assume there is a $\tau$ with $\mathfrak{M}_\tau$ as in (i) as otherwise $d(\mathfrak{M}) =0$. If $\mathfrak{M}_\tau$ is as in (i) for every $\tau$ then Lemma~\ref{moving} would imply $r_\theta = r_{\theta \circ \varphi^h}$ for every $\theta \in \operatorname{Hom}_{\mathbb{F}_p}(l,\mathbb{F})$. In this case the submodule of $f_*\mathfrak{N}$ generated by the $e_\theta + e_{\theta \circ \varphi^h}$ for all $\theta \in \operatorname{Hom}_{\mathbb{F}_p}(l,\mathbb{F})$ is $\varphi$-stable and so corresponds to a $G_{K_\infty}$-subrepresentation of $V_{\mathbb{F}}$, contradicting irreducibility.
	
	Choose $\theta \in \operatorname{Hom}_{\mathbb{F}_p}(l,\mathbb{F})$ so that $\theta|_k = \tau$; there is an $\alpha \in \mathbb{F}^\times$ such that $\mathfrak{M}_\tau$ is generated over $\mathbb{F}[[u]]$ by $e_{\theta} + \alpha e_{\theta \circ \varphi^h}$ and $ue_{\theta}$. Using Lemma~\ref{moving} we see $\mathfrak{M}_{\tau \circ \varphi^j}$ is generated by $e_{\theta \circ \varphi^j} + \alpha e_{\theta \circ \varphi^{h+j}}$ and $ue_{\theta \circ \varphi^{h+j}}$ for $0 \leq j \leq n$. For $0 \leq j \leq n$ we define 
	$$
	\mathfrak{M}_{\mathbb{F}[T],\tau \circ \varphi^j} = \text{ the $\mathbb{F}[[u]][T]$-linear span of $e_{\theta \circ \varphi^j} + T\alpha e_{\theta \circ \varphi^{h+j}}$ and $ue_{\theta \circ \varphi^j}$}
	$$
	(note this is well-defined; for $0 \leq j,j' \leq n$ if $\tau \circ \varphi^j = \tau \circ \varphi^{j'}$ then $j = j'$, otherwise $J = \operatorname{Hom}_{\mathbb{F}_p}(k,\mathbb{F})$ which we've shown in th paragraph above is impossible). For $\tau' \not\in J$ we set $\mathfrak{M}_{\mathbb{F}[T],\tau'} = \mathfrak{M}_{\tau'} \otimes_{\mathbb{F}} \mathbb{F}[T]$. Define $\mathfrak{M}_{\mathbb{F}[T]} = \bigoplus_{\tau' \in \operatorname{Hom}_{\mathbb{F}_p}(k,\mathbb{F})} \mathfrak{M}_{\mathbb{F}[T],\tau'}$. This is a projective $\mathfrak{S}_{\mathbb{F}[T]}$-module inside $M_{\mathbb{F}[T]}$, which by construction equals $\mathfrak{M}$ at $T =1$. The scheme from Proposition~\ref{kisprop} is described as a closed subscheme of the affine Grassmannian, and so $\mathcal{L}_{\mathbb{F}}$ is also closed in the affine Grassmannian. Thus to show $\mathfrak{M}_{\mathbb{F}[T]} \in \mathcal{L}^{\leq p}_{\operatorname{crys}}(V_{\mathbb{F}} \otimes_{\mathbb{F}} \mathbb{F}[T])$ it suffices to show that $\mathfrak{M}_\lambda \in \mathcal{L}^{\leq p}_{\operatorname{crys}}(V_{\mathbb{F}} \otimes_{\mathbb{F}} \mathbb{F}')$ whenever $\mathbb{F}'$ is a finite extension of $\mathbb{F}$ and $\mathfrak{M}_\lambda$ is obtained from $\mathfrak{M}_{\mathbb{F}[T]}$ by evaluating $T$ at $\lambda \in \mathbb{F}'$. This means verifying the two bullet points above for $\mathfrak{M}_\lambda$. The second bullet point is clear from the construction of $\mathfrak{M}_{\mathbb{F}[T]}$. To show the first we only have to check $u^p \mathfrak{M}_{\lambda} \subset \mathfrak{M}^\varphi_{\lambda} \subset \mathfrak{M}_{\lambda}$. If $\tau'$ is such that both $\tau'$ and $\tau' \circ \varphi$ are not in $J$ then
	$$
	u^p\mathfrak{M}_{\lambda,\tau'} \subset \mathfrak{M}_{\lambda,\tau'}^\varphi \subset \mathfrak{M}_{\lambda,\tau'}
	$$
	since $\mathfrak{M}_{\lambda, \tau' \circ \varphi} = \mathfrak{M}_{\tau' \circ \varphi}$ (so that $\mathfrak{M}^\varphi_{\lambda,\tau'} = \mathfrak{M}^\varphi_{\tau'}$) and $\mathfrak{M}_{\lambda,\tau'} = \mathfrak{M}_{\tau'}$. For the remaining $\tau'$ we choose $\mathbb{F}[[u]]$-bases $\xi_{\tau'}$ of $\mathfrak{M}_{\lambda,\tau'}$ as follows:
	\begin{enumerate}
		\item If $\tau' = \tau \circ \varphi^j \in J$, so that $0 \leq j \leq n$, then $ \xi_{\tau'} = (e_{\theta \circ \varphi^j} + \lambda \alpha e_{\theta \circ \varphi^{h+j}},ue_{\theta \circ \varphi^{h+j}})$.
		\item If $\tau' = \tau \circ \varphi^{-1}$ or $\tau \circ \varphi^{n+1}$ not in $J$, then $\xi_{\tau'} = (u^{\delta_j} e_{\theta \circ \varphi^j}, u^{\delta_{j+h}} e_{\theta \circ \varphi^{j+h}})$ where $\delta_j$ and $\delta_{j+h}$ respectively equal $x_{\theta \circ \varphi^j}$ and $x_{\theta \circ \varphi^{j+h}}$ as defined in (ii) above. In particular, both $\delta_j$ and $\delta_{j+h}$ are integers in $[0,1]$.
	\end{enumerate}
	There are matrices $A_{\tau'}$ valued in $\mathbb{F}[[u]]$ so that $\varphi(\xi_{\tau' \circ \varphi}) = (\xi_{\tau'}) A_{\tau'}$. If $\tau' = \theta \circ \varphi^j$ with $\tau' \circ \varphi$ and $\tau' \in J$ (i.e. $0 \leq j \leq n-1$) then we compute
	$$
	A_{\tau'} = \begin{pmatrix}
	u^{r_{\theta \circ \varphi^j}} & 0 \\ 0 & u^{p-1 + r_{\theta \circ \varphi^{h+j}}}
	\end{pmatrix}
	$$
	This doesn't depend upon $\lambda$ so we deduce that $u^p \mathfrak{M}_{\lambda,\tau'} \subset \mathfrak{M}_{\lambda,\tau'}^\varphi \subset \mathfrak{M}_{\lambda,\tau'}$ from the fact that it holds when $\lambda  =1$. If $\tau'= \tau$, so  $\tau' \in J$ but $\tau' \circ \varphi^{-1} \not\in J$, then we compute
	$$
	A_{\tau'} = \begin{pmatrix}
	u^{r_{\theta \circ \varphi^{-1}} - \delta_{-1}} & 0 \\ \lambda \alpha u^{r_{\theta \circ \varphi^{h-1}} - \delta_{h-1}} & u^{p + r_{\theta \circ \varphi^{h-1}} - \delta_{h-1}}
	\end{pmatrix}
	$$
	As such, in order that $u^p\mathfrak{M}_{\lambda,\tau'} \subset \mathfrak{M}^\varphi_{\lambda,\tau'} \subset \mathfrak{M}_{\lambda,\tau'}$, we need both $r_{\theta \circ \varphi^{-1}} - \delta_{-1} =  r_{\theta \circ \varphi^{h-1}} - \delta_{h-1} = 0$. Setting $\lambda =1$ we see this is the case, so it is the case for any $\lambda$. Finally we consider the case $\tau' = \tau \circ \varphi^n$ so that $\tau' \circ \varphi \not\in J$ and $\tau' \in J$. Then 
	$$
	A_{\tau'} = \begin{pmatrix}
	u^{p\delta_{n+1} + r_{\theta \circ \varphi^n}} & 0 \\ -\lambda \alpha u^{p\delta_{n+1} + r_{\theta \circ \varphi^n} -1} & u^{p\delta_{n+h+1} + r_{\theta \circ \varphi^{n+h}} -1}
	\end{pmatrix}
	$$
	Again whether or not $u^p\mathfrak{M}_{\lambda,\tau'} \subset \mathfrak{M}^\varphi_{\lambda,\tau'} \subset \mathfrak{M}_{\lambda,\tau'}$ is a condition on the powers of $u$ appearing in this matrix (we must have $p\delta_{n+1} + r_{\theta \circ \varphi^n} \in [1,p]$ and $p\delta_{n+h + 1} + r_{\theta \circ \varphi^{n+h}} - 1 \in [0,p-1]$). As these conditions holds with $\lambda =1$ they hold for general $\lambda$. 
\end{proof}
\begin{example}\label{example}
	While it seems plausible that the same kind of strategy could be used to give a full proof of Conjecture~\ref{conjecture}, in higher dimensions the situation is more complicated. Below we give an example which illustrates this difficulty. Suppose $K= \mathbb{Q}_p$ and that $L/K$ is the unramified extension of degree $7$. For an appropriate one-dimensional representation $W_{\mathbb{F}}$ of $G_L$ there is an $\mathfrak{N} \in \mathcal{L}^{\leq p}(W_{\mathbb{F}})$ so that $\mathfrak{N}$ is generated by $(e_6,e_5,\ldots,e_1,e_0)$ with
	$$
	\varphi(e_6,\ldots, e_0) = (e_5,ue_4,e_3,ue_2,e_1,u^2e_0,ue_6)
	$$
	If $\mathfrak{M} \subset f_*\mathfrak{N}$ is the $\mathbb{F}[[u]]$-submodule generated by
	$$
	e_6 + e_4 + e_2, e_5+ e_3, ue_4, ue_3, ue_2, e_1,e_0
	$$
	then one easily checks that $\mathfrak{M} \in \mathcal{L}^{\leq p}_{\operatorname{crys}}(V_{\mathbb{F}})$ where $V_{\mathbb{F}}  = \operatorname{Ind}_{G_L}^{G_K} W_{\mathbb{F}}$. If we define $\mathfrak{M}_{\mathbb{F}[T]}$ to be generated over $\mathbb{F}[[u]][T]$ by
	$$
	e_6 + T^2e_4 + Te_2, e_5 + Te_3, ue_4,ue_3,ue_2, e_1,e_0
	$$
	then one also checks by hand that this defines an element of $\mathcal{L}^{\leq p}_{\operatorname{crys}}(V_{\mathbb{F}} \otimes_{\mathbb{F}} \mathbb{F}[T])$. Thus we obtain a morphism $\mathbb{A}^1 \rightarrow \mathcal{L}_{\mathbb{F}}$ connecting $\mathfrak{M}$ with $f_*\mathfrak{N}'$ where $\mathfrak{N}' \subset \mathfrak{N}$ is generated by $e_6,e_5,ue_4,ue_3,ue_2,e_1,e_0$.
\end{example}
\subsection{Final remarks}

\begin{sub}
	So far we've seen that the scheme $\mathcal{L}^\circ$ describes the irreducible components of $R^{\mathbf{v}}_{\operatorname{crys}}$ (they correspond to the connected components of $\mathcal{L}^\circ$). In some specific situations we can do better. As usual let $\mathbb{F}$ be a finite field of characteristic $p$ and $V_{\mathbb{F}}$ a representation of $G_K$ on an $\mathbb{F}$-vector space. Assume that $V_{\mathbb{F}}$ is cyclotomic-free, and if $p=2$ that $K_\infty \cap K(\mu_{p^\infty}) = K$. We also fix a $p$-adic Hodge type $\mathbf{v}$ concentrated in degree $[0,p]$.
\end{sub}

The following notation is taken from \cite[5.1.3]{BM02}. For complete local Noetherian $\mathbb{Z}_p$-algebras $R,R_1,\ldots,R_r$ write $R \sim \prod_{i=1}^r R_i$ if there exists a $\mathbb{Z}_p$-algebra homomorphism $R \rightarrow \prod R_i$ which becomes an isomorphism after inverting $p$ and is such that each projection $R \rightarrow \prod R_i \rightarrow R_j$ is surjective.
\begin{proposition}\label{goodcase}
	Suppose that the fibre of $\mathcal{L}^{\mathbf{v}}$ over the closed point of $\operatorname{Spec}R$ is reduced and zero-dimensional and assume $\mathbb{F}$ is sufficiently large. Then 
	$$
	R^{\mathbf{v}}_{\operatorname{crys}} \sim \prod \mathcal{O}_{\mathcal{L},x}
	$$
	where the product runs over the closed points of $\mathcal{L}^{\mathbf{v}}$.\footnote{In particular we are asserting that each $\mathcal{O}_{\mathcal{L},x}$ is complete.}
\end{proposition}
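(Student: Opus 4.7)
The plan is to prove $\mathcal{L}^{\mathbf{v}} \to \operatorname{Spec}R$ is a finite morphism whose source decomposes as a disjoint union of the local schemes $\operatorname{Spec}\mathcal{O}_{\mathcal{L},x}$, and then to deduce the $\sim$ relation from Proposition~\ref{crysdef} and Nakayama's lemma.

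First I would establish finiteness. By Corollary~\ref{regular} (which applies thanks to cyclotomic-freeness of $V_{\mathbb{F}}$), $\mathcal{L}^\circ$ is a union of connected components of the projective $R$-scheme $\mathcal{L}$, and $\mathcal{L}^{\mathbf{v}}$ is in turn a union of connected components of $\mathcal{L}^\circ$; hence $\mathcal{L}^{\mathbf{v}}$ is itself projective over $R$. The closed fibre is zero-dimensional by hypothesis, and by upper semicontinuity of fibre dimension for a proper morphism every fibre is then zero-dimensional, since any non-empty closed subset of the local scheme $\operatorname{Spec}R$ must contain the closed point. A proper quasi-finite morphism is finite, so $\mathcal{L}^{\mathbf{v}} = \operatorname{Spec}A$ for some finite $R$-algebra $A$.

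Next I would decompose $A$. Because $R$ is complete local Noetherian, any finite $R$-algebra splits canonically as a product $A = \prod_x A_x$ of complete local $R$-algebras, one factor per maximal ideal of $A$; these maximal ideals are in bijection with the closed points of $\mathcal{L}^{\mathbf{v}}$, and each $A_x$ coincides with the local ring $\mathcal{O}_{\mathcal{L},x}$. The fibre $A/\mathfrak{m}_R A$ is the coordinate ring of the closed fibre of $\mathcal{L}^{\mathbf{v}}$, which by hypothesis is reduced and zero-dimensional. Taking $\mathbb{F}$ sufficiently large so that all residue fields appearing in the closed fibre equal $\mathbb{F}$, we obtain $A/\mathfrak{m}_R A = \prod_x \mathbb{F}$; in particular each $\mathcal{O}_{\mathcal{L},x}$ has residue field $\mathbb{F}$.

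By construction $\operatorname{Spec}R^{\mathbf{v}}_{\operatorname{crys}}$ is the scheme-theoretic image of $\mathcal{L}^{\mathbf{v}} \to \operatorname{Spec}R$, so the latter factors through an $R$-algebra map $R^{\mathbf{v}}_{\operatorname{crys}} \to A = \prod_x \mathcal{O}_{\mathcal{L},x}$. Proposition~\ref{crysdef}(1) guarantees this map becomes an isomorphism after inverting $p$. For surjectivity of each projection $R^{\mathbf{v}}_{\operatorname{crys}} \to \mathcal{O}_{\mathcal{L},x}$ I would apply Nakayama's lemma: $\mathcal{O}_{\mathcal{L},x}$ is a finite $R^{\mathbf{v}}_{\operatorname{crys}}$-module (as a direct factor of the finite $R^{\mathbf{v}}_{\operatorname{crys}}$-module $A$), the map is local since both rings have residue field $\mathbb{F}$, and the identity $\mathcal{O}_{\mathcal{L},x} \otimes_{R^{\mathbf{v}}_{\operatorname{crys}}} \mathbb{F} = \mathbb{F}$ obtained above forces surjectivity. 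All three conditions defining $\sim$ are then satisfied. The argument is essentially formal given the earlier results; the one step I would double-check most carefully is the passage from a zero-dimensional closed fibre to finiteness of $\mathcal{L}^{\mathbf{v}} \to \operatorname{Spec}R$, which rests on upper semicontinuity together with the standard fact that proper quasi-finite morphisms are finite.
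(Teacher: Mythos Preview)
Your proof is correct and follows essentially the same route as the paper: establish finiteness of $\mathcal{L}^{\mathbf{v}} \to \operatorname{Spec}R$ from projectivity plus the zero-dimensional closed fibre, decompose the resulting finite algebra over the complete local ring $R$ into a product of complete local rings, and then use the reduced-fibre hypothesis (together with $\mathbb{F}$ sufficiently large) to obtain surjectivity onto each factor. The only cosmetic difference is that the paper phrases the last step as checking unramifiedness, i.e.\ $\mathfrak{m}_{R^{\mathbf{v}}_{\operatorname{crys}}}\mathcal{O}_{\mathcal{L}^{\mathbf{v}},x} = \mathfrak{m}_{\mathcal{L}^{\mathbf{v}},x}$, while you phrase it as a Nakayama argument; these are the same computation.
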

\begin{proof}
	The fibre of $\mathcal{L}^{\mathbf{v}} \rightarrow \operatorname{Spec}R$ over the closed point being zero-dimensional implies this map is quasi-finite and therefore finite since it is projective. In particular $\mathcal{L}^{\mathbf{v}} = \operatorname{Spec}S$ is affine and the induced finite map $R \rightarrow S$ becomes surjective after inverting $p$. By definition $R^{\mathbf{v}}_{\operatorname{crys}}$ is the quotient of $R$ by the kernel of this map. Clearly $S$ is a product of the local rings of $\mathcal{L}^{\mathbf{v}}$ at its closed points; all that remains is to show the maps $R^{\mathbf{v}}_{\operatorname{crys}} \rightarrow \mathcal{O}_{\mathcal{L}^\mathbf{v},x}$ are surjective. Since $\mathbb{F}$ is assumed to be sufficiently large we may assume this map is an isomorphism on residue fields, so we only need to show that $\mathfrak{m}_{R^\mathbf{v}_{\operatorname{crys}}} \mathcal{O}_{\mathcal{L}^\mathbf{v},x} = \mathfrak{m}_{\mathcal{L}^\mathbf{v},x}$ (i.e. that $R^{\mathbf{v}}_{\operatorname{crys}} \rightarrow \mathcal{O}_{\mathcal{L}^{\mathbf{v}},x}$ is unramified). This follows from the assumption that the fibre of $\mathcal{L}^{\mathbf{v}}$ at the closed point of $\operatorname{Spec}R$ is reduced.
\end{proof}

\begin{sub}\label{matrices}
	We now show how to verify the hypotheses of Proposition~\ref{goodcase} in explicit cases. Let $\mathfrak{M} \in \mathcal{L}^{\leq p}_{\operatorname{crys}}(V_{\mathbb{F}})$ and suppose $\mathfrak{M}_{\mathbb{F}[\epsilon]} \in \mathcal{L}^{\leq p}_{\operatorname{crys}}(V_{\mathbb{F}} \otimes_{\mathbb{F}} \mathbb{F}[\epsilon])$ is such that $\mathfrak{M}_{\mathbb{F}[\epsilon]} \otimes_{\mathbb{F}[\epsilon]} \mathbb{F} = \mathfrak{M}$. As in the proof of Lemma~\ref{dimm} we can view $\mathfrak{M}_{\mathbb{F}[\epsilon]}$ as fitting into an exact sequence $0 \rightarrow \mathfrak{M} \rightarrow \mathfrak{M}_{\mathbb{F}[\epsilon]} \rightarrow \mathfrak{M} \rightarrow 0$. If $\xi$ is an $\mathbb{F}[[u]]$-basis of $\mathfrak{M}$ (viewed as a row vector) then any $\mathfrak{S}_{\mathbb{F}}$-splitting corresponds to an $X \in \operatorname{Mat}(\mathbb{F}((u)))$; the splitting sends $\xi$ onto $\xi(1+\epsilon X)$. As explained in \ref{ext}, any such splitting gives rise to an $f \in \frac{1}{u^p}\operatorname{Hom}(\mathfrak{M},\mathfrak{M})$ such that 
	$$
	\varphi_{\mathfrak{M}_{\mathbb{F}[\epsilon]}} = (\varphi_{\mathfrak{M}} + f \circ \varphi_{\mathfrak{M}}, \varphi_{\mathfrak{M}})
	$$
	Writing $\varphi_{\mathfrak{M}}(\xi) = \xi A$ for some matrix $A$ we compute that the matrix of $f$ with respect to $\xi$ is given by $A \varphi(X) A^{-1} - X$. As $\mathfrak{M}_{\mathbb{F}[\epsilon]}$ is strongly divisible, Proposition~\ref{SDexact} implies that $X$ can be chosen so that $A \varphi(X) A^{-1} - X \in \operatorname{Mat}(\mathbb{F}[[u]])$. Any such $\mathfrak{M}_{\mathbb{F}[\epsilon]}$ corresponds to a tangent vector, around the closed point corresponding to $\mathfrak{M}$, mapping onto the zero tangent vector at the closed point of $\operatorname{Spec}R$. Such tangent vectors fit into the diagram
	$$
	\begin{tikzcd}
	\operatorname{Spec}\mathbb{F}[\epsilon] \arrow[d] \arrow[r] & \mathcal{L} \arrow[d]\\
	\operatorname{Spec}\mathbb{F} \arrow[r] & \operatorname{Spec}R
	\end{tikzcd}
	$$
	and so describe the tangent vectors of  $\mathcal{L} \otimes_R \mathbb{F}$ at closed points. If every such tangent vector is zero then it will follow that $\mathcal{L} \otimes_R \mathbb{F}$ is zero-dimensional and smooth, and so verifies the hypothesis of Proposition~\ref{goodcase}. In other words, to verify the hypothesis of Proposition~\ref{goodcase}, it suffices to show that $\mathfrak{M}_{\mathbb{F}[\epsilon]} = \mathfrak{M} \otimes_{\mathbb{F}} \mathbb{F}[\epsilon]$ if $X \in \operatorname{Mat}(\mathbb{F}((u)))$ is such that $A\varphi(X)A^{-1} - X \in \operatorname{Mat}(\mathbb{F}[[u]])$ then $X \in \operatorname{Mat}(\mathbb{F}[[u]])$.
\end{sub}
We conclude with some in examples in which Proposition~\ref{goodcase} can be applied.
\begin{example}
	Suppose $K = \mathbb{Q}_p$ and suppose $\mathbb{F}$ is sufficiently large. A $p$-adic Hodge type $\mathbf{v}$ concentrated in degree $[0,p]$ then corresponds to a pair of integers $0 \leq a \leq b \leq p$. Suppose $V_{\mathbb{F}} = \operatorname{Ind}_{G_L}^{G_K} W_{\mathbb{F}}$ is two-dimensional and irreducible, with $W_{\mathbb{F}}$ one-dimensional. In this case we shall show $R^{\mathbf{v}}_{\operatorname{crys}}$ is either zero, or is formally smooth over $\mathbb{Z}_p$.
	\begin{itemize}
		\item First, using the discussion from \ref{equivalent} it is easy to verify that any $\mathfrak{M} \in \mathcal{L}^{\leq p}_{\operatorname{crys}}(V_{\mathbb{F}})$ is of the form $f_*\mathfrak{N}$ for an $\mathfrak{N} \in \mathcal{L}^{\leq p}(W_{\mathbb{F}})$. Of course this is only true because $K= \mathbb{Q}_p$. Thus there exists an $\mathbb{F}[[u]]$-basis of $\mathfrak{M}$ with respect to which the matrix of $\varphi$ is given by $A = \left(\begin{smallmatrix}
		0 & xu^s \\ xu^r & 0
		\end{smallmatrix}\right)$ with $r,s \in [0,p]$ not equal and $x \in \mathbb{F}^\times$.
		\item Next take $X =\left( \begin{smallmatrix}
	a & b \\ c & d
	\end{smallmatrix} \right)$ and compute
	$$
	A\varphi(X)A^{-1} - X = \begin{pmatrix}
	\varphi(d) - a & u^{s-r}  \varphi(c) - b \\ u^{r-s} \varphi(b) - c & \varphi(a) - d
	\end{pmatrix}
	$$
	We want to verify the condition in \ref{matrices}, so assume $A \varphi(X) A^{-1} - X \in \operatorname{Mat}(\mathbb{F}[[u]])$. It is easy to see this implies $a,d \in \mathbb{F}[[u]]$. For $c$ and $b$ we may assume that $r>s$, otherwise interchange $c$ and $b$ in our argument. Let $x_c$ and $x_b$ denote the $u$-adic valuations of $c$ and $b$ respectively and assume $x_c < 0$. Then $r-s + p x_b = x_c$, and so $x_b < 0$ also. This implies $s-r + px_c = x_b$ and so $p(x_b+x_c) = (x_c+ x_b)$, a contradiction. Now assume $x_c \geq 0$ and $x_b < 0$. We still have $s-r + px_c = x_b$ and so, as $s-r \geq -p$, we must have $x_c = 0$. Thus $s-r = x_b$. On the other hand we see that $r-s + px_b \geq 0$ which is another contradiction. We conclude that $X \in \operatorname{Mat(\mathbb{F}[[u]])}$, and so in this case $\mathcal{L} \otimes_{R} \mathbb{F}$ is zero-dimensional and reduced.
	\item Finally we argue that $\mathcal{L}^\mathbf{v}$ contains at most one closed point. One computes that for $\mathfrak{M}$ as above $\mathcal{G}^i(\mathfrak{M})$ is zero unless $i = r$ or $s$, in which case it is one-dimensional. This implies that the Breuil--Kisin modules associated to two closed point of $\mathcal{L}^{\mathbf{v}}$ must be abstractly isomorphic as Breuil--Kisin modules; they must therefore be equal since an abstract $\varphi$-equivariant isomorphism induces a $G_{K_\infty}$-equivariant automorphism of $V_{\mathbb{F}}$, and these are all given by scalar multiplication, since $V_{\mathbb{F}}$ is irreducible.
	\end{itemize} 
\end{example}
\begin{example}
	Continue to assume that $K= \mathbb{Q}_p$ and that $\mathbb{F}$ is sufficiently large. We can then also treat the two dimensional reducible case $V_{\mathbb{F}} \sim \left(  \begin{smallmatrix}
	\chi_1 & c \\ 0 & \chi_2 
	\end{smallmatrix} \right)$, at least as long as $\chi_1 \chi_2^{-1} \neq \chi_{\operatorname{cyc}}$ so that $V_{\mathbb{F}}$ is cyclotomic-free. We can compute that $R^{\mathbf{v}}_{\operatorname{crys}}$ is either zero or is formally smooth over $\mathbb{Z}_p$, except in the following exceptional cases:
	\begin{itemize}
		\item When $\mathbf{v} = (a,a+p-1)$ and $V_{\mathbb{F}}$ is a split extension with $\chi_1\chi_2^{-1}$ equal to a non-trivial unramified character, $R^{\mathbf{v}}_{\operatorname{crys}}$ is either zero or has two irreducible components, each of which is formally smooth.
		\item When $\mathbf{v} = (a,a+p-1)$, $\chi_1\chi_2^{-1} =1$ and the cocycle $c(\sigma)$ is ramified, $R^{\mathbf{v}}_{\operatorname{crys}}$ is either zero or formally smooth. If $c(\sigma)$ is unramified then we are only able to deduce that $R^{\mathbf{v}}$ has a single irreducible component (\cite{Sand} computes these rings directly in this case; they are not formally smooth, in fact they are not even Cohen--Macaulay). 
	\end{itemize} 
	Let us only explain the claims in the last bullet point. We leave the rest as an exercise for the interested reader (the arguments are more straightforward). After twisting we may suppose $V_{\mathbb{F}}$ admits an $\mathbb{F}$-basis $\xi_{\mathbb{F}}$ so that $\sigma(\xi_{\mathbb{F}}) = \xi_{\mathbb{F}} \left( \begin{smallmatrix}
	1 & c(\sigma) \\ 0 & 1
	\end{smallmatrix}\right)$. Then the etale $\varphi$-module associated to $V_{\mathbb{F}}$, viewed as a sub-module of $V_{\mathbb{F}} \otimes_{\mathbb{F}_p} C^\flat$,  is generated by $\xi :=\xi_{\mathbb{F}}\left( \begin{smallmatrix}
	1 & \alpha \\ 0 & 1
	\end{smallmatrix} \right)$ where $\alpha \in C^\flat$ is such that $\sigma(\alpha) - \alpha = c(\sigma)$ for $\sigma \in G_{K_\infty}$.
	\begin{itemize}
		\item  Note that $\varphi(\alpha) - \alpha \in \mathbb{F}((u))$. Note also that $\alpha$ is only well-defined up to translation by elements of $\mathbb{F}((u))$. This allows us to assume that $\varphi(\alpha) - \alpha = \alpha_0 + \alpha_{-1}u^{-1} + \ldots + \alpha_{-n}u^{-n}$ for some $\alpha_i \in \mathbb{F}$ and some $p > n \geq 0$. Let us choose $\alpha$ so that $n$ is minimal. If $\sigma \mapsto c(\sigma)$ is unramified then we can clearly take $\alpha \in \overline{k} \otimes_{\mathbb{F}_p} \mathbb{F}$ and in this case $n = 0$. Conversely if $n = 0$ then $\alpha \in \overline{k} \otimes_{\mathbb{F}_p} \mathbb{F}$ and so $\sigma \mapsto c(\sigma)$ is unramified.
		\item We first compute the set of $\mathfrak{M} \in \mathcal{L}^{\leq p}(V_{\mathbb{F}})$. Any such $\mathfrak{M}$ is generated by $\xi B$ for some $B \in \operatorname{GL}_2(\mathbb{F}((u)))$. Using the Iwasawa decomposition for $\operatorname{GL}_2(\mathbb{F}((u)))$ we may assume $B= \left( \begin{smallmatrix}
		u^r & b \\ 0 & u^s
		\end{smallmatrix}\right)$ for some $b \in \mathbb{F}((u))$. With respect to $\xi B$ the matrix of $\varphi$ is given by 
		$$
		A = \begin{pmatrix}
		u^{(p-1)r} & u^{-r} ( \varphi(b) - u^{(p-1)s}b + u^{ps}(\varphi(\alpha) - \alpha)) \\ 0 & u^{(p-1)s}
		\end{pmatrix}
		$$
		Thus $r,s \in [0,1]$. When $r = s = 0$ we must have $\varphi(b) - b + \varphi(\alpha) - \alpha \in \mathbb{F}[[u]]$, which is only possible if $\varphi(\alpha) - \alpha \in \mathbb{F}[[u]]$ and $b \in \mathbb{F}[[u]]$. Thus when $c(\sigma)$ is unramified we obtain a single element of $\mathcal{L}^{\leq p}(V_{\mathbb{F}})$, and otherwise this case contributes no elements. For $r = 0, s = 1$, we must have $\varphi(b) - u^{p-1}b + u^p(\varphi(\alpha) - \alpha) \in \mathbb{F}[[u]]$. This occurs if and only if $b \in \mathbb{F}[[u]]$ so in this case we obtain a single element of $\mathcal{L}^{\leq p}(V_{\mathbb{F}})$. If $ r =1, s = 0$ then we must have $\varphi(b) - b + \varphi(\alpha) - \alpha \in u\mathbb{F}[[u]]$. We see this is only possible if $\varphi(\alpha) - \alpha = 0$ (i.e. $c(\sigma) = 0$) and $b \in \mathbb{F}[[u]]$. In this case we obtain multiple elements of $\mathcal{L}^{\leq p}(V_{\mathbb{F}})$, one for every $b \in \mathbb{F}$. Finally, if $r = s = 1$ then we must have $\varphi(b) - u^{p-1} b + u^p(\varphi(\alpha) - \alpha) \in u \mathbb{F}[[u]]$, and one sees that this case contributes one element to $\mathcal{L}^{\leq p}(V_{\mathbb{F}})$.
		\item We assert that each of the $\mathfrak{M} \in \mathcal{L}^{\leq p}(V_{\mathbb{F}})$ from the previous bullet point lies in $\mathcal{L}^{\leq p}_{\operatorname{crys}}(V_{\mathbb{F}})$. This can be done by first checking each is strongly divisible (which is easy to do by hand). Then use Lemma~\ref{lift} to deduce each is contained in $\mathcal{L}^{\leq p}_{\operatorname{crys}}(V_{\mathbb{F}})$ where $V_{\mathbb{F}}$ is equipped with \emph{some} $G_K$-action extending the $G_{K_\infty}$-action. Finally use that there is at most one way to extend the $G_{K_\infty}$ action on $V_{\mathbb{F}}$ to a $G_K$-action, because $V_{\mathbb{F}}$ is cyclotomic-free.
		\item Let us now focus on the case $\mathbf{v} = (0,p-1)$. We first suppose $c(\sigma)$ is non-zero. From the above $\mathcal{L}^{\mathbf{v}}$ consists of one closed point $\mathfrak{M}$ admitting a basis on which $\varphi$ acts by $\left( \begin{smallmatrix}
		1 & u^p\beta \\ 0 & u^{p-1}
		\end{smallmatrix} \right)$ where $\beta  = \varphi(\alpha) - \alpha$. To compute the tangent vectors of $\mathcal{L} \otimes_{R} \mathbb{F}$ around this point take $X = \left( \begin{smallmatrix}
		 a & b \\ c & d 
		 \end{smallmatrix} \right)$. Then $A\varphi(X)A^{-1} - X$ equals
		 $$
		 \begin{pmatrix}
		 \varphi(a) - a +  u^p\beta \varphi(c) & u^{1-p} \left( u^p\beta ( \varphi(d) - \varphi(a) - u^p\beta \varphi(c)) + \varphi(b) - u^{p-1} b \right) \\
		  u^{p-1}\varphi(c) - c & -u^p\beta \varphi(c) + \varphi(d) - d
		 \end{pmatrix}
		 $$
		 Assume that $A\varphi(X)A^{-1} - X \in \operatorname{Mat}(\mathbb{F}[[u]])$. From $u^{p-1}\varphi(c) - c \in \mathbb{F}[[u]]$ we deduce $c$ has $u$-adic valuation $\geq -1$. If this valuation is $-1$ then $\varphi(a) - a + u^p\beta \varphi(c) \in \mathbb{F}[[u]]$ implies $v(\varphi(a)- a) = v(\beta)$. By construction $p<v(\beta)\leq 0$ while $v(\varphi(a)-a) = pv(a)$ unless $a \in \mathbb{F}[[u]]$, so we must have $a,\beta \in \mathbb{F}[[u]]$. Thus $c \not\in \mathbb{F}[[u]]$ implies $c(\sigma)$ is unramified. Regardless of the valuation of $c$ we see $\alpha \in \mathbb{F}[[u]]$. Similarly $d \in \mathbb{F}[[u]]$, and also $b \in \mathbb{F}[[u]]$. We conclude that if $c(\sigma) \neq 0$ then $\mathcal{L}^{\mathbf{v}} \otimes_{R} \mathbb{F}$ is a reduced point when $c(\sigma)$ is ramified and a non-reduced point if $c(\sigma)$ is unramified.
		 \item Finally we consider when $c(\sigma) = 0$. In this case we may construct a morphism $\mathbb{A}^1_{\mathbb{F}} \rightarrow \mathcal{L}^{\mathbf{v}}$ given by the element $\mathfrak{M}_{\mathbb{F}[T]} \in \mathcal{L}^{\leq p}_{\operatorname{crys}}(V_{\mathbb{F}} \otimes_{\mathbb{F}} \mathbb{F}[T])$ with basis 
		 $$
		 \xi\begin{pmatrix}
		 u & T \\ 0 & 1
		 \end{pmatrix}
		 $$
		 for $\xi$ as above. Since $\mathcal{L}^{\mathbf{v}}$ is projective this morphism extends to a morphism $\mathbb{P}^1_{\mathbb{F}} \rightarrow \mathcal{L}^{\mathbf{v}}$ which is an isomorphism. 
 	\end{itemize}
\end{example}

These calculations recover those of \cite[1.7.14]{KisFM}, see also \cite{Sand}.
\bibliography{/home/user/Dropbox/Maths/biblio}

\end{document}